\def\cleardoublepage{\clearpage\if@twoside \ifodd\c@page\else%
	\hbox{}%
	\thispagestyle{empty}
	\newpage%
	\if@twocolumn\hbox{}\newpage\fi\fi\fi}
\DeclareMathOperator*{\esslimsup}{ess\,lim\,sup}
\DeclareMathOperator*{\esssup}{ess\,sup}
\DeclareMathOperator*{\esslim}{ess\,lim}
\let\cleardoublepage\clearpage
\newcommand{\R}{\mathbb{R}}
\newcommand{\norm}[1]{\left\lVert#1\right\rVert}
\newtheorem{thm}{Theorem}[section]
\newtheorem{cor}[thm]{Corollary}
\newtheorem{lem}[thm]{Lemma}
\newtheorem{pro}[thm]{Proposition}
\newtheorem{den}[thm]{Definition}
\newtheorem{rem}[thm]{Remark}
\numberwithin{equation}{section}
\def\Yint#1{\mathchoice
    {\YYint\displaystyle\textstyle{#1}}%
    {\YYint\textstyle\scriptstyle{#1}}%
    {\YYint\scriptstyle\scriptscriptstyle{#1}}%
    {\YYint\scriptscriptstyle\scriptscriptstyle{#1}}%
      \!\iint}
\def\YYint#1#2#3{{\setbox0=\hbox{$#1{#2#3}{\iint}$}
    \vcenter{\hbox{$#2#3$}}\kern-.51\wd0}}
\def\longdash{{-}\mkern-3.5mu{-}}
\def\fiint{\Yint\longdash}
\begin{document}
	
	\title[]{A porous medium equation with rough weights:\\  sharp Widder theory}
	
	\author{Gabriele Grillo, Matteo Muratori, Troy Petitt, Nikita Simonov}
	
	\address{Gabriele Grillo and Matteo Muratori: Dipartimento di Matematica, Politecnico di Milano, Piazza Leonardo da Vinci 32, 20133 Milano (Italy)}
	\email{gabriele.grillo@polimi.it}
	\email{matteo.muratori@polimi.it}
	
	\address{Troy Petitt: Dipartimento di Matematica ``Federigo Enriques'',
		Universit\`a degli Studi di Milano, Via Cesare Saldini 50, 20133 Milano (Italy)}
	\email{troy.petitt@unimi.it}
	
	\address{Nikita Simonov: Sorbonne Université, Université Paris Cité, CNRS, Laboratoire Jacques-Louis Lions, LJLL, F-75005 Paris, France}
	\email{nikita.simonov@sorbonne-universite.fr}
	
	\makeatletter
	\@namedef{subjclassname@2020}{%
		\textup{2020} Mathematics Subject Classification}
	\makeatother

    \subjclass[2020]{Primary: 35R05, 35R06. Secondary: 28A33,  35A01, 35A02, 35B45, 35J08,  35K65.}

\keywords{Porous medium equation; weighted diffusion; Widder theory; Aronson--Caffarelli estimates; potential theory.}

	\begin{abstract}
    We establish an optimal \emph{Widder theory} for a weighted porous medium equation with rough and inhomogeneous density that may be singular at a point and tends to zero at spatial infinity. Specifically, for this equation, we identify a class $X$ of initial measure data that give rise to very weak solutions, we show that non-negative very weak solutions necessarily admit an initial trace in $X$ at time $t=0$, and we prove that any two non-negative solutions having the same initial trace are equal. The corresponding theory for the classical (unweighted) equation was established by exploiting various properties that are not available in our weighted setting, such as the continuity of solutions, the explicit scale invariance of the equation, Aleksandrov's reflection principle, and the Aronson--B\'enilan inequality. Therefore, to complete the Widder theory, we must devise several proofs by means of entirely new methods. We also establish an optimal quantitative \emph{a priori} smoothing estimate for unsigned local solutions without resorting to scale invariance, which seems to be new in this form even for the classical porous medium equation. Finally, we show that non-negative very weak solutions are always locally bounded, and in particular that they have locally finite energy.
	\end{abstract}
	
	\maketitle

	\tableofcontents

	\section{Introduction}\label{sec:intro}
	We investigate initial traces, \emph{a priori} estimates, existence, and uniqueness for solutions to the following \emph{weighted porous medium equation} (WPME):
	\begin{equation}\label{wpme-noid}
		\rho(x) \, u_t = \Delta (u^m) \qquad \text{in} \;\;  \R^N\times(0,T) \, ,
	\end{equation}
	for $m>1$, $T>0$, and $N\ge3$, where $ \rho $ is a weight (or density) satisfying suitable assumptions that will be made clear shortly. The requirement $N\ge3$ is crucial for most of our results, since we use extensively potential methods. As we will see, the solvability of \eqref{wpme-noid} is strictly related to the initial value problem
    \begin{equation}\label{measure}
     \begin{cases}
     \rho(x) \, u_t = \Delta (u^m)  & \text{in} \;\; \R^N\times(0,T) \, , \\
     \rho(x) \, u = \mu  & \text{on} \;\; \R^N \times \{ 0 \} \, ,
    \end{cases}
    \end{equation}
where $\mu$ is a Radon measure (possibly sign-changing), having in a suitable sense appropriate growth at infinity. Although most of our results concern non-negative solutions to \eqref{wpme-noid}, when dealing with sign-changing solutions, we implicitly set $u^m=|u|^{m-1}u$.

The weight $\rho$ appearing in \eqref{wpme-noid} and \eqref{measure} describes an \it inhomogeneous mass density\rm, on which we make \it no regularity assumptions\rm.  Equation \eqref{wpme-noid} has been extensively studied for decades stemming from \cite{KRo,KRo2}, where it was proposed as a model for nonlinear heat transfer in an inhomogeneous one-dimensional medium. In this context, we consider solutions in the weakest possible sense, namely \emph{very weak solutions}, and in particular we avoid any \it a priori \rm mention of (even local) weak energy, though we will show here that for some class of solutions this can be recovered \it a posteriori\rm, a fact that we find interesting in itself. The possible roughness of $\rho$, which is also allowed to be singular at a point (for simplicity we choose it to be the origin) and to tend to zero at infinity, requires several \it ad hoc \rm procedures that cannot be borrowed from the case $\rho=1$, in a sense analogous to the non-trivial analysis of second-order parabolic PDEs in divergence form, even with uniformly elliptic coefficients that are merely measurable. In particular, this involves several technical adjustments that are necessary to deal with key properties of solutions that are satisfied only for \emph{almost every} $ x $ or $t$.
Our first main goal is to prove initial-trace properties for \emph{non-negative} solutions $u$ to \eqref{wpme-noid}, in the spirit of the celebrated work \cite{AC}. We recall that an initial trace $\mu\in\mathcal{M}\!\left(\R^N\right)$ (the set of Radon measures -- that is, measures with a locally finite total variation) of $u$ is defined by the validity of the limit
\begin{equation*}
    u(x,t) \, \rho(x) \underset{t \to 0}{\longrightarrow} \mu \qquad \mathrm{weakly}^* \ \mathrm{in}\;\; \mathcal{M}\!\left(\R^N\right) .
\end{equation*}
Therefore, before introducing our results, we give a brief overview of basic issues concerning the theory of initial traces for the (unweighted) heat equation and the porous medium equation (PME).

\subsection{\for{toc}{Widder theory for the heat equation and for the classical PME ($\rho=1$)}\except{toc}{Widder theory for the heat equation and for the classical PME (\texorpdfstring{$ \boldsymbol{\rho=1}$}{})}}\label{widder}
Let us first consider the heat equation. In  the paper \cite{Wid} by D.V. Widder (see also the monograph \cite{WidBook}), it was proved that all positive solutions to $u_t=u_{xx}$ in a finite strip $\R\times(0,T)$ have a unique initial trace $\mu$ at $ t=0 $ with growth
\begin{equation*}
    \mu(B_R)=\mathcal{O}\!\left(e^{aR^2}\right) \qquad \mathrm{as}\;\;R\to+\infty\,,
\end{equation*}
for some $a>0$ depending on $T$, where $B_R$ denotes the open ball of radius $R>0$ centered at the origin. In \cite{Aronson-IT}, the higher-dimensional case was handled, along with the corresponding result for general linear divergence-form parabolic equations. We recall that solutions of both the heat equation and the porous medium equation may blow up in finite time if the growth at spatial infinity is critical, thus explaining the need to treat solutions defined only in strips $\R^N\times(0,T)$.

With regards to the classical (unweighted) porous medium equation, that is
 \begin{equation}\label{pme}
 u_t = \Delta \!\left( u^m \right) \qquad \text{in}\;\; \R^N\times(0,T) \, ,
     \end{equation}
the existence of Barenblatt-type solutions (see \cite[Chapter 4]{Vazquez}), namely solutions of the corresponding initial-value problem
 \begin{equation}\label{wpme-funid-pme}
 \begin{cases}
 u_t = \Delta \!\left( u^m \right) & \text{in} \;\; \R^N\times(0,T) \, , \\
 u = u_0  & \text{on} \;\; \R^N \times\{ 0 \} \, ,
     \end{cases}
     \end{equation}
taking as $u_0$ the \emph{Dirac delta} measure, immediately makes it necessary to consider a weaker class of initial data, rather than a function $u_0\in L^1\!\left(\R^N\right)$, when treating general non-negative solutions. The article \cite{AC} by D.G. Aronson and L.A. Caffarelli was a breakthrough for the study of initial traces in non-linear diffusion problems. In that work, it is found that for each non-negative and continuous solution to \eqref{pme} there necessarily exists a unique initial trace measure $\mu$, with the explicit growth bound
\begin{equation}\label{growth}
	\mu(B_R)=\mathcal{O}\!\left(\!R^{N+\frac{2}{m-1}}\right) \qquad \mathrm{as}\;\; R\to+\infty \, .
\end{equation}
 It is noteworthy that this result was obtained via the careful study of special solutions of Barenblatt and blow-up type, and without the use of a \emph{representation formula}, which is of course not available for non-linear problems. The continuity assumption was later removed in the remarkable paper \cite{DK2}, but the non-negativity assumption is unavoidable, see \cite[Section 12.8]{Vazquez}.

The growth estimate \eqref{growth}, which actually also holds for solutions at any given positive time,
can be used to prove uniqueness for general non-negative solutions taking the same initial trace. The close connection between initial-trace estimates and uniqueness for parabolic equations is well known, and originally took the form of a representation formula for the heat equation. Indeed, the main result in \cite{Wid} is that a non-negative solution is \emph{uniquely identified} as a convolution between the heat kernel and an initial measure trace. For the PME, the stated initial-trace estimates in \cite{AC} were exploited in \cite{DK} by B.E.J. Dahlberg and C.E. Kenig to prove uniqueness for general positive solutions, \emph{without any a priori growth assumptions} on the solutions themselves.

To determine the precise results that are necessary to provide a complete theory of initial traces, which together constitute the so-called \emph{Widder theory} for \eqref{wpme-funid-pme}, we note that the main goal is to establish bijections between an initial data space $X$ and a solution space $Y$ (where both spaces depend on the particular equation in consideration), according to the following program:
\begin{enumerate}
    \item \emph{Identify a large enough initial class of possibly sign-changing measures $X$ and a regularity/growth class of solutions $Y$ such that
    $\mu\in X\implies\exists u\in Y$ taking $\mu$ as its initial trace}. This was proven in \cite{BCP} for the PME. 
    \item \emph{Prove that if $u\geq0$ is a very weak solution $\implies\exists! \, \mu\in X_+$ (the non-negative measures in $X$) an initial trace of $u$, and $u \in Y$}. This was proven in \cite{AC} (and \cite{DK,DK2}) for the PME. 
    \item \emph{General non-negative solutions taking the same initial trace are equal}. This was proven in \cite{DK} for the PME. 
\end{enumerate}

We also comment that when $u_0$ is a \emph{sign-changing} measure, no uniqueness result for \eqref{wpme-funid-pme} is available to the best of our knowledge, even if such a measure has a \emph{finite} total variation.

For the heat equation, $X$ and $Y$ are spaces of exponential-growth type, and all of these results are by now classical; see again \cite{WidBook}. Finally, we remark that the Widder theory for the \emph{fractional} heat equation was completed in \cite{BPSV,BSV}. Recently, significant generalizations have been proved in \cite{GQSV}, always in the linear case, where in particular fractional-type operators with possibly rough kernels are considered, which are the natural counterparts of elliptic operators with measurable coefficients in the local case.

\subsection{Widder theory for the WPME with rough weights: our results}
We start by stating our main assumption on the weight $\rho$, which reads as follows: $\rho\equiv\rho(x)$ is a measurable function in $ \R^N $ satisfying
	\begin{equation}\label{weight-cond}
    \begin{gathered}
		\underline{C} \left( 1 + |x| \right)^{-\gamma}  \le \rho(x) \le \overline{C} \, |x| ^{-\gamma} \qquad \text{for a.e.} \;\; x \in \mathbb{R}^N \, , \\
    \text{for some}\;\;\gamma\in[0,2) \;\; \text{and constants} \;\; \overline{C}>\underline{C}>0  \, .
        \end{gathered}
	\end{equation}
We stress that our results, to be stated below, are new \emph{even for a measurable weight $\rho$ that is bounded and bounded away from zero}, \emph{i.e.}~$\gamma=0$ in \eqref{weight-cond}, but we can allow for weights tending to zero at infinity and possibly being singular at the origin, \emph{i.e.}~$\gamma\in(0,2)$. We will explain in a few lines why $\gamma=2$ appears as a natural threshold for these kinds of problems.

As recalled above, since its introduction in \cite{KRo,KRo2}, equation \eqref{wpme-noid} has been widely studied. More recently, efforts have been made to extend the classical theory of the unweighted porous medium equation in higher dimensions to the weighted case; see for example \cite{GMPo,KRV,RV1,RV2,RV3}. We comment that the case $\gamma\ge2$ is in several respects completely different from the one studied here, \emph{e.g.}\ as concerns asymptotic behavior (see \cite{KRV,NR}). Besides, non-negative bounded solutions taking suitable arbitrary conditions at infinity have been shown in \cite{GMP3} to exist when $\gamma>2$, therefore our main results on uniqueness simply cannot hold if $\gamma >2$. On the other hand, in the critical case $\gamma=2$, many of our other results fail, for instance the Aronson--Caffarelli estimate Theorem \ref{thm:weighted-AC} has diverging exponents as $\gamma\to2^-$, and the multiplying constants of the local smoothing estimates \eqref{l.infty.balls} actually blow up as $\gamma\to2^-$. Therefore, this borderline case is delicate and should be dealt with separately.

Remarkably, the \emph{radial} version of \eqref{wpme-noid} with a decaying (radial) weight $\rho$ has proven to be closely connected to the unweighted porous medium equation on a \emph{Riemannian manifold with non-positive curvature}; we refer to \cite{GMV1,GMV2,Vazquez-hyp} for the details. It turns out that $\gamma\in(0,2)$ corresponds (via a suitable transformation) to a non-compact manifold with negative curvature \emph{decaying at spatial infinity}, whereas $\gamma=2$ corresponds to the \emph{hyperbolic space}. Furthermore, $\gamma = 2$ with suitable \it logarithmic perturbations \rm is associated with manifolds having divergent negative curvature, which can also be \emph{very negative} in the sense of \cite{GMV2}, a setting in which it is known that \it uniqueness does not hold \rm even for bounded solutions.


Our new results, in combination with those of \cite{MP}, complete the \emph{Widder theory for \eqref{wpme-noid} with rough weights}, which consists, with the same notation and numbering given in Subsection \ref{widder}, in establishing a bijection between an initial data space $X$ and a solution space $Y$ as follows:
\begin{enumerate}

    \item The identification of an initial class of possibly sign-changing measures $X$ and a regularity/growth class of solutions $Y$ for the WPME such that
    $\mu\in X\implies\exists u\in Y$ taking $\mu$ as its initial trace, is shown by combining \cite[Theorem 2.2]{MP} and Theorem \ref{mBCP} below. In our context, the spaces $X$ and $Y$ are introduced in Definition \ref{def-Morrey} and in \eqref{def-space-Y}, respectively. We comment that in the corresponding results of \cite{BCP} the so-called \it Aronson--B\'enilan inequality \rm established in \cite{AB} is crucial, but the latter is not available in the weighted framework (see \cite[Introduction]{MP}).

    \item The fact that any non-negative very weak solution to the WPME has a unique initial trace $\mu\in X_+$ is proved in Theorem \ref{thm:weighted-AC}, which in particular provides an Aronson--Caffarelli-type estimate (whose sharpness is discussed in Subsection \ref{ac space optimal}). We stress that in the PME case, continuity of solutions is a crucial tool for the proof, but such a property is not available in our rough weighted setting. We replace it by Theorem \ref{apriori-bounded}  establishing \it local boundedness \rm of non-negative solutions, which is enough for our purposes, in combination with the optimal local smoothing estimates of Theorem \ref{local moser iter lem}, ensuring that $u$ has the right pointwise growth and is therefore an element of $Y$. Also, we point out that the results of \cite{AC, DK, DK2} take advantage of the scale invariance of the equation and the validity of Aleksandrov's reflection principle (which is replaced by our potential methods), all of such results being lost here due to the presence of $\rho$.

    \item  We prove in Theorem \ref{uniq thm} that non-negative very weak solutions, \it without growth assumptions\rm, taking the same initial trace are equal. Again, in contrast to \cite{DK, DK2}, we avoid any use of continuity properties of solutions.
\end{enumerate}

To be more specific, in the weighted case in which assumption  \eqref{weight-cond} is required,
we find that $X$ is the set of initial traces satisfying
\begin{equation}\label{weighted-growth-rate}
	|\mu|(B_R)=\mathcal{O}\!\left(\!R^{N-\gamma+\frac{2-\gamma}{m-1}}\right)\qquad\text{as}\;\;R\to+\infty\, ,
\end{equation}
where $|\mu|$ is the total variation of $\mu$.
Furthermore, the pointwise growth condition required to belong to $Y$ reads
\begin{equation}\label{weighted-inf-growth-rate}
    \norm{u(t)}_{L^\infty(B_R)}=\mathcal{O}\!\left(R^{\frac{2-\gamma}{m-1}}\right)\qquad\text{as}\;\;R\to+\infty\,,
\end{equation}
for all $t\in(0,T)$, and it is a key tool for uniqueness since it allows us to exploit \cite[Theorem 2.3]{MP}. Of course, in the case $\gamma=0$, namely possibly rough weights that are (essentially) bounded and bounded away from zero, we get the same growth rates valid for the classical PME. However, we stress again that our results are new in this case as well.

\subsection{Further results: sharp smoothing estimates and local boundedness} Finally, it is worth describing more in detail another two main theorems of ours, of independent interest:
\begin{itemize}

\item We prove the \emph{a priori} smoothing estimate Theorem \ref{local moser iter lem}, for possibly sign-changing and purely local solutions, that allows us to pass from $L^1$-growth of type \eqref{weighted-growth-rate} to pointwise growth of type \eqref{weighted-inf-growth-rate}. This result is new in the present form even for the PME, and we believe that it may have independent interest. We discuss its sharpness, in suitable senses, in Subsection \ref{gen-rad-rem}.

\item We prove in Theorem \ref{apriori-bounded} that non-negative very weak  solutions to \eqref{wpme-noid} are always locally bounded. This result, which is also true for merely local solutions, is somewhat expected in view of \cite{DK2}, but we employ a suite of refined potential methods to overcome significant technical challenges resulting from the inclusion of a rough weight $\rho$. As a remarkable consequence, \emph{non-negative very weak solutions are locally weak}, \emph{i.e.}~they have a locally finite energy.

\end{itemize}

Although we will not pursue any further extension here, we expect that our potential methods might have a wider degree of applicability, for instance to the Riemannian setting, at least under controlled curvature or volume growth.

	\section{Preliminary material and statement of the main results}\label{prelim}
After an introductory subsection concerning functional preliminaries and notations, we will state all of our main results, dividing them into Aronson--Caffarelli estimates, existence and uniqueness, and local smoothing effects. Finally, we will devote the last subsection to a discussion of the optimality of the estimates we prove.

From now on, and in the subsequent sections, if not explicitly stated we will take for granted that $N \ge 3$, $m>1$ and $ \rho $ complies with \eqref{weight-cond}.

	\subsection{Functional spaces, auxiliary definitions, and notation}
    In the next definitions we introduce the main functional spaces (and related norms) we will work with.
	\begin{den}[Weighted Lebesgue spaces]\label{def:weighted-norm}
		Let $p\in[1,\infty)$ and $Q=\Omega\times(t_1,t_2)$, where $\Omega\subset \R^N$ is a (possibly unbounded) domain and $0 \le t_1<t_2$. Given a measurable function $f$ in $Q$, we set
		\begin{equation*}
			\norm{f}_{L^p_\rho(Q)}=\left(\int_{t_1}^{t_2}\int_{\Omega}\left|f(x,t)\right|^p\rho(x)\,dxdt\right)^{\frac 1 p} ,
		\end{equation*}
and we denote by $ L^p_\rho(Q) $ the weighted Lebesgue space of all measurable functions $f$ in $Q$ such that $ \|f \|_{L^p_{\rho}(Q)} <+\infty $. Similarly, for measurable functions that depend only on the spatial variable $x$, we set
	\begin{equation}\label{weighted-leb-norm}
			\norm{f}_{L^p_\rho(\Omega)}=\left(\int_\Omega \left|f(x)\right|^p\rho(x)\,dx\right)^{\frac 1 p} ,
		\end{equation}
        with the usual definition of $ L^p_\rho(\Omega) $. In view of the assumptions on $\rho$, the weighted $ L^\infty $ spaces coincide with the standard ones with respect to the Lebesgue measure; we then denote by $L^\infty_c(\Omega)$ the space of $L^\infty(\Omega)$ functions with compact support in $\Omega$.

Also, we use the notation $ L^p_{\rho,\mathrm{loc}}(Q) $ to refer to functions that belong to $ L^p_{\rho}(Q') $ for all $ Q' \Subset Q $, with a completely analogous definition for $ L^p_{\rho,\mathrm{loc}}(\Omega) $.
When referring to unweighted Lebesgue spaces, that is the above ones with $\rho = 1 $, we will simply write $ L^p $ or $ L^p_{\mathrm{loc}} $, whereas in the special case $ \rho(x) = |x|^{-\gamma} $ we will write $ L^p_\gamma $ or $ L^p_{\gamma,\mathrm{loc}} $.

Upon calling $\sigma$ the $(N-1)$-dimensional Hausdorff measure on $\partial\Omega$, we will let $L^p_\sigma( \partial \Omega \times (t_1,t_2) )$ denote the corresponding Lebesgue space of measurable functions $g$ on $ \partial \Omega \times (t_1,t_2) $ such that $ |g| ^p$ is integrable with respect to the boundary measure $ d\sigma \otimes dt $.
	\end{den}

    In the special case when $ \Omega $ is the open ball of radius $R>0$ centered at some $x_0 \in \R^N$, we write $ B_R(x_0) $, and if $x_0=0$ we just write $B_R$.

    Up to standard identifications, it is plain that $ L^p_{\rho}(Q) $ coincides with the Bochner space $ L^p\!\left((t_1,t_2);L^p_\rho(\Omega)\right) $, but in fact we will need to consider more general versions of these spaces.

    \begin{den}[Bochner spaces -- see \cite{Cohn,Ev,Mik}]\label{boch}
For a Banach space $V$, a time interval $I \subset \R$, and some $ p \in [1,\infty] $, we denote by $ L^p(I;V) $ the Banach space of all strongly measurable functions $ f : I \to V $ such that
$$
\left\| f \right\|_{L^p(I;V)} = \left( \int_I \left\| f(t) \right\|_V^p
dt \right)^{\frac 1 p} <+\infty \quad \text{if}\;\; p < \infty \, , \quad \left\| f \right\|_{L^\infty(I;V)} = \underset{t \in I}{\esssup} \left\| f(t) \right\|_V < +\infty \, .
$$
For all $f \in L^p(I;V) $ admitting a weak derivative $ f_t \in L^p(I;V) $, we can define the Banach space $ W^{1,p}(I;V) $ endowed with the norm
$$
\left\| f \right\|_{W^{1,p}(I;V)} = \left\| f \right\|_{L^p(I;V)} + \left\| f_t \right\|_{L^p(I;V)} ,
$$
where, as is customary, in the special case $p=2$ we set $ H^1(I;V) = W^{1,2}(I;V) $. Furthermore, we will let $ \mathrm{Lip}(I;V) $ denote the space of all functions $f:I \to V$ such that
$$
\left\| f(t)-f(s) \right\|_V \le L \, |t-s| \qquad \forall t,s \in I \, ,
$$
for some constant $L>0$.

Normally, we will add the subscript $ _\mathrm{loc} $ to mean that the above properties are satisfied in any subinterval $ I' \Subset I $. If it makes sense in $V$, which is the case for Lebesgue spaces, we may also write $ V_{\mathrm{loc}} $, so that a notation of the form $ H^1_{\mathrm{loc}}\!\left(I; L^p_{\mathrm{loc}}(\Omega) \right) $ means that $ f \in H^1\!\left(I'; L^p_{\mathrm{loc}}(\Omega') \right) $ for all $ I' \Subset I $ and all $ \Omega' \Subset \Omega $.

Concerning continuous curves, we will let $C\!\left(I ; V \right)$ denote the space of all (strongly) continuous functions $ f : I \to V $, and when $ I $ is not compact, convergence will always be meant locally uniformly in time, that is, we say that $ f_k \to f $ in $C\!\left(I ; V \right)$ as $ k \to \infty $ if
\begin{equation*}
    \sup_{t\in I' }\norm{f_k(t)-f(t)}_V \to 0 \qquad \text{as}\;\;k\to\infty\,, \ \text{for all}\;\; I' \Subset I \, .
\end{equation*}
	\end{den}

    In the sequel we will systematically use the following cutoff functions.

    \begin{den}[Spatial cutoff functions]\label{den-cutoff}
        For any $ R>0$ we define $  \phi_R \in  C^\infty_c\!\left( \R^N \right)$ to satisfy
		\begin{equation}\label{defcutoff1}
			0\leq\phi_R\leq1 \quad \text{in}\;\; \R^N \, , \qquad \phi_R=1 \quad \text{in}\;\; B_R \, , \qquad \phi_R=0 \quad \text{in}\;\; B_{2R}^c \,,
		\end{equation}
		and the pointwise estimates
		\begin{equation}\label{cutoff-est}
			\left|\nabla\phi_R\right|\leq\frac{C}{R} \quad \text{and} \quad \left|\Delta\phi_R\right|\leq\frac{C}{R^2} \qquad \text{in} \;\; \R^N   \, ,
		\end{equation}
		for some constant $C>0$ depending only on $N$.
	\end{den}
	
	\begin{den}[Morrey-type spaces for large data]\label{def-Morrey}
		Let $r\geq1$. Given a (possibly sign-changing) Radon measure $\mu$ in $\R^N$, we define
		\begin{equation*}\label{def-norm-1-r}
			\norm{\mu}_{1,r}=\sup_{R\geq r}R^{-(N-\gamma)-\frac{2-\gamma}{m-1}} \, |\mu|(B_R)\,,
		\end{equation*}
		where $|\mu|$ is the total variation of $\mu$. We denote by $X$ the Banach space of all Radon measures $ \mu $ in $\R^N$ such that $\norm{\mu}_{1,r}<+\infty$. By the local finiteness of the total variation of $\mu$, such a definition is independent of $r\geq1$ and all norms $ \| \cdot \|_{1,r} $ are equivalent. Furthermore, let
		\begin{equation*}\label{lim-lim}
			\ell(\mu):=\lim_{r\to+\infty}\norm{\mu}_{1,r} .
		\end{equation*}
         We say that $\mu\in X_0$ if $
         \ell(\mu)=0 $.
    Occasionally, it will be convenient to work with the following (equivalent) norm:
            \begin{equation*}\label{def-norm-1-rbis}
			\vert{\mu}\vert_{1,r}=\sup_{R\geq r}R^{-(N-\gamma)-\frac{2-\gamma}{m-1}} \, \int_{\mathbb R^N} \phi_R \, d|\mu| \, .
		\end{equation*}

In the special case when $ \mu \equiv f \rho  \in L^1_{\mathrm{loc}}\!\left(\R^N\right)$, the above definitions have already been given in \cite[Subsection 2.1]{MP}; in order to be consistent with the notation introduced therein, we will write $ \| f \|_{1,r} $ in the place of (the more appropriate) $ \| f \rho \|_{1,r} $. The same rule applies to $|\cdot|_{1,r}$.

		Finally, for a function $f\in L^\infty_{\mathrm{loc}}\!\left(\R^N\right)$, we set
		\begin{equation}\label{def-norm-inf-r}
			\norm{f}_{\infty,r}=\sup_{R\geq r}R^{-\frac{2-\gamma}{m-1}} \, \norm{f}_{L^\infty(B_R)}  ,
		\end{equation}
        and it is plain that such a norm is stronger than $ \| f \|_{1,r} $.
	\end{den}

In agreement with \cite[Subsection 2.1]{MP}, given any measurable function $f$ in $\R^N$, we define
\begin{equation*}\label{alpha-weight-norm}
\norm{f}_{L^1(\Phi_\alpha)}=\int_{\R^N}|f(x)|\,\Phi_\alpha(x)\,\rho(x)\,dx \, ,
\end{equation*}
and we denote by $ L^1(\Phi_\alpha) $ the weighted Lebesgue space of all measurable functions $f$ in $ \R^N $ such that $ \| f \|_{L^1(\Phi_\alpha)} < +\infty $, where $ \Phi_\alpha $ is another weight of the form
\begin{equation}\label{alpha-cond}
\Phi_\alpha(x)=\left(1+|x|^2\right)^{-\alpha} \quad \text{for some} \;\; \alpha>\frac{2-\gamma}{2(m-1)}+\frac{N-\gamma}{2} \, .
\end{equation}
It was shown in \cite[Proposition A.1]{MP} that $X$ is continuously embedded in $ L^1(\Phi_\alpha) $.

\smallskip

We let $ W^{k,p}(\Omega) $ denote the usual Sobolev space of $L^p(\Omega)$ functions such that all weak partial derivatives of order up to $k$ are also $L^p(\Omega)$ functions, with $ H^k(\Omega) = W^{k,2}(\Omega) $. The $ _\mathrm{loc} $ subscript applies exactly as above. When $\Omega$ is a bounded smooth domain, we let $ W^{k,p}_0(\Omega) $ denote the subspace of  $W^{k,p}(\Omega) $ functions with all derivatives of order up to $k-1$ vanishing on $\partial \Omega$. In the special case $\Omega = \R^N$, the symbol $ \dot H^1\!\left( \R^N \right) $ stands for the closure of $ C_c^\infty\!\left( \R^N \right) $ with respect to the $L^2\!\left( \R^N \right)$ norm of the gradient.

\smallskip

In general, we will use notations that are deemed standard. When dealing with integrals, we will often omit the integration variables to lighten the reading. A space-time function such as $u(x,t)$ will sometimes be displayed as $u(t)$, if it is to be interpreted as a whole spatial function at a fixed time $t$; this will occasionally be done in space-time integrals as well. To denote time derivatives we will use either the subscript $ _t $ or the symbol $ \partial_t $, depending on readability.

\smallskip

	Finally, it is convenient to define the following special constants, which are related to scale invariance and appear many times below (as well as in previous works \cite{MP,MPQ}):
	\begin{equation}\label{deflambdatheta}
		\lambda=\frac{N-\gamma}{(N-\gamma)(m-1)+2-\gamma} \, , \qquad \theta=\frac{2-\gamma}{N-\gamma} \, ;
	\end{equation}
	notice that it holds
	\begin{equation}\label{spexprel}
		\lambda(m-1)+\theta\lambda=1 \, .
	\end{equation}
	
    \subsection{An Aronson--Caffarelli-type initial trace theorem}\label{subsec:AC}
	Our principal definition of solution to \eqref{wpme-noid} is the following.
	\begin{den}[Global very weak solutions]\label{def:vw-gl}
        We say that a function
        $$ u \in L^1_{\rho,\mathrm{loc}}\!\left(\R^N\times(0,T)\right)\cap L^m_{\mathrm{loc}}\!\left(\R^N\times(0,T)\right)
        $$
        is a very weak solution to \eqref{wpme-noid} if
		\begin{equation}\label{vw-cond}
			\int_{0}^{T} \int_{\mathbb{R}^N} \left(u \, \psi_t \, \rho + u^m \, \Delta \psi \right)  dx dt = 0
		\end{equation}
		for all $\psi\in C^\infty_c\!\left(\mathbb{R}^N\times (0,T)\right)$.
	\end{den}

	
	The first main result we present is a sharp estimate on initial traces, for non-negative solutions. In fact, it is a weighted version of the celebrated result \cite[Theorem 4.1]{AC} for the unweighted equation \eqref{pme}.
	\begin{thm}[Aronson--Caffarelli-type estimate {for initial traces}]\label{thm:weighted-AC}
		Let  $N\geq3$, $m>1$, $ T > 0 $, and $\rho$ be a measurable function satisfying \eqref{weight-cond}. Let $u$ be a non-negative solution to \eqref{wpme-noid}, in the sense of Definition \ref{def:vw-gl}. Then there exists a unique non-negative Radon measure $\mu$ that is the initial datum of $u$,
		in the sense that
		\begin{equation}\label{measure-data}
			{\lim_{t\to0}}\int_{\R^N} \varphi(x)\,u(x,t)\,\rho(x)\,dx=\int_{\R^N}\varphi\,d\mu \qquad \forall\varphi\in C_c\!\left(\R^N\right) ,
		\end{equation}
where $  t \mapsto \int_{\R^N} \varphi(x) \, u(x,t) \,\rho(x) \, dx $ has a continuous version in $ (0,T) $. Furthermore, $\mu$ satisfies the growth estimate
		\begin{equation}\label{weighted-AC-inq}
			\mu(B_R)\leq C \left[t^{-\frac{1}{m-1}}R^{N-\gamma+\frac{2-\gamma}{m-1}}+t^{\frac{N-\gamma}{2-\gamma}} \left(\fiint_{B_\varepsilon\times (t,t+\delta)}u\,\rho\,dxds\right)^{1+\frac{N-\gamma}{2-\gamma}(m-1)}\right] \qquad \forall R \ge 1 \, ,
		\end{equation}
		for all $\varepsilon \in (0,1)$, all $t\in(0,T)$, and all $\delta \in(0,T-t)$, where $C>0$ is a constant depending only on $N,m,\gamma,\underline{C}, \overline{C} $.
	\end{thm}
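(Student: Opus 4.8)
My plan is to establish three things in turn: that the mass functional $t\mapsto\int\varphi\,u(t)\,\rho$ has a continuous version in $(0,T)$; the quantitative bound \eqref{weighted-AC-inq}, which is the heart of the matter; and the existence and identification of the trace $\mu$.

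\emph{The continuous version.} Testing \eqref{vw-cond} with $\psi(x,t)=\varphi(x)\,\eta(t)$ for $\varphi\in C^\infty_c(\R^N)$ and $\eta\in C^\infty_c(0,T)$, I would read off that $t\mapsto\int_{\R^N}\varphi\,u(t)\,\rho\,dx$ has distributional derivative $t\mapsto\int_{\R^N}u^m(t)\,\Delta\varphi\,dx$, which belongs to $L^1_{\mathrm{loc}}(0,T)$ because $u\in L^m_{\mathrm{loc}}$; hence it admits a locally absolutely continuous (in particular continuous) representative on $(0,T)$, and for $0<\tau<t<T$,
\[
\int_{\R^N}\varphi\,u(\tau)\,\rho\,dx=\int_{\R^N}\varphi\,u(t)\,\rho\,dx+\int_\tau^t\!\!\int_{\R^N}u^m\,\Delta\varphi\,dx\,ds .
\]

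\emph{The growth estimate.} This is where the potential-theoretic tools take over the role played by scale invariance and Aleksandrov's reflection principle in \cite{AC}. The guiding remark is that, formally, $\partial_t\big[(-\Delta)^{-1}(\rho u)\big]=-u^m\le0$ when $u\ge0$, so the Riesz potential of $\rho u$ is time-monotone; its localized, rigorous counterpart is obtained by testing the weak formulation, for fixed $R\ge1$ and a free auxiliary radius $\ell\gg R$, against $\psi=\Psi_{R,\ell}\,\eta$, where $\Psi_{R,\ell}$ is a $C^\infty_c$-approximation of the Newtonian potential of $\mathbf 1_{B_R}$ truncated at scale $\ell$, so that $0\le\Psi_{R,\ell}\lesssim\min\{R^2,\,R^N|x|^{2-N}\}$, $\Psi_{R,\ell}\simeq R^2$ on $B_R$, $\operatorname{supp}\Psi_{R,\ell}\subset B_{2\ell}$, and $-\Delta\Psi_{R,\ell}=\mathbf 1_{B_R}+E_\ell$ with $E_\ell$ supported in $B_{2\ell}\setminus B_\ell$ and $|E_\ell|\lesssim R^N\ell^{-N}$. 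Choosing $\eta$ to approximate $\mathbf 1_{(\tau,t)}$ and using the previous item yields
\[
R^2\!\int_{B_R}\rho\,u(\tau)\,dx\ \lesssim\ \int_{B_{2\ell}}\Psi_{R,\ell}\,\rho\,u(t)\,dx+\int_\tau^t\!\!\int_{B_R}u^m\,dx\,ds+R^N\ell^{-N}\!\int_\tau^t\!\!\int_{B_{2\ell}\setminus B_\ell}u^m\,dx\,ds .
\]
Since $u\ge0$, the local boundedness Theorem \ref{apriori-bounded} guarantees $u(t)\in L^\infty_{\mathrm{loc}}$ and the finiteness of all these integrals, and the local smoothing Theorem \ref{local moser iter lem} — via the weighted $L^\infty$ bounds of $u$ on dyadic balls together with \eqref{weight-cond} — bounds the three terms on the right by powers of $R,\ell,t,\varepsilon,\delta$ multiplying the space-time average $\fiint_{B_\varepsilon\times(t,t+\delta)}u\,\rho\,dx\,ds$, plus a term depending on $t$ alone. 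Optimizing the free radius $\ell$ — the step that substitutes for rescaling — balances the two resulting contributions; the weighted Newtonian growth $R^N|x|^{2-N}$ paired with the behaviour $|x|^{-\gamma}$ of the weight produces the exponent $N-\gamma+\tfrac{2-\gamma}{m-1}$ and the powers $t^{-1/(m-1)}$, $t^{(N-\gamma)/(2-\gamma)}$, while the exponents $\lambda,\theta$ of \eqref{deflambdatheta} entering the smoothing estimate account for $1+\tfrac{N-\gamma}{2-\gamma}(m-1)$. Letting $\tau\to0$ gives \eqref{weighted-AC-inq}.

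\emph{Existence, identification, and the main difficulty.} By the growth estimate the masses $\int_{B_R}\rho\,u(\tau)\,dx$ stay bounded as $\tau\to0$ for every fixed $R$, so the non-negative functionals $\varphi\mapsto\int\varphi\,u(\tau)\,\rho\,dx$ are locally uniformly bounded and weak-$*$ compactness produces a non-negative Radon measure $\mu$ with $u(\tau_k)\,\rho\rightharpoonup\mu$ along some $\tau_k\to0$. To upgrade this to full convergence \eqref{measure-data} I would exploit monotonicity: testing the identity of the first item with a non-negative superharmonic $\varphi$ shows that $t\mapsto\int\varphi\,u(t)\,\rho\,dx$ is then non-increasing and non-negative, hence convergent as $t\to0$; standard truncation and density arguments — in which the cutoff errors are absorbed using the growth estimate just proved — then extend this to every $\varphi\in C_c(\R^N)$, so that $\lim_{t\to0}\int\varphi\,u(t)\,\rho\,dx$ exists and is represented by $\mu$. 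I expect the genuine obstacle to lie in the growth estimate: without scale invariance, the sharp exponents have to be extracted through the optimization over $\ell$ rather than by normalisation; the localisation of the Newtonian potential must be carried out so that the error $E_\ell$ and any boundary contributions are of reabsorbable size (this being the analogue of what Aleksandrov's principle handles for $\rho\equiv1$); and every pointwise manipulation of $u^m$ and of $(-\Delta)^{-1}(\rho u)$ has to be legitimised under the mere measurability of $\rho$, which forces one to work throughout in the very weak formulation with the cutoffs of Definition \ref{den-cutoff} and to rely on Theorems \ref{apriori-bounded} and \ref{local moser iter lem} precisely where \cite{AC} uses continuity.
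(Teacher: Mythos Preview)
Your treatment of the continuous version of the mass functional is correct and coincides with the paper's.

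The growth estimate, however, contains a circularity. To bound the right-hand side of your key inequality
\[
R^2\!\int_{B_R}\rho\,u(\tau)\,dx\ \lesssim\ \int_{B_{2\ell}}\Psi_{R,\ell}\,\rho\,u(t)\,dx+\int_\tau^t\!\!\int_{B_R}u^m\,dx\,ds+R^N\ell^{-N}\!\int_\tau^t\!\!\int_{B_{2\ell}\setminus B_\ell}u^m\,dx\,ds
\]
in terms of the average over the fixed small set $B_\varepsilon\times(t,t+\delta)$, you appeal to Theorem~\ref{local moser iter lem}. But that smoothing estimate controls $\|u\|_{L^\infty(B_R)}$ by $\mathsf S$ or $\|u\|_{L^1_\rho}$ taken on a \emph{larger} ball $B_{2R}$, never the reverse; it cannot reduce $\int_{B_{2\ell}}\Psi_{R,\ell}\,\rho\,u(t)$ or $\int_{B_{2\ell}\setminus B_\ell}u^m$ to a quantity living on $B_\varepsilon$. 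Since a general non-negative very weak solution carries no \emph{a priori} spatial growth information, your optimization over $\ell$ has nothing to balance against --- both the potential term and the annular $u^m$ term may diverge with $\ell$ in an uncontrolled way. The paper breaks this circle by never estimating $u$ directly. It first proves the Aronson--Caffarelli bound (Proposition~\ref{weighted AC lemma}) for the \emph{constructed} weak-energy solution $u_{R,\tau}$ with compactly supported datum $u(\tau)\chi_{B_R}$, where global mass conservation \eqref{mass cons}, the global smoothing effect \eqref{basic smoothing}, and the Green's-function lower bound \eqref{flb equation} (exploiting the weak B\'enilan--Crandall inequality \eqref{weak BC estimate}) are all available and deliver the sharp exponents. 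It then transfers the estimate to $u$ via the comparison $u_{R,\tau}\le u(\cdot+\tau)$ of Corollary~\ref{global comparison lemma}, itself resting on the duality-based local comparison of Proposition~\ref{local comparison lemma}. This two-stage route --- constructed solution plus comparison --- is precisely what replaces scale invariance and Aleksandrov's principle, not a direct optimization over $\ell$.

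Your uniqueness argument also fails: there is no nontrivial non-negative $\varphi\in C^\infty_c(\R^N)$ with $\Delta\varphi\le0$, since $\int_{\R^N}\Delta\varphi\,dx=0$ forces $\Delta\varphi\equiv0$ and hence $\varphi\equiv0$. So the monotonicity you invoke is vacuous for compactly supported test functions, and using non-compactly supported superharmonic $\varphi$ would again require the very growth control you are trying to establish. The paper instead integrates the smoothing bound for $u_{R,\tau}$ in time and uses comparison to obtain the one-sided estimate $\int\varphi\,u(t)\,\rho\,dx\ge\int\varphi\,d\mu-C\,t^{\theta\lambda}$ (see \eqref{smoothing for uniqueness 2}); interchanging the roles of two candidate limit sequences then forces them to agree.
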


	\subsection{Existence and uniqueness results}\label{subsec:exist-uniq}

Before stating our main existence and uniqueness theorems, we introduce the definition of very weak solution of the Cauchy problem \eqref{measure}.
    \begin{den}[Global very weak solutions with measure initial data]\label{def:vw-gl-m}
        Let $\mu$ be a (possibly sign-changing) Radon measure in $\R^N$. We say that a function $ u $ is a very weak
		solution of problem \eqref{measure} if it is a very weak solution to \eqref{wpme-noid}, in the sense of Definition \ref{def:vw-gl}, and in addition \eqref{measure-data} holds (as an essential limit).
    \end{den}

We are now ready to state our main existence and uniqueness theorems (the latter for non-negative solutions) for problem \eqref{measure}. Recall that the spaces $X$ and $X_0$ have been introduced in Definition~\ref{def-Morrey}.

	\begin{thm}[Existence of solutions with measure initial data]\label{mBCP}
		Let  $N\geq3$, $m>1$, and $\rho$ be a measurable function satisfying \eqref{weight-cond}. Let $\mu\in X $ be a (possibly sign-changing) Radon measure. Then there exists a solution $u$ of problem \eqref{measure} in the sense of Definition \ref{def:vw-gl-m},  which we call the constructed one, with $ T=\mathsf{T}(\mu) $, where
		\begin{equation}\label{def-t-u0m}
			\mathsf{T}(\mu) = \frac{\mathsf{C}_1}{\left[ \ell(\mu) \right]^{m-1}} \quad \text{if}\;\; \mu \in X \setminus X_0 \, , \qquad  \mathsf{T}(\mu) = + \infty \quad \text{if}\;\; \mu\in X_0 \, ,
		\end{equation}
		and $ \mathsf{C}_1>0 $ is a constant depending only on $ N,m,\gamma,\underline{C},\overline{C} $. Moreover, the constructed solution $u$ enjoys the  regularity properties
\begin{equation}\label{Lip-X}
    u \in \mathrm{Lip}_{\mathrm{loc}}\!\left( \left(0,\mathsf{T}(\mu)\right) ; X \right) \cap W^{1,\infty}_{\mathrm{loc}}\!\left(\left(0,\mathsf{T}(\mu)\right);L^1(\Phi_\alpha) \right)  .
\end{equation}
        Furthermore, upon setting
		\begin{equation*}\label{maxtimem}
			\mathsf{T}_r(\mu) = \frac{\mathsf{C}_1}{\left\|\mu \right\|_{1,r}^{m-1}} \qquad \forall r \ge 1 \, ,
		\end{equation*}
		there exist constants $\mathsf{C}_2, \mathsf{C}_3>0$, depending only on $ N,m,\gamma,\underline{C},\overline{C} $, such that
		\begin{equation}\label{smoothing estimatem}
			\left\| u(t) \right\|_{1,r} \leq \mathsf{C}_2 \norm{\mu}_{1,r}  \qquad \forall t \in \left( 0 , \mathsf{T}_r(\mu) \right),
		\end{equation}
		\begin{equation}\label{key estimatem}
			\left\| u(t) \right\|_{\infty,r} \leq \mathsf{C}_3 \, t^{-\lambda}\norm{\mu}_{1,r}^{ \theta \lambda} \qquad  \forall t \in \left( 0 , \mathsf{T}_r(\mu) \right).
		\end{equation}
		We also have the following ordering principle: if $ v $ is the constructed solution associated with another initial datum $ \nu \in X $, then $\mu\leq \nu$ implies $u(t) \leq v(t)$ for all $  t \in \left( 0 , \mathsf{T}(\nu) \right) $.
		
		Finally, if in addition $ \mu \in X_0 $, then $ u \in  W^{1,\infty}_{\mathrm{loc}}\!\left(\left(0,\mathsf{T}(\mu)\right); X_0 \right) $ and
		\begin{equation}\label{subcritical-growth}
			\esslim_{|x|\to+\infty}|x|^{-\frac{2-\gamma}{m-1}}\,u(x,t)=0 \qquad \forall t>0 \, .
		\end{equation}
	\end{thm}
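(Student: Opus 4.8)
\emph{Step 1 (strategy and truncated problems).} The plan is to produce the constructed solution as a limit of solutions with regular data, transferring to the limit a suite of a priori bounds established uniformly along the approximation, and then to read off the stated regularity and ordering properties. To this end I would first choose data $\mu_n=g_n\rho$ with $g_n\in L^\infty_c(\R^N)$, obtained from $\mu$ by mollification and truncation against the cutoffs $\phi_n$ of Definition~\ref{den-cutoff}, so that $\mu_n\to\mu$ in the sense of \eqref{measure-data} (possible because $X\hookrightarrow L^1(\Phi_\alpha)$), that $\|\mu_n\|_{1,r}\le c_0\,\|\mu\|_{1,r}$ for a fixed $c_0>0$ and all $n,r$, and that order is preserved, i.e.\ $\mu\le\nu\Rightarrow\mu_n\le\nu_n$ — mollification, multiplication by the nonnegative $\phi_n$, and truncation all preserve order and inflate the Morrey norm by at most a fixed factor. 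On each ball $B_n$ I would then solve the homogeneous Dirichlet problem for $\rho\,u_t=\Delta(u^m)$ with datum $g_n$: since $\rho\in L^1(B_n)$ and $\rho$ is bounded below by a positive constant on $B_n$, this has a unique bounded weak solution $u_n$ (nonnegative if $\mu\ge0$) obeying the $L^1$-contraction and comparison principles — alternatively one invokes the existence theory for absolutely continuous data, \emph{cf.}~\cite[Theorem~2.2]{MP}.

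\emph{Step 2 (uniform a priori estimates — the crux).} I would next prove \eqref{smoothing estimatem} and \eqref{key estimatem} \emph{uniformly in $n$}. For the Morrey bound, testing the equation against $\phi_R$ and writing $u_n^m=u_n\,u_n^{m-1}$ lets one control $\big|\int u_n^m\,\Delta\phi_R\big|$ by the current $L^\infty$-control raised to the power $m-1$, times the Morrey mass $\int\phi_{2R}\,u_n\,\rho$ (up to the appropriate power of $R$, using \eqref{def-norm-inf-r} and the comparability of $\rho(x)$ with $|x|^{-\gamma}$ for $|x|\ge1$); this makes $R\mapsto R^{-(N-\gamma)-\frac{2-\gamma}{m-1}}\int\phi_R\,u_n\,\rho$ the subject of a continuity/Gr\"onwall argument in time whose closure is dictated by \eqref{deflambdatheta}--\eqref{spexprel} and breaks down exactly at $\mathsf{T}_r$. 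For the $L^\infty$-bound I would feed the just-obtained control of local $L^1_\rho$ masses into the purely local smoothing effect of Theorem~\ref{local moser iter lem}; the two estimates are coupled and must be bootstrapped simultaneously. Testing against a fixed $\psi\in C^\infty_c$ and using these bounds also gives $\big|\tfrac{d}{dt}\!\int u_n(t)\,\psi\,\rho\big|\le C(\psi)$, a uniform Lipschitz-in-time control. \textbf{I expect this step to be the main obstacle}: in the rough weighted setting one has neither the exact scale invariance of the model $\rho(x)=|x|^{-\gamma}$ nor the Aronson--B\'enilan inequality nor Barenblatt comparison, so the sharp exponents $\lambda,\theta$ and the sharp lifespan $\mathsf{T}_r$ must be produced by quantitative potential/De Giorgi--Moser arguments (via Theorem~\ref{local moser iter lem}) combined with a careful continuity-in-time argument, with all constants independent of $n$ and insensitive to the singularity of $\rho$ at the origin.

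\emph{Step 3 (passage to the limit and remaining properties).} From the uniform local $L^\infty$-bound on $[t_1,t_2]\Subset(0,\mathsf{T}_r(\mu))$, the uniform Morrey bound, and the uniform time-equicontinuity, standard compactness (Aubin--Lions — the local $L^\infty$-bound makes the $u_n$ local weak solutions with locally uniform energy; or monotone convergence when the $\mu_n$ are chosen increasing) yields $u_n\to u$ in $L^1_{\rho,\mathrm{loc}}$ and a.e., which suffices to pass to the limit in \eqref{vw-cond} — $u_n^m\to u^m$ being dominated via the local $L^\infty$-bound — and makes $u$ admissible for Definition~\ref{def:vw-gl}. The estimates \eqref{smoothing estimatem}--\eqref{key estimatem} are inherited by $u$; since the construction is consistent as $r$ varies and $\mathsf{T}_r(\mu)\uparrow\mathsf{T}(\mu)$, it patches into a solution on $(0,\mathsf{T}(\mu))$. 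The initial trace \eqref{measure-data}, with the continuous representative of $t\mapsto\int\varphi\,u(t)\,\rho$, follows from the uniform time-equicontinuity together with $\int\varphi\,d\mu_n\to\int\varphi\,d\mu$, and \eqref{Lip-X} is read off from the time-derivative bound of Step~2 in the norms $\|\cdot\|_{1,r}$ and $\|\cdot\|_{L^1(\Phi_\alpha)}$ (recall $X\hookrightarrow L^1(\Phi_\alpha)$). Finally, the ordering principle is immediate: $\mu\le\nu\Rightarrow\mu_n\le\nu_n\Rightarrow u_n\le v_n$ on $B_n$ by comparison, which persists in the limit.

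\emph{Step 4 (the case $\mu\in X_0$).} Here $\ell(\mu)=0$, so $\|\mu\|_{1,r}\to0$ and $\mathsf{T}_r(\mu)=\mathsf{C}_1\,\|\mu\|_{1,r}^{-(m-1)}\to+\infty$ as $r\to\infty$; hence every $t>0$ lies in $(0,\mathsf{T}_r(\mu))$ for $r$ large, so $\mathsf{T}(\mu)=+\infty$ and $u$ is global. By \eqref{smoothing estimatem}, $\ell(u(t))=\lim_{r\to\infty}\|u(t)\|_{1,r}\le\mathsf{C}_2\lim_{r\to\infty}\|\mu\|_{1,r}=0$, i.e.\ $u(t)\in X_0$ for every $t$; with \eqref{Lip-X} this gives $u\in W^{1,\infty}_{\mathrm{loc}}((0,\infty);X_0)$. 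For the last assertion I would argue as follows: fix $t>0$; for $k$ so large that $t<\mathsf{T}_{2^{k+1}}(\mu)$ and a.e.\ $x$ with $2^k\le|x|\le2^{k+1}$, the definition of $\|\cdot\|_{\infty,2^{k+1}}$ followed by \eqref{key estimatem} gives $|x|^{-\frac{2-\gamma}{m-1}}\,|u(x,t)|\le 2^{\frac{2-\gamma}{m-1}}\,\|u(t)\|_{\infty,2^{k+1}}\le 2^{\frac{2-\gamma}{m-1}}\,\mathsf{C}_3\,t^{-\lambda}\,\|\mu\|_{1,2^{k+1}}^{\theta\lambda}$, and the right-hand side tends to $0$ as $k\to\infty$ because $\|\mu\|_{1,2^{k+1}}\to\ell(\mu)=0$; since $r\mapsto\|u(t)\|_{\infty,r}$ is non-increasing, this yields $\esssup_{|x|\ge2^k}|x|^{-\frac{2-\gamma}{m-1}}|u(x,t)|\to0$ as $k\to\infty$, which is exactly \eqref{subcritical-growth}.
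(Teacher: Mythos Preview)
Your overall strategy (regularize the datum, solve the approximate problems, extract uniform a priori estimates, pass to the limit by compactness) is the same as the paper's. However, two points deserve correction. First, what you flag as ``the main obstacle'' (Step~2) is in fact bypassed entirely in the paper: the Morrey and smoothing bounds \eqref{smoothing estimatem}--\eqref{key estimatem} for the approximants, as well as the $L^1(\Phi_\alpha)$ and $|\cdot|_{1,r}$ stability estimates, are imported wholesale from \cite[Theorem~2.2]{MP} (restated in the paper as Proposition~\ref{mp-exist}). The paper simply mollifies $\mu$ to $\mu_\varepsilon=\rho^{-1}(\eta_\varepsilon\ast\mu)$, observes $\|\mu_\varepsilon\|_{1,r}\le 2\|\mu\|_{1,r}$, applies Proposition~\ref{mp-exist} to obtain $u_\varepsilon$ together with all the needed uniform bounds, and then invokes the compactness Lemma~\ref{alaa compact2}. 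There is no need to rederive the sharp exponents from scratch.

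Second, your derivation of \eqref{Lip-X} has a genuine gap. Testing against a fixed $\psi\in C^\infty_c$ yields $\big|\tfrac{d}{dt}\int u_n(t)\,\psi\,\rho\big|\le C(\psi)$, which is a \emph{weak}$^*$ Lipschitz bound; it does not give strong Lipschitz continuity in $L^1(\Phi_\alpha)$, because controlling $\int(u(t+h)-u(t))\psi\,\rho$ is not the same as controlling $\int|u(t+h)-u(t)|\,\Phi_\alpha\,\rho$. The paper instead uses a time-scaling trick (Lemma~\ref{time scaling trick}): applying the stability estimates \eqref{l1-weight-contr} and \eqref{cutoff-weight-cont} to the pair $u_\varepsilon(t)$ and $\lambda\,u_\varepsilon(\lambda^{m-1}t)$ yields genuine Lipschitz bounds in both $L^1(\Phi_\alpha)$ and $|\cdot|_{1,r}$. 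Moreover, upgrading from $\mathrm{Lip}_{\mathrm{loc}}$ to $W^{1,\infty}_{\mathrm{loc}}$ with values in the non-reflexive space $L^1(\Phi_\alpha)$ is not automatic (the paper stresses this point right after the statement); it requires an additional argument via \cite[Theorem~1.1]{BG}, exploiting that $\partial_t(u^m)\in L^2_{\rho,\mathrm{loc}}$ to first obtain $\partial_t u\in L^1_{\rho,\mathrm{loc}}$. The same subtlety applies in the $X_0$ case, where the paper uses density of $L^1_\rho(\R^N)$ in $X_0$ to pass from the incremental bound to an honest weak derivative in $X_0$.
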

We point out that the Lipschitz- and $W^{1,\infty}$-regularity properties \eqref{Lip-X} seem to be new, even in the classical case $ \rho = 1 $. Specifically, the fact that $u$ turns out to be a $W^{1,\infty}_{\mathrm{loc}}$ curve with values in $ L^1(\Phi_\alpha) $ is particularly relevant, since the inclusion $ \mathrm{Lip}(I;V) \subset W^{1,\infty}(I;V) $ is in general false when $ V $ is not a reflexive space (see the references in Definition  \ref{boch}).


	\begin{thm}[{Uniqueness without global assumptions}]\label{uniq thm}
		Let  $N\geq3$, $m>1$, $ T > 0 $, and $\rho$ be a measurable function satisfying \eqref{weight-cond}. Let $u$ and $v$ be non-negative solutions to \eqref{wpme-noid}, in the sense of Definition \ref{def:vw-gl}, such that
		\begin{equation}\label{same data}
			\esslim_{t\to0}\int_{\R^N}\left[ u(x,t)-v(x,t)\right] \varphi(x)\,\rho(x) \, dx=0 \qquad \forall \varphi \in C_c\!\left(\R^N\right) .
		\end{equation}
		Then $u = v$.
	\end{thm}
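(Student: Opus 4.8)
The plan is to reduce the claim to the uniqueness theorem \cite[Theorem 2.3]{MP}, which yields $u=v$ whenever $u,v$ are (local weak) solutions obeying the pointwise growth bound \eqref{weighted-inf-growth-rate}; the real content is therefore to show that an \emph{arbitrary} non-negative very weak solution automatically has the regularity and the growth required to invoke that theorem. As a preliminary, Theorem \ref{apriori-bounded} guarantees that $u$ and $v$ are locally bounded, hence local weak solutions with locally finite energy, while Theorem \ref{thm:weighted-AC} shows that each of them has a unique initial trace; by hypothesis \eqref{same data} these coincide, so $u$ and $v$ share a common trace $\mu\in X_+$, and, absorbing the $R$-independent term of \eqref{weighted-AC-inq}, $\mu$ satisfies \eqref{weighted-growth-rate}, i.e.\ $\mu\in X$ in the sense of Definition \ref{def-Morrey}.

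The core step is to upgrade this to the global-in-space bound \eqref{weighted-inf-growth-rate} at every positive time. Fix $[a,b]\Subset(0,T)$. For each $s\in[a,b]$ the time-shifted function $w_s(x,t):=u(x,t+s)$ is a non-negative very weak solution on $\R^N\times(0,T-s)$, and the continuous-in-time version of $t\mapsto\int\varphi\,u\,\rho$ provided by Theorem \ref{thm:weighted-AC} identifies its initial trace as $u(s)\,\rho$. Applying the Aronson--Caffarelli estimate \eqref{weighted-AC-inq} to $w_s$ with one fixed admissible choice of the free parameters (e.g.\ $\varepsilon=\tfrac12$ and the time parameters comparable to $T-b$), and using only $u\in L^1_{\rho,\mathrm{loc}}$ to bound the resulting space-time average, one finds a constant $C=C(a,b,T,u)$ with
\[
\int_{B_R} u(s)\,\rho\,dx \le C\,R^{\,N-\gamma+\frac{2-\gamma}{m-1}} \qquad \text{for all}\;\; R\ge 1 \;\;\text{and}\;\; s\in[a,b] \,,
\]
that is, an $L^1$-growth bound of type \eqref{weighted-growth-rate} which is \emph{uniform in} $s\in[a,b]$. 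Feeding this into the \emph{a priori} local smoothing estimate of Theorem \ref{local moser iter lem}, applied to $u$ on the cylinders $B_{2R}\times(a,b)$ with $R\to+\infty$, converts the $\|\cdot\|_{1,r}$-type control into $\|\cdot\|_{\infty,r}$-type control, i.e.\ it promotes \eqref{weighted-growth-rate} to \eqref{weighted-inf-growth-rate} for $u(t)$ with $t$ in, say, $[\tfrac{a+b}{2},b]$. Since $[a,b]\Subset(0,T)$ was arbitrary, \eqref{weighted-inf-growth-rate} holds for all $t\in(0,T)$, and the same reasoning applies verbatim to $v$. Thus $u$ and $v$ are non-negative local weak solutions on $\R^N\times(0,T)$, both satisfying \eqref{weighted-inf-growth-rate} and sharing the initial trace $\mu$, and \cite[Theorem 2.3]{MP} gives $u=v$.

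I expect the main obstacle to be precisely this core step: extracting the \emph{sharp} pointwise growth \eqref{weighted-inf-growth-rate} when only local boundedness is available a priori and there is no scale invariance to exploit. This requires running the Aronson--Caffarelli estimate at positive initial times (on the shifted solutions $w_s$) so as to obtain the $L^1$-growth bound \emph{uniformly} over a compact time interval, and then matching it against the local smoothing estimate with exactly the cylinder geometry that makes the powers of $R$ balance to the optimal exponent $\tfrac{2-\gamma}{m-1}$ — which is the very reason the non-self-similar estimate of Theorem \ref{local moser iter lem} must be so precise. A secondary but genuine difficulty is the measure-theoretic bookkeeping forced by the roughness of $\rho$: identifying $u(s)\,\rho$ as the trace of $w_s$ through the continuous version of $t\mapsto\int\varphi\,u\,\rho$, and upgrading the various statements valid for a.e.\ $s$ (or $t$) to hold on the full intervals needed in the argument.
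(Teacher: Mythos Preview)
Your core step is correct and matches the paper's own ``Preliminary claim'': combining Theorem~\ref{apriori-bounded}, the Aronson--Caffarelli estimate applied at positive times, and Corollary~\ref{glob-unif-est} does yield $u,v\in Y_{\mathrm{loc}}((0,T))$ with $\|u(s)\|_{1,r}$ bounded uniformly on compact time intervals (and in fact down to $t=0$ via Corollary~\ref{small time AC lem}). The paper even remarks that this puts $u$ and $v$ ``\emph{almost} in the uniqueness class of Proposition~\ref{mp-uniq}, the only obstruction being the lack of regularity at $t=0$''.

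That obstruction is precisely where your argument breaks. Proposition~\ref{mp-uniq} (i.e.\ \cite[Theorem 2.3]{MP}) requires $u,v\in C\big([0,T);L^1_{\rho,\mathrm{loc}}(\R^N)\big)$ with $u(0)=v(0)=u_0$ a \emph{function} in $X$, and $u,v\in L^\infty_{\mathrm{loc}}([0,T);X)$. You only have the initial trace as a Radon \emph{measure} $\mu$, attained in the weak$^*$ sense~\eqref{measure-data}; $\mu$ need not be of the form $u_0\rho$ at all (think of a Dirac mass), and even if it were, weak$^*$ convergence does not give strong $L^1_{\rho,\mathrm{loc}}$ continuity at $t=0$. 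So \cite[Theorem 2.3]{MP} is simply not applicable at $t=0$, and this is not a technicality one can patch with bookkeeping.

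The paper closes this gap with a substantial additional construction (Steps 1--5 of the proof): it builds approximating solutions $w_{\tau,k}$ with \emph{function} initial data $\phi_{2^k}u(\tau)$, uses Proposition~\ref{mp-uniq} on $(\tau,T)$ (where it legitimately applies), passes to the limit $\tau\to0$ and then $k\to\infty$ via the compactness Lemmas~\ref{alaa compact1}--\ref{alaa compact2} and Theorem~\ref{weighted pierre thm} to obtain a limit $w$ that depends only on $\mu$, and finally shows $u=w=v$ through the $L^1(\Phi_\alpha)$-stability estimate~\eqref{l1-weight-contr} combined with a tail estimate. Your proposal would need to supply an argument of this kind.
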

As a consequence of Theorems \ref{thm:weighted-AC}, \ref{mBCP} and \ref{uniq thm}, we can assert that, as far as non-negative solutions are concerned, problem \eqref{measure} is solvable if and only if $ \mu \in X $, and in that case it admits exactly one non-negative solution in the sense of Definition \ref{def:vw-gl-m} (with $ T=\mathsf{T}(\mu) $). Moreover, thanks to \eqref{smoothing estimatem}, for all $ t \in (0,T) $ we have
        \begin{equation*}
            \norm{u(t)}_{L^\infty(B_R)}=\mathcal{O}\!\left(R^{\frac{2-\gamma}{m-1}}\right) \qquad \text{as}\;\;R\to+\infty\,.
        \end{equation*}
	
	\subsection{Local smoothing estimates and \emph{a priori} boundedness}\label{subsec:apriori}
	In order to prove the uniqueness Theorem \ref{uniq thm}, we first obtain quantitative uniform estimates for locally bounded solutions, which  might be defined only in space-time sets of the form $\Omega\times(\tau_1,\tau_2)$, where $\Omega \subset \R^N$ is a (possibly unbounded) domain and $0<\tau_1<\tau_2$. That is, we consider the purely local problem
	\begin{equation}\label{wpme-noid-loc}
		\rho \, u_t = \Delta(u^m) \qquad \text{in}\; \; \Omega\times(\tau_1,\tau_2) \, .
	\end{equation}

	
	
	

    	\begin{den}[Local very weak solutions]\label{vw loc def}
        Let $ \Omega \subset \R^N $ be a (possibly unbounded) domain and $ 0 \le \tau_1 < \tau_2 $. We say that a function
        $$
        u \in L^1_{\rho,\mathrm{loc}}\!\left(\Omega \times(\tau_1,\tau_2)\right)\cap L^m_{\mathrm{loc}}\!\left(\Omega\times(\tau_1,\tau_2)\right)
        $$
        is a very weak solution to \eqref{wpme-noid-loc} if
		\begin{equation*}\label{vw-cond-loc}
			\int_{\tau_1}^{\tau_2} \int_{\Omega} \left(u \, \psi_t \, \rho + u^m \, \Delta \psi \right)  dx dt = 0
		\end{equation*}
		for all $\psi\in C^\infty_c\!\left(\Omega\times (\tau_1,\tau_2)\right)$.
        \end{den}

    Next we introduce the concept of \emph{strong energy} solution to \eqref{wpme-noid-loc}, which is \emph{a priori} more restrictive than Definition \ref{vw loc def} (but \emph{a posteriori} it is not -- see Corollary \ref{very weak strong energy} below).

	\begin{den}[{Local strong energy solutions}]\label{def:w-loc}
 Let $ \Omega \subset \R^N $ be a (possibly unbounded) domain and $ 0 \le \tau_1 < \tau_2 $. We say that a function $ u $ is a strong energy solution to  \eqref{wpme-noid-loc} if
		\begin{equation}\label{w-hyp-loc}
			u\in L^\infty_{{\mathrm{loc}}}(\Omega\times(\tau_1,\tau_2))\,,
		\end{equation}
        \begin{equation}\label{w-hyp-energ}
            u^m \in H^1_{{\mathrm{loc}}}\!\left((\tau_1,\tau_2); L^2_{{\rho,\mathrm{loc}}}(\Omega)\right) \cap L^2_{{\mathrm{loc}}}\!\left((\tau_1,\tau_2); H^1_{{\mathrm{loc}}}(\Omega)\right) ,
     \end{equation}
		and for any $ \tau_1<t_1<t_2<\tau_2 $ it holds
		\begin{equation}\label{w-cond-loc}
			\int_{t_1}^{t_2} \int_{\Omega} \left( - u \, \psi_t \, \rho  + \left\langle \nabla (u^m) \, , \nabla \psi \right\rangle \right) dx dt =
            \int_{\Omega} \left[u(t_1)\,\psi(t_1) - u(t_2)\,\psi(t_2)\right] \rho\, dx \, ,
		\end{equation}
		for all $\psi\in L^\infty\!\left(\Omega\times(\tau_1,\tau_2)\right) \cap H^1\!\left((\tau_1,\tau_2); L^2_\rho(\Omega)\right) \cap L^2\!\left((\tau_1,\tau_2); H^1(\Omega)\right)$ such that $ \operatorname{supp} \psi \Subset \Omega \times [t_1,t_2] $.
	\end{den}

We are now in a position to state our local smoothing effect.

	\begin{thm}[Local smoothing estimates for unsigned solutions]\label{local moser iter lem}
		Let $N\geq3$, $m>1$, and $\rho$ be a measurable function satisfying \eqref{weight-cond}. Let $ \Omega \subset \R^N $ be a (possibly unbounded) domain and $ 0 \le \tau_1 < \tau_2 $.
		Let $u$ be a solution to~\eqref{wpme-noid-loc}, in the sense of Definition \ref{vw loc def},  satisfying \eqref{w-hyp-loc}. Assume that there exists $R \ge 1$ such that $B_{2R} \Subset \Omega$, and for any $ \tau_1<t_1<t_2<T^*<\tau_2 $ consider the pair of cylinders
		\begin{equation*}
			\overline{Q}=B_{2R}\times(t_1,T^*) \, , \qquad \underline{Q}=B_{R}\times(t_2,T^*) \, .
		\end{equation*}
		Then the following smoothing estimates hold for all $ \varepsilon \in (0,\varepsilon_0) $:
		\begin{equation}\label{l.infty.balls}
			\begin{aligned}
				R^{-\frac{2-\gamma}{m-1}} \norm{u}_{L^\infty\left(\underline{Q}\right)} & \leq C_1 \left[  \left(R^{-(N-\gamma)-\frac{2-\gamma}{m-1}}\norm{u}_{L_\rho^{1}\left(\overline{Q}\right)}\right)^{\frac{1}{2-m}}+\left(\tfrac{1}{t_2-t_1}\right)^{\frac{1}{m-1}} \right] \quad \text{for}\;\; m\in(1,2) \, ,\\
				R^{-\frac{2-\gamma}{m-1}} \norm{u}_{L^\infty\left(\underline{Q}\right)} & \leq  C_2\left[\left(R^{-(N-\gamma)-\frac{2-\gamma}{m-1}}\,\mathsf{S}\right)^{1+\varepsilon}\left(T^*-t_1\right)^{\frac{\varepsilon}{m-1}}+\left(\tfrac{1}{t_2-t_1}\right)^{\frac{1}{m-1}}\right] \quad
				 \text{for}\;\; m>1 \, ,
			\end{aligned}
		\end{equation}
		 where
        $$
        \mathsf{S}=\sup\limits_{t \in (t_1,T^*) } \int _{B_{2R}} |u(x,t)|\,\rho(x)\,dx \, ,
        $$
        $ \varepsilon_0 > 0 $ is a constant depending on $N,m,\gamma $, $C_1>0$ is a constant depending on $N,m,\gamma,\underline{C},\overline{C}$, and $C_2>0$ is a constant depending on $N,m,\gamma,\underline{C},\overline{C},\varepsilon$.
	\end{thm}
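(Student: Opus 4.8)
The plan is to run a Moser-type iteration adapted to the weighted degenerate equation, taking care at each step that the only structural facts used are those valid for very weak solutions satisfying the local boundedness \eqref{w-hyp-loc}, together with the two-sided bound \eqref{weight-cond} on $\rho$. First I would justify that $u$ is in fact a \emph{local strong energy solution} in the sense of Definition \ref{def:w-loc} on slightly smaller cylinders: since $u$ is locally bounded, one may test the very weak formulation \eqref{vw-cond-loc} against approximations of $\psi = \zeta^2 (u^m)$-type functions (mollified in time), obtaining the Caccioppoli inequality that yields $\nabla(u^m)\in L^2_{\mathrm{loc}}$ and the time-regularity in \eqref{w-hyp-energ}; this is the analogue of Corollary \ref{very weak strong energy} and will be invoked as such. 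From here the energy identity \eqref{w-cond-loc} is available, which is the workhorse for the iteration.

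Next, the core estimate. Fix a nested family of cylinders $Q_n = B_{R_n}\times(s_n,T^*)$ with $R_n \downarrow R$ and $s_n \uparrow$ interpolating between $(t_1,t_2)$; let $\zeta_n$ be space-time cutoffs with $|\nabla\zeta_n|\lesssim 2^n/R$ and $|\partial_t\zeta_n|\lesssim 2^n/(t_2-t_1)$. Testing \eqref{w-cond-loc} with $\psi = \zeta_n^2\,|u|^{(m-1)(p_n-1)}u$ for an increasing sequence of exponents $p_n$, and using the weighted Sobolev inequality on balls — here the hypotheses $N\ge 3$ and $\gamma\in[0,2)$ enter, giving a Sobolev exponent $2^\star_\gamma$ and a constant controlled by $\underline C,\overline C$, independent of $R\ge 1$ after rescaling the ball to unit size — one derives a reverse-Hölder/energy recursion of the schematic form
\begin{equation*}
\left\|u\right\|_{L^{p_{n+1}}_\rho(Q_{n+1})} \le \left(C\,4^n\right)^{\frac{1}{p_n}}\left(\tfrac{1}{R^{2-\gamma}} + \tfrac{1}{t_2-t_1}\right)^{\frac{1}{p_n}}\left\|u\right\|_{L^{p_n}_\rho(Q_n)}^{\kappa},
\end{equation*}
with $\kappa>1$ a fixed dilation exponent dictated by the scaling relations \eqref{deflambdatheta}–\eqref{spexprel} and the degeneracy $m>1$. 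Iterating and summing the resulting series of exponents produces the $L^\infty(\underline Q)$ bound in terms of $\|u\|_{L^{p_0}_\rho(\overline Q)}$; taking the initial exponent $p_0 = 1$ (possible in the fast-diffusion-like window $m\in(1,2)$, where the nonlinearity $|u|^{m-1}$ is sublinear) gives the first line of \eqref{l.infty.balls}, while for general $m>1$ one must first pass through a low exponent $p_0 = 1+\varepsilon$ to control $\|u\|_{L^{1+\varepsilon}_\rho}$ by $\mathsf S^{1+\varepsilon}$ via Hölder and the supremum-in-time of the mass, whence the second line, with $\varepsilon_0$ fixed so that $1+\varepsilon$ stays below the first genuine threshold exponent of the iteration.

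The main obstacle, I expect, is to make the iteration scale-invariant and quantitative in $R$ \emph{without} the explicit scaling symmetry of the equation — precisely the difficulty flagged in the introduction. The weight $\rho$ destroys dilation invariance, so one cannot simply rescale to $R=1$; instead one must track the $R$-dependence through every Sobolev and Caccioppoli constant by hand, exploiting that on the annulus $B_{2R}\setminus B_R$ (with $R\ge 1$) the bound \eqref{weight-cond} gives $\rho\simeq R^{-\gamma}$ up to $\underline C,\overline C$, so that weighted and unweighted norms on such shells differ by a controlled power of $R$. The singularity of $\rho$ at the origin is harmless here because $B_{2R}\Subset\Omega$ forces $R$ bounded above by $\operatorname{dist}$ considerations only — but in fact $B_{2R}$ may contain the origin, so one additionally needs that the weighted Sobolev inequality $\|f\|_{L^{2^\star_\gamma}_\rho(B_{2R})}\le C\,R^{\beta}\|\nabla f\|_{L^2(B_{2R})}$ holds \emph{with the origin included}, which is where the restriction $\gamma<2$ is essential (for $\gamma\ge 2$ the constant blows up, consistent with the remark that \eqref{l.infty.balls} degenerates as $\gamma\to 2^-$). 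Assembling these scalings so that the final exponents match $R^{-\frac{2-\gamma}{m-1}}$ and $R^{-(N-\gamma)-\frac{2-\gamma}{m-1}}$ exactly — as forced by \eqref{spexprel} — is the delicate bookkeeping at the heart of the proof.
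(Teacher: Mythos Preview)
Your overall architecture (Caccioppoli-type energy estimate + weighted parabolic Sobolev + Moser iteration on nested cylinders) matches the paper's, and your remarks about tracking $R$-powers by hand in the absence of scaling are on point. However, two steps in your outline do not go through as written.

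First, the reduction to strong energy solutions. You propose to test the very weak formulation against mollifications of $\zeta^2 u^m$ and thereby derive the Caccioppoli inequality directly; you then cite Corollary~\ref{very weak strong energy}. But in the paper that corollary is a \emph{consequence} of the present theorem, not an input to it: the passage from ``very weak, locally bounded'' to ``strong energy'' is carried out at the \emph{end} of the proof by approximating $u$ itself with classical solutions $u_n$ of regularized Cauchy--Dirichlet problems (smooth data, smooth $\rho_n$), for which the Moser iteration is justified, and then identifying the limit with $u$ via the local comparison principle (Proposition~\ref{local comparison lemma}) and Aubin--Lions compactness. Testing the very weak formulation directly with $\zeta^2 (u^m)_\delta$ runs into the usual obstruction that $\int u^m \Delta (u^m)_\delta$ does not obviously produce $-\int |\nabla(u^m)|^2$ without already knowing $u^m\in H^1_{\mathrm{loc}}$.

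Second, and more substantively, your mechanism for the general $m>1$ estimate is not the right one. You write that one ``must first pass through a low exponent $p_0=1+\varepsilon$'' and bound $\|u\|_{L^{1+\varepsilon}_\rho}$ by $\mathsf{S}$. But the Moser iteration here requires the starting exponent to satisfy $p_0>m-1$ (this is what makes the self-improving $L^\infty$ recursion contractive), so $p_0=1+\varepsilon$ is forbidden once $m\ge 2$. The paper's route is the opposite: it first runs the iteration from $L^{p_0}_\rho$ to $L^\infty$ with $p_0>m-1$, obtaining an estimate that still contains $\|u\|_{L^\infty}$ on the right; a second iteration on \emph{increasing} cylinders removes that term under a dichotomy. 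Then, to descend from $L^{p_0}_\rho$ to the supremum-in-time mass $\mathsf{S}$, the paper goes back to the parabolic Sobolev inequality with the parameter $a$ chosen so that the $\sup_t$ factor involves $\int_{B_{2R}}|u|\,\rho\,dx$ exactly, and iterates a separate interpolation inequality. The $\varepsilon$ in the final statement arises because the exponents in this last step are optimal only as a \emph{large} auxiliary parameter $p_0\to\infty$, while the iteration constants blow up in that limit; it is not a ``low-exponent'' artefact. The factor $(T^*-t_1)^{\varepsilon/(m-1)}$, which you do not account for, comes out of precisely this bookkeeping.
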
	

\begin{rem}[General cylinders] \label{GB} \rm
In order to emphasize the most relevant aspects of Theorem \ref{local moser iter lem}, \emph{i.e.}\ the pointwise growth rate of solutions as $R\to+\infty$, the statement is written only for cylinders of large radii $R$ and $2R$. From the proof (given in Section \ref{sec:local}), however, one can check that analogous estimates also hold for general ordered radii $0<r<R$ -- see in particular \eqref{aaa-bis} for $m\in(1,2)$ and \eqref{aaa-bis-gen} for any $m>1$. Also, it would be possible to obtain similar estimates for cylinders that are not necessarily centered at $ x_0 = 0 $, but in this case the argument becomes technically more involved because one needs to take into account the relative positions of the origin itself and the balls $ B_r(x_0),B_R(x_0) $, in the spirit of the proof of \cite[Theorem 2.1]{BS1}. Nevertheless, it is clear that if $ 0 \in B_r(x_0) $ or $ 0 \not\in B_R(x_0) $, the weight can be handled as in the case $x_0=0$ or treated as a constant (namely as if $\gamma=0$), respectively.
        \end{rem}

The above theorem requires \emph{a priori}, via \eqref{w-hyp-loc}, that very weak solutions are locally bounded. The goal of the next result is to show that, at least for non-negative solutions, this is always the case. In fact, it is a weighted variant of \cite[Theorem 1.1]{DK2}, in whose unweighted framework the authors even proved that non-negative very weak solutions are space-time \emph{continuous}. In our case however, the inclusion of a (possibly rough) weight $\rho$ precludes the possibility of applying H\"older-regularity results in the spirit of \cite{DiBV}; therefore we need to replace continuity with local boundedness, which turns out to be sufficient for our purposes.
	\begin{thm}[Local boundedness]\label{apriori-bounded}
		Let  $N\geq3$, $m>1$, and $\rho$ be a measurable function satisfying \eqref{weight-cond}. Let $ \Omega \subset \R^N $ be a (possibly unbounded) domain and $ 0 \le \tau_1 < \tau_2 $. Let $u$ be a non-negative solution to \eqref{wpme-noid-loc}, in the sense of Definition \ref{vw loc def}. Then $u\in L^\infty_{{\mathrm{loc}}}(\Omega\times(\tau_1,\tau_2)) $.
	\end{thm}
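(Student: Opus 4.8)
The plan is to establish local boundedness of non-negative very weak solutions to the local problem \eqref{wpme-noid-loc} by a two-stage bootstrap: first upgrade the very weak formulation to a quantitative $L^1_\rho$-type estimate on small cylinders via potential-theoretic comparison, and then feed that $L^1_\rho$-bound into the local smoothing estimate of Theorem \ref{local moser iter lem}. The crucial obstacle is that Theorem \ref{local moser iter lem} \emph{assumes} \eqref{w-hyp-loc}, i.e.\ $u \in L^\infty_{\mathrm{loc}}$, so it cannot be invoked directly; the substance of the proof is to produce an a priori local $L^\infty$-bound by a self-improving argument that never presupposes finiteness of the quantity it controls.

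\textbf{Step 1: localization and reduction.} Fix a point $(x_0,t_0) \in \Omega \times (\tau_1,\tau_2)$ and a radius $R>0$ with $B_{4R}(x_0) \Subset \Omega$ and $[t_0-R^2, t_0] \subset (\tau_1,\tau_2)$ (after rescaling time, or just working with a fixed small parabolic cylinder; since we are not using scale invariance of the equation, we keep $R$ fixed). By Remark \ref{GB} the smoothing estimate is available for cylinders not centered at the origin once $\rho$ is handled as in the case $x_0=0$ (if $0$ is deep inside the small ball) or treated as a constant (if $0$ lies outside the larger ball); so without loss of generality we may assume the weight behaves like $|x|^{-\gamma}$ near $0$ or is comparable to a constant. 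It therefore suffices to show $u \in L^\infty(\underline{Q})$ for such a cylinder, with a bound depending only on $\|u\|_{L^1_\rho(\overline{Q})}$ and structural constants.

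\textbf{Step 2: from very weak to a quantitative local estimate via potentials.} This is where the weighted potential machinery replaces the continuity arguments of \cite{DK2}. Since $u \geq 0$ is very weak, we test \eqref{vw-cond-loc} with $\psi = \phi_R^2(x)\,\eta(t)$-type cutoffs (Definition \ref{den-cutoff}), producing $\int u^m \Delta\psi \, dx\,dt$ terms that are finite because $u \in L^m_{\mathrm{loc}}$, and then use a Riesz-potential representation: the quantity $w(x,t) = (-\Delta)^{-1}\!\big(\rho\, u(\cdot,t)\,\phi_R\big)(x)$ is well-defined (here $N\geq 3$ is essential — this is the core use of ``potential methods''), is non-negative, and satisfies, formally, $\partial_t w = -u^m\phi_R + (\text{error terms from } \nabla\phi_R)$. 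The key point is that $\|w(t)\|_{L^\infty}$ controls a Riesz potential of $\rho u$, which in turn controls spatial averages of $u$; one obtains a Gronwall-type inequality for a suitable functional of $\int_{B_R} u\,\rho\,dx$ that is bounded in terms of $\int_{B_{2R}} u\,\rho\,dx$ at a slightly earlier time plus the elapsed time, \emph{without} any continuity or energy assumption, because everything is phrased through the (linear, smoothing) Newtonian potential. This step should follow the strategy already used to prove Theorem \ref{apriori-bounded}'s precursors and the a priori estimates of \cite{MP}, and it yields a finite bound on $\mathsf{S} = \sup_{t}\int_{B_{2R}}|u|\rho\,dx$ for a subinterval. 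I expect the technical heart of the whole theorem to be the careful handling of the error terms involving $\nabla\phi_R$ and $\Delta\phi_R$ against the rough, possibly singular weight $\rho$, and the verification that the potential $w$ is genuinely differentiable in time in the required weak sense despite $u$ being merely very weak.

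\textbf{Step 3: closing the loop with Theorem \ref{local moser iter lem}.} Once $\mathsf{S} < +\infty$ is established on a slightly smaller cylinder, we know a posteriori that $u \in L^\infty_{\mathrm{loc}}$ on that smaller cylinder \emph{provided} \eqref{l.infty.balls} may be applied — but \eqref{l.infty.balls} requires $u \in L^\infty_{\mathrm{loc}}$ as a hypothesis. To break this circularity one runs the standard truncation/exhaustion device: apply the local smoothing estimate to the bounded functions $u_k = \min\{u,k\}$, or equivalently replace $u^m$ by $(u\wedge k)^{m}$ in the Moser iteration of Theorem \ref{local moser iter lem}, obtaining a bound on $\|u_k\|_{L^\infty(\underline{Q})}$ that depends only on $\|u_k\|_{L^1_\rho(\overline{Q})} \leq \|u\|_{L^1_\rho(\overline{Q})}$ and on $\mathsf{S}_k \leq \mathsf{S}$, uniformly in $k$; then let $k \to +\infty$ by monotone convergence. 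The uniform-in-$k$ bound, combined with the finiteness of $\mathsf{S}$ from Step 2, gives $\|u\|_{L^\infty(\underline{Q})} < +\infty$. A mild subtlety is that Moser's iteration must be carried out on the truncated solutions while controlling the defect (i.e.\ that $u_k$ is not exactly a solution); but since $u_k$ is a subsolution in a suitable weighted sense when $u\geq 0$ — because the nonlinearity $s \mapsto s^m$ is convex and increasing — the iteration produces one-sided bounds, which is exactly what is needed for an $L^\infty$ estimate from above, and the lower bound is trivial by non-negativity. Finally, since $(x_0,t_0)$ was arbitrary and the bound is local and uniform on compact subcylinders, $u \in L^\infty_{\mathrm{loc}}(\Omega\times(\tau_1,\tau_2))$, which is the assertion. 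As noted in the excerpt, combining this with the energy estimates in Definition \ref{def:w-loc} then upgrades $u$ to a local strong energy solution (Corollary \ref{very weak strong energy}), so in particular non-negative very weak solutions have locally finite energy.
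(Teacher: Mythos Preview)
Your proposal has a genuine gap in Step 3, and Step 2, while in the right spirit, is too vague to carry the argument.

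\textbf{Step 3 fails.} The claim that $u_k = u \wedge k$ is a subsolution is incorrect: for a smooth non-negative solution of $\rho\,u_t = \Delta(u^m)$ one computes, for $\psi \ge 0$,
\[
\int \rho\,(u_k)_t\,\psi + \int \langle \nabla(u_k^m),\nabla\psi\rangle = \int_{\{u=k\}} \partial_\nu(u^m)\,\psi \, d\sigma \ge 0,
\]
so $u_k$ is a \emph{super}solution, not a subsolution; the convexity of $s\mapsto s^m$ goes the wrong way here. Even setting the sign aside, the Moser iteration in Theorem \ref{local moser iter lem} is carried out for \emph{strong energy} solutions (Definition \ref{def:w-loc}), and a truncation of a merely very weak $u$ has no reason to possess the $H^1_{\mathrm{loc}}$ regularity of $u_k^m$ needed to run the Caccioppoli-type estimates. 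So the ``break the circularity by truncation'' device does not work at this regularity level.

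\textbf{What the paper actually does.} The paper never truncates $u$. Instead it (i) builds the Dirichlet potential $W = w - h$ with $w(x,t)=\int_{B_R} G_R(x,y)\,u(y,t)\,\rho(y)\,dy$ and shows, via the dual system $-\Delta W = u\rho$, $-W_t = u^m$, that $W$ is time-monotone and then runs an \emph{elliptic regularity bootstrap} (Calder\'on--Zygmund plus Sobolev) on time-integrals of $u$ to push the integrability exponent past $N/2$, yielding $W\in L^\infty_{\mathrm{loc}}$ and hence that time-averages of $u$ and $u^m$ are locally bounded and $u\in L^{m+1}_{\rho,\mathrm{loc}}$; (ii) uses \emph{time-only} mollification $S_\varepsilon$ to produce bounded initial/boundary data on $B_r\times(\tau_1,\tau_2)$ and constructs genuine bounded solutions $u_\varepsilon$ of the Cauchy--Dirichlet problem, to which Theorem \ref{local moser iter lem} applies with uniform constants; (iii) extracts a limit $\overline{u}\in L^\infty_{\mathrm{loc}}$ and identifies $\overline{u}=u$ by a potential/duality argument exploiting the just-obtained $L^{m+1}_\rho$ integrability (an inequality of the form $\int (u^m-\overline u^m)(u-\overline u)\,\rho \le 0$). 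Your Step 2 gestures at (i) but misses the crucial bootstrap mechanism; your Step 3 should be replaced entirely by (ii)--(iii).
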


    We stress that such a result is crucially exploited both in the proof of Theorem \ref{thm:weighted-AC} and in that of Theorem \ref{uniq thm}, as it ensures that non-negative solutions can always be approximated (locally) by sequences of more regular solutions (for the details of this construction we refer to the proof of Theorem \ref{local moser iter lem} in Section \ref{sec:local}).

    \begin{cor}[Local smoothing estimates for non-negative solutions]\label{glob-unif-est}
		Let the hypothes\-es of Theorem \ref{local moser iter lem} hold, with \eqref{w-hyp-loc} replaced by $u \ge 0$. Then the smoothing estimates \eqref{l.infty.balls} hold.
	\end{cor}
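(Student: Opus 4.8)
The plan is to obtain this as an immediate consequence of the two preceding theorems. The hypotheses of Theorem \ref{local moser iter lem} consist of: (i) $u$ being a very weak solution to \eqref{wpme-noid-loc} in the sense of Definition \ref{vw loc def}; (ii) the local boundedness property \eqref{w-hyp-loc}; and (iii) the purely geometric data (the existence of $R\ge1$ with $B_{2R}\Subset\Omega$, the ordering $\tau_1<t_1<t_2<T^*<\tau_2$, the cylinders $\overline{Q},\underline{Q}$, and the range $\varepsilon\in(0,\varepsilon_0)$). In the corollary only (ii) is dropped, being replaced by $u\ge0$, while (i) and (iii) are retained verbatim. Hence the whole content of the proof is to show that a \emph{non-negative} very weak solution automatically verifies \eqref{w-hyp-loc}.

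To that end, I would simply invoke Theorem \ref{apriori-bounded}: since $u\ge0$ is a very weak solution to \eqref{wpme-noid-loc} on $\Omega\times(\tau_1,\tau_2)$ in the sense of Definition \ref{vw loc def}, it belongs to $L^\infty_{\mathrm{loc}}(\Omega\times(\tau_1,\tau_2))$, which is exactly \eqref{w-hyp-loc}. With this in hand, all the assumptions of Theorem \ref{local moser iter lem} are satisfied, and the smoothing estimates \eqref{l.infty.balls} (in both the $m\in(1,2)$ and the $m>1$ forms) follow at once, with the same constants $C_1,C_2$ and the same threshold $\varepsilon_0$.

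The only thing deserving a remark is the logical ordering: no circularity is introduced, because Theorem \ref{apriori-bounded} is proved independently of Corollary \ref{glob-unif-est} — its proof uses Theorem \ref{local moser iter lem} itself (applied to suitable locally bounded approximations, as indicated in the remark following Theorem \ref{apriori-bounded}), not the corollary. Thus the implication chain $u\ge0\ \Rightarrow\ \eqref{w-hyp-loc}\ \Rightarrow\ \eqref{l.infty.balls}$ is sound. In short there is no genuine obstacle here; the statement is a bookkeeping consequence of having deliberately decoupled the qualitative a priori regularity (Theorem \ref{apriori-bounded}) from the quantitative local smoothing bound (Theorem \ref{local moser iter lem}).
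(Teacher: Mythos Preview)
Your proof is correct and matches the paper's own argument exactly: the paper simply states that the corollary is an immediate consequence of Theorems \ref{local moser iter lem} and \ref{apriori-bounded}. Your additional remark on the absence of circularity (Theorem \ref{apriori-bounded} being proved independently, using Theorem \ref{local moser iter lem} only on bounded approximations) is accurate and in fact reflects the organization described in Subsection \ref{po}.
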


\begin{cor}[Very weak solutions are strong energy]\label{very weak strong energy}
    Let  $N\geq3$, $m>1$, and $\rho$ be a measurable function satisfying \eqref{weight-cond}. Let $ \Omega \subset \R^N $ be a (possibly unbounded) domain and $ 0 \le \tau_1 < \tau_2 $. Let $u$ be either a locally bounded or a non-negative very weak solution to \eqref{wpme-noid-loc}, in the sense of Definition \ref{vw loc def}. Then $u$ is a strong energy solution, in the sense of Definition \ref{def:w-loc}. In particular,
    \begin{equation}\label{Cont-Lp}
        u\in C\big((\tau_1,\tau_2);L^p_{\rho,\mathrm{loc}}(\Omega)\big) \qquad \forall p \in [1,\infty) \,.
        \end{equation}
\end{cor}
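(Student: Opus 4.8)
The plan is to prove Corollary \ref{very weak strong energy} by first reducing the non-negative case to the locally bounded case via Theorem \ref{apriori-bounded}, and then establishing the strong energy property for locally bounded very weak solutions. So assume henceforth that $u$ is a locally bounded very weak solution to \eqref{wpme-noid-loc}, and fix an arbitrary subcylinder $Q' = \Omega' \times (t_1,t_2) \Subset \Omega \times (\tau_1,\tau_2)$. The strategy is the classical one of mollification in time combined with a Caccioppoli-type energy estimate, carried out locally because of the rough weight. First I would mollify $u$ in the time variable (Steklov averages or a smooth convolution kernel), producing $u_h$ which solves $\rho\,(u_h)_t = \Delta((u^m)_h)$ in the interior, and for which $u_h \to u$ and $(u^m)_h \to u^m$ in the relevant local $L^p_\rho$ and $L^m_{\mathrm{loc}}$ topologies as $h \to 0$. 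Since $u$ is locally bounded, $u^m$ is controlled, and $u_h, (u^m)_h$ remain uniformly bounded on $Q'$.

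The core step is the Caccioppoli estimate. Taking as test function (in the mollified, hence legitimate, weak formulation) $\psi = \phi^2 \,(u^m)_h$ — more precisely a product of a spatial cutoff $\phi \in C_c^\infty(\Omega')$ and a temporal cutoff, with $\phi$ built from the functions $\phi_R$ of Definition \ref{den-cutoff} suitably rescaled and translated — and integrating by parts in space, one gets
\begin{equation*}
\int \phi^2 \left| \nabla (u^m)_h \right|^2 dx\,dt \le C \int \left[ \left| \nabla \phi \right|^2 (u^m)_h^2 + \phi^2\, \rho\, \partial_t(u_h)\, (u^m)_h \right] dx\,dt \, ,
\end{equation*}
and the time-derivative term is handled using the monotonicity inequality: because $s \mapsto s^m$ is increasing, one has pointwise $\partial_t(u_h)\,(u_h)^m \ge \tfrac{1}{m+1}\partial_t\big((u_h)^{m+1}\big)$-type control, which after integrating the temporal cutoff by parts yields a bound depending only on $\|u\|_{L^\infty(Q')}$, $\|\rho\|_{L^1}$ on the support, and the cutoff derivatives. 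This gives a uniform-in-$h$ bound on $(u^m)_h$ in $L^2_{\mathrm{loc}}((t_1,t_2);H^1_{\mathrm{loc}}(\Omega'))$, so up to a subsequence $(u^m)_h \rightharpoonup u^m$ weakly there, establishing the second half of \eqref{w-hyp-energ}. For the time-regularity half, $u^m \in H^1_{\mathrm{loc}}((\tau_1,\tau_2);L^2_{\rho,\mathrm{loc}}(\Omega))$, I would instead test the equation itself: from $\rho\,u_t = \Delta(u^m)$ in the very weak sense and the just-obtained spatial $H^1$ regularity, one reads off that $\rho\,u_t$ is a distribution of order one in space with $L^2_{\mathrm{loc}}$ coefficients; pairing this with the chain rule $\partial_t(u^m) = m\,u^{m-1} u_t$ (valid a.e. once we know enough regularity — here a bootstrap or a direct difference-quotient argument on $u^m$ is cleaner), and using that $u$ is bounded above and the integrand is controlled, one obtains $\partial_t(u^m) \in L^2_{\rho,\mathrm{loc}}$. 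Passing to the limit $h\to 0$ in the mollified weak formulation, integrating the $u\,\psi_t\,\rho$ term by parts in time (now legitimate since $u^m$, hence $u = (u^m)^{1/m}$ away from zero, has the needed time regularity and $u\in C((\tau_1,\tau_2);L^2_{\rho,\mathrm{loc}})$), and identifying the boundary terms yields precisely \eqref{w-cond-loc}; the admissible test class in Definition \ref{def:w-loc} is reached by density from $C_c^\infty$. The continuity statement \eqref{Cont-Lp} then follows: $u^m \in H^1_{\mathrm{loc}}((\tau_1,\tau_2);L^2_{\rho,\mathrm{loc}}) \hookrightarrow C((\tau_1,\tau_2);L^2_{\rho,\mathrm{loc}})$, so $u^m$ — and hence $u$, since $\tau \mapsto \tau^{1/m}$ is continuous and $u$ is bounded — is continuous into $L^2_{\rho,\mathrm{loc}}$; boundedness then upgrades this to every $L^p_{\rho,\mathrm{loc}}$, $p\in[1,\infty)$, by interpolation.

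The main obstacle I anticipate is the rigorous justification of testing with $\phi^2 (u^m)_h$ and of the time-derivative manipulation in the presence of a merely measurable weight $\rho$ that may be singular at the origin: one cannot freely differentiate or apply the chain rule, so every such step must be performed at the mollified level where $(u_h)_t$ exists classically in time, and only then passed to the limit — the delicate point being to ensure all the limiting integrals make sense and converge, which forces one to keep all test functions compactly supported strictly inside $\Omega$ and to exploit $\rho \in L^1_{\mathrm{loc}}$ together with the $L^\infty$ bound on $u$ to dominate everything. A secondary technical nuisance is the passage from the $h$-regularized identity to the clean energy identity \eqref{w-cond-loc} with the correct endpoint terms $u(t_1)\psi(t_1)\rho$ and $u(t_2)\psi(t_2)\rho$; this requires knowing that the map $t\mapsto \int u(t)\psi(t)\rho\,dx$ has well-defined traces at $t_1,t_2$, which is exactly what \eqref{Cont-Lp} provides, so the two parts of the argument are somewhat intertwined and should be organized as: (i) interior spatial energy bound via Caccioppoli at the mollified level; (ii) interior time-derivative bound; (iii) continuity in $L^p_{\rho,\mathrm{loc}}$; (iv) limit passage to get \eqref{w-cond-loc}. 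Much of this is routine once the mollification framework is fixed, and I would cite the corresponding local energy arguments already used in the proof of Theorem \ref{local moser iter lem} (Section \ref{sec:local}) rather than repeating them.
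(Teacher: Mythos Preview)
Your reduction of the non-negative case to the locally bounded case via Theorem \ref{apriori-bounded} and your derivation of \eqref{Cont-Lp} from \eqref{w-hyp-energ} plus local boundedness both match the paper. However, your core argument for the energy regularity \eqref{w-hyp-energ} takes a different route from the paper, and as written it has two genuine gaps.

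First, the test function $\phi^2(u^m)_h$ is not admissible in the very weak formulation: your mollification is in time only, so $(u^m)_h$ inherits no spatial regularity beyond $L^\infty_{\mathrm{loc}}$ from $u^m$, and you cannot integrate by parts to produce $\int\phi^2|\nabla(u^m)_h|^2$. One can repair this by first invoking elliptic regularity (since $\Delta(u^m)_h=\rho\,\partial_t u_h\in L^p_{\mathrm{loc}}$ for $p<N/\gamma$, Calder\'on--Zygmund gives $(u^m)_h\in W^{2,p}_{\mathrm{loc}}$ for each fixed $h$), but this is a nontrivial step you do not mention, and it must precede the Caccioppoli computation. Second, in the time term you obtain $\int\rho\,\partial_t u_h\cdot(u^m)_h\,\phi^2$, not $\int\rho\,\partial_t u_h\cdot(u_h)^m\,\phi^2$; only the latter is $\tfrac{1}{m+1}\partial_t(u_h)^{m+1}$. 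The discrepancy $(u^m)_h-(u_h)^m$ does vanish in $L^p_{\mathrm{loc}}$ as $h\to0$, but it is multiplied by $\partial_t u_h$, whose $L^\infty$ bound is of order $1/h$, so the error term is not obviously controlled uniformly. Your time-regularity argument is similarly circular: the chain rule $\partial_t(u^m)=m\,u^{m-1}u_t$ presupposes the very regularity you are trying to establish.

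The paper avoids all of this by a completely different mechanism, carried out in the ``End of proof of Theorem \ref{local moser iter lem}'': rather than mollifying the \emph{given} solution, it approximates the \emph{data} (initial datum, boundary datum, and weight $\rho$) by smooth data and solves the resulting Cauchy--Dirichlet problems \eqref{cd problem-bis} to obtain \emph{classical} solutions $u_n$. For these, all energy computations (Proposition \ref{estimates.balls} and Lemma \ref{energy lem}) are trivially justified, yielding uniform bounds \eqref{loc-unif-est}--\eqref{types of convergence smoothing}. Aubin--Lions then produces a limit $u_\star$ that is a strong energy solution and also a bounded very weak solution of the same Cauchy--Dirichlet problem as $u$; the identification $u_\star=u$ comes from the duality-based comparison Proposition \ref{local comparison lemma}. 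Thus the paper trades your direct (but technically incomplete) bootstrap for an existence-plus-uniqueness argument that never requires testing the equation with a nonsmooth function of the unknown.
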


    \begin{rem}[On the assumption $N \ge 3$]\label{N}\rm
 For simplicity, we assumed throughout that $ N\ge3 $; however, we point out that it is crucially needed for our methods of proof only in Theorems \ref{thm:weighted-AC} and \ref{uniq thm}, where the existence of the global Green's function for $-\Delta$ is essential. All of the other main results, up to mild technical modifications, also hold in lower dimensions.  Indeed, the key observation is to replace the weighted Sobolev inequality \eqref{Sobolev.inequality} with Gagliardo--Nirenberg-type inequalities, see the discussion in \cite[Remark 4.4]{MP}. Note that for the validity of the latter the hypotheses \eqref{weight-cond} are in general too weak, but at least in the model case $ \rho(x) = |x|^{-\gamma} $ they are well known to hold, see again \cite{MP} and references therein.
    \end{rem}

    \subsection{Discussion of optimality and comparison to known results}
We now provide some technical comments on the sharpness of our main results. For simplicity, we will work in the time interval $ (0,T) $.

    \subsubsection{On the sharpness of Aronson--Caffarelli estimate}\label{ac space optimal}
		Let us begin by observing that if one fixes any $\varepsilon,t,\delta$ according to the statement of Theorem \ref{thm:weighted-AC}, it clearly follows that
		\begin{equation*}\label{init-datum-growth}
			\mu(B_R)=\mathcal{O}\!\left(R^{N-\gamma+\frac{2-\gamma}{m-1}}\right)\qquad\text{as} \;\; R\to+\infty\,.
		\end{equation*}
		Furthermore, estimate \eqref{weighted-AC-inq} applied to $ u(t) \equiv u(t+\tau) $ yields
     \begin{equation}\label{solution-l1-time}
        \begin{aligned}
			\int_{B_R}u(\tau)\,\rho\,dx \leq C \left[t^{-\frac{1}{m-1}}R^{N-\gamma+\frac{2-\gamma}{m-1}}+t^{\frac{N-\gamma}{2-\gamma}} \left(\fiint_{B_\varepsilon\times (t+\tau,t+\tau+\delta)}u\,\rho\,dxds\right)^{1+\frac{N-\gamma}{2-\gamma}(m-1)}\right],
            \end{aligned}
        \end{equation}
		for all $\tau\in(0,T)$, $t\in(0,T-\tau)$ and small enough $\delta>0$, hence it follows
		\begin{equation}\label{solution-l1-growth}
			\int_{B_R} u(t)\,\rho\,dx=\mathcal{O}\!\left(R^{N-\gamma+\frac{2-\gamma}{m-1}}\right) \qquad \text{as}\;\;R\to+\infty \,,\ \text{for all}\;\;t\in(0,T)\,.
		\end{equation}
		In fact, such estimates are sharp as concerns the spatial behavior, as can be seen by considering suitable explicit or semi-explicit solutions that satisfy \eqref{solution-l1-growth} with $\mathcal{O}$ replaced by $\approx$. We refer to \cite[Subsection 2.4]{MP}, where these solutions were constructed. Specifically, they exhibit a complete blow up at $ t=T $, and in the pure power case $\rho(x)=|x|^{-\gamma}$ they can be taken of the form (up to numerical multiplying constants)
        \begin{equation}\label{blowupsol}
            U(x,t) = |x|^{\frac{2-\gamma}{m-1}}\,(T-t)^{-\frac{1}{m-1}}\, ,
        \end{equation}
so the spatial growth is precisely the one predicted by \eqref{solution-l1-growth}. We stress that the same holds even for a general weight satisfying \eqref{weight-cond}, see \cite[Proposition 2.4]{MP}, the only difference being the replacement of $ |x|^{{(2-\gamma)}/{(m-1)}} $ with a non-explicit function that is asymptotically equivalent as $ |x| \to +\infty $.

		To observe the sharpness of \eqref{solution-l1-time} with respect to time as it approaches the blow-up threshold, it suffices to fix any suitable $R,\varepsilon$ and then set
		\begin{equation*}
			\delta=\tfrac{T-\tau}2 \, , \quad t=\tfrac{\delta}{2}\,.
		\end{equation*}
		Under this choice, we see that both sides of \eqref{solution-l1-time} behave exactly like $\delta^{-{1}/{(m-1)}}$ as $\delta\to0$ (namely $ \tau \to T $), when considering the solutions mentioned in \eqref{blowupsol}.
		
		Finally, we notice that \eqref{solution-l1-time} is also sharp, for large times, when considering the weighted Barenblatt solutions (see \cite{RV3})
		\begin{equation*}
			U_B(x,t)=t^{-\frac{N-\gamma}{(N-\gamma)(m-1)+2-\gamma}}\left(c_1-c_2\,t^{-\frac{2-\gamma}{(N-\gamma)(m-1)+2-\gamma}}\,|x|^{2-\gamma}\right)^{\frac{1}{m-1}}_+ ,
		\end{equation*}
		for suitable positive constants $c_1,c_2$, which solve \eqref{measure} with $\rho(x)=|x|^{-\gamma}$ and $ \mu $ a multiple of the Dirac delta centered at the origin. Indeed, in this case we can fix  $R,\varepsilon,\delta$, choose $\tau$ proportional to $t$, and let $t \to +\infty$; in this way, we see through direct inspection that the right-hand side of \eqref{solution-l1-time} goes to zero exactly with the same rate as the left-hand side (namely the time power in front of the positive part in the above formula for $U_B$). Similar comments apply to more general $L^1_\rho$ solutions, thanks to the smoothing effect \eqref{basic smoothing}.

    \subsubsection{Blow-up and optimality of the smoothing effect}\label{gen-rad-rem}
     Let us now change focus to the sharpness of the local smoothing estimates \eqref{l.infty.balls} by testing them  against a few types of explicit or semi-explicit solutions.
    However, before we begin, let us emphasize again that such estimates hold even for purely local and unsigned solutions. In particular, the latter may blow up at the boundary of $ \Omega \times (0,T) $, or they may exhibit a wild sign-changing behavior that is difficult to capture. For these reasons, it is not surprising that \eqref{l.infty.balls} is far from optimal on certain subclasses of solutions, for instance global $ L^1_\rho $ solutions.

    That being said, let us first go back to the finite-time blow-up solutions introduced in \eqref{blowupsol}.
        If we plug them into \eqref{l.infty.balls}, we immediately see that both cases ($ m \in (1,2) $ and $m>1$) are sharp with respect to $R\to+\infty$, \emph{i.e.}\ the left- and right-hand sides are stable. However, time-optimality is slightly more delicate. Indeed, for a $T$ fixed (the blow-up time of $U$), let us consider \eqref{l.infty.balls} for $\overline{Q}=B_{2R}\times(0,T-\delta)$ and $\underline{Q}=B_R\times (T/2,T-\delta)$, with a small enough $ \delta>0 $. We have
        \begin{equation*}
            \delta^{-\frac{1}{m-1}}\leq \begin{cases}
                C\,\delta^{\frac{m-2}{m-1}\frac{1}{2-m}}=C\,\delta^{-\frac{1}{m-1}} & \text{for}\;\; m\in(1,2)\,,\\
                C\,\delta^{-\frac{1+\varepsilon}{m-1}}& \text{for} \;\; m>1\,,
            \end{cases}
        \end{equation*}
        for a suitable $C>0$ independent of $\delta$, which shows optimality when $m\in(1,2)$, and exhibits almost optimality otherwise, since the power $\tfrac{1}{m-1}$ on the right-hand side of the second estimate in \eqref{l.infty.balls} cannot be replaced by a smaller one. We do not know whether the presence of $\varepsilon$ is an artifact of our proof, or if it is actually necessary to reflect the behavior of certain critical solutions.\ On the other hand, by means of a similar reasoning, it is not difficult to check that if we consider cylinders that ``collapse'' at the blow-up time, namely $\overline{Q}=B_{2R}\times(T-3\delta,T-\delta)$ and $\underline{Q}=B_R\times (T-2\delta,T-\delta)$, and let $ \delta \to 0 $, then sharpness is recovered regardless of $\varepsilon$, since the term $ (T^*-t_1)^{\varepsilon/(m-1)} $ in \eqref{l.infty.balls} compensates for the $1+\varepsilon$ non-optimal exponent, so that both sides of the estimate behave like $ \delta^{-1/{(m-1)}} $.

        Remarkably, the solution $U$ becomes time-integrable in a neighborhood of the blow-up time $T$ precisely when $m>2$. For this reason, it is unavoidable to split \eqref{l.infty.balls} into those two cases, otherwise the first estimate would be inconsistent with such an integrability property.

        The second special type of solutions we  consider are intrinsically local: the separable \emph{friendly giant} solution $V$, defined by
        \begin{equation*}
            V(x,t)=W(x)\,t^{-\frac{1}{m-1}} \qquad \text{for} \;\; (x,t) \in \Omega\times(0,+\infty)\,,
        \end{equation*}
        for some bounded smooth domain $\Omega\subset\R^N$, where $W \ge 0$ satisfies
        \begin{equation*}
        \begin{cases}
            -\Delta W^m=\tfrac{1}{m-1}\,\rho\,W & \text{in}\;\;\Omega\,,\\
            W=0 & \text{on}\;\;\partial\Omega\,.
        \end{cases}
        \end{equation*}
        We refer to \cite{AP,DK2} for more information regarding the existence and main properties of this type of solutions in the unweighted framework, and to \cite{BK} and the specific bibliography quoted therein in the weighted setting.        In particular, such solutions are guaranteed to exist and be bounded if $\gamma < 2$. Let us check the temporal optimality of \eqref{l.infty.balls} (near the origin) when applied to $V$ by considering cylinders of the form $\overline{Q}=B_{2R}\times(\delta,T)$ and $\underline{Q}=B_{R}\times\left(2\delta,T\right)$, for any $T>0$, $\delta<T/2$, and $R \ge 1$ such that $B_{2R} \Subset \Omega$. We obtain
        \begin{equation*}
			\delta^{-\frac{1}{m-1}}\leq
			\begin{cases}
				C\,\delta^{-\frac{1}{m-1}}& \text{for}\;\; m\in(1,2) \, ,\\
				C\left(\delta^{-\frac{1+\varepsilon}{m-1}}+\delta^{-\frac{1}{m-1}}\right) \,
				& \text{for}\;\; m>1 \, ,
			\end{cases}
		\end{equation*}
        for a suitable $ C>0 $ independent of $\delta$. Once again, this shows optimality as $\delta \to 0$ when $m\in(1,2)$, and exhibits almost optimality otherwise, since the power $\tfrac{1}{m-1}$ cannot be replaced by a smaller one.

    \subsubsection{Comparison of \eqref{l.infty.balls} to previous results}
        The original inspiration for Theorem \ref{local moser iter lem} comes from the classical paper \cite{DK}; see also \cite[Section 1.2]{DasK} for a streamlined presentation. In that work none of the exponents are calculated, so the authors are forced to make additional use of an explicit scaling property of the equation to recover the expected spatial growth rate of solutions from the estimate. However, this is not possible in our inhomogeneous setting, and we manage to prove the sharp spatial growth rate differently, with a method that we believe to be applicable in several frameworks.

        A sharp supremum estimate is actually found in \cite[Remarks 3.1--3.2]{Andreucci}, where, \it still in the unweighted case\rm, a cylindrical $L^p \to L^\infty$ smoothing estimate for all $p>m-1$ is proved, which corresponds to the estimate we obtain by coupling \eqref{iteration6}--\eqref{linfty contr mod bis} via further interpolation, when $m\in(1,2)$, whereas for $ m \ge 2 $ the bound in \cite{Andreucci} is to some extent weaker than ours, since it involves an $L^p$ norm in the right-hand side with $p > m-1 \ge 1$. One may also observe that in \cite{Andreucci} the lack of sharp time optimality appears in much the same way as appears here (recall the role of $\varepsilon$).

        Finally, we mention that in the recent papers \cite{BS1,BS2}, similar estimates are obtained for solutions to a weighted fast diffusion equation, and the authors take the additional care to even distinguish whether or not the domains contain the origin (see Remark \ref{GB}). In that context, it is possible to obtain supremum estimates in terms of the weighted cylindrical $L^1$ norm, at least in the range $m\in(m_c,1)$, for an explicit critical exponent $m_c\in(0,1)$ related to Caffarelli--Kohn--Nirenberg inequalities; for the details we refer to \cite[Section 2]{BS1}. We stress that in such a supercritical range
        they get smoothing effects that are coherent with the ones we establish in the case $m\in(1,2)$.

	\subsection{Organization of the paper}\label{po}
	    The rest of the paper is organized as follows. In Section \ref{sec:AC}, we prove the Aronson--Caffarelli-type estimate Theorem \ref{thm:weighted-AC}, and along the way we establish Corollary \ref{small time AC lem}, which concerns the $ L^1_\rho $ growth rate of solutions for positive times. Section \ref{sec:local} contains the proof of the  local smoothing estimate Theorem \ref{local moser iter lem} and of Corollaries \ref{glob-unif-est}--\ref{very weak strong energy}, whereby the estimate also holds for non-negative  very weak solutions. In Section \ref{sec:unique}, we exploit the results of the previous sections to prove Theorem \ref{uniq thm}, that is, uniqueness for non-negative solutions without growth assumptions, and Theorem \ref{mBCP} concerning existence of unsigned solutions with measure data. Finally, we reserve to Section \ref{sec:loc-bdd} the proof of Theorem \ref{apriori-bounded}, namely that non-negative solutions are locally bounded, which is implicitly used at various points of the paper and is shown independently.

	\section{Aronson--Caffarelli-type estimate and initial traces: proofs}\label{sec:AC}
	
	The goal of this section is to prove Theorem \ref{thm:weighted-AC}; this will be carried out in two main steps. In the first one, we will establish the Aronson--Caffarelli-type estimate for non-negative \emph{constructed solutions} of 
     \begin{equation}\label{wpme-funid}
 \begin{cases}
 \rho\, u_t = \Delta \!\left( u^m \right) &\text{in}\;\; \R^N\times(0,T) \, , \\
 u = u_0  & \text{on} \;\; \R^N \times\{ 0 \} \, ,
     \end{cases}
     \end{equation}
    taking a bounded and integrable initial datum $u_0$, by suitably adapting the Green's function approach introduced in \cite{BV}. In the second one, we will carefully pass from constructed solutions to general given solutions, upon resorting to delicate local and global comparison results (see Proposition \ref{local comparison lemma} and Corollary \ref{global comparison lemma} below).

    	Since our strategy centers on the use of \emph{Green's functions}, we recall the classical definition in the whole Euclidean space along with key convergence properties.
	\begin{lem}[Euclidean Green's function and approximations]\label{defgreen}
		Let $N\geq3$. Let us consider the Green's function for $-\Delta$ in $\R^N$
		\begin{equation}\label{green-form}
			G(x,y)=\tfrac{1}{N(N-2)\,\alpha(N)}\left|x-y\right|^{2-N} ,
		\end{equation}
		where $\alpha(N)$ denotes the volume of the $N$-dimensional unit ball, so that
		$-\Delta_x G = \delta_y$ in the sense of distributions in $\R^N$.
		For each $n\in\mathbb{N}$, let us define the smooth approximations $G_n\in C^\infty\!\left(\R^{2N}\right)$ of $G$ by
		\begin{equation}\label{green-approx}
			G_n(x,y)=\tfrac{1}{N(N-2)\,\alpha(N)}\left(\tfrac{n^2}{1+n^2|x-y|^2}\right)^{\frac{N-2}{2}} .
		\end{equation}
		Then, it holds that $G_n\nearrow G$ as $ n \to \infty $ and
		\begin{equation}\label{green-lapl}
			\lim_{n\to \infty}\int_{\R^N} \Delta_x G_n(x,y)\,f(x)\,dx = -f(y)
		\end{equation}
		for all  $ f \in L^\infty_c\!\left(\R^N \right)$ and a.e.~$y\in\R^N$.
	\end{lem}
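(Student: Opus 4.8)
The plan is to reduce the whole statement to the fact that $G_n$ is a \emph{regularized fundamental solution} of $-\Delta$. Writing for brevity $c_N=\tfrac{1}{N(N-2)\,\alpha(N)}$, one simply rewrites \eqref{green-approx} as $G_n(x,y)=c_N\big(|x-y|^2+n^{-2}\big)^{-\frac{N-2}{2}}$; that is, $G_n(x,y)=\Phi_{1/n}(x-y)$ for the family $\Phi_\varepsilon(z)=c_N\big(|z|^2+\varepsilon^2\big)^{-(N-2)/2}$, which is the Euclidean fundamental solution $G(x,y)=\Phi_0(x-y)$ regularized near its singularity. From this, the monotonicity $G_n\nearrow G$ is immediate: for each fixed $x\neq y$ the map $\varepsilon\mapsto\big(|x-y|^2+\varepsilon^2\big)^{-(N-2)/2}$ is nonincreasing on $[0,+\infty)$ and $n^{-2}\downarrow0$, so that $G_n(x,y)\uparrow c_N|x-y|^{2-N}=G(x,y)$ (with value $+\infty$ at $x=y$, consistently with $G(x,x)$).

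The core of the proof is the explicit computation of $\Delta_x G_n$. Treating $G_n$ as a radial function of $r=|x-y|$ and using $\Delta g=g''+\tfrac{N-1}{r}g'$, a routine differentiation gives
\begin{equation*}
    -\Delta_x G_n(x,y)=\eta_n(x-y)\,,\qquad \eta_n(z):=\frac{n^N}{\alpha(N)}\big(1+n^2|z|^2\big)^{-\frac{N+2}{2}}=n^N\,\eta_1(nz)\,,
\end{equation*}
where $\eta_1(w)=\tfrac{1}{\alpha(N)}\big(1+|w|^2\big)^{-(N+2)/2}\ge0$. I would then verify that $\eta_1\in L^1\!\left(\R^N\right)$ with $\int_{\R^N}\eta_1=1$: passing to polar coordinates and setting $s=r^2$ turns the radial integral into a Beta integral, $\int_0^{+\infty}r^{N-1}\big(1+r^2\big)^{-(N+2)/2}\,dr=\tfrac12 B\!\left(\tfrac N2,1\right)=\tfrac1N$, and since the unit sphere has measure $N\alpha(N)$ this yields $\int_{\R^N}\eta_1=1$. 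Hence $\{\eta_n\}_n$ is a standard approximation of the identity, obtained by scaling the fixed nonnegative, radially nonincreasing, integrable kernel $\eta_1$.

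It then remains to pass to the limit. For $f\in L^\infty_c\!\left(\R^N\right)\subset L^1\!\left(\R^N\right)$, the substitution $w=n(x-y)$ together with the fact that $\eta_n$ is even gives
\begin{equation*}
    \int_{\R^N}\Delta_x G_n(x,y)\,f(x)\,dx=-\int_{\R^N}\eta_n(x-y)\,f(x)\,dx=-\int_{\R^N}\eta_1(w)\,f\!\left(y+\tfrac wn\right)dw=-(\eta_n*f)(y)\,.
\end{equation*}
By the classical theorem on approximations to the identity with a radially nonincreasing integrable majorant, $\eta_n*f\to f$ at every Lebesgue point of $f$, hence for a.e.\ $y\in\R^N$, which is exactly \eqref{green-lapl}. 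The same identity, tested against $\psi\in C^\infty_c\!\left(\R^N\right)$ and combined with the dominated convergence $G_n\to G$ in $L^1_{\mathrm{loc}}\!\left(\R^N\right)$ (legitimate since $0\le G_n\le G$ and $G\in L^1_{\mathrm{loc}}$ for $N\ge3$), also recovers the distributional identity $-\Delta_x G=\delta_y$. I do not foresee any genuine obstacle here; the only point requiring mild care is that \eqref{green-lapl} is asserted for the \emph{full} sequence and a.e.\ $y$ — not merely for a subsequence — which is precisely what the Lebesgue-point version of the convergence of approximate identities delivers.
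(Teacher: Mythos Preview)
Your proof is correct and follows essentially the same approach as the paper: both compute $-\Delta_x G_n(x,y)$ explicitly as the rescaled kernel $\eta_n(x-y)=\tfrac{n^N}{\alpha(N)}(1+n^2|x-y|^2)^{-(N+2)/2}$, verify that it integrates to $1$, and then conclude via the Lebesgue-point convergence of approximate identities. The only cosmetic difference is that you invoke the classical approximate-identity theorem directly, whereas the paper writes out the Lebesgue-point argument by hand (splitting $\int \Delta_x G_n\,f$ into a term involving $f(y)$ and a remainder involving $f(x)-f(y)$).
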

    \begin{proof}
We will only prove \eqref{green-lapl}, since all of the other properties are rather standard. To this aim, it is convenient to write $G_n$ as
$$
 G_n(x,y)= n^{N-2} \, g(n |x-y|) \, , \quad \text{where} \; \;  g(r) = \tfrac{1}{N(N-2)\,\alpha(N)} \, \left(1+r^2\right)^{-\frac{N-2}{2}} .
$$
In particular, interpreting $g(r)$ as $g(|x|)$, we infer
$$
\Delta_x G_n(x,y) = n^N \, \Delta g (n|x-y|) \, ,
$$
and a direct computation reveals that
\begin{equation}\label{Delta-g}
 -\Delta g (|x|) = \tfrac{1}{\alpha(N)} \left( 1 + |x|^2\right)^{-\frac{N+2}{2}} .
\end{equation}
Now, let us deal with the integral appearing in \eqref{green-lapl}:
\begin{equation}\label{Delta-g-bis}
\begin{aligned}
\int_{\R^N} \Delta_x G_n(x,y)\,f(x)\,dx = & \, n^N \int_{B_R(y)} \Delta g (n|x-y|) \,f(x)\,dx  \\
= &  \, \underbrace{n^N \, f(y) \int_{B_R(y)} \Delta g (n|x-y|) \,dx}_{I} \\
 & + \underbrace{n^N \int_{B_R(y)} \Delta g (n|x-y|) \,[f(x)-f(y)]\,dx}_{II} \, ,
\end{aligned}
\end{equation}
where $ R > |y| + R_0 $ and $R_0$ is such that $\mathrm{supp} f \subset B_{R_0} $. As concerns $I$, we have:
\begin{equation}\label{Leb-p-0}
\begin{aligned}
I = f(y) \,  \int_{B_{nR}} \Delta g (|x|) \,dx \underset{n \to \infty}{\longrightarrow}  f(y) \,  \int_{\R^N} \Delta g (|x|) \,dx = & - f(y) \, \tfrac{1}{\alpha(N)}  \int_{\R^N} \left( 1 + |x|^2\right)^{-\frac{N+2}{2}} dx \\
= & - f(y) \, ,
\end{aligned}
\end{equation}
since it is not hard to show that $\alpha(N)=\int_{\R^N} \left( 1 + |x|^2\right)^{-{(N+2)}/{2}} dx$.
On the other hand,
\begin{equation}\label{Leb-p}
|II| \le \tfrac{1}{\alpha(N)} \, n^N \int_{B_{R/n}(y)} |f(x)-f(y)|\,dx  + 2 \, n^N \left\| f \right\|_{L^\infty\left( \R^N \right)} \int_{B_{R/n}^c(y)} \left| \Delta g (n|x-y|) \right| dx \, .
\end{equation}
If $y$ is a Lebesgue point for $ f $, the first term on the right-hand side~of \eqref{Leb-p} vanishes as $ n \to \infty $, for all $R$ as above. As for the second one, from \eqref{Delta-g} it follows that
\begin{equation}\label{Leb-p-2}
 n^N \int_{B_{R/n}^c(y)} \left| \Delta g (n|x-y|) \right| dx = \tfrac{1}{\alpha(N)} \int_{B_R^c}  \left( 1 + |x|^2\right)^{-\frac{N+2}{2}} dx \, .
\end{equation}
        Hence, \eqref{green-lapl} is a consequence of letting first $ n \to \infty $ and then $ R \to +\infty $ in \eqref{Delta-g-bis}, using \eqref{Leb-p-0}, \eqref{Leb-p}, and \eqref{Leb-p-2}.
    \end{proof}

	\subsection{An intermediate result for constructed solutions}\label{IRes}
	Let us first recall the notion of weak energy solution to \eqref{wpme-funid} for an initial datum $u_0$ in $L^1_\rho\!\left(\mathbb{R}^N\right) \cap L^\infty\!\left(\mathbb{R}^N\right)$, which is by now standard.
	\begin{den}[Weak energy solutions]\label{def2}
        Let $u_0\in L^1_\rho\!\left(\mathbb{R}^N\right) \cap L^\infty\!\left(\mathbb{R}^N\right)$. We say that a function $ u $ is a weak energy solution of problem \eqref{wpme-funid}, with $T=+\infty$, if
		\begin{equation*}\label{energy}
			u \in  C\!\left([0,+\infty); L^1_\rho\!\left(\mathbb{R}^N\right) \right)\cap\, L^\infty\!\left( \mathbb{R}^N \times (0 , +\infty) \right) , \quad u^m \in L^2_{\mathrm{loc}} \big([0,+\infty); \dot{H}^1\!\left(\mathbb{R}^N \right)\! \big) \, ,
		\end{equation*}
		$ u(0)=u_0 $ and
		\begin{equation}\label{energy-formulation}
			\int_0^{+\infty} \int_{\mathbb{R}^N} u\, \psi_t \, \rho \, dx dt  = \int_0^{+\infty} \int_{\mathbb{R}^N}  \left\langle \nabla (u^m) \, , \nabla \psi \right\rangle  dx dt
		\end{equation}
		for all $\psi\in C^\infty_c\!\left( \mathbb{R}^N\times (0, +\infty) \right)$.
	\end{den}
	
	In the following statement we collect all the relevant facts about constructed solutions to \eqref{wpme-funid}, in the sense of the above definition.
	\begin{pro}\label{pro1}
		Let $u_0\in L^1_\rho\!\left(\mathbb{R}^N\right)\,\cap L^\infty\!\left(\R^N\right)$. There exists a unique weak energy solution $u$ of problem \eqref{wpme-funid} (with $T=+\infty$), in the sense of Definition \ref{def2}, satisfying
		\begin{equation}\label{m37a}
			\left\| u(t) \right\|_{L^\infty\left(\mathbb{R}^N\right)} \leq \left\| u_0 \right\|_{L^\infty\left(\mathbb{R}^N\right)} \qquad \forall t\geq 0 \, .
		\end{equation}
		Furthermore, there exists a constant $K>0$ depending only on $N,m, \gamma, \underline{C}, \overline{C}$ such that
		\begin{equation}\label{basic smoothing}
			\norm{u(t)}_{L^\infty(\R^N)}\leq K\,t^{-\lambda}\norm{u_0}_{L^1_\rho(\mathbb{R}^N)}^{\theta\lambda}\qquad \forall t > 0 \,.
		\end{equation}
		Next, if $ v $ is the weak energy solution corresponding to another initial datum $v_0\in L^1_\rho\!\left(\mathbb{R}^N\right) \cap L^\infty\!\left(\mathbb{R}^N\right)$, then the following $ L^1_\rho $-contraction/stability estimate holds:
		\begin{equation}\label{m37}
			\left\| u(t) - v(t) \right\|_{L^1_\rho\left(\mathbb{R}^N\right)} \leq \left\|u_0 - v_0 \right\|_{L^1_\rho\left(\mathbb{R}^N\right)}  \qquad \forall t\geq 0 \, .
		\end{equation}
		Solutions are strong, in the sense that
		\begin{equation}\label{strong energy}
			u_t \in L^\infty_{\mathrm{loc}}\! \left( (0,+\infty) ; L^1_\rho\!\left(\mathbb{R}^N\right) \right) \qquad \text{and} \qquad \partial_t\!\left( u^{\frac{m+1}{2}} \right) \in L^2_{\rho,\mathrm{loc}}\!\left( \R^N \times (0,+\infty) \right).
		\end{equation}
		If in addition $ v_0 \le u_0 $ the corresponding weak energy solutions are ordered, that is $v \le u$. In particular, $ u_0 \ge 0 $ implies $ u \ge 0 $, and  in such case the inequality
		\begin{equation}\label{weak BC estimate}
			\rho \, u_t \geq - \frac{\rho \, u}{(m-1) \, t}  \qquad \text{a.e.\ in} \; \; \mathbb{R}^N \times (0,+\infty)
		\end{equation}
	holds Still under the assumption $u_0\geq0$, mass conservation holds:
		\begin{equation}\label{mass cons}
			\int_{\R^N} u(t) \, \rho\,dx=\int_{\R^N}u_0 \, \rho \,dx  \qquad \forall t>0 \, ,
		\end{equation}
		and, for all $0<t_0<t_1$, the inequality
		\begin{equation}\label{flb equation}
			\int_{\R^N}\left[u(x,t_0)-u(x,t_1)\right]G(x,x_0)\,\rho(x)\,dx\leq(m-1)\,\frac{t_1^{\frac{m}{m-1}}}{t_0^{\frac{1}{m-1}}}\,u^m(x_0,t_1)
		\end{equation}
		is satisfied for a.e.\ $ x_0 \in \R^N $ (depending on $t_0,t_1)$, where $G(x,x_0)$ is the Green's function \eqref{green-form} centered at $x_0$.
	\end{pro}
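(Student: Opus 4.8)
The plan is to construct $u$ by a standard approximation and then read off every listed property from the approximants, keeping careful track that all constants depend only on $N,m,\gamma,\underline{C},\overline{C}$. Concretely, I would regularize the datum by functions $u_{0,n}\in C_c^\infty(\R^N)$ with $\|u_{0,n}\|_{L^\infty(\R^N)}\le\|u_0\|_{L^\infty(\R^N)}$ and $u_{0,n}\to u_0$ in $L^1_\rho(\R^N)$ (taking $0\le u_{0,n}$ when $u_0\ge0$), solve the weighted equation on the balls $B_n$ with homogeneous Dirichlet data, and invoke classical parabolic regularity — legitimate since on each $B_n$ the weight $\rho$ is bounded and bounded away from zero — to obtain smooth approximants $u_n$. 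On these, the maximum principle gives \eqref{m37a} and the comparison principle gives the ordering statement (hence $u_0\ge0\Rightarrow u\ge0$); the Crandall--Tartar argument — multiply the equation solved by $u_n-v_n$ by a smooth approximation of $\sign(u_n-v_n)$, noting that $\rho$ enters only as a positive weight on the time derivative — yields the $L^1_\rho$ contraction \eqref{m37}; and the smoothing estimate \eqref{basic smoothing}, with the exponents $\lambda,\theta\lambda$ of \eqref{deflambdatheta} dictated by scaling, follows from a weighted Moser iteration based on the weighted Sobolev inequality available under \eqref{weight-cond} (cf.\ \cite{MP}). Passing to the limit $n\to\infty$ with these uniform bounds produces a weak energy solution in the sense of Definition \ref{def2}, and uniqueness is immediate from \eqref{m37}.

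For the strong-solution regularity \eqref{strong energy}: since the equation is invariant under time translations, the difference quotients $h^{-1}(u(\cdot,t+h)-u(\cdot,t))$ are controlled in $L^1_\rho$ by $h^{-1}\|u(h)-u_0\|_{L^1_\rho(\R^N)}$ through \eqref{m37}, which combined with the Bénilan--Crandall regularizing effect for homogeneous accretive evolutions (making $t\mapsto u(t)$ locally Lipschitz in $L^1_\rho$ on $(0,+\infty)$) gives $u_t\in L^\infty_{\mathrm{loc}}((0,+\infty);L^1_\rho(\R^N))$. The inclusion $\partial_t(u^{(m+1)/2})\in L^2_{\rho,\mathrm{loc}}$ comes from the weighted energy identity obtained by testing with $\partial_t(u^m)$: using $u_t\,\partial_t(u^m)=\tfrac{4m}{(m+1)^2}(\partial_t u^{(m+1)/2})^2$ one gets, after a small time shift to make the data smooth, $\tfrac{4m}{(m+1)^2}\int_{t_0}^{t_1}\!\!\int_{\R^N}\rho\,(\partial_t u^{(m+1)/2})^2=\tfrac12\int_{\R^N}|\nabla(u^m)(t_0)|^2-\tfrac12\int_{\R^N}|\nabla(u^m)(t_1)|^2$, all verified first on the $u_n$. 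Mass conservation \eqref{mass cons} (for $u_0\ge0$) follows by testing with the cutoffs $\phi_R$ of Definition \ref{den-cutoff}: the error $\int u^m\,\Delta\phi_R$ is bounded by $CR^{-2}\int_{B_{2R}\setminus B_R}u^m$, and since $\rho\ge cR^{-\gamma}$ on that annulus and $u$ is bounded, this is $\le CR^{\gamma-2}\|u_0\|_{L^\infty}^{m-1}\|u\|_{L^1_\rho(\R^N)}\to0$ because $\gamma<2$.

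The weak Aronson--Bénilan inequality \eqref{weak BC estimate} I would prove by the homogeneity-plus-comparison argument of \cite{AB}, which is robust to the weight: if $u$ solves $\rho u_t=\Delta(u^m)$ then so does $u_\lambda(x,t):=\lambda^{1/(m-1)}u(x,\lambda t)$ for every $\lambda>0$, since $\rho$ multiplies $u_t$ and the time-dilation and amplitude scalings stay compatible (only the spatial dilation is lost: indeed $\tfrac{1}{m-1}+1=\tfrac{m}{m-1}$). For $\lambda\ge1$ and $u_0\ge0$ one has $u_\lambda(0)=\lambda^{1/(m-1)}u_0\ge u_0$, hence $u_\lambda\ge u$ by the ordering principle; differentiating $\lambda\mapsto\lambda^{1/(m-1)}u(x,\lambda t)$ at $\lambda=1$ gives $\tfrac{1}{m-1}u+t\,u_t\ge0$ a.e., that is \eqref{weak BC estimate}. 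A useful consequence, used below, is that $s\mapsto s^{1/(m-1)}u(x,s)$ is nondecreasing.

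Finally, the Green's-function inequality \eqref{flb equation}, which I expect to be the delicate part, I would derive following the strategy of \cite{BV}. Testing the equation against $G(\cdot,x_0)$ — rigorously against the smooth approximants $G_n(\cdot,x_0)$ of Lemma \ref{defgreen}, with a temporal cutoff, splitting $u^m(t)=u^m(t)\phi_R+u^m(t)(1-\phi_R)$ so that \eqref{green-lapl} applies to the compactly supported piece while the tail is controlled using that $\Delta G_n$ vanishes off the diagonal and $\int_{\R^N}|\Delta_x G_n(\cdot,x_0)|=1$ for all $n$ — one obtains, for a.e.\ $x_0$, that $t\mapsto\int_{\R^N}u(x,t)\,G(x,x_0)\,\rho(x)\,dx$ is locally Lipschitz with derivative $-u^m(x_0,t)$. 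Integrating over $(t_0,t_1)$ gives $\int_{\R^N}[u(x,t_0)-u(x,t_1)]\,G(x,x_0)\,\rho(x)\,dx=\int_{t_0}^{t_1}u^m(x_0,s)\,ds$, and the monotonicity of $s\mapsto s^{1/(m-1)}u(x_0,s)$ yields $u^m(x_0,s)\le(t_1/s)^{m/(m-1)}u^m(x_0,t_1)$, whence $\int_{t_0}^{t_1}u^m(x_0,s)\,ds\le(m-1)\,t_1^{m/(m-1)}\big(t_0^{-1/(m-1)}-t_1^{-1/(m-1)}\big)u^m(x_0,t_1)\le(m-1)\,t_1^{m/(m-1)}t_0^{-1/(m-1)}u^m(x_0,t_1)$, which is \eqref{flb equation}. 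The main obstacle is making this computation fully rigorous: one must check that $\int_{\R^N}u(t)\,G(\cdot,x_0)\,\rho\,dx$ is finite (this uses $N\ge3$ together with $\rho\le\overline{C}|x|^{-\gamma}$ and $\gamma<2$, both near $x_0$ — in particular when $x_0=0$ — and at infinity via the decay of $G$), that the identity $\int_{\R^N} G_n(\cdot,x_0)\,\Delta(u^m)=\int_{\R^N}\Delta_x G_n(\cdot,x_0)\,u^m$ can be passed to the limit despite $u^m(t)$ not being compactly supported and despite the merely a.e.\ validity of \eqref{green-lapl}, and that the resulting weak time-derivative identity upgrades to the stated pointwise integral relation thanks to the continuity afforded by the strong energy framework (cf.\ Corollary \ref{very weak strong energy}).
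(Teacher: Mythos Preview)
Your approach is essentially that of the paper: most properties are standard (the paper simply cites \cite[Proposition 3.3]{MP}, \cite{RV2}, and related works), and for \eqref{flb equation} the paper does exactly what you outline---test against $G_n$ with a spatial cutoff, pass to the limit to obtain the identity $\int_{\R^N}[u(t_0)-u(t_1)]G(\cdot,x_0)\rho\,dx=\int_{t_0}^{t_1}u^m(x_0,s)\,ds$, and bound the right-hand side via the monotonicity of $t\mapsto t^{m/(m-1)}u^m(t)$ coming from \eqref{weak BC estimate}.

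Two technical slips in your write-up are worth fixing. First, $\rho$ is \emph{not} bounded on $B_n$: under \eqref{weight-cond} it may blow up like $|x|^{-\gamma}$ at the origin, so you cannot appeal to ``classical parabolic regularity'' on $B_n$ without also regularizing $\rho$ (this is done routinely elsewhere in the paper, e.g.\ in the approximations \eqref{approx-data-loc}). Second, $\Delta_x G_n$ does \emph{not} vanish off the diagonal---it is a smooth bump that merely concentrates there---so the tail control you sketch does not go through by splitting $u^m$ while testing against the non-compactly-supported $G_n$. The clean fix (and what the paper does) is to put the cutoff on the \emph{test function}: use $\psi=G_n(\cdot,x_0)\,\phi_R(\cdot-x_0)\,\tilde\chi_{[t_0,t_1]}$, expand $\Delta(G_n\phi_R)$ by the product rule, and show that the cross terms (involving $\nabla G_n\cdot\nabla\phi_R$ and $G_n\,\Delta\phi_R$, supported in the annulus $B_{2R}(x_0)\setminus B_R(x_0)$) are $O(R^{\gamma-N})$ uniformly in $n$, via \eqref{basic smoothing} and \eqref{mass cons}; one then lets $n\to\infty$ first and $R\to+\infty$ afterwards.
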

	\begin{proof}
		It is only needed to establish \eqref{basic smoothing}, \eqref{mass cons}, and \eqref{flb equation}, because the other items have already been proven in \cite[Proposition 3.3]{MP}. Under the running assumptions on $\rho$, the smoothing estimate \eqref{basic smoothing} and the mass conservation \eqref{mass cons} are by now standard. The former was shown in \cite[Theorem 2.1]{RV2}, but under the additional requirements that the solution is positive and that the weight $\rho$ is smooth. However, it is easily seen that such extra assumptions are inessential, the key hypotheses in order to prove both \eqref{basic smoothing} and \eqref{mass cons} being \eqref{weight-cond}; in fact, similar smoothing effects and mass-conservation properties have already been derived in \cite{MP,MPQ} under \eqref{weight-cond}.
		
		Let us now focus on the proof of \eqref{flb equation}, which requires a few modifications from the original one  \cite[Proposition 4.2]{BV} (in a fractional framework actually). Nevertheless, given the importance of such an estimate, we provide the complete argument. The first step is to derive from \eqref{weak BC estimate} the following monotonicity result:
		\begin{equation*}\label{weak-monotone}
			t \mapsto t^{\frac{m}{m-1}}\,u^m(t) \; \; \text{is essentially non-decreasing} \, .
		\end{equation*}
Indeed, this is a simple consequence of \eqref{strong energy}--\eqref{weak BC estimate}, since the above function is absolutely continuous (\emph{e.g.}\ with values in $ L^1_\rho\!\left( \R^N \right) $) and
$$
\partial_t \left( t^{\frac{m}{m-1}}\,u^m  \right) = m \, t^{\frac{m}{m-1}} \, u^{m-1} \left[ \frac{u}{(m-1) \, t} + u_t \right] \ge 0 \qquad \text{a.e.\ in} \; \; \R^N \times (0,+\infty) \, .
$$
In particular, thanks to the time continuity of $u$, we can deduce that for \emph{every} $ t>s>0 $ it holds
\begin{equation}\label{almost-t-mon}
  s^{\frac{m}{m-1}}\,u^m(x,s)   \le t^{\frac{m}{m-1}}\,u^m(x,t)  \qquad \text{for a.e.} \; \; x \in \R^N \; \; \text{(depending on $t,s$)} \, .
\end{equation}
        Now, in order to obtain \eqref{flb equation}, we multiply the differential equation in \eqref{wpme-funid} by the test function
        \begin{equation*}\label{phi-st}
        \phi(x,t) = G_n(x,x_0) \,\phi_R(x-x_0)  \, \tilde{\chi}_{[t_0,t_1]}(t) \in C^\infty_c\!\left(\R^N\times(0,+\infty)\right) ,
        \end{equation*}
        where $G_n$ is a smooth and monotone approximation of the Green's function defined in \eqref{green-approx}, $\tilde{\chi}$ is a smooth approximation of the characteristic function $ \chi_{[t_0,t_1]} $, and $\phi_R$ is a cutoff function introduced in \eqref{defcutoff1}--\eqref{cutoff-est}. Let us first integrate by parts in time; this gives (recall \eqref{strong energy})
		\begin{equation*}\label{first-tint}
			\begin{aligned}
				& \int_{0}^{+\infty}\int_{\R^N}u_t(x,t)\,G_n(x,x_0)\,\phi_R(x-x_0) \, \tilde{\chi}_{[t_0,t_1]}(t) \,\rho(x)\,dxdt\\
				= & -\int_{0}^{+\infty}\int_{\R^N}u(x,t)\,G_n(x,x_0)\,\phi_R(x-x_0) \, \partial_t\tilde{\chi}_{[t_0,t_1]}(t)\,\rho(x)\,dxdt\,.
			\end{aligned}
		\end{equation*}
        Since $u$ is continuous with values in $ L^1_\rho\!\left( \R^N \right) $, upon suitably letting $ \partial_t\tilde{\chi}_{[t_0,t_1]} \to \delta_{t_0}-\delta_{t_1} $ we end up with
		\begin{equation}\label{tint-2}
			\begin{aligned}
				&\int_{t_0}^{t_1}\int_{\R^N}u_t(x,t)\,G_n(x,x_0)\,\phi_R(x-x_0)\,\rho(x)\,dxdt\\
				=&\int_{\R^N}\left[u(x,t_1)-u(x,t_0)\right]G_n(x,x_0)\,\phi_R(x-x_0)\,\rho(x)\,dxdt\,.
			\end{aligned}
		\end{equation}
		 Next, we integrate by parts in space on the right-hand side of the corresponding identity, to find that \eqref{tint-2} is equal to
		\begin{equation}\label{tint-3}
			\begin{aligned}
				&\int_{t_0}^{t_1}\int_{\R^N}\Delta [u^m(x,t)]\,G_n(x,x_0)\,\phi_R(x-x_0)\,dxdt\\
				=&\int_{t_0}^{t_1}\int_{\R^N}u^m(x,t) \, \Delta[G_n(x,x_0)\,\phi_R(x-x_0)] \, dxdt\,\\
				=& \underbrace{\int_{t_0}^{t_1}\int_{\R^N}u^m(x,t) \, \Delta [G_n(x,x_0)]\,\phi_R(x-x_0)\,dxdt}_{I}\\
				&+\underbrace{\int_{t_0}^{t_1}\int_{\R^N}u^m(x,t)\left[2 \left\langle \nabla G_n(x,x_0) \, , \nabla\phi_R(x-x_0) \right\rangle+G_n(x,x_0) \,\Delta\phi_R(x-x_0)\right]dxdt}_{II} \, .
			\end{aligned}
		\end{equation}
		Let us now take $n\to\infty$ to pass $\{G_n\}$ to its limit $G$. Due to \eqref{green-lapl} and the fact that $\phi_R(0)=1$, we find that
		\begin{equation*}\label{tint-rhs}
			I \underset{n \to \infty}{\longrightarrow} \int_{t_0}^{t_1}u^m(x_0,t) \,dt \,
		\end{equation*}
		for a.e.\ $x_0\in\R^N$, more precisely for every $x_0$ that is a Lebesgue point of the function $ x \mapsto \int_{t_0}^{t_1}u^m(x,t) \,dt $. On the other hand, we apply H\"{o}lder's inequality and use \eqref{defcutoff1}--\eqref{cutoff-est}, \eqref{green-approx}, \eqref{basic smoothing}, and \eqref{mass cons} to deduce
		\begin{align*}
			|II|&\leq C\, R^{-N} \norm{u}_{L^\infty\left(\R^N\times(t_0,t_1)\right)}^{m-1}\int_{t_0}^{t_1}\int_{B_{2R}(x_0)\setminus B_R(x_0)}u(x,t)\,dxdt\\
			&\leq C\, R^{-N+\gamma}\norm{u}_{L^\infty\left(\R^N\times(t_0,t_1)\right)}^{m-1}\,(t_1-t_0)\int_{\R^N}u(x,t_0)\,\rho(x)\,dx\\
			&\leq C\, R^{-N+\gamma} \, t_0^{-\lambda(m-1)}\,(t_1-t_0)\,\norm{u_0}_{L^1_\rho\left(\mathbb{R}^N\right)}^{\theta\lambda(m-1)+1}
		\end{align*}
        for a suitable constant $C>0$ independent of $n$ and $R>|x_0|+1$, which clearly vanishes as $R\to+\infty$. Therefore we may pass to the limit first as $n\to\infty$ then as $R\to+\infty$ in \eqref{tint-2} and \eqref{tint-3}, applying the dominated convergence theorem to obtain
		\begin{equation}\label{tint-al}
			\int_{\R^N}\left[u(x,t_0)-u(x,t_1)\right]G(x,x_0)\,\rho(x)\,dx=\int_{t_0}^{t_1}u^m(x_0,t)\,dt \, .
		\end{equation}
		Finally, using \eqref{almost-t-mon}, for all $\varepsilon>0$ we infer that
		\begin{equation*}\label{tint-last}
			\begin{aligned}
				\int_{t_0}^{t_1} \int_{B_\varepsilon(x_0)}u^m(x,t)\,dxdt = & \int_{t_0}^{t_1}  t^{-\frac{m}{m-1}} \int_{B_\varepsilon(x_0)} t^\frac{m}{m-1}\,u^m(x,t)\,dx \, dt\\
				\leq & \int_{t_0}^{t_1}t^{-\frac{m}{m-1}}\,dt \, \int_{B_\varepsilon(x_0)} t_1^\frac{m}{m-1}\,u^m(x,t_1)\,dx\\
				\leq & \, (m-1) \, t_0^{-\frac{1}{m-1}} \, \int_{B_\varepsilon(x_0)} t_1^\frac{m}{m-1}\,u^m(x,t_1)\,dx \, ,
			\end{aligned}
		\end{equation*}
		where in the last inequality we have evaluated the time integral and discarded the negative term at $t=t_1$. Dividing by $|B_\varepsilon(x_0)|$ and letting $ \varepsilon \to 0 $, we end up with
        \begin{equation}\label{tint-last-2}
  \int_{t_0}^{t_1}u^m(x_0,t)\,dt  \le (m-1) \, \frac{t_1^{\frac{m}{m-1}}}{t_0^{\frac{1}{m-1}}} \, u^m(x_0,t_1) \, ,
        \end{equation}
        up to possibly refining the choice of $x_0$ (still in a full-measure set depending on $t_0,t_1$). Combining \eqref{tint-al} and \eqref{tint-last-2} completes the proof.
	\end{proof}
	

    We are now in a position to state and prove the claimed Aronson--Caffarelli-type estimate for non-negative constructed solutions of the approximate problem \eqref{wpme-funid}.

	\begin{pro}\label{weighted AC lemma}
		Let $u_0 \in L^\infty\!\left(\R^N\right)$ be a non-negative initial datum with $\mathrm{supp}(u_0)\subset B_R $ for some $R\ge 1$. Let $u$ be the corresponding weak energy solution of problem \eqref{wpme-funid} (with $T=+\infty$), in the sense of Definition \ref{def2} and Proposition \ref{pro1}, and fix any $ t>0 $. Then for all $\varepsilon\in(0,1)$ and a.e.\ $x_0\in B_\varepsilon$, it holds
		\begin{equation}\label{weighted AC all time est}
			\int_{B_R} u_0 \,\rho \, dx \leq C\left\{ t^{-\frac{1}{m-1}} \, R^{N-\gamma+\frac{2-\gamma}{m-1}}+t^{\frac{N-\gamma}{2-\gamma}} \, [u(x_0,t)]^{1+\frac{N-\gamma}{2-\gamma}(m-1)}  \right\} ,
		\end{equation}
		for a constant $C>0$ depending only on $N,m,\gamma,\underline{C},\overline{C}$.
		
	\end{pro}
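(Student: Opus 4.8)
The plan is to run a Green's-function argument exploiting the properties of constructed solutions in Proposition~\ref{pro1}. Throughout set $M:=\int_{B_R}u_0\,\rho\,dx$; since $u_0\ge0$ is supported in $B_R$, mass conservation~\eqref{mass cons} gives $\int_{\R^N}u(s)\,\rho\,dx=M$ for every $s>0$, the smoothing effect~\eqref{basic smoothing} reads $\|u(s)\|_{L^\infty(\R^N)}\le K\,s^{-\lambda}M^{\theta\lambda}$, and~\eqref{m37a} gives $\|u(s)\|_\infty\le\|u_0\|_\infty$. For $x_0\in\R^N$ put $\mathcal E(s):=\int_{\R^N}u(s)\,G(\cdot,x_0)\,\rho\,dx$; then~\eqref{flb equation} states that, for a.e.\ $x_0$ and all $0<t_0<t_1$,
\[
\mathcal E(t_0)-\mathcal E(t_1)\;\le\;(m-1)\,\frac{t_1^{m/(m-1)}}{t_0^{1/(m-1)}}\,u^m(x_0,t_1)\,.
\]
We may assume $M>0$ (otherwise the statement is trivial), and we abbreviate $D:=(N-\gamma)(m-1)+(2-\gamma)$, so that by~\eqref{deflambdatheta}--\eqref{spexprel} one has $\lambda=(N-\gamma)/D$, $\theta\lambda=(2-\gamma)/D$ and $\lambda(m-1)=1-\theta\lambda\in(0,1)$.

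First I would establish two auxiliary estimates. \textbf{(i) A mass--spreading (tail) bound}, which replaces finite speed of propagation, not readily available here: for $\Lambda\ge R$, testing the equation against $1-\phi_\Lambda$ (with $\phi_\Lambda$ as in Definition~\ref{den-cutoff}), using $\mathrm{supp}\,u_0\subset B_R$, the bounds~\eqref{cutoff-est}, $\inf_{B_{2\Lambda}}\rho\ge c\,\Lambda^{-\gamma}$, mass conservation and the smoothing effect, and integrating in time (the exponent $\lambda(m-1)<1$ makes $\int_0^s\sigma^{-\lambda(m-1)}d\sigma$ finite), one finds
\[
\int_{B_{2\Lambda}^c}u(s)\,\rho\,dx\;\le\;A_0\,\Lambda^{\gamma-2}\,M^{\,1+\theta\lambda(m-1)}\,s^{\theta\lambda}\,,
\]
with $A_0=A_0(N,m,\gamma,\underline C,\overline C)$. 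Hence, setting $c_0':=(2A_0)^{1/(\theta\lambda)}$ (which after enlarging if necessary we may assume $\ge1$) and $\bar t_\Lambda:=\Lambda^{D}/(c_0'M^{m-1})$, the tail is $\le M/2$ whenever $s\le\bar t_\Lambda$. Since $x_0\in B_\varepsilon\subset B_1\subset B_\Lambda$ and $\Lambda\ge1$, we have $\min_{B_{2\Lambda}}G(\cdot,x_0)\ge c_N(3\Lambda)^{2-N}$, whence $\mathcal E(s)\ge c_N(3\Lambda)^{2-N}\int_{B_{2\Lambda}}u(s)\,\rho\,dx\ge c_1\,\Lambda^{2-N}M$ for all $s\le\bar t_\Lambda$, with $c_1=c_1(N)$. \textbf{(ii) An upper bound for the Green potential}: splitting $\mathcal E(s)$ at radius $d$ around $x_0$, using $\int_{B_d(x_0)}G(\cdot,x_0)\,\rho\le C\,d^{2-\gamma}$ (valid for every $x_0$, thanks to $\gamma<2<N$, by a scaling and uniform-bound argument), mass conservation and the smoothing effect, and optimizing in $d$, one gets $\mathcal E(s)\le C_4\,M^{m(2-\gamma)/D}s^{-(N-2)/D}$, with $C_4=C_4(N,m,\gamma,\underline C,\overline C)$; after harmlessly enlarging $C_4$ we may assume $c_1\le C_4$.

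The core of the proof is then a dichotomy on the scale $\Lambda_\diamond:=\big(c_\ast M^{m-1}t\big)^{1/D}$, where $c_\ast:=(c_1/(2C_4))^{D/(N-2)}$. \emph{Case A: $\Lambda_\diamond<R$.} Then $M^{m-1}t<c_\ast^{-1}R^{D}$, so $M\le C\,t^{-1/(m-1)}R^{D/(m-1)}=C\,t^{-1/(m-1)}R^{\,N-\gamma+\frac{2-\gamma}{m-1}}$, the first term of~\eqref{weighted AC all time est}. \emph{Case B: $\Lambda_\diamond\ge R$.} Take $\Lambda=\Lambda_\diamond\ (\ge R\ge1)$ and $t_0=\bar t_{\Lambda_\diamond}=\kappa\,t$, where $\kappa:=c_\ast/c_0'<1$ (by $c_1\le C_4$ and $c_0'\ge1$), so that $0<t_0<t=:t_1$. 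By the very choice of $\Lambda_\diamond$ one has $c_1\Lambda_\diamond^{2-N}M=2C_4M^{m(2-\gamma)/D}t^{-(N-2)/D}$, while $t_1^{m/(m-1)}t_0^{-1/(m-1)}=\kappa^{-1/(m-1)}t$. Applying~\eqref{flb equation} at $(t_0,t_1)$, bounding $\mathcal E(t_0)$ below by (i) and $\mathcal E(t_1)=\mathcal E(t)$ above by (ii), we get
\[
2C_4\,M^{m(2-\gamma)/D}t^{-(N-2)/D}\;\le\;C_4\,M^{m(2-\gamma)/D}t^{-(N-2)/D}\;+\;(m-1)\,\kappa^{-1/(m-1)}\,t\,u^m(x_0,t)\,.
\]
Absorbing the first term on the right into the left, then raising to the power $D/(m(2-\gamma))$ and simplifying (using $D+N-2=(N-\gamma)m$ and $D/(2-\gamma)=1+\tfrac{N-\gamma}{2-\gamma}(m-1)$), one reaches $M\le C\,t^{\frac{N-\gamma}{2-\gamma}}u(x_0,t)^{\,1+\frac{N-\gamma}{2-\gamma}(m-1)}$, the second term of~\eqref{weighted AC all time est}. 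Since a.e.\ $x_0\in B_\varepsilon$ lies in the full-measure set where~\eqref{flb equation} holds (and $\kappa t$ is a fixed time), and all constants depend only on $N,m,\gamma,\underline C,\overline C$, combining the two cases completes the proof.

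The main obstacle — and the reason for the calibration above — is that one \emph{cannot} let $t_0\to0$ in~\eqref{flb equation}, since its right-hand side then diverges; hence $M$ must be detected through $\mathcal E(t_0)$ at a \emph{positive} time, which is precisely what forces the tail estimate of step~(i). Worse still, the only available \emph{upper} bound on the potential at the later time $t$, namely (ii), has exactly the same scaling in $M$ and $t$ as the desired \emph{lower} bound at $t_0$; the localization radius $\Lambda_\diamond$ must therefore be chosen so that these two quantities are comparable with ratio strictly between $1$ and $2$, so that the crude potential term can be absorbed without leaving a residual multiple of $M$ (which would render the inequality vacuous). By contrast, all the exponent bookkeeping — the identities $\lambda(m-1)=1-\theta\lambda$, $D+N-2=(N-\gamma)m$, the simplification of the optimized radius in (ii), etc. — is routine.
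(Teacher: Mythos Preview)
Your proof is correct and follows essentially the same route as the paper's: both start from the fundamental Green's-function inequality~\eqref{flb equation}, bound $\mathcal{E}(t_0)$ from below via a cutoff/mass-spreading argument (your step (i) is the paper's estimate of the term ``$I$''), bound $\mathcal{E}(t)$ from above by splitting the Green integral (your step (ii) corresponds to the paper's ``$II_A+II_B$''), and conclude by a dichotomy on whether $t$ is large or small relative to $R^{D}/M^{m-1}$. The only cosmetic differences are that the paper introduces a free localization radius $\overline{R}\ge R$ and expresses $t$ in terms of it (your $\Lambda_\diamond$ is precisely this $\overline{R}$, obtained by inverting the relation), and that the paper splits the upper potential at a radius proportional to $\overline{R}$ centered at the origin rather than optimizing in a radius $d$ centered at $x_0$---but after substitution both give the same bound $C\,M^{m(2-\gamma)/D}t^{-(N-2)/D}$.
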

		\begin{proof}
		Our starting point is the fundamental inequality \eqref{flb equation}, rewritten here for convenience:
		\begin{equation}\label{fundamental lower bound}
			\underbrace{\int_{\R^N}u(x,t_0)\,G(x,x_0)\,\rho(x)\,dx}_{I}-\underbrace{\int_{\R^N}u(x,t)\,G(x,x_0)\,\rho(x)\,dx}_{II}\leq(m-1) \, \frac{t^{\frac{m}{m-1}}}{t_0^{\frac{1}{m-1}}} \, u^m(x_0,t)
		\end{equation}
		for all $0<t_0<t$ and a.e.\ $x_0\in\R^N$. During the course of the proof, we will select $t_0$ to be a suitable fraction of $t$ (so that the feasible $x_0$ will only depend on $t$ in the end).
		
		We estimate $I$ from below and $II$ from above, from now on assuming that $|x_0|<\varepsilon<1$. In the following calculations, let $C$ denote a positive constant depending only on $N,m,\gamma,\underline{C},\overline{C}$, which may change from line to line. Also, let us denote the mass of the solution
        $$
        M =\int_{\R^N}u_0\,\rho\,dx=\int_{B_R}u_0\,\rho\,dx \, ,
        $$
        which by \eqref{mass cons} is conserved for all $t\geq0$. We recall that $R\ge 1$ is fixed once and for all according to the statement, and for now we consider a larger radius $\overline{R}\geq R$ that is initially fixed but will ultimately be allowed to vary in $[R,+\infty)$.
		
		As a first step to bounding $I$ from below, the basic estimate follows from the very definition of the Green's function \eqref{green-form}:
		\begin{equation}\label{basic I estimate}
			\begin{aligned}
				\int_{\R^N}u(x,t_0)\,G(x,x_0)\,\rho(x)\,dx&\geq
				\int_{B_{2\overline{R}}}u(x,t_0)\,G(x,x_0)\,\rho(x)\,dx\\ &\geq\frac{C_N}{\left(2\overline{R}+|x_0|\right)^{N-2}}\int_{B_{2\overline{R}}}u(x,t_0)\,\rho(x)\,dx \, ,
			\end{aligned}
		\end{equation}
        where $C_N$ stands for the multiplying constant in \eqref{green-form}. In the following passage, we will choose $t_0$ to be small enough in a quantitative way so that the integral on the right-hand side of \eqref{basic I estimate} is proportional to the mass $M$. Recalling the definition of $\phi_{\overline{R}}$ in \eqref{defcutoff1}, there holds
		\begin{equation}\label{cutoff implies}
			\int_{B_{2\overline{R}}}u(x,t_0)\,\rho(x)\,dx\geq\int_{\R^N}u(x,t_0)\,\phi_{\overline{R}}(x)\,\rho(x)\,dx \, .
		\end{equation}
		In order to continue estimating the right-hand side of \eqref{cutoff implies} from below, we proceed as follows:
		\begin{align*}
			\left|\frac{d}{dt}\int_{\R^N}u(x,t)\,\phi_{\overline{R}}(x)\,\rho(x)\,dx\right|& =_{(a)}\left|\int_{\R^N}u_t(x,t)\,\phi_{\overline{R}}(x)\, \rho(x) \,dx\right|=\left|\int_{\R^N}\Delta [u^m(x,t)]\,\phi_{\overline{R}}(x)\,dx\right|\\
			&=\left|\int_{\R^N}u^m(x,t)\,\Delta\phi_{\overline{R}}(x)\,dx\right|\\
			&\leq\norm{u(t)}_{L^\infty\left(\R^N\right)}^{m-1}\int_{\R^N}u(x,t) \left|\Delta\phi_{\overline{R}}(x)\right|dx\\
			&\leq_{(b)}C\,t^{-\lambda(m-1)} \, M^{\theta\lambda(m-1)}\int_{\R^N}u(x,t)\left|\Delta\phi_{\overline{R}}(x) \right| dx\\
			&\leq_{(c)}C\,t^{-\lambda(m-1)}\, M^{\theta\lambda(m-1)}\,\frac{\left(1+2\overline{R}\right)^{\gamma}}{\overline{R}^2}\int_{\R^N}u(x,t)\,\rho(x)\,dx\\
			&\leq_{(d)}C\,t^{-\lambda(m-1)} \, M^{1+\theta\lambda(m-1)}\,{\overline{R}}^{-2+\gamma} \, .
		\end{align*}
		Specifically, in $(a)$ we have used property \eqref{strong energy}, so this computation is justified for a.e.\ $t>0$, in $(b)$ we have used the smoothing estimate \eqref{basic smoothing}, in $(c)$ we have exploited \eqref{defcutoff1} and the lower bound in \eqref{weight-cond} on the weight $\rho$ whereas in $(d)$ we have taken advantage of the mass-conservation property \eqref{mass cons}, along with the elementary observation that $(1+2z)^\gamma\leq 3^\gamma z^\gamma$ for all $z\geq1$. Due to the absolute value in the above inequality, we may use in particular the negatively-signed version
		\begin{equation}\label{I smoothing}
			-\frac{d}{dt}\int_{\R^N}u(x,t)\,\phi_{\overline{R}}(x)\,\rho(x)\,dx\leq C\,t^{-\lambda(m-1)} \, M^{1+\theta\lambda(m-1)} \,{\overline{R}}^{\gamma-2} \,\normalcolor ;
		\end{equation}
		integrating \eqref{I smoothing} from $0$ to $t_0$, we find that
		\begin{equation}\label{spec-const}
			-\int_{\R^N}u(x,t_0)\,\phi_{\overline{R}}(x)\,\rho(x)\,dx+\int_{\R^N} u_0 \,\phi_{\overline{R}}\,\rho \,dx\leq \widetilde{C}\,t_0^{\theta\lambda} \, M^{1+\theta\lambda(m-1)} \, {\overline{R}}^{\gamma-2}\normalcolor
		\end{equation}
		for all $t_0>0$, where $\widetilde{C}>0$ is a constant we will keep track of and still depends only on $N,m,\gamma,\underline{C},\overline{C}$. Since the second term on the left-hand side of \eqref{spec-const} is equal to $M$, we arrive to
		\begin{equation}\label{preselect0}
			\int_{\R^N}u(x,t_0)\,\phi_{\overline{R}}(x)\,\rho(x)\,dx\geq M\left(1-\widetilde{C}\,t_0^{\theta\lambda} \, M^{\theta\lambda(m-1)} \, {\overline{R}}^{\gamma-2} \right) .
		\end{equation}
		 Next, we select a $t_0$ in terms of $M$ and $\overline{R}$ so that the quantity inside the parenthesis in \eqref{preselect0} is a positive constant. That is, we set
		\begin{equation}\label{rule for t_0}
			t_0 = c_0 \, \tfrac{\overline{R}^{(N-\gamma)(m-1)+2-\gamma}}{M^{m-1}} \qquad \text{for} \; \; c_0 = \left[2\widetilde{C}\right]^{-\frac{1}{\theta \lambda}} .
		\end{equation}
		Plugging the choice \eqref{rule for t_0} into \eqref{preselect0}, recalling \eqref{deflambdatheta}, we obtain
		\begin{equation}\label{pre-final-i-est}
			\int_{\R^N}u(x,t_0)\,\phi_{\overline{R}}(x)\,\rho(x)\,dx\geq \left(1-\widetilde{C}\,c_0^{\theta\lambda}\right)M  = \frac{M}{2}\, .
		\end{equation}
		Hence, we conclude from \eqref{basic I estimate}, \eqref{cutoff implies}, and \eqref{pre-final-i-est} that
		\begin{equation}\label{final I estimate}
			\begin{aligned}
				I=\int_{\R^N}u(x,t_0)\,G(x,x_0)\,\rho(x)\,dx\geq  \frac{C_N}{2}\frac{M}{\left(2\overline{R}+|x_0|\right)^{N-2}}
				&\geq \frac{C_N}{2}\frac{M}{\left(2\overline{R}+1\right)^{N-2}}\\
				& \geq \underbrace{\frac{C_N}{2 \, 3^{N-2}}}_{c_1} \, \cdot \, \frac{M}{{\overline{R}}^{N-2}} \, ,
			\end{aligned}
		\end{equation}
		for all $t_0$  as in \eqref{rule for t_0}, and recalling that $|x_0|<\varepsilon<1\le \overline{R}$.
		
		Now, we prepare to estimate $II$ from above by splitting the integral into two parts:
		\begin{equation*}
			II=\underbrace{\int_{B_r}u(x,t)\,G(x,x_0)\,\rho(x)\,dx}_{II_A}+\underbrace{\int_{B^c_r}u(x,t)\,G(x,x_0)\,\rho(x)\,dx}_{II_B} \, ,
		\end{equation*}
		where $r>0$ only serves to optimize the relative sizes of $II_A$ and $II_B$. In particular, we select it as
		\begin{equation}\label{choice of R1}
			r=c_2\, \overline{R}+\varepsilon \, ,
		\end{equation}
		for a suitable constant $c_2>0$ to be chosen later. In order to estimate $II_A$, we use H\"{o}lder's inequality and the upper bound in \eqref{weight-cond} to obtain
        \begin{equation}\label{1hiia}
			\begin{aligned}
				II_A\leq\norm{u(t)}_{L^\infty\left(\R^N\right)}\int_{B_{r}}G(x,x_0)\,\rho(x)\,dx &\leq \overline{C}\norm{u(t)}_{L^\infty\left(\R^N\right)}\int_{B_{r}}G(x,x_0)\,|x|^{-\gamma}\,dx\\
				&\leq C\,t^{-\lambda}\,M^{\theta\lambda}\int_{B_r}G(x,x_0)\,|x|^{-\gamma}\,dx \, ,
			\end{aligned}
		\end{equation}
		where in the last passage we have used the smoothing effect \eqref{basic smoothing}. Next, we claim that the function $f \colon B_\varepsilon\to\R$ defined by
		\begin{equation*}
			y \in B_\varepsilon \mapsto \int_{B_r}G(x,y)\,|x|^{-\gamma}\,dx
		\end{equation*}
		achieves its maximum at $y=0$. To this end, it is not difficult to see that
        $$
f(y) = f(0)-\tfrac{1}{(N-\gamma)(2-\gamma)} \, |y|^{2-\gamma} \qquad \forall y \in B_\varepsilon \,,
        $$
       since $f$ is a radial solution to $ -\Delta f (y)= |y|^{-\gamma} \, \chi_{B_r}(y) $ and $r>\varepsilon$, whence the equation can be solved by ODE methods and the claim easily follows.  Therefore, we may continue from \eqref{1hiia} the following chain of estimates:
		\begin{equation}\label{IIA}
			\begin{aligned}
				II_A \leq C\,t^{-\lambda} \,M^{\theta\lambda}\int_{B_r}|x|^{2-N-\gamma}\,dx & \leq C\,t^{-\lambda} \, M^{\theta\lambda} \, r^{2-\gamma}\\
				&= C\,t^{-\lambda} \, M^{\theta\lambda}\left(c_2 \, \overline{R}+\varepsilon \right)^{2-\gamma}\\
				&\leq C\left(1+c_2\right)^{2-\gamma}t^{-\lambda} \, M^{\theta\lambda} \, {\overline{R}}^{2-\gamma} \, ,
			\end{aligned}
		\end{equation}
        recalling that $ 0<\varepsilon<1\le \overline{R} $.
		On the other hand, our estimate for $II_B$ is a simple consequence of mass conservation and \eqref{choice of R1}:
		\begin{equation}\label{IIB}
			II_B\leq\frac{C_N}{(r-\varepsilon)^{N-2}}\int_{\R^N}u(x,t) \, \rho(x)\,dx=C\,c_2^{2-N} \, \frac{M}{\overline{R}^{N-2}} \, .
		\end{equation}
Combining \eqref{IIA} and \eqref{IIB} gives
		\begin{equation*}
			II\leq C\left(1+c_2\right)^{2-\gamma}t^{-\lambda} \, M^{\theta\lambda}\,{\overline{R}}^{2-\gamma}+C\,c_2^{2-N} \, \frac{M}{\overline{R}^{N-2}} \, .
		\end{equation*}
We consider, for the moment, all times $t$ of the form
		\begin{equation}\label{choice for t}
			t=c_3\,\tfrac{{\overline{R}}^{(N-\gamma)(m-1)+2-\gamma}}{M^{m-1}} \, ,
		\end{equation}
		under the crucial restriction $c_3>c_0$, so that $t>t_0$. At the end of the proof we will explain how to get rid of such an extra assumption. With this choice, we have (again recalling \eqref{deflambdatheta}--\eqref{spexprel})
		\begin{equation}\label{final II estimate}
			II\leq c_4\,\frac{M}{{\overline{R}}^{N-2}} \, ,
		\end{equation}
		where
		\begin{equation*}\label{defcn}
		c_4= C \, \max\left\{\left(1+c_2\right)^{2-\gamma}c_3^{-\lambda} \, , \, c_2^{2-N}\right\}.
		\end{equation*}

Let us insert the estimates \eqref{final I estimate} and \eqref{final II estimate} for $I$ and $II$ in \eqref{fundamental lower bound}, to reach		\begin{equation}\label{c1-c4}
			\left(c_1-c_4\right)\frac{M}{{\overline{R}}^{N-2}}\leq(m-1) \, \frac{t^{\frac{m}{m-1}}}{t_0^{\frac{1}{m-1}}} \, u^m(x_0,t) \, ,
		\end{equation}
provided, in addition, $x_0$ belongs to a suitable full-measure set in $B_\varepsilon$ depending on $t_0,t$. Now, we would like to make a suitable choice of $c_0$, $c_2$, and $c_3$ so that
		\begin{equation*}\label{cond-consts}
			\tfrac{c_1}{2}>c_4\qquad\text{and}\qquad c_3>c_0\,.
		\end{equation*}
		This is achieved by taking $c_2$ and $c_3$ sufficiently large, and $c_0$ sufficiently small; quantitatively, that is
		\begin{equation}\label{cond-consts-1}
			c_2 > (2C)^{\frac{1}{N-2}} \, c_1^{-\frac{1}{N-2}} \qquad \text{and} \qquad c_3 > \max\left\{ (2C)^{\frac{1}{\lambda}} \, c_1^{-\frac{1}{\lambda}} \left(1+c_2\right)^{\frac{2-\gamma}{\lambda}} , \, c_0 \right\} .
		\end{equation}
		In conclusion, owing to \eqref{c1-c4}, under \eqref{rule for t_0}, \eqref{final I estimate}, \eqref{choice for t}, and \eqref{cond-consts-1} we end up with
\begin{equation}\label{c1-c4-bis}
			\frac{M}{{\overline{R}}^{N-2}}\leq 2\, (m-1)  \,  c_1^{-1}  \left(\frac{c_3}{c_0}\right)^{\frac{1}{m-1}}\,t\,u^m(x_0,t) \, .
		\end{equation}
Replacing $\overline{R}$ by $t$ and $M$ according to \eqref{choice for t}, estimate \eqref{c1-c4-bis} becomes
		\begin{equation*}
M^{m\theta\lambda}\leq 2\,(m-1)\,c_0^{-\frac{1}{m-1}}\,c_3^{\frac{m}{m-1}\theta\lambda} \, t^{m\lambda}\,u^m(x_0,t) \, .
		\end{equation*}
Since $c_0$ and $c_1$ are numerical constants, also $c_2$ and $c_3$ can be chosen as such satisfying \eqref{cond-consts-1}, therefore taking the correct power of both sides and using \eqref{deflambdatheta} we conclude
		\begin{equation}\label{final ineq}
			M\leq C\,t^{\frac{N-\gamma}{2-\gamma}}\,[u(x_0,t)]^{\frac{(N-\gamma)(m-1)+2-\gamma}{2-\gamma}} \, ,
		\end{equation}
where, as usual, we have summarized all of the constants depending on $N,m,\gamma,\underline{C},\overline{C}$ by $C>0$. Recalling that $\overline{R}\geq R$ is arbitrary, \eqref{choice for t}, \eqref{cond-consts-1}, and \eqref{final ineq} imply
		\begin{equation}\label{weighted AC lemma est}
			\int_{B_R}u_0\,\rho\,dx\leq C_1\,t^{\frac{N-\gamma}{2-\gamma}}\,[u(x_0,t)]^{1+\frac{N-\gamma}{2-\gamma}(m-1)}
		\end{equation}
		for all
		\begin{equation}\label{weighted AC lemma t}
			t\geq C_2 \, \frac{R^{(N-\gamma)(m-1)+2-\gamma}}{\left(\int_{B_R}u_0\,\rho\,dx\right)^{m-1}}\,,
		\end{equation}
		for some constants $C_1,C_2>0$, with the usual dependencies, to which we give names to avoid confusion.

We are left with establishing \eqref{weighted AC all time est} also for times that do not fulfill \eqref{weighted AC lemma t}. This follows easily by a dichotomy argument. Indeed, if
		\begin{equation}\label{stimes-1}
t < C_2 \, \frac{R^{(N-\gamma)(m-1)+2-\gamma}}{\left(\int_{B_R}u_0\,\rho\,dx\right)^{m-1}} \, ,
\end{equation}
then simply by rearranging terms
		\begin{equation}\label{stimes-2}
			\int_{B_R} u_0 \,\rho \,dx\leq C_2^{\frac{1}{m-1}}\,t^{-\frac{1}{m-1}} \, R^{N-\gamma+\frac{2-\gamma}{m-1}} \, .
		\end{equation}
        Hence, because \eqref{weighted AC lemma est} holds under \eqref{weighted AC lemma t} and \eqref{stimes-2} holds under \eqref{stimes-1}, it is plain that \eqref{weighted AC all time est} is satisfied for all $t>0$.
	\end{proof}
\subsection{From constructed to general solutions}\label{gen-sol}
	Now we need to prove the Aronson--Caffarelli estimate of the form \eqref{weighted AC all time est} for given solutions, not just the constructed  ones dealt with in Subsection \ref{IRes}. To this end, we introduce an approximation scheme, in which the following non-homogeneous Cauchy--Dirichlet problem plays a key role:
	\begin{equation}\label{cd problem}
		\begin{cases}
			\rho \, u_t = \Delta\!\left(u^m \right) & \text{in} \; \; \Omega\times(\tau_1,\tau_2 ) \, , \\
			u^m = g & \text{on} \; \; \partial\Omega\times (\tau_1,\tau_2) \, , \\
			u  = u_0 & \text{on} \; \; \Omega\times \{ \tau_1 \} \, ,
		\end{cases}
	\end{equation}
	for $\Omega$ a bounded smooth domain, $0 \le \tau_1<\tau_2$, and  measurable functions $u_0$ and $g$.
	
	\begin{den}[Very weak solutions to the Cauchy--
Dirichlet problem]\label{vw cd def}
		Let $ \Omega \subset \R^N $ be a bounded smooth domain, $ 0 \le \tau_1<\tau_2 $,  $u_0\in L^1_\rho(\Omega)$, and $g\in L^1_\sigma(\partial\Omega\times(\tau_1,\tau_2))$. We say that a function $ u \in L^m(\Omega\times(\tau_1,\tau_2))\cap L^1_\rho(\Omega\times(\tau_1,\tau_2))$ is a very weak solution of problem \eqref{cd problem} if
		\begin{equation}\label{vw cd eq}
			\int_{\tau_1}^{\tau_2} \int_{\Omega} \left(u \, \psi_t \, \rho + u^m \, \Delta\psi \right) dx dt +\int_\Omega u_0(x)\,\psi(x,\tau_1)\,\rho(x)\,dx= \int_{\tau_1}^{\tau_2} \int_{\partial\Omega} g \, \partial_{\Vec{\mathsf n}}\psi \, d\sigma dt
		\end{equation}
		holds for all test function $\psi\in C^{2,1}\!\left(\overline{\Omega}\times[\tau_1,\tau_2]\right)$ that vanishes on $\left(\partial\Omega\times(\tau_1,\tau_2) \right) \cup \left( \Omega\times\{\tau_2\} \right)$, where $\Vec{\mathsf n}$ is the outer unit normal vector to $\Omega$. If \eqref{vw cd eq} holds with $\leq$ (resp.~$\geq$) instead of $=$ for all non-negative $\psi$ as above, then we say that $u$ is a very weak supersolution (resp.~subsolution) of \eqref{cd problem}.
	\end{den}
	The next lemma establishes that a general solution to \eqref{wpme-noid} always satisfies \eqref{cd problem}, up to choosing proper balls and times.
	\begin{lem}\label{globloc}
		Let $u$ be a very weak solution of equation \eqref{wpme-noid}, in the sense of Definition \ref{def:vw-gl}. Then, for all $\tau_2\in(0,T)$, a.e.~$r>0$, and a.e.~$\tau_1 \in(0,T)$ with $ \tau_1 < \tau_2 $, we have that $u |_{B_r \times(\tau_1,\tau_2)}$ is a very weak solution to \eqref{cd problem}, in the sense of Definition \ref{vw cd def}, with domain $ \Omega=B_r $, initial datum $u_0=u(\tau_1)$, and boundary datum  $g=u^m |_{\partial B_r}$.
	\end{lem}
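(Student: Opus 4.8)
The plan is to derive the Cauchy--Dirichlet formulation \eqref{vw cd eq} directly from the global very weak formulation \eqref{vw-cond}, by testing \eqref{vw-cond} against a regularized version of $\psi\,\chi_{B_r}$ that is additionally cut off in time near $\tau_1$. The starting observation is that, since $u^m\in L^m_{\mathrm{loc}}\subset L^1_{\mathrm{loc}}(\R^N\times(0,T))$, Fubini's theorem in polar coordinates together with the Lebesgue differentiation theorem provides a full-measure set of radii $r>0$ for which both $u^m|_{\partial B_r\times(\tau_1,\tau_2)}\in L^1_\sigma$ for every $\tau_1\in(0,\tau_2)$, and $r$ is a \emph{radial Lebesgue point} of $u^m$, meaning that $\tfrac1\varepsilon\int_{\tau_1}^{\tau_2}\!\int_{B_r\setminus B_{r-\varepsilon}}|u^m(x,t)-u^m(r\tfrac{x}{|x|},t)|\,dx\,dt\to 0$ as $\varepsilon\to0^+$. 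We fix such an $r$, put $g=u^m|_{\partial B_r}$, and recall that testing \eqref{vw-cond} against $\varphi(x)\eta(t)$ shows that $t\mapsto\int_{\R^N}u(t)\,\varphi\,\rho$ admits an absolutely continuous representative for every $\varphi\in C_c(\R^N)$; combining this with $u\in L^1_{\rho,\mathrm{loc}}$ and a countable-dense argument, we select $\tau_1\in(0,\tau_2)$ in a full-measure set such that $u_0:=u(\tau_1)\in L^1_{\rho,\mathrm{loc}}(\R^N)$ and $\tau_1$ is a Lebesgue point of $t\mapsto\int_{B_r}u(t)\,\psi(\cdot,t)\,\rho$ for every admissible $\psi$.

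Given such a $\psi$, first extend it in time to $\overline{B_r}\times(0,\tau_2]$ as a $C^{2,1}$ function and by $0$ for $t>\tau_2$ (which is Lipschitz in $t$, since $\psi(\cdot,\tau_2)=0$); by a routine mollification, \eqref{vw-cond} remains valid for test functions that are $C^2$ in $x$ and Lipschitz in $t$ with compact support in $\R^N\times(0,T)$. Let $\chi_\varepsilon(x)=\theta_\varepsilon(|x|)$, with $\theta_\varepsilon$ smooth, equal to $1$ on $[0,r-\varepsilon]$ and to $0$ on $[r,+\infty)$, and with $\theta_\varepsilon',\theta_\varepsilon''$ vanishing at $r-\varepsilon$ and at $r$; let $\zeta_\delta\in C^\infty(\R)$ vanish on $(-\infty,\tau_1]$ and equal $1$ on $[\tau_1+\delta,+\infty)$. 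Then $\Psi_{\varepsilon,\delta}:=\zeta_\delta\,\psi\,\chi_\varepsilon$ is admissible, and inserting it into \eqref{vw-cond}, using that $\chi_\varepsilon$ does not depend on $t$ and that $\Delta(\psi\chi_\varepsilon)=\chi_\varepsilon\Delta\psi+2\nabla\psi\cdot\nabla\chi_\varepsilon+\psi\Delta\chi_\varepsilon$, yields
\[
\int_0^T\!\!\int_{\R^N}\Big[u\,\partial_t(\zeta_\delta\psi)\,\chi_\varepsilon\,\rho+\zeta_\delta\,u^m\big(\chi_\varepsilon\Delta\psi+2\nabla\psi\cdot\nabla\chi_\varepsilon+\psi\Delta\chi_\varepsilon\big)\Big]dx\,dt=0 .
\]

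Now we pass to the limit, first $\varepsilon\to0^+$ with $\delta$ fixed. The first term and the $\chi_\varepsilon\Delta\psi$-term converge by dominated convergence to the corresponding integrals over $B_r$. The two terms carrying $\nabla\chi_\varepsilon$ and $\Delta\chi_\varepsilon$ live in the thin annulus $B_r\setminus B_{r-\varepsilon}$; there one replaces $u^m(x,t)$ by $u^m(r\tfrac{x}{|x|},t)$ — the error being absorbed precisely by the radial Lebesgue point property of $u^m$ at $r$ — and Taylor-expands the $C^2$ function $\psi$ at $\partial B_r$, using $\psi(r\omega,t)=0$, so that $\psi(s\omega,t)=(s-r)\,\partial_{\vec{\mathsf n}}\psi(r\omega,t)+O((s-r)^2)$; a short computation of the resulting one-dimensional radial integrals (via $\theta_\varepsilon(r-\varepsilon)=1$, $\theta_\varepsilon(r)=0$, $\theta_\varepsilon'(r-\varepsilon)=\theta_\varepsilon'(r)=0$, and an integration by parts for the $\theta_\varepsilon''$ contribution) shows that, when integrated against $u^m$, the term $2\nabla\psi\cdot\nabla\chi_\varepsilon$ contributes $-2\int_{\partial B_r}g\,\partial_{\vec{\mathsf n}}\psi\,d\sigma$ in the limit, and $\psi\Delta\chi_\varepsilon$ contributes $+\int_{\partial B_r}g\,\partial_{\vec{\mathsf n}}\psi\,d\sigma$, for a net boundary contribution $-\int_{\partial B_r}g\,\partial_{\vec{\mathsf n}}\psi\,d\sigma$. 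This is the delicate step, and the reason the statement is restricted to \emph{a.e.}\ $r$. Finally we let $\delta\to0$: writing $\partial_t(\zeta_\delta\psi)=\zeta_\delta'\psi+\zeta_\delta\psi_t$, the $\zeta_\delta\psi_t$-term and the now $\varepsilon$-free $u^m$- and boundary-terms converge by dominated convergence to integrals over $(\tau_1,\tau_2)$, while $\int_\R\zeta_\delta'(t)\big(\int_{B_r}u(t)\,\psi(\cdot,t)\,\rho\big)dt\to\int_{B_r}u_0\,\psi(\cdot,\tau_1)\,\rho$ because $\tau_1$ is a Lebesgue point of the bracketed function and $\zeta_\delta'\ge0$ has unit integral concentrating at $\tau_1$; there is no boundary term at $\tau_2$ since $\psi(\cdot,\tau_2)=0$. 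Collecting all the limits gives precisely \eqref{vw cd eq}, and the memberships $u\in L^m\cap L^1_\rho(B_r\times(\tau_1,\tau_2))$, $u_0\in L^1_\rho(B_r)$, $g\in L^1_\sigma(\partial B_r\times(\tau_1,\tau_2))$ required by Definition \ref{vw cd def} follow from the choice of $r$ and $\tau_1$. The hard part is exactly the $\varepsilon\to0$ analysis of the cutoff-derivative terms against the merely $L^1$ density $u^m$, which is handled by the radial Lebesgue point reduction combined with the smoothness of $\psi$ and its vanishing on $\partial B_r$.
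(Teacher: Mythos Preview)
Your proof is correct and follows essentially the same strategy as the paper's: choose $r$ to be a radial Lebesgue point of $u^m$ and $\tau_1$ a temporal Lebesgue point of $u$ in $L^1_{\rho,\mathrm{loc}}$, insert $\psi$ times a radial spatial cutoff and a time cutoff into \eqref{vw-cond}, and identify the boundary term as the net contribution $(-2)+1=-1$ from the $2\nabla\psi\cdot\nabla\chi_\varepsilon$ and $\psi\,\Delta\chi_\varepsilon$ pieces. The paper packages the two cutoffs into a single parabolic cutoff $\varphi_n$ (supported in $B_{r-1/n}\times(\tau_1+1/n,\tau_2-1/n)$) and sends $n\to\infty$, and it also cuts off near $\tau_2$ rather than extending $\psi$ by zero, but these are cosmetic differences; the computation of the two limit contributions on $\partial B_r$ is identical. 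One small simplification: your countable-dense argument for $\tau_1$ is unnecessary, since taking $\tau_1$ to be a Lebesgue point of $t\mapsto u(t)$ in $L^1_\rho(B_r)$ (the paper's condition \eqref{Lebesgue2}) already gives the convergence $\int\zeta_\delta'\big(\int_{B_r}u\,\psi\,\rho\big)\to\int_{B_r}u_0\,\psi(\cdot,\tau_1)\,\rho$ for every continuous bounded $\psi$.
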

	\begin{proof}
		It suffices to apply the strategy of proof of \cite[Proposition 3.1]{GMP2}. Since we are working in a Euclidean rather than a Riemannian framework, the arguments therein can even be slightly simplified. The only additional factor is the presence of the weight $\rho$, which however does not play a major role here. We provide anyway a concise proof for the reader's convenience.
		
		Fix any $0<\tau_1<\tau_2 < T$. Since $u\in  L^m_{\mathrm{loc}}\!\left(\R^N\times(0,T)\right)$, we have that for a.e.\ $r>0$ (independently of $\tau_1, \tau_2$) the trace  $u^m |_{\partial B_r }$ is well defined in $ L^1_\sigma\!\left(\partial B_r\times(\tau_1,\tau_2)\right)$ as a Lebesgue value, in the sense that
		\begin{equation}\label{Lebesgue}
			\lim_{\varepsilon \to 0}    \frac{\int_{r-\varepsilon}^{r+\varepsilon} \int_{\mathbb{S}^{N-1}}  \int_{\tau_1}^{\tau_2} \left|u^m(\varrho,\theta,t)  - u^m |_{\partial B_r}(\theta,t) \right| dt d\theta \, \varrho^{N-1} d\varrho }{\varepsilon} = 0 \, ,
		\end{equation}
		where $ d \theta $ stands for the volume measure on the unit sphere $ \mathbb{S}^{N-1} $. Similarly, fix any $r>0$. Since $u\in L^1_{\rho,\mathrm{loc}}\!\left(\R^N\times(0,T)\right)$, for a.e.\ $\tau_1\in (0,T)$ (independently of $r$) we have that $u(\tau_1)\in L^1_\rho(B_r)$ and
		\begin{equation}\label{Lebesgue2}
			\lim_{\varepsilon \to 0}  \frac{\int_{\tau_1-\varepsilon}^{\tau_1+\varepsilon}  \int_{B_r} \left|u(x,t)  - u(x,\tau_1) \right| \rho(x) \, dx dt  }{\varepsilon} = 0 \, .
		\end{equation}	
		From now on, we will assume that $ r>0 $ and $ \tau_1 \in (0,\tau_2) $ are such that both \eqref{Lebesgue} and \eqref{Lebesgue2} hold. In order to prove  \eqref{vw cd eq}, the first step is to use in \eqref{vw-cond} the test function $\psi_n=\psi \, \varphi_n$, where $\psi$ is a test function as in Definition \ref{vw cd def} and $\varphi_n \in C^\infty_c\!\left( \R^N \times (0,T) \right)$, with $0\leq\varphi_n\leq1$, is a family of parabolic radial cutoff functions constructed in such a way that
		\begin{equation}\label{bdd-cutoff-cond}
			\begin{aligned}
				\varphi_n & = 0 \qquad \text{in}\;\; \left( B^c_{r-\frac{1}{n}}\times(\tau_1,\tau_2)\right) \cup \left( \R^N \times \left(0,\tau_1+\tfrac{1}{n} \right) \right) \cup \left( \R^N \times \left(\tau_2-\tfrac{1}{n} , T \right) \right)  , \\
				\varphi_n & = 1 \qquad \text{in}\;\; B_{r-\frac{2}{n}} \times \left( \tau_1+\tfrac{2}{n},\tau_2-\tfrac2n \right) ,
			\end{aligned}
		\end{equation}
		and satisfies
		\begin{equation}\label{bdd-cutoff-cond-bis}
			|\nabla\varphi_n|+\tfrac{1}{n}\, |\Delta\varphi_n|\leq c \, n \, \chi_{ \Big( B_{r-\frac 1 n} \setminus B_{r-{\frac 2 n}} \Big) \times (\tau_1,\tau_2) } \, , \quad |\partial_t\varphi_n| \le c \, n \, \chi_{ B_r \times \left( \left(\tau_1+\frac{1}{n},\tau_1+\frac{2}{n}\right) \cup \left(\tau_2-\frac{2}{n},\tau_2-\frac{1}{n}\right)  \right)} \, ,
		\end{equation}
		for all $n \in \mathbb{N}$ sufficiently large and some $c>0$ independent of $n$. Note that, by construction, $\psi\,\varphi_n$ smoothly vanishes on the whole boundary of $ B_r\times(\tau_1,\tau_2)$, so we tacitly consider its natural extension to all of $\R^N\times(0,T)$ as a test function in \eqref{vw-cond} (this is feasible up to the plain fact that \eqref{vw-cond} also holds for all test functions in $ C_c^{2,1}\!\left( \R^N \times (0,T) \right) $).
		
		For the time-derivative term, we obtain
		\begin{equation*}\label{td-1}
			\begin{aligned}
				& \int_{0}^{T} \int_{\mathbb{R}^N} u \, \partial_t\psi_n \, \rho\,dxdt=\int_{0}^{T} \int_{\mathbb{R}^N} u \, \psi_t\,\varphi_n \, \rho\,dxdt+\int_{0}^{T} \int_{\mathbb{R}^N} u \, \psi\,\partial_t\varphi_n \, \rho\,dxdt\\
				= & \underbrace{\int_{\tau_1}^{\tau_2} \int_{B_r} u \, \psi_t\,\varphi_n \, \rho\,dxdt}_{I_a}+\underbrace{\int_{\tau_1+\frac{1}{n}}^{\tau_1+\frac{2}{n}} \int_{B_{r}} u \, \psi\,\partial_t\varphi_n \, \rho\,dxdt}_{I_b}+\underbrace{\int_{\tau_2-\frac{2}{n}}^{\tau_2-\frac{1}{n}} \int_{B_{r}} u \, \psi\,\partial_t\varphi_n \, \rho\,dxdt}_{I_c} \, .
			\end{aligned}
		\end{equation*}
		Clearly, $I_a$ converges as $n\to\infty$ to the first term on the left-hand side of \eqref{vw cd eq}. As concerns the term $I_b$, taking advantage of \eqref{Lebesgue2}, \eqref{bdd-cutoff-cond}, and \eqref{bdd-cutoff-cond-bis} we notice that
		\begin{equation*}\label{t-conv}
        \begin{aligned}
			& \lim_{n\to \infty} \int_{\tau_1+\frac{1}{n}}^{\tau_1+\frac{2}{n}} \int_{B_{r}} u \, \psi\,\partial_t\varphi_n \, \rho\,dxdt = \lim_{n\to \infty} \int_{\tau_1+\frac{1}{n}}^{\tau_1+\frac{2}{n}} \int_{B_{r}} u(\tau_1) \, \psi\,\partial_t\varphi_n \, \rho\,dxdt \\
            = &  \lim_{n\to \infty} \int_{B_{r}} u(\tau_1) \, \psi\!\left(\tau_1+\tfrac 2 n\right) \varphi_n\!\left(\tau_1+\tfrac 2 n\right)  \rho\,dx - \lim_{n\to \infty} \int_{\tau_1+\frac{1}{n}}^{\tau_1+\frac{2}{n}} \int_{B_{r}} u(\tau_1) \, \psi_t \, \varphi_n \, \rho\,dxdt  \\
            = & \int_{B_{r}} u(\tau_1) \, \psi\!\left(\tau_1\right)  \rho\,dx \, .
            \end{aligned}
		\end{equation*}
As for $I_c$, it easy to check that it tends to zero as $n\to\infty$ thanks to the fact that $ \{ \psi \, \partial_t \varphi_n \} $ stays uniformly bounded in $ B_r \times \left(\tau_2-{2}/{n},\tau_2-{1}/{n}\right) $ (recall that $ \psi(\tau_2) = 0$).
		
		The spatial-derivative terms are handled in an analogous way, observing that
		\begin{equation*}\label{xx-conv}
			\int_{0}^{T} \int_{\mathbb{R}^N}  u^m \, \Delta \psi_n  \, dx dt = \int_{\tau_1}^{\tau_2} \int_{B_r}  u^m \left(\Delta\psi\,\varphi_n+2\left\langle \nabla\psi  , \nabla\varphi_n \right\rangle +\psi\,\Delta\varphi_n\right) dx dt \, .
		\end{equation*}
		Again, it is trivial that
		\begin{equation*}	\label{x-conv}
\lim_{n \to \infty}	\int_{\tau_1}^{\tau_2} \int_{B_r}  u^m \, \Delta\psi \, \varphi_n\,dxdt = \int_{\tau_1}^{\tau_2} \int_{B_r}  u^m \,\Delta\psi\,dxdt \, .
		\end{equation*}
		The remaining two terms, summed together, converge exactly to the negative right-hand side of \eqref{vw cd eq}. The corresponding computations are similar to the just stated time-derivative term, and follow closely the argument of the proof of \cite[Proposition 3.1]{GMP2}, taking advantage of \eqref{Lebesgue}; in fact, since $ \partial_\varrho = \partial_{\Vec{\mathsf n}} $, one can show that
        $$
\lim_{n \to \infty}2 \int_{\tau_1}^{\tau_2} \int_{B_r}  u^m  \left\langle \nabla\psi , \nabla\varphi_n \right\rangle dx dt   =  -2 \, r^{N-1} \int_{\tau_1}^{\tau_2} \int_{\mathbb{S}^{N-1}} u^m |_{\partial B_r} \, \partial_{\Vec{\mathsf n}}\psi \, d\theta dt
        $$
        and
        $$
       \lim_{n \to \infty}  \int_{\tau_1}^{\tau_2} \int_{B_r}  u^m  \, \psi\,\Delta\varphi_n \, dx dt  = r^{N-1} \int_{\tau_1}^{\tau_2} \int_{\mathbb{S}^{N-1}} u^m |_{\partial B_r} \, \partial_{\Vec{\mathsf n}}\psi \, d\theta dt \, ,
        $$
		whence
		\begin{equation*}\label{s-conv}
			\lim_{n\to \infty}\int_{0}^{T} \int_{\mathbb{R}^N}  u^m \, \Delta \psi_n   \, dx dt = \int_{\tau_1}^{\tau_2} \int_{B_r}  u^m \, \Delta\psi \, dxdt -\int_{\tau_1}^{\tau_2}\int_{\partial B_r} u^m |_{\partial B_r } \, \partial_{\Vec{\mathsf n}}\psi\,d\sigma dt \, .
		\end{equation*}
Combining all of the above identities and passages to the limit concludes the proof.
	\end{proof}

    \begin{rem}\label{loc-vw-bdry}\rm
        From the above proof, it is plain that the result continues to hold if $ u $ is a \emph{local} very weak solution to  \eqref{wpme-noid-loc} (relabeling $ \tau_1 $ and $\tau_2$ to avoid confusion), in the sense of Definition \ref{vw loc def}, and in the place of arbitrary balls $ B_r $ one takes $ B_r(x_0) $, for all $ x_0 \subset \Omega $ and a.e.\ $ r>0 $ such that $ B_r(x_0) \Subset \Omega $.
    \end{rem}

 	We now prove a fundamental local comparison property, by adapting duality arguments that are by now well established. In order to avoid an overly technical proof, we carry out a simplified argument that works under the additional assumption that $ \gamma <N/2 $, which becomes a real restriction only if $N=3$. For similar results and tools, see \cite[Theorem 6.5]{Vazquez} for the unweighted equation and \cite[Proposition 4.1]{MP} for the weighted equation in the whole $\R^N$; in particular, in the latter, all the details on how to get rid of the constraint $\gamma < N/2$ are provided (this involves removing a small ball $ B_\delta $ from $\Omega$).

	\begin{pro}[Local comparison for very weak sub/supersolutions]\label{local comparison lemma}
Let $ \Omega \subset \R^N $ be a bounded smooth domain, $ 0 \le \tau_1<\tau_2 $,  $u_0\in L^\infty(\Omega)$, and $g \in L^\infty_\sigma(\partial\Omega\times(\tau_1,\tau_2))$. Let $u \in L^\infty(\Omega \times (\tau_1,\tau_2))$ and $v \in L^\infty(\Omega \times (\tau_1,\tau_2))$ be a very weak subsolution and a very weak supersolution to \eqref{cd problem}, respectively, in the sense of Definition \ref{vw cd def}. Then $u\leq v$ in $\Omega\times(\tau_1,\tau_2)$.
	\end{pro}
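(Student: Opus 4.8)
The plan is to run the classical Holmgren--Oleinik duality argument for very weak solutions, adapted to the presence of the weight $\rho$. Set $w:=u-v$. Since $u,v$ are bounded and share the same initial datum $u_0$ and boundary datum $g$, it suffices to show
\[
\int_{\tau_1}^{\tau_2}\!\!\int_\Omega w\,\rho\,\chi\,dx\,dt\le 0 \qquad \text{for every }\ 0\le\chi\in C^\infty_c\!\left(\Omega\times(\tau_1,\tau_2)\right),
\]
because $\rho>0$ a.e.\ then forces $w\le 0$, i.e.\ $u\le v$. First I would subtract the supersolution inequality for $v$ from the subsolution inequality for $u$ in \eqref{vw cd eq}: the lateral term involving $g$ and the initial term involving $u_0$ cancel exactly, leaving
\[
\int_{\tau_1}^{\tau_2}\!\!\int_\Omega \left(w\,\psi_t\,\rho+(u^m-v^m)\,\Delta\psi\right) dx\,dt\ \ge\ 0
\]
for every non-negative admissible test function $\psi$. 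Then I would linearize by writing $u^m-v^m=a\,w$, where $a:=(u^m-v^m)/(u-v)$ on $\{u\neq v\}$ and $a:=0$ elsewhere, so that $0\le a\le C$ with $C=C(m,\|u\|_{L^\infty},\|v\|_{L^\infty})$ (this is where the boundedness of $u,v$ is used).

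Since $a$ is merely measurable and may degenerate, the next step is to regularize it: choose smooth $b_\delta$ with $0\le b_\delta\le C$ and $b_\delta\to a$ in $L^1\!\left(\Omega\times(\tau_1,\tau_2)\right)$, and set $a_\varepsilon:=b_{\delta(\varepsilon)}+\varepsilon$ for a suitable $\delta(\varepsilon)\to 0$; thus $a_\varepsilon$ is smooth with $\varepsilon\le a_\varepsilon\le C+1$. The core of the argument is the backward dual problem
\[
\rho\,\psi_t+a_\varepsilon\,\Delta\psi=-\rho\,\chi \quad \text{in }\Omega\times(\tau_1,\tau_2), \qquad \psi=0 \ \text{on }\partial\Omega\times(\tau_1,\tau_2), \qquad \psi(\cdot,\tau_2)=0 .
\]
Reversing time turns this into a forward parabolic problem, degenerate only at the origin, with diffusion $a_\varepsilon/\rho$ and source $\chi\ge 0$; under the assumption $\gamma<N/2$, equivalently $\rho\in L^2_{\mathrm{loc}}(\Omega)$, it can be solved within the standard weighted $L^2$-framework \emph{without} excising a small ball around the origin, which is the device needed in the general case, cf.\ \cite[Proposition 4.1]{MP}. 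By the maximum principle its solution $\psi_\varepsilon$ satisfies $0\le\psi_\varepsilon\le (\tau_2-\tau_1)\,\|\chi\|_{L^\infty}$ and enjoys enough Sobolev regularity (the weight being kept on the zeroth-order side of the equation) to be used, after a routine smoothing, as an admissible test function in the inequality above.

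The key quantitative input comes from testing the dual equation against $\Delta\psi_\varepsilon$ and integrating by parts in space and time, using $\nabla\psi_\varepsilon(\cdot,\tau_2)=0$ and moving the Laplacian onto $\chi$ on the right-hand side; this gives the bound
\[
\int_{\tau_1}^{\tau_2}\!\!\int_\Omega \frac{a_\varepsilon}{\rho}\,(\Delta\psi_\varepsilon)^2\,dx\,dt\ \le\ (\tau_2-\tau_1)\,\|\chi\|_{L^\infty}\,\|\Delta\chi\|_{L^1}\ =:\ C_\chi ,
\]
uniformly in $\varepsilon$. Substituting $\psi=\psi_\varepsilon$ into the subtracted inequality, the main term reproduces $-\int_{\tau_1}^{\tau_2}\!\int_\Omega w\,\rho\,\chi$, while the remainder is estimated by Cauchy--Schwarz:
\[
\left|\int_{\tau_1}^{\tau_2}\!\!\int_\Omega w\,(a_\varepsilon-a)\,\Delta\psi_\varepsilon\,dx\,dt\right| \le \|w\|_{L^\infty}\left(\int_{\tau_1}^{\tau_2}\!\!\int_\Omega\frac{(a_\varepsilon-a)^2}{a_\varepsilon}\,\rho\,dx\,dt\right)^{1/2} C_\chi^{1/2} ,
\]
and the first factor tends to $0$ by the pointwise inequality $(a_\varepsilon-a)^2/a_\varepsilon\le 4C\,|b_{\delta(\varepsilon)}-a|/\varepsilon+2\varepsilon$, the calibration $\int_{\tau_1}^{\tau_2}\!\int_\Omega|b_{\delta(\varepsilon)}-a|\,\rho\,dx\,dt\le\varepsilon^2$ (possible because $b_\delta\to a$ in $L^1$ with $|b_\delta-a|\le 2C$, together with the absolute continuity of $E\mapsto\int_E\rho$), and $\int_{\tau_1}^{\tau_2}\!\int_\Omega\rho\,dx\,dt<\infty$. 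Letting $\varepsilon\to 0$ yields $\int_{\tau_1}^{\tau_2}\!\int_\Omega w\,\rho\,\chi\,dx\,dt\le 0$, hence $u\le v$.

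The hard part will be the construction and a priori control of the dual solution $\psi_\varepsilon$: since $\rho$ is only measurable it cannot be differentiated, and after reversing time the diffusion $a_\varepsilon/\rho$ degenerates at $x=0$, so one has to produce $\psi_\varepsilon\ge 0$ with the regularity and the uniform estimate above while keeping the weight on the time-derivative side of the equation. The assumption $\gamma<N/2$ is exactly what keeps everything inside the plain weighted $L^2$-theory; for $\gamma\in[N/2,2)$ --- which can happen only when $N=3$ --- one instead removes a small ball $B_\delta$ around the origin, argues on $\Omega\setminus B_\delta$ where $\rho$ is bounded above and below, and passes to the limit $\delta\to 0$, exactly as in \cite[Proposition 4.1]{MP}. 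A minor additional technicality is the simultaneous tuning of the mollification scale and the ellipticity parameter $\varepsilon$ that makes the error term above vanish.
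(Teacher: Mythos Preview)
Your proposal is correct and follows the same Holmgren--Oleinik duality scheme as the paper, with two variations worth noting. First, a cosmetic one: you encode the arbitrary non-negative target as a space--time forcing $\chi$ with zero terminal datum at $\tau_2$, whereas the paper fixes an almost-every terminal time $\tau$ and prescribes a purely spatial final datum $\omega\in C^\infty_c(\Omega)$ with no forcing; both lead to the same energy identity for $\int a_\varepsilon\rho^{-1}(\Delta\psi)^2$. Second, and more substantively: in the dual problem you regularize only $a$ and keep $\rho$ untouched, while the paper additionally replaces $\rho$ by smooth approximants $\rho_n\to\rho$ in $L^2_{\mathrm{loc}}$ (this is where $\gamma<N/2$ enters for them). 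The payoff of the paper's choice is that the approximate dual solution $\psi_n$ is genuinely $C^{2,1}$ up to the boundary and can be plugged directly into Definition~\ref{vw cd def}, at the price of a second error term $\int\!\!\int \chi\,\partial_t\psi_n\,(\rho-\rho_n)$ controlled by the same energy identity. Your route avoids that term, but then your $\psi_\varepsilon$ is \emph{not} $C^{2,1}$ (since $\rho$ is merely measurable), and the ``routine smoothing'' you invoke to make it an admissible test function is exactly the non-trivial step: carrying it out carefully amounts either to approximating $\rho$ after all, or to enlarging the test class by density in a norm where $\psi_t\in L^2_\rho$ and $\Delta\psi\in L^2$, which is doable but not quite routine. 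In short, both approaches are valid; the paper's is more transparent about the regularity bookkeeping.
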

	\begin{proof}
	It suffices to prove that $u(\tau)\leq v(\tau)$ for a.e.~$\tau\in(\tau_1,\tau_2)$. To begin, we notice that $u-v$ satisfies the following inequality for a.e.~$\tau\in(\tau_1,\tau_2)$ and all non-negative $\psi \in C^{2,1}\!\left(\overline{\Omega}\times[\tau_1,\tau]\right)$ that vanishes on $\partial\Omega\times(\tau_1,\tau) $:
		\begin{equation}\label{subsuperineq}
			\int_{\Omega}(u(\tau)-v(\tau))\,\psi(\tau)\,\rho\,dx\leq \int_{\tau_1}^{\tau} \int_{\Omega} \left[ (u-v) \, \psi_t \, \rho + \left(u^m-v^m\right) \Delta\psi \right] dx dt \, ,
		\end{equation}
which is simply obtained by subtracting the very weak formulations satisfied by $u$ and $v$, respectively, and using a similar time cutoff argument to the proof of Lemma \ref{globloc}.

		Next, defining the function
		\begin{equation*}\label{defa}
			a=
			\begin{cases}
				\frac{u^m-v^m}{u-v} \quad&\text{if}\;\;u\neq v\, ,\\
				0 &\text{if}\;\;u=v\,,
			\end{cases}
		\end{equation*}
		which is readily seen to be bounded, we can rewrite \eqref{subsuperineq} as
		\begin{equation*}
			\int_{\Omega}(u(\tau)-v(\tau))\,\psi(\tau)\,\rho\,dx\leq \int_{\tau_1}^{\tau} \int_{\Omega} (u-v) \, ( \psi_t \, \rho + a \, \Delta\psi) \, dx dt \, .
		\end{equation*}
		Choosing an arbitrary non-negative function $\omega\in C^\infty_c(\Omega)$, we would ideally like to use as $\psi$ the solution of the dual (backward) problem
		\begin{equation}\label{BP}
			\begin{cases}
				-\rho\,\psi_t = a\,\Delta \psi&\text{in}\;\;\Omega\times(\tau_1,\tau)\,,\\
				\psi = 0 &\text{on}\;\;\partial\Omega\times(\tau_1,\tau)\,,   \\
				\psi=\omega&\text{on}\;\;\Omega\times\{\tau\}\,,
			\end{cases}
		\end{equation}
		which would immediately yield
		\begin{equation*}
			\int_{\Omega}(u(\tau)-v(\tau))\,\omega\,\rho\,dx\leq 0\,,
		\end{equation*}
		and thus the proof would be complete owing to the arbitrariness of $\tau$ and $\omega$. However, \emph{a priori} such a $\psi$ fails to be in the class $C^{2,1}\!\left(\overline{\Omega}\times[\tau_1,\tau]\right)$, so it is necessary to smoothly approximate both $\rho$ and $a$ (positively lifting the latter).
		
		To this end, we define a sequence of smooth and positive weights $\{\rho_n\} \subset C^\infty\!\left(\R^N\right)$ satisfying
		\begin{equation}\label{rhon-rho}
			\rho_n \underset{n \to \infty}{\longrightarrow} \rho \qquad \text{in} \;\; L^2_{\mathrm{loc}}\!\left( \R^N \right) ,
            \end{equation}
		which can be constructed \emph{e.g.}\ by standard mollification methods, under the additional constraint $ \gamma < N/2 $. Likewise, we consider a sequence of smooth and positive functions $ \{a_n\} \subset C^\infty\!\left(\overline{\Omega} \times [\tau_1,\tau_2] \right)$, which will be made more precise in the final step of the proof. Now, for each $n \in \mathbb{N}$, we pick as a test function in \eqref{subsuperineq} the solution $\psi_n$ of the following approximate version of \eqref{BP}:
\begin{equation}\label{approx-prob}
			\begin{cases}
				\rho_n\,\partial_t\psi_n+a_n\,\Delta \psi_n=0&\text{in}\;\;\Omega\times(\tau_1,\tau)\,,\\
				\psi_n = 0 &\text{on}\;\;\partial\Omega \times(\tau_1,\tau)\,,   \\
				\psi_n=\omega &\text{on}\;\;\Omega\times\{\tau\}\,.
			\end{cases}
		\end{equation}
It is standard that $ \psi_n $ is smooth up to the boundary and positive in $ \Omega \times (\tau_1,\tau) $. Then we have, labeling $\chi=u-v$,
\begin{equation}\label{subsuperineq2}
		\int_{\Omega} \chi(\tau)\,\omega\,\rho\,dx\leq \int_{\tau_1}^{\tau} \int_{\Omega} \chi \left[\partial_t\psi_n \, (\rho-\rho_n) + (a-a_n) \, \Delta\psi_n \right] dx dt \,.
		\end{equation}
The proof is completed if we can show that the right-hand side of \eqref{subsuperineq2} vanishes upon taking $n\to\infty$. To this end, the key technical tool is the following energy equality:
		\begin{equation*}\label{ener-eq}
			\frac{1}{2}\int_{\Omega} \left|\nabla\psi_n(\tau_1)\right|^2 dx+\int_{\tau_1}^{\tau}\int_{\Omega} \left|\Delta\psi_n\right|^2 \frac{a_n}{\rho_n}\,dxdt=\frac{1}{2}\int_{\Omega} \left|\nabla\omega\right|^2 dx\,,
		\end{equation*}
		which follows from dividing the differential equation in \eqref{approx-prob} by $\rho_n$, multiplying by $\Delta\psi_n$, and integrating by parts. Let us estimate the first summand on the right-hand side of \eqref{subsuperineq2}:
		\begin{equation*}
			\begin{aligned}
				& \left| \int_{\tau_1}^{\tau} \int_{\Omega} \chi\,\partial_t  \psi_n\,(\rho-\rho_n)\,dxdt \right|\\
				\leq & \norm{\chi}_{L^\infty(\Omega\times(\tau_1,\tau_2))}\left(\int_{\tau_1}^{\tau} \int_{\Omega}(\rho-\rho_n)^2 \, \frac{a_n}{\rho_n} \,dxdt\right)^{\frac{1}{2}}\left(\int_{\tau_1}^{\tau} \int_{\Omega} \left|\Delta\psi_n\right|^2\frac{a_n}{\rho_n} \, dxdt\right)^{\frac{1}{2}} ,
			\end{aligned}
		\end{equation*}
		where we have used H\"{o}lder's inequality and, again, the differential equation in \eqref{approx-prob}. Clearly, it is possible to construct $  \{ \rho_n \}$ and $\{a_n\}$ in such a way that the ratio $\frac{a_n}{\rho_n}$ is bounded above independently of $n$, so we find that
		\begin{equation*}
\left| \int_{\tau_1}^{\tau} \int_{\Omega} \chi\,\partial_t  \psi_n\,(\rho-\rho_n)\,dxdt \right|\leq C \left(\int_{\tau_1}^{\tau} \int_{\Omega}(\rho-\rho_n)^2 \,dxdt\right)^{\frac{1}{2}} ,
		\end{equation*}
		for some constant $C>0$ independent of $n$. The second summand can be treated in a similar way to obtain an analogous bound, leading to
		\begin{equation}\label{minrho}
	\int_{\Omega} \chi(\tau)\,\omega\,\rho\,dx \leq C \left[ \left(\int_{\tau_1}^{\tau} \int_{\Omega}(\rho-\rho_n)^2 \, dxdt\right)^{\frac{1}{2}} + \left(\int_{\tau_1}^{\tau} \int_{\Omega}\frac{(a-a_n)^2}{a_n} \, \rho_n \, dxdt\right)^{\frac{1}{2}} \right] .
		\end{equation}
    It is possible to take $\rho_n$ in such a way that, in addition to \eqref{rhon-rho}, it satisfies $ \rho_n \le C |x|^{-\gamma} $, and by reasoning as in the final passages of the proof of \cite[Proposition 4.1]{MP} it is also possible to construct $a_n$ so that $ (a_n-a)/a_n \to 0 $ in $ L^2_{\gamma}(\Omega \times (\tau_1,\tau_2)) $. Letting $ n \to \infty $ in \eqref{minrho} we end up with
		\begin{equation*}
			\int_{\Omega} \chi(\tau)\,\omega\,\rho\,dx \le 0 \, ,
            \end{equation*}
            which completes the proof since $\omega$ and $\tau$ (a.e.\ in $ (\tau_1,\tau_2) $) are arbitrary.
	\end{proof}

	
        \begin{cor}[Global comparison for non-negative very weak solutions]\label{global comparison lemma}
Let $u$ be a non-negative very weak solution of equation \eqref{wpme-noid}, in the sense of Definition \ref{def:vw-gl}, such that $ u \in L^\infty_{\mathrm{loc}}\!\left( \R^N\times (0,T) \right) $. Take any $ \tau_1 \in (0,T) $ as in Lemma \ref{globloc}. Let $ v $ be the weak energy solution of problem \eqref{wpme-funid}, in the sense of Definition \ref{def2}, with initial datum $v_0\in L^1_\rho\!\left(\mathbb{R}^N\right) \cap L^\infty\!\left(\R^N\right)$. If $0\leq v_0\leq u(\tau_1)$, then $v \leq u(t+\tau_1)$ in $ \R^N \times (0,T-\tau_1) $.
	\end{cor}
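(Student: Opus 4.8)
The plan is to localize the comparison on large balls and to apply the local comparison Proposition~\ref{local comparison lemma}, after approximating the weak energy solution $v$ by solutions of homogeneous Cauchy--Dirichlet problems. Write $\tilde u:=u(\cdot+\tau_1)$, which is a non-negative, locally bounded very weak solution of \eqref{wpme-noid} on $\R^N\times(0,T-\tau_1)$; by Corollary~\ref{very weak strong energy} it belongs to $C\big((0,T-\tau_1);L^1_{\rho,\mathrm{loc}}(\R^N)\big)$, and since $\tau_1$ is a good time in the sense of Lemma~\ref{globloc}, $\tilde u$ is right-continuous at $t=0$ in $L^1_{\rho,\mathrm{loc}}$ with $\tilde u(0)=u(\tau_1)$. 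On the other hand, I recall from the standard construction of weak energy solutions (see \cite{MP}) that $v=\lim_{r\to+\infty}v_r$ monotonically, and in $L^1_{\rho,\mathrm{loc}}(\R^N\times(0,\infty))$, where $v_r$ is the unique bounded non-negative very weak solution of the homogeneous Cauchy--Dirichlet problem \eqref{cd problem} on $\Omega=B_r$ with $g\equiv0$ and initial datum $v_0\,\chi_{B_r}$; in particular $0\le v_r\le\|v_0\|_{L^\infty(\R^N)}$.

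Fix $\tau_2\in(0,T-\tau_1)$ and any radius $r>0$ in the full-measure set for which Lemma~\ref{globloc} applies to $\tilde u$ on the window $(0,\tau_2)$. By that lemma --- applied to $\tilde u$ with initial time $0$, which is legitimate thanks to the right-continuity noted above --- the restriction $\tilde u|_{B_r\times(0,\tau_2)}$ is a very weak solution, hence in particular a very weak supersolution, of problem \eqref{cd problem} on $\Omega=B_r$ with initial datum $u(\tau_1)\in L^\infty(B_r)$ and boundary datum $g=\tilde u^m|_{\partial B_r}\in L^\infty_\sigma(\partial B_r\times(0,\tau_2))$; note that $g\ge0$ because $u\ge0$. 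I then claim that $v_r|_{B_r\times(0,\tau_2)}$ is a very weak \emph{subsolution} of the \emph{same} Cauchy--Dirichlet problem. Indeed, $v_r$ satisfies the very weak Cauchy--Dirichlet formulation \eqref{vw cd eq} with equality, with its own zero boundary datum and initial datum $v_0\,\chi_{B_r}$; subtracting this identity from the subsolution test inequality for the reference problem, the latter reduces, for every admissible non-negative test function $\psi$, to
\[
\int_{B_r}\big(u(\tau_1)-v_0\,\chi_{B_r}\big)\,\psi(\cdot,0)\,\rho\,dx\;\ge\;0\;\ge\;\int_0^{\tau_2}\!\!\int_{\partial B_r} g\,\partial_{\vec n}\psi\,d\sigma\,dt\,,
\]
where the left-hand inequality uses $0\le v_0\,\chi_{B_r}\le v_0\le u(\tau_1)$ a.e.\ together with $\psi\ge0$, and the right-hand one uses $g\ge0$ and $\partial_{\vec n}\psi\le0$ on $\partial B_r$ (the outward normal derivative of a non-negative function vanishing on the boundary is non-positive). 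Since both $v_r$ and $\tilde u$ lie in $L^\infty(B_r\times(0,\tau_2))$, Proposition~\ref{local comparison lemma} gives $v_r\le\tilde u$ a.e.\ in $B_r\times(0,\tau_2)$.

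Letting $r\to+\infty$ along admissible radii (so $v_r\to v$) yields $v\le\tilde u$ a.e.\ in $\R^N\times(0,\tau_2)$, and then letting $\tau_2\uparrow T-\tau_1$ gives $v(t)\le u(t+\tau_1)$ a.e.\ in $\R^N\times(0,T-\tau_1)$, as claimed. The one genuinely non-routine ingredient --- and thus the main obstacle to a direct argument --- is the observation that \emph{no ordering between the lateral boundary traces of $v_r$ and of $\tilde u$ on $\partial B_r$ is needed}: by choosing the approximants $v_r$ with vanishing boundary datum and with initial datum below $u(\tau_1)$, the defect in the Cauchy--Dirichlet formulation has precisely the sign that makes $v_r$ a very weak subsolution of the problem solved by $\tilde u$ on each ball, so that Proposition~\ref{local comparison lemma} applies verbatim. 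A secondary, purely technical point is the use of Lemma~\ref{globloc} for $\tilde u$ at initial time $t=0$, which is justified by the $L^1_{\rho,\mathrm{loc}}$-continuity of $u$ up to $\tau_1$ (Corollary~\ref{very weak strong energy}), or, if one prefers to avoid invoking that corollary, directly by the Lebesgue-point property defining the good time $\tau_1$.
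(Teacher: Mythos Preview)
Your proof is correct and follows essentially the same route as the paper: approximate $v$ by the Cauchy--Dirichlet solutions $v_r$ on $B_r$ with zero boundary datum and initial datum $v_0\,\chi_{B_r}$, use Lemma~\ref{globloc} to identify $\tilde u$ as a (super)solution of the corresponding Cauchy--Dirichlet problem, apply the local comparison Proposition~\ref{local comparison lemma}, and let $r\to+\infty$. The only difference is presentational: you spell out in detail why $v_r$ is a very weak \emph{subsolution} of the problem with $\tilde u$'s initial and boundary data (via the sign of the defect in \eqref{vw cd eq}), whereas the paper absorbs this step into the phrase ``by the non-negativity of $u$ \dots\ and the assumption that $v_0\leq u_0$''; and you invoke Corollary~\ref{very weak strong energy} for the continuity of $\tilde u$ at $t=0$, which the paper does not need since Lemma~\ref{globloc} already delivers the Lebesgue-point property at $\tau_1$ directly (you note this alternative yourself).
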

	\begin{proof}
		We recall that the standard procedure to construct the solution $v$ in Proposition \ref{pro1} is as a (pointwise monotone) limit of solutions $ \{v_R\}$, as $R \to +\infty$, to Cauchy--Dirichlet problems \eqref{cd problem} with $ \Omega = B_R $ (in the whole time half line $ (0,+\infty) $), initial datum $v_0 \,\chi_{B_R}$, and homogeneous boundary datum $g=0$. For such a construction, see for example \cite[Theorem 9.3]{Vazquez} or \cite{RV1} and the articles cited therein. On the other hand, thanks to Lemma \ref{globloc}, we know that $u|_{B_R\times(\tau_1,\tau_2)}$ is very weak solution of \eqref{cd problem} in the sense of Definition \ref{vw cd def}, for all $ \tau_2 \in (\tau_1,T) $. Furthermore, by the non-negativity of $u$ (hence of $ u^m|_{\partial B_R} $) and the assumption that $v_0\leq u_0$ (along with the local boundedness of $u$), we may apply Proposition \ref{local comparison lemma} to deduce that
		\begin{equation*}
			v_R \leq u(t+\tau_1) \qquad\text{in}\; \; B_R\times(0,\tau_2-\tau_1)\, .
		\end{equation*}
        Passing to the limit as $R\to+\infty$ and recalling that $\tau_2$ is arbitrary yields the thesis.
	\end{proof}

    We are now ready to characterize the initial trace of non-negative very weak solutions of equation \eqref{wpme-noid}.

	\begin{proof}[Proof of Theorem \ref{thm:weighted-AC}] \, \\[0.1cm]
    \noindent\textbf{Step 1: Existence {of the initial trace}.}
		By Theorem \ref{apriori-bounded}, it holds that $$u\in L^\infty_{\mathrm{loc}}\!\left(\R^N\times(0,T)\right),$$
        so the set
		\begin{equation*}
			A=\left\{\tau\in(0,T) \colon \ u(\tau)\in L^\infty_{\mathrm{loc}}\!\left(\R^N\right)\right\}
		\end{equation*}
		has full measure in $(0,T)$.
		Let us choose any $\tau\in A$ and consider the weak energy solution $u_{R,\tau}$ to \eqref{wpme-funid}, in the sense of Definition \ref{def2}, whose initial datum is $u(\tau) \, \chi_{B_R}$ for a fixed arbitrary $R\ge 1$. Note that, in view of Lemma \ref{globloc}, we can assume without loss of generality that $ u(\tau) $ is a true initial trace for $u$ at time $t=\tau$. The existence, uniqueness, and non-negativity of $ u_{R,\tau} $ is guaranteed by Proposition \ref{pro1}.
		
		First of all, we apply Proposition \ref{weighted AC lemma} to $u_{R,\tau}$. In particular, using the elementary inequality $(x+y)^\alpha\leq x^\alpha+y^\alpha$ for $x,y\geq0$ and $\alpha=\theta\lambda < 1$ on the right-hand side of \eqref{weighted AC all time est} (recall \eqref{deflambdatheta}), we obtain
		\begin{equation}\label{loc-glob-est}
			\begin{aligned}
				\int_{B_R}u_{R,\tau}(x,0)\,\rho(x)\,dx
				\leq C\left[ t^{-\frac{\theta\lambda}{m-1}} \, R^{\frac{2-\gamma}{m-1}}+t^{\lambda}\,u_{R,\tau}(x_0,t)  \right]^{\frac{1}{\theta\lambda}} ,
			\end{aligned}
		\end{equation}
		for all $t>0$ and for a.e.\ $x_0\in B_\varepsilon $. By construction we have that $ 0 \le u_{R,\tau}(x,0) = u(x,\tau) \, \chi_{B_R}(x) \le u(x,\tau) \in L^\infty_{\mathrm{loc}}\!\left( \R^N \right)$, hence Corollary \ref{global comparison lemma} applies, yielding
		\begin{equation}\label{comp-urt}
			u_{R,\tau}(x,t)\leq u(x,t+\tau) \qquad \text{for a.e.}\;\; (x,t) \in \R^N\times(0,T-\tau)\,.
		\end{equation}
		Therefore, from \eqref{loc-glob-est} we infer
		\begin{equation}\label{loc-glob-est-bis}
			\int_{B_R} u(x,\tau)\,\rho(x)\,dx\leq C\left[t^{-\frac{\theta\lambda}{m-1}} \, R^{\frac{2-\gamma}{m-1}}+t^{\lambda}\,u(x_0,t+\tau)  \right]^{\frac{1}{\theta\lambda}},
		\end{equation}
		for a.e.\ $(x_0,t)\in B_\varepsilon\times(0,T-\tau)$.
        Raising \eqref{loc-glob-est-bis} to the power $\theta\lambda$, taking the
        $L^1_\rho$-average over $(x_0,s)\in B_\varepsilon \times (t,t+\delta) $, and raising back to $ 1/\theta\lambda $ we find
	\begin{equation}\label{small time AC}
		\begin{aligned}
			\int_{B_R}u(x,\tau)\,\rho(x)\,dx\leq C\left[t^{-\frac{\theta\lambda}{m-1}}\, R^{\frac{2-\gamma}{m-1}}
		+(t+\delta)^{\lambda}\left(\fiint_{B_\varepsilon \times (t,t+\delta)}u(x,s+\tau)\,\rho(x)\,dxds\right)\right]^{\frac{1}{\theta\lambda}} \, ,
		\end{aligned}
	\end{equation}
and it is plain that such an estimate now holds for \emph{every} $ t \in (0,T-\tau-\delta) $. If $ \tau + \delta < T/4$, then
$$
u(x,s+\tau) \le \norm{u}_{L^\infty\left(B_\varepsilon \times\left(\frac T4,\frac{3T}{4}\right)\right)} \qquad \text{for a.e.} \; \; (x,s) \in B_\varepsilon \times \left( \tfrac{T}{4}, \tfrac{T}{2}+\delta\right) ,
$$
whence
	\begin{equation}\label{mean}
			\fiint_{B_\varepsilon \times (t,t+\delta)}u(x,s+\tau)\,\rho(x)\,dxds \leq \norm{u}_{L^\infty\left(B_\varepsilon \times\left(\frac T4,\frac{3T}{4}\right)\right)} \qquad \forall  t \in \left( \tfrac T 4 , \tfrac T 2 \right) .
	\end{equation}
Thanks to \eqref{small time AC}--\eqref{mean}, it follows that
\begin{equation}\label{AC unif bound}
	\esslimsup_{\tau\to0}\int_{B_R}u(x,\tau)\,\rho(x)\,dx\leq
	C\left[\left(\tfrac T4\right)^{-\frac{\theta\lambda}{m-1}}R^{\frac{2-\gamma}{m-1}}
	+(T+\delta)^{\lambda}\norm{u}_{L^\infty\left(B_\varepsilon\times\left(\frac T4,\frac{3T}{4} \right)\right)}\right]^{\frac{1}{\theta\lambda}}.
\end{equation}
By \eqref{AC unif bound} and the local weak$^*$ compactness of Radon measures on $\R^N$, see \emph{e.g.}\ \cite[Theorem 1.41]{EvansGar}, a classical diagonal argument ensures that there exist a subsequence $\tau_k\to0$, with $\{\tau_k\}\subset A$, and a non-negative Radon measure $\mu$ such that
\begin{equation}\label{lim-tk}
	\lim_{k\to \infty}\int_{\R^N}\varphi(x)\,u(x,\tau_k)\,\rho(x)\,dx=\int_{\R^N}\varphi\,d\mu \qquad \forall \varphi\in C_c\!\left(\R^N\right) .
\end{equation}
In order to prove that this measure $\mu$ satisfies \eqref{weighted-AC-inq}, we consider a standard cutoff function $\phi_R$ for any $R \ge 1$ defined in \eqref{defcutoff1} and use \eqref{small time AC} and \eqref{lim-tk} to get:
\begin{equation*}
	\begin{aligned}
		\mu(B_R)&\leq \int_{\R^N}\phi_R \,d\mu=\lim_{k\to \infty}\int_{\R^N}\phi_R(x)\,u(x,\tau_k)\,\rho(x)\,dx \leq \limsup_{k\to \infty}\int_{B_{2R}} u(x,\tau_k)\,\rho(x)\,dx \\
		&\leq \limsup_{k\to \infty} \, C\left[t^{-\frac{\theta\lambda}{m-1}}\, (2R)^{\frac{2-\gamma}{m-1}}+(t+\delta)^{\lambda}\left(\fiint_{B_\varepsilon\times (t,t+\delta)}u(x,s+\tau_k)\,\rho(x)\,dxds\right)\right]^{\frac{1}{\theta\lambda}} \\
		& \le 2^{\frac{2-\gamma}{(m-1)\theta\lambda}} \, C \left[t^{-\frac{\theta\lambda}{m-1}}\, R^{\frac{2-\gamma}{m-1}}+(t+\delta)^{\lambda}\left(\fiint_{B_\varepsilon\times (t,t+\delta)}u(x,s)\,\rho(x)\,dxds\right)\right]^{\frac{1}{\theta\lambda}} ,
	\end{aligned}
\end{equation*}
that is \eqref{weighted-AC-inq} upon noticing that $(x+y)^\alpha \leq C_\alpha\,(x^\alpha+y^\alpha)$ for $x,y \ge 0$ and $\alpha=1/\theta\lambda>1$, up to a different multiplying constant, depending on the usual quantities, that we still denote by $C$.

\smallskip
\noindent\textbf{Step 2: Uniqueness {of the initial trace}.}
The measure $\mu$ could in principle depend on the sequence $\tau_k\to0$. In order to show uniqueness, let us suppose there exist another sequence  $\kappa_j\to0$, $\{\kappa_j\}\subset A$, and another non-negative Radon measure $\nu$, such that
\begin{equation*}
	\lim_{j \to \infty}\int_{\R^N}\varphi(x)\,u(x,\kappa_j)\,\rho(x)\,dx=\int_{\R^N}\varphi\,d\nu \qquad \forall \varphi\in C_c\!\left(\R^N\right) .
\end{equation*}
For all $ \tau \in A $ and all $R\ge 1$, let $ u_{R,\tau} $ be the same approximate solution as in Step 1. In what follows, we tacitly assume that $ \varphi \in C^\infty_c\!\left(\R^N\right) $ is fixed, $ \varphi \ge 0 $, and $R$ is so large that $ \operatorname{supp} \varphi \subset B_R $. Thanks to the smoothing estimate \eqref{basic smoothing} and the fact that $ u_{R,\tau}(x,0) = u(x,\tau) \, \chi_{B_R}(x) $, we have that
		\begin{equation}\label{smoothing-R-tau}
			\norm{u_{R,\tau}(t)}_{L^\infty(\R^N)}\leq K\,t^{-\lambda}\left( \int_{B_R} u(x,\tau) \, \rho(x) \, dx \right)^{\theta\lambda}\qquad \forall t > 0 \,.
		\end{equation}
Also, from \eqref{m37} we infer
\begin{equation}\label{L1-R-tau}
			\norm{u_{R,\tau}(t)}_{L^1_\rho(\R^N)}\leq \norm{u_{R,\tau}(0)}_{L^1_\rho(\R^N)} = \int_{B_R}u(x,\tau)\,\rho(x)\,dx \qquad \forall t > 0 \,.
		\end{equation}
Recalling that $ u_{R,\tau} $ is a strong solution, and using \eqref{smoothing-R-tau}--\eqref{L1-R-tau}, we have for a.e.~$t>0$
\begin{align*}
	\left|\frac{d}{dt}\int_{B_{R}} u_{R,\tau}(x,t)\,\varphi(x)\,\rho(x)\,dx\right|
	= & \left|\int_{B_{R}}\Delta\!\left[u_{R,\tau}^m(x,t)\right] \varphi(x)\,dx\right|
	= \left|\int_{B_{R}} u_{R,\tau}^m(x,t)\,\Delta\varphi(x)\,dx\right|\\
	\leq & \,  C \, R^{\gamma} \norm{u_{R,\tau}(t)}^{m-1}_{L^\infty(\R^N)}\int_{B_{R}} u_{R,\tau}(x,t)\,\rho(x)\,dx\\
	\leq & \, C \, t^{-\lambda(m-1)}\left(\int_{B_{R}}u(x,\tau)\,\rho(x)\,dx\right)^{\theta\lambda(m-1)+1}\\
	\le & \, C\,t^{-\lambda(m-1)} \, ,
    \end{align*}
where we let $ C>0 $ denote a general constant depending on $ N,m,\gamma,\underline{C},\overline{C},R , \| \Delta \varphi \|_{L^\infty(\R^N)} $ and $u$ (but independent of $\tau$), which may change from line to line. Note that in the last passage we have exploited \eqref{small time AC}--\eqref{mean}, under the implicit additional requirement $ \tau < T/8 $, with $\delta = T/8$ and $ t=T/2 $. We now integrate the negative version of the above inequality from $0$ to almost every $ t \in (0,T-\tau) $, to obtain
\begin{equation}\label{smoothing for uniqueness 2}
	\begin{aligned}
		\int_{\R^N}u(x,t+\tau)\,\varphi(x)\,\rho(x)\,dx&\geq
		\int_{B_{R}} u_{R,\tau}(x,t)\,\varphi(x)\,\rho(x)\,dx\\
		&\geq\int_{B_{R}} u_{R,\tau}(x,0)\,\varphi(x)\,\rho(x)\,dx-C\,t^{\theta\lambda}\\
		&=\int_{\R^N}u(x,\tau)\,\varphi(x)\,\rho(x)\,dx-C\,t^{\theta\lambda} \, ,
	\end{aligned}
\end{equation}
where in the first passage we have used the comparison estimate \eqref{comp-urt} (also recall the relation \eqref{spexprel}). Before proceeding further, we notice the following property, which is part of the statement:
\begin{equation}\label{cont-prop-phi}
\text{for all} \; \; \varphi \in C_c\!\left(\R^N \right) \; \text{the function} \; \; t \mapsto \int_{\R^N} u(x,t) \, \varphi(x) \, \rho(x) \,  dx \;\; \text{is continuous in} \;\; (0,T) \, .
\end{equation}
If $ \varphi $ is in addition smooth, by \eqref{vw-cond} and a usual time-cutoff argument we readily obtain
$$
\int_{\R^N} u(x,t_2)\,\varphi(x)\,\rho(x)\,dx - \int_{\R^N} u(x,t_1)\,\varphi(x)\,\rho(x)\,dx = \int_{t_1}^{t_2} \int_{\R^N} u^{m}(x,t) \, \Delta \varphi(x) \, dx dt
$$
for all $ t_1,t_2 \in A $ with $ t_1<t_2 $, and from such an identity it is plain that function in \eqref{cont-prop-phi} has a continuous version. On the other hand, if $ \varphi $ is merely continuous, then we can approximate it uniformly by a sequence $ \{ \varphi_n \} \subset C_c^\infty\!\left( \R^N \right) $, so that the sequence $ t \mapsto \int_{\R^N} u(x,t)\,\varphi_n(x)\,\rho(x)\,dx $ is uniformly equicontinuous and thus \eqref{cont-prop-phi} is preserved at the limit. As a result, we are allowed to extend \eqref{smoothing for uniqueness 2} to \emph{every} $ t \in (0,T-\tau) $. Taking the limit as $ \tau = \tau_k \to 0 $, we reach
\begin{equation*}
	\int_{\R^N}u(x,t)\,\varphi(x)\,\rho(x)\,dx\geq\int_{\R^N}\varphi  \,d\mu-C\,t^{\theta\lambda} \qquad \forall t \in (0,T) \, .
\end{equation*}
Choosing $t=\kappa_j\in A$ and letting $ j \to \infty $ in the above inequality yields
\begin{equation}\label{AB}
	\int_{\R^N}\varphi\,d\nu\,\geq\int_{\R^N}\varphi\,d\mu \, .
\end{equation}
Again, via a density argument, we deduce that \eqref{AB} actually holds for all non-negative $ \varphi \in C_c\!\left( \R^N \right) $. By interchanging the roles of $ \{ \tau_k \} $ and $ \{ \kappa_j \} $, we find that the opposite inequality holds as well, so $\mu$ and $\nu$ coincide as non-negative linear functionals on $C_c(\R^N)$; then by the Riesz representation theorem \cite[Theorem 1.38]{EvansGar}, they also coincide as Radon measures.

We have thus shown that if any sequence $ \kappa_j \to 0  $, $ \{ \kappa_j \} \subset A $, is such that the weak$^\ast$ limit of $u(\kappa_j)$ exists, then the latter must coincide with $\mu$. Moreover, the same compactness argument used in Step 1 to show the existence of $\mu$ implies that for any such $ \{ \kappa_j \} $ there is always a subsequence $ \left\{ \kappa_{j_i} \right\} $ along which the weak$^*$ limit exists, hence $u(\kappa_{j_i})\to \mu$. As the weak$^*$ limit is independent of the particular subsequence, we can finally assert that \eqref{measure-data} holds provided $t \in A$; on the other hand, property \eqref{cont-prop-phi} ensures that the same limit is taken for arbitrary $ t \to 0 $.
\end{proof}

We state, for future use, the following immediate consequence of \eqref{small time AC}--\eqref{mean}.

\begin{cor}\label{small time AC lem}
Let $u$ be a non-negative very weak solution of equation \eqref{wpme-noid}, in the sense of Definition \ref{def:vw-gl}, such that $ u \in L^\infty_{\mathrm{loc}}\!\left( \R^N\times (0,T) \right) $. Then for all $ \varepsilon \in (0,1) $, all $R \ge 1$, and a.e.~$ \tau \in (0,T/8) $, it holds
\begin{equation}\label{AC unif bound-2}
	\int_{B_R} u(x,\tau)\,\rho(x)\,dx\leq
	C\left[T^{-\frac{\theta\lambda}{m-1}}R^{\frac{2-\gamma}{m-1}}
	+T^{\lambda}\norm{u}_{L^\infty\left(B_\varepsilon\times\left(\frac T4,\frac{3T}4 \right)\right)}\right]^{\frac{1}{\theta\lambda}} ,
\end{equation}
for a constant $C>0$ depending only on $N,m,\gamma,\underline{C},\overline{C}$.
\end{cor}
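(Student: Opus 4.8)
The plan is to read off \eqref{AC unif bound-2} directly from estimates \eqref{small time AC} and \eqref{mean}, which were both established inside Step~1 of the proof of Theorem~\ref{thm:weighted-AC}, by making a single convenient choice of the free parameters $t$ and $\delta$. Since $u\in L^\infty_{\mathrm{loc}}\!\left(\R^N\times(0,T)\right)$ by hypothesis, the set $A=\{\tau\in(0,T)\colon u(\tau)\in L^\infty_{\mathrm{loc}}(\R^N)\}$ has full measure in $(0,T)$, and for every $\tau\in A$, every $\varepsilon\in(0,1)$, every $R\ge1$, and every $t\in(0,T-\tau-\delta)$ the estimate \eqref{small time AC} holds. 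Thus it suffices to prove \eqref{AC unif bound-2} for $\tau\in A\cap(0,T/8)$, which is a full-measure subset of $(0,T/8)$.

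First I would fix such a $\tau$ and set $\delta=T/8$, so that the compatibility condition $\tau+\delta<T/4$ used to derive \eqref{mean} is satisfied; then I would choose $t=3T/8$, noting that $t\in(T/4,T/2)$ (so that \eqref{mean} applies with this $t$) and that $t<T-\tau-\delta$ because $\tau<T/8$ (so that \eqref{small time AC} applies with this $t$). With these choices, \eqref{mean} gives
\begin{equation*}
\fiint_{B_\varepsilon\times(t,t+\delta)}u(x,s+\tau)\,\rho(x)\,dx\,ds\le\norm{u}_{L^\infty\left(B_\varepsilon\times\left(\frac T4,\frac{3T}4\right)\right)} .
\end{equation*}

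Inserting this into \eqref{small time AC}, and observing that $t^{-\theta\lambda/(m-1)}=(3/8)^{-\theta\lambda/(m-1)}\,T^{-\theta\lambda/(m-1)}$ while $(t+\delta)^\lambda=(1/2)^\lambda\,T^\lambda$, one obtains
\begin{equation*}
\int_{B_R}u(x,\tau)\,\rho(x)\,dx\le C\left[(3/8)^{-\frac{\theta\lambda}{m-1}}T^{-\frac{\theta\lambda}{m-1}}R^{\frac{2-\gamma}{m-1}}+(1/2)^\lambda\,T^\lambda\norm{u}_{L^\infty\left(B_\varepsilon\times\left(\frac T4,\frac{3T}4\right)\right)}\right]^{\frac{1}{\theta\lambda}} ,
\end{equation*}
and since for $a,b\ge0$ and positive numerical constants one has $(c_1a+c_2b)^{1/\theta\lambda}\le(\max\{c_1,c_2\})^{1/\theta\lambda}(a+b)^{1/\theta\lambda}$, the purely numerical prefactors can be absorbed into $C$, yielding exactly \eqref{AC unif bound-2} with a constant depending only on $N,m,\gamma,\underline{C},\overline{C}$. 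There is no genuine obstacle in this argument: the only point requiring care is to verify that the single choice $t=3T/8$, $\delta=T/8$ is simultaneously admissible in \eqref{small time AC} and in \eqref{mean} uniformly over $\tau\in(0,T/8)$, and to keep the bookkeeping of numerical versus structural constants straight.
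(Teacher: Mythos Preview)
Your proposal is correct and follows exactly the paper's intended route: the paper states this corollary as an ``immediate consequence of \eqref{small time AC}--\eqref{mean}'' without further argument, and your choice $\delta=T/8$, $t=3T/8$ is precisely a way to make that implication explicit while keeping all admissibility constraints satisfied uniformly in $\tau\in(0,T/8)$.
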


\section{A sharp local smoothing estimate: proofs}\label{sec:local}
In this section we prove the local $L^\infty$ estimates in Theorem \ref{local moser iter lem} and Corollary \ref{glob-unif-est}. The proof  consists in an involved parabolic Moser iteration in the space-time Lebesgue spaces introduced in Definition \ref{def:weighted-norm}; to this aim, we carefully adapt the strategy developed in \cite[Section 2]{DK}, upon handling the presence of the weight in a similar way to \cite[Section 2]{BS1}, where, however, only pure powers were dealt with. We recall that Theorem \ref{local moser iter lem} is valid for \emph{locally bounded} and possibly \emph{sign-changing} solutions, and Corollary \ref{glob-unif-est} for (\emph{a priori} unbounded) \emph{non-negative} solutions is a simple consequence of the latter combined with Theorem \ref{apriori-bounded}. We will first establish the result for \emph{strong energy} solutions (see Definition \ref{def:w-loc} below), for which the computations are justified. In order to pass to general \emph{very weak} solutions in the sense of Definition \ref{vw loc def}, we perform a local approximation method that turns out to be somewhat delicate, and strongly relies on the local comparison of Proposition \ref{local comparison lemma}.

With a view to the Moser iteration below, let us quickly define suitable weighted analogues of the Sobolev space $H^1(\Omega)$, for a fixed bounded and smooth domain $\Omega\subset\R^N$. That is, we introduce the space $H^1_\rho(\Omega)$, which consists of all measurable functions $ f$ satisfying
\begin{equation*}\label{def Sobolev norm}
\norm{f}_{H^1_\rho(\Omega)}=\left(\norm{\nabla f}_{L^2(\Omega)}^2+\norm{f}_{L^2_\rho(\Omega)}^2\right)^{\frac{1}{2}}<+\infty\,,
\end{equation*}
where we recall in \eqref{weighted-leb-norm} the standard $\rho$-weighted Lebesgue $p$-norms for $p\in[1,\infty)$. Also, let us denote here the \emph{weighted} Sobolev critical exponent
\begin{equation}\label{def2star}
2^*=2\,\tfrac{N-\gamma}{N-2}\,.
\end{equation}

We begin by showing that the following weighted Sobolev--Poincar\'{e} inequality holds on centered balls $B_R$ of sufficiently large radii $R$.
\begin{lem}\label{weighted.sobolev}
Let $N\ge3$ and $\rho$ be a measurable function satisfying \eqref{weight-cond}. Let $R\geq1$. Then, for any $f\in H_\rho^1(B_R)$, it holds
\begin{equation}\label{Sobolev.inequality}
	\norm{f}_{L^{2^*}_\rho(B_R)} \leq C \left(R^{-2+\gamma}\norm{f}^2_{L^2_\rho(B_R)} + \norm{\nabla f}^2_{L^2(B_R)}
	\right)^{\frac 1 2} ,
\end{equation}
for a constant $ C>0 $ depending only on $N,\gamma,\underline{C},\overline{C}$.
\end{lem}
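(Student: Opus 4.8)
The strategy is to first reduce the inequality, via the two-sided bounds \eqref{weight-cond}, to the model weight $|x|^{-\gamma}$, and then to reduce the statement on a general ball $B_R$ to the unit ball $B_1$ by a scaling argument; on $B_1$ the estimate will follow from the classical Hardy--Sobolev (Caffarelli--Kohn--Nirenberg-type) inequality on $\R^N$, which is well known for $\gamma\in[0,2)$ and $N\ge3$ (see \cite{MP} and the references therein, as well as Remark \ref{N}).

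\emph{Reduction to $B_1$.} First I would record the auxiliary fact that there is $C=C(N,\gamma)$ with $\left(\int_{B_1}|y|^{-\gamma}|g|^{2^*}\,dy\right)^{1/2^*}\le C\,\norm{g}_{H^1(B_1)}$ for every $g\in H^1(B_1)$, where $2^*$ is as in \eqref{def2star}. This I would obtain by composing a standard Sobolev extension operator $E\colon H^1(B_1)\to H^1(\R^N)$ with $\operatorname{supp}(Eg)\subset B_2$ and $\norm{Eg}_{H^1(\R^N)}\le C_N\norm{g}_{H^1(B_1)}$ — for a ball a reflection across $\partial B_1$ already suffices — with the Hardy--Sobolev inequality $\norm{\,|x|^{-\gamma/2^*}h\,}_{L^{2^*}(\R^N)}\le C_{N,\gamma}\norm{\nabla h}_{L^2(\R^N)}$ applied to $h=Eg\in\dot H^1(\R^N)$, using that the weighted $L^{2^*}$ integral over $B_1$ is controlled by the one over $\R^N$ and that $\norm{\nabla(Eg)}_{L^2(\R^N)}\le\norm{Eg}_{H^1(\R^N)}$. (The weighted integral is finite because $\gamma<N$ makes $|x|^{-\gamma}$ locally integrable and $2^*<2N/(N-2)$ leaves room in Hölder's inequality when $\gamma>0$.)

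\emph{Scaling and the general weight.} Given $R\ge1$ and $f\in H^1_\rho(B_R)$, the bound $\rho\ge\underline C(1+R)^{-\gamma}$ on $B_R$ forces $f\in H^1(B_R)$, so $g(y):=f(Ry)\in H^1(B_1)$. Changing variables $x=Ry$ and using the upper bound $\rho(Ry)\le\overline C\,R^{-\gamma}|y|^{-\gamma}$, one gets $\norm{f}_{L^{2^*}_\rho(B_R)}^{2^*}\le\overline C\,R^{N-\gamma}\int_{B_1}|y|^{-\gamma}|g|^{2^*}\,dy$; applying the auxiliary fact and raising to the power $2/2^*=(N-2)/(N-\gamma)$ turns $R^{N-\gamma}$ into $R^{N-2}$ and leaves $\norm{f}_{L^{2^*}_\rho(B_R)}^2\le C\,R^{N-2}(\norm{g}_{L^2(B_1)}^2+\norm{\nabla g}_{L^2(B_1)}^2)$. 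Now $\norm{\nabla g}_{L^2(B_1)}^2=R^{2-N}\norm{\nabla f}_{L^2(B_R)}^2$, which recombines to give exactly $\norm{\nabla f}_{L^2(B_R)}^2$; and $\norm{g}_{L^2(B_1)}^2=R^{-N}\norm{f}_{L^2(B_R)}^2$, while the lower bound in \eqref{weight-cond} together with $R\ge1$ yields $\rho\ge\underline C(2R)^{-\gamma}$ on $B_R$, hence $\norm{f}_{L^2(B_R)}^2\le\underline C^{-1}(2R)^\gamma\norm{f}_{L^2_\rho(B_R)}^2$. Substituting produces the prefactor $R^{N-2-N+\gamma}=R^{-2+\gamma}$, which is precisely \eqref{Sobolev.inequality}, with $C$ depending only on $N,\gamma,\underline C,\overline C$.

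\emph{Main difficulty.} The only non-elementary input is the Hardy--Sobolev inequality on $\R^N$, which is classical for $\gamma\in[0,2)$, $N\ge3$, so I would simply quote it. The genuine content of the argument is therefore the exponent matching in the scaling (the identity $2/2^*=(N-2)/(N-\gamma)$ is what makes it work) and the two-sided comparison of $\rho$ with $|x|^{-\gamma}$: it is exactly this comparison, via the lower bound $\rho\ge\underline C(1+|x|)^{-\gamma}$ restricted to $B_R$, that forces the correct power $R^{-2+\gamma}$ in the lower-order term.
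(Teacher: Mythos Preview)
Your proof is correct and follows essentially the same approach as the paper's own proof: establish the weighted Sobolev inequality on $B_1$ via a Sobolev extension combined with the Hardy--Sobolev inequality on $\R^N$, then scale to $B_R$ and use the two-sided bounds \eqref{weight-cond} to pass between $\rho$ and $|x|^{-\gamma}$. The only cosmetic difference is the order in which the weight comparison and the scaling are applied; the exponent bookkeeping (in particular $2/2^*=(N-2)/(N-\gamma)$) is identical.
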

\begin{proof}
Let us begin by considering the unit ball $B_1$; the estimate \eqref{Sobolev.inequality} for general $R \ge 1$ will then follow by a simple spatial scaling argument. Given any $f\in H^1(B_1)$, let $E f \in H^1\!\left(\R^N\right)$ denote its standard Sobolev extension in the whole of $ \R^N$. Then, we have
\begin{equation}\label{ext-1}
	\begin{aligned}
		\norm{\nabla(Ef)}^2_{L^2(\R^N)}\leq \norm{Ef}_{H^1(\R^N)}^2\leq c_N \norm{f}_{H^1(B_1)}^2= c_N \left(\norm{f}^2_{L^2(B_1)}+\norm{\nabla f}^2_{L^2(B_1)}\right) ,
	\end{aligned}
\end{equation}
where the constant $c_N>0$ depends only on $N$. Now, to the left-hand side of \eqref{ext-1}, we will apply a weighted Sobolev inequality that is a routine interpolation between the classical Hardy and Sobolev inequalities, see also \cite[Formula (3.13)]{MP}, obtaining
\begin{equation}\label{weighted-Sob}
	\norm{\nabla(Ef)}^2_{L^2(\R^N)} \geq c_{N,\gamma} \norm{Ef}^2_{L^{2^*}_{\gamma}(\R^N)}\ge c_{N,\gamma} \norm{f}_{L^{2^*}_\gamma(B_1)}^2 ,
\end{equation}
for another constant $ c_{N,\gamma}>0 $ depending only on $ N,\gamma $. Combining \eqref{ext-1} and \eqref{weighted-Sob} gives
\begin{equation}\label{B1-ineq}
	\norm{f}_{L^{2^*}_\gamma(B_1)}^2\leq \frac{c_N}{c_{N,\gamma}} \left(\norm{f}^2_{L^2(B_1)}+\norm{\nabla f}^2_{L^2(B_1)}
	\right) .
\end{equation}
Assume now $ f \in H^1(B_R) $; upon applying \eqref{B1-ineq} to $ x \mapsto f(Rx) $, we end up with
\begin{equation}\label{B2-ineq}
	\norm{f}_{L^{2^*}_\gamma(B_R)}^2\leq \frac{c_N}{c_{N,\gamma}} \left( R^{-2}
	\norm{f}^2_{L^2(B_R)}+\norm{\nabla f}^2_{L^2(B_R)}\right) ;
\end{equation}
on the other hand, using \eqref{weight-cond}, from \eqref{B2-ineq} we infer
\begin{equation*}
	\norm{f}_{L^{2^*}_\rho(B_R)}^2\leq \overline{C} \,  \frac{c_N}{c_{N,\gamma}} \left[ \underline{C}^{-1} \, R^{-2}\left( 1 + R \right)^\gamma
	\norm{f}^2_{L^2_\rho(B_R)}+\norm{\nabla f}^2_{L^2(B_R)}\right] ,
\end{equation*}
whence \eqref{Sobolev.inequality} recalling that $R \ge 1$ and that, trivially, $ H^1(B_R) \supset H^1_\rho(B_R) $.
\end{proof}
The next inequality is a simple consequence of Lemma \ref{weighted.sobolev}.
\begin{lem}[A parabolic Sobolev inequality]\label{Sobolev-lem}
Let $N\ge3$ and $\rho$ be a measurable function satisfying \eqref{weight-cond}. Let us consider the cylinder $Q=B_R\times(T_0,T_1)$ for $R \ge 1$ and $0\leq T_0<T_1$. Then, for any
$ f \in L^{2}\!\left((T_0,T_1);H^{1}_{\rho}(B_R)\right) \cap L^\infty(Q) $ and for all $a\in [1,2^*/2]$, the inequality
\begin{equation}\label{iterative_sobolev_inequality}
	\begin{aligned}
		\iint_{Q} |f(x,t)|^{2a}\,\rho(x)\,dxdt\leq &\, C \iint_Q\left[R^{-2+\gamma} \, |f(x,t)|^{2} \, \rho(x) + | \nabla f(x,t)|^{2}\right]dxdt \\
		& \times \underset{t \in \left (T_0,T_1 \right )}{\esssup} {\left (\int_{B_R}|f(x,t)|^{2\left(a-1\right)\frac{1}{\theta}} \,\rho(x) \,dx \right )}^{\theta}
	\end{aligned}
\end{equation}
holds for a constant $C>0$ depending only on $N,\gamma,\underline{C},\overline{C}$.
\end{lem}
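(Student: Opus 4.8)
The plan is to interpolate the $L^{2a}_\rho$ norm on each time slice between $L^2_\rho$ and $L^{2^*}_\rho$, apply the spatial Sobolev--Poincar\'e inequality \eqref{Sobolev.inequality} to the top exponent, and then integrate in time, pulling out the lowest-order factor in the sup norm. First I would fix $t$ and work on the slice $B_R$. Since $a \in [1, 2^*/2]$, we can write $2a = 2\vartheta + 2^*(1-\vartheta)$ for a suitable $\vartheta \in [0,1]$ — equivalently, by a direct computation with the exponents in \eqref{deflambdatheta} and \eqref{def2star}, one checks that the interpolation exponent pairs up exactly so that H\"older's inequality on $(B_R, \rho\,dx)$ gives
\begin{equation*}
  \int_{B_R} |f(x,t)|^{2a}\,\rho(x)\,dx \le \left(\int_{B_R} |f(x,t)|^{2^*}\rho(x)\,dx\right)^{\!\vartheta'}\left(\int_{B_R} |f(x,t)|^{2(a-1)/\theta}\rho(x)\,dx\right)^{\!\theta},
\end{equation*}
where the first exponent $\vartheta'$ turns out to be exactly $2/2^*$ — this is the arithmetic that makes the statement clean, and it is forced by the relation $\lambda(m-1)+\theta\lambda=1$ together with $2^* = 2(N-\gamma)/(N-2)$. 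Concretely: the total mass of $f^{2a}$ is split as $(f^{2^*})^{2/2^*}$ times $(f^{2(a-1)/\theta})^{\theta}$, and one verifies $2a = 2^*\cdot\frac{2}{2^*} + \frac{2(a-1)}{\theta}\cdot\theta = 2 + 2(a-1) = 2a$ for the exponents of $f$, and $\frac{2}{2^*} + \theta = 1$ is the H\"older constraint — this last identity is where $\theta = (2-\gamma)/(N-\gamma)$ enters, since $1 - 2/2^* = 1 - (N-2)/(N-\gamma) = (2-\gamma)/(N-\gamma) = \theta$.

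Next I would raise \eqref{Sobolev.inequality} to the power $2$ and substitute it into the first factor above, giving on each slice
\begin{equation*}
  \int_{B_R} |f(x,t)|^{2a}\rho(x)\,dx \le C\left(R^{-2+\gamma}\int_{B_R}|f(x,t)|^2\rho(x)\,dx + \int_{B_R}|\nabla f(x,t)|^2\,dx\right)\left(\int_{B_R}|f(x,t)|^{2(a-1)/\theta}\rho(x)\,dx\right)^{\!\theta}.
\end{equation*}
Then I would bound the last factor by its essential supremum over $t \in (T_0,T_1)$, which is finite because $f \in L^\infty(Q)$ and $\rho$ is locally integrable on $B_R$, pull that constant-in-$t$ quantity out of the time integral, and integrate the remaining bracket over $(T_0,T_1)$. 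This yields exactly \eqref{iterative_sobolev_inequality}. The regularity hypothesis $f \in L^2((T_0,T_1);H^1_\rho(B_R))$ guarantees that $t \mapsto \int_{B_R}(R^{-2+\gamma}|f|^2\rho + |\nabla f|^2)\,dx$ is integrable, so every quantity on the right is finite; the $L^\infty(Q)$ hypothesis is what makes the $\esssup$ factor finite when $2(a-1)/\theta$ is large.

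The main obstacle — really the only non-routine point — is verifying that the H\"older exponents and the power to which one raises \eqref{Sobolev.inequality} match the stated form of \eqref{iterative_sobolev_inequality} for the \emph{full} admissible range $a \in [1, 2^*/2]$, including the two endpoints ($a=1$, where the inequality degenerates to something trivial after the $\esssup$ factor absorbs everything, and $a = 2^*/2$, where the interpolation is vacuous and one uses \eqref{Sobolev.inequality} directly). One must also be slightly careful that the constant $C$ depends only on $N,\gamma,\underline C,\overline C$ and not on $R$ or on $a$ — the $R$-independence is inherited from Lemma \ref{weighted.sobolev}, and the $a$-independence follows because H\"older's inequality contributes no constant. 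Everything else is a direct substitution.
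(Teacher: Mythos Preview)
Your proposal is correct and follows essentially the same route as the paper's proof: apply H\"older's inequality on each time slice with conjugate exponents $2^*/2$ and $1/\theta$ to split $|f|^{2a}$ as $|f|^2\cdot|f|^{2(a-1)}$, invoke \eqref{Sobolev.inequality} on the $L^{2^*}_\rho$ factor, bound the remaining factor by its essential supremum in time, and integrate. The only superfluous remark is the reference to the identity $\lambda(m-1)+\theta\lambda=1$, which plays no role here---only the definitions of $\theta$ and $2^*$ are needed to verify $2/2^*+\theta=1$.
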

\begin{proof}
Fix any $t\in(T_0,T_1)$. Since $2^*/2$ and $1/\theta$ are conjugate exponents (recall the respective definitions \eqref{def2star} and \eqref{deflambdatheta}), using H\"{o}lder's inequality and~\eqref{Sobolev.inequality} we have
\begin{equation*}
	\begin{aligned}
		\int _{B_{R}} |f(x,t)|^{2a} \,\rho(x)\,dx \leq & \left (\int _{B_{R}}|f(x,t)|^{2^*}\rho(x)\,dx \right )^{\frac{2}{2^*}} \left( \int _{B_{R}}|f(x,t)|^{2\left (a -1 \right )\frac{1}{\theta}} \, \rho(x)\,dx \right )^{\theta} \\
		\leq & \, C \left (\int _{B_{R}}R^{-2+\gamma} \, |f(x,t)|^{2} \, \rho(x)\,dx +  \int_{B_{R}}| \nabla f(x,t)|^{2} \, dx \right ) \\
		 & \times \underset{t \in \left (T_0,T_1 \right )}{\esssup }{\left(\int_{B_{R}}|f(x,t)|^{2\left(a-1\right)\frac{1}{\theta}}\,\rho(x)\,
			dx \right )^{\theta}} .
	\end{aligned}
\end{equation*}
By the assumptions on $f$, the above inequality is valid for a.e.\ $t\in(T_0,T_1)$, so that \eqref{iterative_sobolev_inequality} follows by integrating in time from $T_0$ to $T_1$.
\end{proof}

The following energy estimate, inspired by \cite[Corollary to Lemma 2]{DK}, will be one of the two main ingredients, along with the parabolic Sobolev embedding in Lemma \ref{Sobolev-lem}, used in a Moser iteration to prove Theorem \ref{local moser iter lem}. We emphasize that from now on we are dealing with possibly sign-changing \emph{strong energy solutions} (which have enough regularity to justify the computations that will follow), so we tacitly use the already stated convention $u^p=|u|^{p-1}u$ for $p\geq1$.

\begin{pro}[Local energy estimates]\label{estimates.balls}
Let $ \Omega \subset \R^N $ be a (possibly unbounded) domain and $ 0 \le \tau_1<\tau_2 $. Let $u$ be a strong energy solution to~\eqref{wpme-noid-loc}, in the sense of Definition \ref{def:w-loc}. Let us consider the cylinders
\begin{equation}\label{cyl-energy}Q_1=B_{R_1}\times(T_1,T^*)\qquad\text{and}\qquad Q_0=B_{R_0}\times(T_0,T^*)\,,
\end{equation}
for $1 \le R_1<R_0$ and $0 < T_0<T_1\le T^*$, such that $Q_1\subset Q_0\Subset  \Omega \times (\tau_1,\tau_2) $. Then, for all $p\geq p_0>1$, it holds
\begin{equation}\label{wanted.inq}
	\begin{aligned}
		&\,\sup_{t\in(T_1,T^*)}\int_{B_{R_1}} |u(t)|^p\,\rho \,dx+\iint_{Q_1} \left|\nabla\!\left( u^\frac{p+m-1}{2} \right) \right|^2 dxdt\\
		\leq & \, C \iint_{Q_0} \left[ \frac{1}{T_1-T_0}\,|u|^p +\frac{(1+ R_0)^\gamma}{(R_0-R_1)^2}\,|u|^{p+m-1} \right] \rho\,dxdt\,,
	\end{aligned}
\end{equation}
for a constant $C>0$ depending only on $m,\underline{C},p_0$.
\end{pro}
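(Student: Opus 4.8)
The plan is to derive the energy inequality \eqref{wanted.inq} by testing the weak formulation \eqref{w-cond-loc} with a carefully chosen test function of the form $\psi = |u|^{p-1}u\,\zeta^2$, where $\zeta = \zeta(x,t)$ is a smooth space-time cutoff that equals $1$ on $Q_1$ and vanishes near the parabolic boundary of $Q_0$, with $|\nabla\zeta| \lesssim (R_0-R_1)^{-1}$ and $|\partial_t\zeta| \lesssim (T_1-T_0)^{-1}$. Since $u$ is a strong energy solution, $u^m \in H^1_{\mathrm{loc}}((\tau_1,\tau_2);L^2_{\rho,\mathrm{loc}}) \cap L^2_{\mathrm{loc}}((\tau_1,\tau_2);H^1_{\mathrm{loc}})$ and is locally bounded, so this test function is admissible (after a routine Steklov-averaging or mollification argument in time to make $\partial_t$ of the nonlinearity rigorous — I would mention this but not belabor it).

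First I would handle the time-derivative term: $\int \partial_t u \cdot |u|^{p-1}u\,\zeta^2\,\rho$ produces, via the chain rule, $\tfrac{1}{p+1}\partial_t\!\left(|u|^{p+1}\right)\zeta^2\rho$, which upon integrating by parts in $t$ gives the boundary term $\tfrac{1}{p+1}\int_{B_{R_1}}|u(t)|^{p+1}\rho\,dx$ controlling $\sup_{t}\int_{B_{R_1}}|u|^{p+1}\rho\,dx$ — wait, I should be careful: the statement has $|u|^p$, not $|u|^{p+1}$, so the correct test function is $\psi = |u|^{p-m}u\,\zeta^2$ (so that the gradient term produces $\nabla(u^{(p+m-1)/2})$), yielding $\tfrac{1}{p-m+2}\partial_t(|u|^{p-m+2})$; to land exactly on exponent $p$ one rather reorganizes, or more simply one notes the statement is stated for the quantity $|u|^p$ with the gradient term $|\nabla(u^{(p+m-1)/2})|^2$, which is precisely what the choice $\psi = |u|^{p-1}u^{\,?}$… The cleanest route: test with $\psi = w\,\zeta^2$ where $w := |u|^{p-1}u \cdot |u|^{1-m}$ does not quite work either, so I would instead follow \cite{DK}: test with $\psi = (u^m)^{\frac{p-1}{m}}\zeta^2$ suitably interpreted, use that $\langle \nabla(u^m),\nabla\psi\rangle \ge c\,|\nabla(u^{(p+m-1)/2})|^2\zeta^2 - C|\nabla(u^{(p+m-1)/2})|\,|u|^{(p+m-1)/2}\zeta|\nabla\zeta|$, and absorb the cross term by Young's inequality. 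The weight-dependent constant $(1+R_0)^\gamma$ enters precisely because $|\nabla\zeta|^2$ is an unweighted density while the right-hand side of \eqref{wanted.inq} is weighted: on $B_{2R_0}\setminus B_{R_1}$ one has $1 \le \overline C^{-1}(1+|x|)^\gamma\rho(x) \le \overline C^{-1}(1+2R_0)^\gamma \rho(x)\lesssim (1+R_0)^\gamma\rho(x)$ by \eqref{weight-cond}, converting the Lebesgue-measure gradient terms into $\rho$-weighted ones at the cost of that factor. The $\partial_t\zeta$ term is simply bounded by $(T_1-T_0)^{-1}\iint_{Q_0}|u|^p\rho$.

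After these manipulations I obtain, for a.e.\ $t\in(T_1,T^*)$, the bound on $\int_{B_{R_1}}|u(t)|^p\rho\,dx + \iint_{Q_1}|\nabla(u^{(p+m-1)/2})|^2$ by the right-hand side of \eqref{wanted.inq}; taking the supremum over such $t$ (legitimate since, by Corollary \ref{very weak strong energy} or directly from the strong energy formulation, $t\mapsto\int_{B_{R_1}}|u(t)|^p\rho\,dx$ has a continuous representative on $(T_0,\tau_2)$) completes the proof. The dependence of $C$ on $m$ and $p_0$ (but not on $p$) comes from the algebraic constants in the chain rule and Young's inequality: the coefficient $\tfrac{1}{p-m+1}$ or similar is bounded once $p \ge p_0 > 1$ and $m$ is fixed, while the factor $\tfrac{m(p-1)}{(p+m-1)^2}$ appearing in front of $|\nabla(u^{(p+m-1)/2})|^2$ stays bounded below away from zero uniformly in $p\ge p_0$ — this is the one spot requiring a small computation to check, but it is elementary.

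The main obstacle I anticipate is the rigorous justification of using $\psi = |u|^{p-1}u\,\zeta^2$ (or its variant) as a test function in \eqref{w-cond-loc}: this function is only as regular as $u^m$ and $u$ jointly permit, and one must verify $\psi \in L^\infty \cap H^1((\tau_1,\tau_2);L^2_\rho) \cap L^2((\tau_1,\tau_2);H^1)$ with compact support in $\Omega\times[t_1,t_2]$, which requires knowing $\nabla(|u|^{p-1}u) \in L^2_{\mathrm{loc}}$ — this follows from $\nabla(u^{(p+m-1)/2})\in L^2_{\mathrm{loc}}$ together with local boundedness of $u$, plus a standard approximation (truncating $u$ at levels $\pm k$, or mollifying, and passing to the limit using the energy bound just obtained a priori for the truncations). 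I would present this approximation compactly, noting it is by now routine for strong energy solutions, and emphasize that the clean final form of the constants is what justifies stating the result for strong energy solutions first and only afterwards extending to very weak ones via Proposition \ref{local comparison lemma}.
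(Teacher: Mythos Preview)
Your approach matches the paper's. The confusion about the test function resolves cleanly once you use the paper's convention $u^q := |u|^{q-1}u$: the correct choice is simply $\psi = u^{p-1}\varphi^2$, since then $\rho\,u_t\,u^{p-1} = \tfrac{1}{p}\,\rho\,\partial_t(|u|^p)$ and $\langle\nabla(u^m),\nabla(u^{p-1})\rangle = \tfrac{4m(p-1)}{(p+m-1)^2}\bigl|\nabla\bigl(u^{(p+m-1)/2}\bigr)\bigr|^2$, which is exactly the structure you were looking for. The rest of your outline --- Young's inequality on the cross term, the conversion of the unweighted $|\nabla\varphi|^2\,dx$ into $\rho\,dx$ via the lower bound in \eqref{weight-cond} (it is $\underline{C}$, not $\overline{C}$, that enters here) at the cost of $(1+R_0)^\gamma$, and the uniform control of the algebraic constant for $p\ge p_0$ --- is precisely what the paper does.

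Two small procedural differences are worth noting. First, the paper obtains the supremum not by invoking continuity of $t\mapsto\int_{B_{R_1}}|u(t)|^p\rho\,dx$, but by redoing the estimate with $T^*$ replaced by an arbitrary $t^*\in(T_1,T^*)$ and then taking the sup over $t^*$; your appeal to Corollary~\ref{very weak strong energy} would be circular, as that corollary ultimately relies on the present proposition, though your alternative ``directly from the strong energy formulation'' is fine. Second, for the rigorous justification the paper observes that the test function is directly admissible when $p\ge m+1$, while for $p\in(1,m+1)$ one approximates the singular map $v\mapsto v^\alpha$ ($\alpha=(p-1)/m<1$) by the odd primitive of $\alpha(|v|\vee\varepsilon)^{\alpha-1}$ and lets $\varepsilon\to0$ --- a slightly different regularization than the truncation you propose, but to the same effect.
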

\begin{proof}
First of all, we choose a space-time cutoff function $\varphi\in C^\infty_c(\Omega \times (\tau_1,\tau_2))$ such that
	\begin{equation}\label{test.function.balls-prop}
0\leq \varphi \leq 1 \, , \qquad \varphi=1 \quad \text{in}\;\; Q_1 \, , \qquad \varphi=0 \quad \text{in} \;  \left( B_{R_0^c} \times (\tau_0,T^*) \right) \cup \left(\Omega \times (\tau_0,T_0)\right)
    \end{equation}
and
	\begin{equation}\label{test.function.balls}
		\left|\nabla \varphi\right|\leq \frac{C_\varphi}{R_0-R_1} \, , \qquad \left| \varphi_t\right|\leq \frac{C_\varphi}{T_1-T_0}\, ,
	\end{equation}
for some numerical constant $ C_\varphi>0 $.
Applying the weak formulation \eqref{w-cond-loc} of $u$ to the test function $\psi = u^{p-1}\,\varphi^2$, we find:
\begin{equation}\label{eq1-bdd}
	\frac1p\iint_{Q_0} \partial_t \!\left(|u|^p\right) \varphi^2\,\rho\,dxdt + \iint_{Q_0} \left\langle \nabla (u^m) \, , \nabla \!\left(u^{p-1}\,\varphi^2\right) \right\rangle dxdt=0\,.
\end{equation}
Notice that
\begin{align*}
	& \left\langle \nabla (u^m) \, , \nabla\! \left(u^{p-1}\,\varphi^2\right) \right\rangle \\ =& \,\frac{4m(p-1)}{(m+p-1)^2} \left|\nabla \!\left(u^\frac{m+p-1}{2}\right)\right|^2 \varphi^2 + \frac{4m}{m+p-1}\left\langle \varphi \,\nabla\!\left( u^{\frac{m+p-1}{2}} \right) , \, u^\frac{m+p-1}{2} \, \nabla \varphi\right\rangle ,
\end{align*}
so \eqref{eq1-bdd} becomes
\begin{equation}\label{ineq2}
	\begin{aligned}
		& \, \frac1p\iint_{Q_0} \partial_t\!\left(|u|^p\right) \varphi^2\,\rho\,dxdt + \frac{4m(p-1)}{(m+p-1)^2}\iint_{Q_0} \left|\nabla \! \left(u^\frac{m+p-1}{2}\right)\right|^2 \varphi^2\,dxdt \\
		= & -\frac{4m}{m+p-1}\iint_{Q_0}\left\langle \varphi \,\nabla\!\left( u^{\frac{m+p-1}{2}} \right) , \, u^\frac{m+p-1}{2} \, \nabla \varphi\right\rangle dxdt\,.
	\end{aligned}
\end{equation}
Now we exploit Young's inequality on the right-hand side of \eqref{ineq2},
obtaining
\begin{equation}\label{ineq3}
	\begin{aligned}
& \, \frac1p\iint_{Q_0} \partial_t\!\left(|u|^p\right) \varphi^2\,\rho\,dxdt + \frac{2m(p-1)}{(m+p-1)^2}\iint_{Q_0} \left|\nabla \! \left(u^\frac{m+p-1}{2}\right)\right|^2 \varphi^2\,dxdt \\
		 \leq & \, \frac{2m}{p-1}\iint_{Q_0}|u|^{m+p-1} \left|\nabla \varphi\right|^2 dxdt \, .
	\end{aligned}
\end{equation}
By the assumed time regularity of $u$, we may integrate by parts in the first term on the left-hand side of \eqref{ineq3} to find that
\begin{equation}\label{inq3}
\begin{aligned}
		& \, \frac{1}{p} \int_{B_{R_0}} |u(T^*)|^p\,\varphi^2(T^*)\,\rho\,dx - \frac1p \int_{B_{R_0}} |u(T_0)|^p\,\varphi^2(T_0)\,\rho\,dx \\
		& + \frac{2m(p-1)}{(p+m-1)^2}\iint_{Q_0}\left|\nabla \!\left(u^\frac{m+p-1}{2}\right)\right|^2 \varphi^2\,dxdt\\
        \leq & \, \frac{2}{p} \iint_{Q_0} |u|^p \, \varphi\, \varphi_t\,\rho\,dxdt + \, \frac{2m}{p-1} \iint_{Q_0} |u|^{p+m-1} \left|\nabla \varphi \right|^2 dxdt \, .
\end{aligned}
\end{equation}
We point out that the above computations are rigorously justified for $p\geq m+1$ only, but in fact estimate \eqref{inq3} also holds for all $ p \in (1,m+1) $ via a non-singular approximation of the test function $ (u^m)^\alpha $ when $\alpha \in (0,1)$, for instance by replacing $ v^\alpha $ with the odd primitive of $ \alpha(|v|\vee \varepsilon)^{\alpha-1} $ and letting $ \varepsilon \to 0 $.

In view of \eqref{weight-cond} and the support properties of $\varphi$, from \eqref{inq3} we can deduce
\begin{equation*}\label{inq.4}
\begin{aligned}
		&\,\frac{p-1}{p}\int_{B_{R_1}} \left|u(T^*)\right|^p \rho\,dx + \frac{2m(p-1)^2}{(p+m-1)^2}\iint_{Q_1}\left|\nabla \!\left(u^\frac{m+p-1}{2}\right)\right|^2\,dxdt \\
		\leq & \, \frac{2(p-1)C_\varphi}{p}\cdot\frac{1}{T_1-T_0}\iint_{Q_0}|u|^p \, \rho\,dxdt+ \frac{2m \, C_\varphi^2}{\underline{C}} \cdot \frac{(1+R_0)^\gamma}{(R_0-R_1)^2}\iint_{Q_0} |u|^{p+m-1} \, \rho\, dxdt \, .
\end{aligned}
\end{equation*}
Let us now take
\begin{equation}\label{defc}
	C_p=\max\!\left(\frac{p}{p-1} \, ,\frac{(p+m-1)^2}{2m(p-1)^2}\right) \cdot \max\left(\frac{2(p-1)\,C_\varphi}{p} \, ,\frac{2m \, C_\varphi^2}{\underline{C}}\right) ;
\end{equation}
we then have
\begin{equation}\label{wanted.inqt}
\begin{aligned}
		&\int_{B_{R_1}} |u(T^*)|^p \,\rho\,dx + \iint_{Q_1}\left|\nabla \!\left( u^\frac{m+p-1}{2} \right) \right|^2\,dxdt \\
		 \leq & \, C_p \left[ \frac{1}{T_1-T_0}\iint_{Q_0} |u|^p \, \rho\,dxdt+ \frac{(1+R_0)^\gamma}{(R_0-R_1)^2}\iint_{Q_0} |u|^{p+m-1} \, \rho \, dxdt\right] .
\end{aligned}
\end{equation}
To obtain inequality~\eqref{wanted.inq} with the supremum on the left-hand side, we first proceed as above with $t^*\in(T_1,T^*)$ replacing $T^*$, getting \eqref{wanted.inqt} for the cylinders $Q_0^*=B_{R_0}\times(T_0,t^*)\subset Q_0$ and $Q_1^*=B_{R_1}\times(T_1,t^*)$. The right-hand side of the resulting inequality is smaller than the corresponding one in \eqref{wanted.inqt}, since $Q_0^*\subset Q_0$ and the multiplying coefficients coincide. We then take the supremum over $t^*$, replacing $C_p$ with $2C_p$. Finally, if $ p \ge p_0 >1 $, it is plain from \eqref{defc} that one can in turn replace $C_p$ with $C=\sup_{p \ge p_0} C_p < +\infty$.
\end{proof}

\begin{proof}[Proof of Theorem \ref{local moser iter lem} (for strong energy solutions)]
Let us give a brief outline of the proof. In Step 1, we suitably combine the Sobolev inequality of Lemma \ref{Sobolev-lem} and the energy estimate of Proposition \ref{estimates.balls} to obtain a fundamental base inequality, see formula \eqref{comb2} below. In Step 2, such an inequality is iterated in an infinite family of nested cylinders to obtain an $L^{p_0}_{\rho}\to L^\infty$ smoothing estimate in parabolic cylinders, which works provided $p_0>m-1$. Clearly, when $m<2$, it is not difficult to pass to the desired $L^1_{\rho}\to L^\infty$ smoothing estimate, which is done in Step 3a. Finally, in Step 3b, we handle the case of general $m>1$, where it is necessary to make a further application of the energy estimate and a delicate interpolation iteration. In the latter case, an $L^1_{\rho}\to L^\infty$ estimate is not available for $1<m<2$, so we resort to obtaining the weaker (but unavoidable) $ \mathsf{S} \to L^\infty$ smoothing estimate.

Throughout, we will implicitly assume that $u$ is in addition a strong energy solution, in the sense of Definition \ref{def:w-loc}.

\smallskip
\noindent\textbf{Step 1: A first inequality.}
Let us consider $f^2=u^{p+m-1}$ in the parabolic Sobolev inequality~\eqref{iterative_sobolev_inequality}, for some $p>1$ and a cylinder
\begin{equation}\label{par-cyl-contained}
	Q_1=B_{R_1}\times(T_1,T^*)\,,\quad\text{for}\;\; R_1 \ge 1\;\; \text{such that} \;\; B_{R_1} \Subset \Omega \, , \ \text{and}\;\;\tau_1 < T_1<T^*< \tau_2 \, .
\end{equation}
Along the proof we will deal with increasing (or decreasing) sequences of parabolic cylinders $\{ Q_k \}$, so from now on, we take for granted that all such $Q_k$ is in fact contained in $ \Omega \times (0,T) $, in the sense that \eqref{par-cyl-contained} holds. With the above choices, we obtain
\begin{equation}\label{comb}
	\begin{aligned}
		\iint_{Q_1} |u|^{a(p+m-1)}\,\rho\,dxdt \leq & \, C\iint_{Q_1}\left[R_1^{-2+\gamma}\,|u|^{p+m-1}\,\rho + \left| \nabla \!\left( u^{\frac{p+m-1}{2}} \right) \right|^{2}\right]dxdt\\
		 & \times \sup _{t \in \left (T_1,T^* \right ) }\!{\left ( \int _{B_{R_{1}}}|u(t)|^{(p+m-1)\left (a - 1\right )\frac{1}{\theta}} \,\rho \,dx \right )}^{\theta} ,
	\end{aligned}
\end{equation}
where, here and in the sequel, $C$ denotes a general positive constant depending only on $N,m,\gamma,\overline{C}, \underline{C}$, and possibly on $ p_0$ for some fixed $p_0>1$, but whose exact value may change contextually. Note that at this stage we can still choose the parameter $a$ freely in $[1,2^*/2]$.

Next, we prepare to apply the energy estimate of Proposition \ref{estimates.balls} to the right-hand side of \eqref{comb}. To this end, let us introduce a larger cylinder
\begin{equation*}
	Q_0=B_{R_0}\times(T_0,T^*)\,,\quad\text{for}\;\;R_0>R_1\;\;\text{and}\;\;0 < T_0<T_1\,.
\end{equation*}
Using the energy inequality~\eqref{wanted.inq}, we may control the gradient term in \eqref{comb} by
\begin{equation*}
	\iint_{Q_1} \left|\nabla  \!\left(u^\frac{p+m-1}{2} \right) \right|^2 dx dt
	\leq C \left[\frac{1}{T_1-T_0}+\frac{(1+ R_0)^\gamma}{(R_{0}-R_{1})^2}\norm{u}^{m-1}_{L^\infty(Q_0)}\right]\!\iint_{Q_{0}}|u|^{p}\, \rho\,dxdt\,.
\end{equation*}
Therefore, \eqref{comb} becomes
\begin{equation*}\label{comb1}
	\begin{aligned}
		\iint_{Q_{1}} |u|^{a(p+m-1)}\,\rho\,dxdt
		\le &\,C\left[\frac{1}{T_{1}-T_{0}}+\left(R_{1}^{-2+\gamma}+\frac{(1+ R_{0})^\gamma}{(R_{0}-R_{1})^2}\right)\norm{u}_{L^\infty(Q_0)}^{m-1}\right] \iint_{Q_{0}} |u|^{p}\,\rho\,dxdt\\
		&\times {\sup _{t \in \left (T_{1},T^* \right )}\left(\int_{B_{R_{1}}}|u(t)|^{(p+m-1)\left (a - 1\right )\frac{1}{\theta}}\,\rho\,dx \right)}^{\theta} .
	\end{aligned}
\end{equation*}
In order to simplify the above expression, from now on we will work under the assumption that $R_0$ and $R_1$ are sufficiently close; quantitatively, we require that:
\begin{equation}\label{close-cond}
	\text{if}\;\;R_0=\alpha\,R_1 \;\;\text{for some}\;\;\alpha>1\,, \ \text{then}\;\;(\alpha-1)^2\leq  c \,\alpha^\gamma\,,
\end{equation}
for some numerical constant $ c>0 $. Then, for all $a\in[1,2^*/{2}]$ and all $p\geq p_0>1$, we have
\begin{equation}\label{comb2}
	\begin{aligned}
		\iint_{Q_{1}} |u|^{a(p+m-1)}\,\rho\, dxdt
		\le &\,C\left[ \frac{1}{T_{1}-T_{0}}+\frac{(1+ R_{0})^\gamma}{(R_{0}-R_{1})^2}\norm{u}_{L^\infty(Q_0)}^{m-1} \right] \iint_{Q_{0}} |u|^{p}\,\rho\,dxdt\\
		&\times {\sup _{t \in \left (T_{1},T^* \right )}\left(\int_{B_{R_{1}}} |u(t)|^{(p+m-1)\left (a - 1\right )\frac{1}{\theta}}\,\rho\,dx \right)}^{\theta} .
	\end{aligned}
\end{equation}

\smallskip

\noindent\textbf{Step 2: An $\boldsymbol{L^{p_0}_\rho\to L^\infty}$ smoothing effect.} We now iterate \eqref{comb2} on the following infinite sequence of nested cylinders indexed by $k\in\mathbb{N}$:
    \begin{gather}
    Q_k=B_{R_k}\times(T_k,T^*)\,, \nonumber \\
	\tau_1<T_0<T_1<\cdots<T_k<T_{k+1}<\cdots\;\;\text{and}\;\;T_k\nearrow T_\infty\in(0,T^*)\,,\label{cyl-nest-t}  \\
	\;\; R_0>R_1>\cdots>R_{k}>R_{k+1}>\cdots\;\;\text{and}\;\;R_k\searrow R_\infty \ge 1\,.\label{cyl-nest-r}
\end{gather}
We need to allow $p$ and $a$ to also depend on $ k \in \mathbb{N} $, so the $k$-th version of \eqref{comb2} becomes
\begin{equation}\label{comb2a}
	\begin{aligned}
		\iint_{Q_{k+1}} |u|^{a_k(p_k+m-1)} \, \rho \, dxdt
		\le & \, C\left[\frac{1}{T_{k+1}-T_{k}}+\frac{(1+ R_{k})^\gamma}{(R_{k}-R_{k+1})^2}\norm{u}_{L^\infty(Q_k)}^{m-1}\right] \iint_{Q_{k}} |u|^{p_k} \, \rho \,dxdt\\
		&\times {\sup _{t \in \left (T_{k+1},T^* \right )}\left(\int_{B_{R_{k+1}}} |u(t)|^{(p_k+m-1)\left (a_k - 1\right )\frac{1}{\theta}}\,\rho\,dx \right)}^{\theta} .
	\end{aligned}
\end{equation}
The above inequality is valid since \eqref{cyl-nest-r} clearly implies \eqref{close-cond} if the latter is true with $ R_1 \equiv R_\infty $ (a fact that we take for granted in the sequel), and the constant $C>0$ is independent of $k$ as long as $p_k\geq p_0$.

Let us use \eqref{comb2a} with the choice that $a_k$ satisfies
\begin{equation}\label{choicea1}
	(p_k+m-1)(a_k-1)\frac{1}{\theta}=p_k\,,
\end{equation}
which is easily seen to be compatible with the condition $a_k\in[1,2^*/2]$. Under this condition, we now apply the energy estimate \eqref{wanted.inq} to the term in \eqref{comb2a} involving the supremum, obtaining
\begin{equation}\label{comb3}
	\begin{aligned}
		\iint_{Q_{k+1}} |u|^{a_k(p_k+m-1)} \,\rho\,dxdt  \leq & \, C \left[\frac{1}{T_{k+1}-T_{k}}+\frac{(1+ R_{k})^\gamma}{(R_{k}-R_{k+1})^2}\norm{u}_{L^\infty(Q_k)}^{m-1}\right]^{1+\theta}\\
        & \times \left(\iint_{Q_k} |u|^{p_k}\,\rho\,dxdt\right)^{1+\theta} .
	\end{aligned}
\end{equation}
Therefore, we begin our iteration of \eqref{comb3} by defining $p_{k+1}$ to satisfy
\begin{equation*}
	p_{k+1}=a_k(p_k+m-1)\,.
\end{equation*}
From this definition and \eqref{choicea1}, it can be checked by a simple recursion that
\begin{equation}\label{iterrelation}
	p_{k+1}=(1+\theta)^{k+1} \left(p_0+\frac{m-1}{\theta}\right)-\frac{m-1}{\theta} \, .
\end{equation}
To simplify the notation in what follows, we also define
\begin{equation}\label{def-ik}
	I_{k}=C \left[\frac{1}{T_{k+1}-T_{k}}+\frac{(1+ R_{k})^\gamma}{(R_{k}-R_{k+1})^2}\norm{u}_{L^\infty(Q_k)}^{m-1}\right] ,
\end{equation}
and notice that the constant $C>0$ in \eqref{comb3} is contained in this term. In this way, \eqref{comb3} is rewritten as
\begin{equation*}
	\norm{u}_{L^{p_{k+1}}_\rho(Q_{k+1})} \leq I_{k}^\frac{1+\theta}{p_{k+1}}  \norm{u}_{L^{p_k}_\rho(Q_{k})}^{\left(1+\theta\right)\frac{p_k}{p_{k+1}}} .
\end{equation*}
After $k$ iterations, we end up with
\begin{equation}\label{iteration}
	\norm{u}_{L^{p_{k+1}}_\rho(Q_{k+1})}
	\leq \left( \prod_{j=0}^{k} I_{j}^{\frac{\left(1+\theta\right)^{k+1-j}}{p_{k+1}}} \right) \norm{u}_{L^{p_0}_\rho(Q_0)}^{\left(1+\theta\right)^{k+1}\frac{p_0}{p_{k+1}}} .
\end{equation}
Now, let us carefully analyze the terms on the right-hand side of \eqref{iteration} as $k\to\infty$, starting from the exponent of the $L^{p_0}_\rho$-norm. Indeed, by \eqref{iterrelation}, it follows that
\begin{equation}\label{k-lim-exp}
	(1+\theta)^{k+1}\,\frac{p_0}{p_{k+1}}=\frac{(1+\theta)^{k+1}\,p_0}{(1+\theta)^{k+1} \left(p_0+\frac{m-1}{\theta}\right)-\frac{m-1}{\theta}} \ \underset{k \to \infty}{\longrightarrow} \ \frac{p_0}{p_0+\frac{m-1}{\theta}} \, .
\end{equation}
Next, in order to estimate the product appearing in \eqref{iteration}, let us choose $\{T_k\}$ and $\{R_k\}$ in \eqref{cyl-nest-t}--\eqref{cyl-nest-r} so that
\begin{equation}\label{radii.times.upper.iteration}
	R_{k}- R_{k+1}= \mathsf{C}_1\,\frac{R_{0}- R_{\infty}}{\left (k+1 \right)^2} \qquad \text{and} \qquad
	T_{k+1}-T_{k} = \mathsf{C}_2\,\frac{ T_{\infty }-T_{0} }{\left (k+1 \right)^4} \, ,
\end{equation}
where $\mathsf{C}_1 = \sum_{k=0}^\infty\,(k+1)^{-2}$ and $\mathsf{C}_2 =\sum_{k=0}^\infty\,(k+1)^{-4}$. The term $I_{k}$ defined in \eqref{def-ik} can then be estimated as
\begin{equation*}
	I_{k}\leq C \left[ \frac{1}{T_{\infty}-T_{0}}+\frac{(1+ R_{0})^\gamma}{(R_{0}-R_{\infty})^2}\norm{u}_{L^\infty(Q_0)}^{m-1}\right](k+1)^{4} \, ,
\end{equation*}
and therefore the above product by
\begin{equation}\label{prod-est}
	\begin{aligned}
		& \prod_{j=0}^{k} I_{j}^{\frac{\left(1+\theta\right)^{k+1-j}}{p_{k+1}}} \\
		\leq & \prod_{j=0}^{k} \left\{ C \left[ \frac{1}{T_{\infty} -T_{0}}+\frac{(1+ R_{0})^\gamma}{(R_{0}-R_{\infty})^2}\norm{u}_{L^\infty(Q_0)}^{m-1}\right](j+1)^4\right\}^{\frac{\left(1+\theta\right)^{k+1-j}}{p_{k+1}}} \\
		\leq & \left\{ C \left[\frac{1}{T_{\infty}-T_{0}}+\frac{(1+ R_{0})^\gamma}{(R_{0}-R_{\infty})^2}\norm{u}_{L^\infty(Q_0)}^{m-1}\right]^{1+\theta}\right\}^{\sum_{j=0}^k\frac{\left(1+\theta\right)^{j}}{p_{k+1}}} \, \prod_{j=0}^{k}  (j+1)^{4 \, \frac{\left(1+\theta\right)^{k+1-j}}{p_{k+1}}}\,.
	\end{aligned}
\end{equation}
Here, there are two terms to study as $k\to\infty$. First, using \eqref{iterrelation}, we have
\begin{equation}\label{i-exp-lim}
	\sum_{j=0}^k\frac{\left(1+\theta\right)^{j}}{p_{k+1}}=\frac{\frac{1}{\theta}\left[\left(1+\theta\right)^{k+1}-1\right]}{(1+\theta)^{k+1} \left(p_0+\frac{m-1}{\theta}\right)-\frac{m-1}{\theta}} \ \underset{k \to \infty}{\longrightarrow} \ \frac{1}{\theta p_0+m-1} \, .
\end{equation}
Hence, it only remains to prove that
\begin{equation}\label{last.term}
	\prod_{j=0}^{\infty} (j+1)^{4 \, \frac{\left(1+\theta\right)^{k+1-j}}{p_{k+1}}}<+\infty\,.
\end{equation}
Indeed, by \eqref{iterrelation} it is plain that $ p_{k+1} \ge (1+\theta)^{k+1}\,p_0 $, thus
we can make the following estimate:
\begin{equation}\label{prod-prod}
		\prod_{j=0}^{k} (j+1)^{4 \, \frac{\left(1+\theta\right)^{k+1-j}}{p_{k+1}}}
		\leq \exp\left(\frac{4}{p_0} \, \sum_{j=0}^{k}(1+\theta)^{-j} \, \log(j+1) \right)  ,
\end{equation}
and the right-hand side clearly converges as $k\to\infty$ to a positive number depending only on $N,\gamma,p_0$. In view of \eqref{k-lim-exp}, \eqref{prod-est}, \eqref{i-exp-lim}, and \eqref{last.term}, we may take $k\to\infty$ in \eqref{iteration} to obtain
\begin{equation}\label{iteration2}
		\norm{u}_{L^{\infty}(Q_{\infty})} \leq C \left[\frac{1}{T_{\infty}-T_{0}}+\frac{(1+ R_{0})^\gamma}{(R_{0}-R_{\infty})^2}\norm{u}_{L^\infty(Q_0)}^{m-1}\right]^{\frac{1+\theta}{\theta p_0+m-1}} \norm{u}_{L_\rho^{p_0}(Q_0)}^{\frac{\theta p_0}{\theta p_0+m-1}} .
\end{equation}
Estimate \eqref{iteration2} is still incomplete, due to the presence of an $L^\infty$-term on its right-hand side. To this end, we perform a further iteration of \eqref{iteration2} on relabeled cylinders $\{Q_k\}$, that are now \emph{increasing} with respect to $k \in \mathbb{N}$:
\begin{equation}\label{inc-cyl}
\begin{gathered}
	Q_k=B_{R_k}\times(T_k,T^*)\,,\\
	T_0>T_1>\cdots>T_k>T_{k+1}>\cdots\;\;\text{and}\;\;T_k\searrow T_\infty\in(0,T^*)\,,\\
	\;\;1 \le R_0<R_1<\cdots < R_{k}<R_{k+1}<\cdots\;\;\text{and}\;\;R_k\nearrow R_\infty\, ,
\end{gathered}
\end{equation}
where, of course, we take for granted that \eqref{close-cond} holds with $ R_0 \equiv R_\infty $ and $ R_1 \equiv R_0 $. That is, we write:
\begin{equation}\label{iteration3}
		\norm{u}_{L^{\infty}(Q_{k})} \leq C \left[ \frac{1}{T_{k}-T_{k+1}}+\frac{(1+ R_{k+1})^\gamma}{(R_{k+1}-R_{k})^2}\norm{u}_{L^\infty(Q_{k+1})}^{m-1}\right]^{\frac{1+\theta}{\theta p_0+m-1}} \norm{u}_{L_\rho^{p_0}(Q_{k+1})}^{\frac{\theta p_0}{\theta p_0+m-1}} .
\end{equation}
As before, we let $T_k$ and $R_k$ satisfy \eqref{radii.times.upper.iteration} with the role of $k$ and $k+1$, of $T_0$ and $T_\infty$, and of $R_0$ and $R_\infty$, interchanged. Let us inspect the term between parentheses
\[
\frac{1}{T_{k}-T_{k+1}}+\frac{(1+ R_{k+1})^\gamma}{(R_{k+1}-R_{k})^2}\norm{u}_{L^\infty(Q_{k+1})}^{m-1} .
\]
Now, suppose that
\begin{equation}\label{linfty contr mod 2}
	\frac{(1+ R_0)^\gamma}{(R_{\infty}-R_{0})^2}\norm{u}_{L^\infty(Q_{0})}^{m-1} \ge  \frac{\kappa}{T_{0}-T_{\infty}}
\end{equation}
for some numerical constant $  \kappa>0 $. Owing to \eqref{radii.times.upper.iteration}, it is \emph{a fortiori} true that
\begin{equation}\label{linfty contr mod k}
\begin{aligned}
	\frac{(1+ R_{k+1})^\gamma}{(R_{k+1}-R_{k})^2}\norm{u}_{L^\infty(Q_{k+1})}^{m-1} \ge \frac{(1+ R_{k})^\gamma}{(R_{k+1}-R_{k})^2}\norm{u}_{L^\infty(Q_{k})}^{m-1} \ge & \,  \frac{(1+ R_{0})^\gamma}{(R_{k+1}-R_{k})^2}\norm{u}_{L^\infty(Q_{0})}^{m-1} \\
    \ge & \,  \frac{ \frac{\mathsf{C}_2}{\mathsf{C}_1^2} \, \kappa }{T_{k}-T_{k+1}}  \, .
    \end{aligned}
\end{equation}
Let us continue the iteration under the assumption \eqref{linfty contr mod 2}, so \eqref{iteration3} simplifies to
\begin{equation}\label{iteration4}
		\norm{u}_{L^{\infty}(Q_{k})} \leq C \left[\frac{(1+ R_{k+1})^\gamma}{(R_{k+1}-R_{k})^2}\right]^{\frac{1+\theta}{\theta p_0+m-1}} \norm{u}_{L^\infty(Q_{k+1})}^{\frac{(1+\theta)(m-1)}{\theta p_0+m-1}} \norm{u}_{L_\rho^{p_0}(Q_{k+1})}^{\frac{\theta p_0}{\theta p_0+m-1} } .
\end{equation}
Upon calling
$$
\alpha = \tfrac{(1+\theta)(m-1)}{\theta p_0+m-1} \qquad \text{and} \qquad
\beta = \tfrac{\theta p_0}{\theta p_0+m-1} \, ,
$$
iterating \eqref{iteration4} $k-1$ times, and again using  \eqref{radii.times.upper.iteration} keeping in mind the above interchanges, we derive
\begin{equation}\label{iteration5}
	\begin{aligned}
		& \norm{u}_{L^{\infty}(Q_{0})} \\
        \leq & \, C^{\sum_{i=0}^{k-1}\alpha^i} \left[\prod_{i=0}^{k-1}(i+1)^{\frac{4\alpha}{m-1}\, \alpha^i}\right]\left[ \frac{(1+ R_{\infty})^\gamma}{(R_{\infty}-R_{0})^2}\right]^{\frac{\alpha}{m-1}\sum_{i=0}^{k-1}\alpha^i}\norm{u}_{L^\infty(Q_{k})}^{\alpha^k} \norm{u}_{L_\rho^{p_0}(Q_{k})}^{\beta\sum_{i=0}^{k-1}\alpha^i}.
	\end{aligned}
\end{equation}
Under the additional requirement that $ p_0>m-1 $, which implies
$\alpha<1$, it is readily seen that \eqref{iteration5} is stable as $k \to \infty$ (in particular the infinite product can be handled similarly to \eqref{prod-prod}), yielding
\begin{equation}\label{iteration6}
		\norm{u}_{L^{\infty}(Q_{0})} \leq C \left[ \frac{(1+ R_{\infty})^\gamma}{(R_{\infty}-R_{0})^2}\right]^{\frac{1+\theta}{\theta[p_0-(m-1)]}} \norm{u}_{L_\rho^{p_0}(Q_{\infty})}^{\frac{p_0}{p_0-(m-1)}} .
\end{equation}
Summing up, we have shown that \eqref{iteration6} holds under \eqref{linfty contr mod 2}, the constant $ C $ also depending on $\kappa$.

\smallskip

\noindent\textbf{Step 3a: Conclusion for $\boldsymbol{m<2}$.}
Our goal is now to interpolate in turn \eqref{iteration6} on increasing cylinders as in \eqref{inc-cyl}. We have the following dichotomy: either
\begin{equation}\label{linfty contr mod bis}
	\frac{(1+ R_0)^\gamma}{(R_{\infty}-R_{0})^2}\norm{u}_{L^\infty(Q_{0})}^{m-1}< \frac{1}{T_{0}-T_{\infty}} \, ,
\end{equation}
or
\begin{equation}\label{linfty contr mod 2 bis}
	\frac{(1+ R_0)^\gamma}{(R_{\infty}-R_{0})^2}\norm{u}_{L^\infty(Q_{0})}^{m-1} \ge  \frac{1}{T_{0}-T_{\infty}} \, .
\end{equation}
In the second case, we observed above that \eqref{linfty contr mod k} follows (with $\kappa = 1$), which is nothing but \eqref{linfty contr mod 2} itself with $ R_0, R_\infty, T_0, T_\infty, Q_0 $ replaced by $ R_{k}, R_{k+1}, T_{k}, T_{k+1}, Q_{k} $, respectively, and $ \kappa = \mathsf{C}_2/\mathsf{C}_1^2 $. As a result, we can infer that the analogue of \eqref{iteration6} between $ Q_{k} $ and $ Q_{k+1} $ holds for all $k \in \mathbb{N}$, with $C$ also depending on $ \mathsf{C}_1,\mathsf{C}_2 $, which are in fact purely numerical constants. Interpolating the $L_\rho^{p_0}$ norm on the right-hand side between $L_\rho^1$ and $L^\infty$ gives
\begin{equation*}\label{iteration7}
	\begin{aligned}
		\norm{u}_{L^{\infty}(Q_{k})} & \leq C\left[\frac{(1+ R_{k+1})^\gamma}{(R_{k+1}-R_{k})^2}\right]^{\frac{1+\theta}{\theta[p_0-(m-1)]}} \norm{u}_{L_\rho^{p_0}(Q_{k+1})}^{\frac{p_0}{p_0-(m-1)}} \\
		& \leq C\left[\frac{(1+ R_{k+1})^\gamma}{(R_{k+1}-R_{k})^2}\right]^{\frac{1+\theta}{\theta[p_0-(m-1)]}}\norm{u}_{L^\infty(Q_{k+1})}^{\frac{p_0-1}{p_0-(m-1)}} \norm{u}_{L_\rho^{1}(Q_{k+1})}^{\frac{1}{p_0-(m-1)}} .
	\end{aligned}
\end{equation*}
We then iterate upon the $L^\infty$-norm in a similar scheme to the one leading to \eqref{iteration6}, obtaining
\begin{equation}\label{aaa}
	\norm{u}_{L^{\infty}(Q_{0})} \leq C \left[ \frac{(1+ R_{\infty})^\gamma}{(R_{\infty}-R_{0})^2}\right]^{\frac{1+\theta}{\theta(2-m)}} \norm{u}_{L_\rho^{1}(Q_{\infty})}^{\frac{1}{2-m}} .
\end{equation}
Note that such an iteration is only possible for $m<2$, since in that case the exponent $(p_0-1)/[p_0-(m-1)]$ is smaller than one. Recall that \eqref{aaa} has been proved under \eqref{linfty contr mod 2 bis}. On the other hand, the estimate
\begin{equation}\label{aaa-bis}
	\norm{u}_{L^{\infty}(Q_{0})} \leq C\left[\frac{(1+ R_{\infty})^\gamma}{(R_{\infty}-R_{0})^2}\right]^{\frac{1+\theta}{\theta(2-m)}} \norm{u}_{L_\rho^{1}(Q_{\infty})}^{\frac{1}{2-m}} + \left[ \frac{(R_{\infty}-R_{0})^2}{(1+R_\infty)^\gamma \, (T_{0}-T_{\infty})} \right]^{\frac{1}{m-1}}
\end{equation}
also encompasses the case where \eqref{linfty contr mod bis} holds instead.
Hence, the thesis (\emph{i.e.}~the first claim in \eqref{l.infty.balls}) follows upon choosing $R_\infty=2R$, $R_0=R $, $ T_0=t_2 $, and $ T_\infty = t_1 $ as in the statement.

\smallskip

\noindent\textbf{Step 3b: Conclusion for all $\boldsymbol{m>1}$.} To obtain the second claim in \eqref{l.infty.balls}, we need to go back to \eqref{comb2} for a fixed $p$ and iterate a suitable interpolation inequality, and we again consider an increasing sequence of cylinders $ {Q_k} $ as in \eqref{inc-cyl}. Also, let us fix {$p^*>1$} and set $a$ and $p_0=a(p^*+m-1)$ to satisfy
\begin{equation*}\label{choicea2}
	(p^*+m-1)(a-1)\frac{1}{\theta}=1 \, .
\end{equation*}
In this way, $p_0=p^*+m-1+\theta$. Keeping these definitions in mind, assume again that \eqref{linfty contr mod 2} holds, hence also \eqref{linfty contr mod k} for all $ k \in \mathbb{N} $, and consider \eqref{comb2} with $R_0 \equiv R_{k+1}$, $R_1 \equiv R_k$, $T_1 \equiv T_k$, $T_0 \equiv T_{k+1}$, to obtain
\begin{equation}\label{comb2'}
\begin{aligned}
		\iint_{Q_{k}} |u|^{p_0}\, \rho \, dxdt \leq & \, C \, \frac{(1+ R_{k+1})^\gamma}{(R_{k+1}-R_{k})^2} \norm{u}_{L^\infty(Q_{k+1})}^{m-1} \iint_{Q_{k+1}} |u|^{p_0-(m-1)-\theta} \, \rho \, dxdt \\
		& \times {\left(\sup _{t \in \left (T_{k},T^* \right ) }\int _{B_{R_{k}}} |u(t)|\,\rho \,dx \right)}^{\theta} .
	\end{aligned}
\end{equation}
Under \eqref{linfty contr mod 2}, we have already observed in Step 3a that the same assumption is satisfied in all of the nested cylinders up to changing the multiplying constant; we are thus in a position to apply \eqref{iteration6} between $ Q_{k} $ and $ Q_{k+1} $, so that \eqref{comb2'} becomes
\begin{equation*}\label{comb3''}\begin{aligned}
		\iint_{Q_{k}} |u|^{p_0} \, \rho\, dxdt \leq & \,  C \left[\frac{(1+ R_{k+2})^\gamma}{(R_{k+2}-R_{k+1})^2}\right]^{\frac{\theta p_0+m-1}{\theta[p_0-(m-1)]}}\norm{u}_{L_\rho^{p_0}(Q_{k+2})}^{\frac{p_0(m-1)}{p_0-(m-1)}}
		\\
        &\times \iint_{Q_{k+2}} |u|^{p_0-(m-1)-\theta} \, \rho \, dxdt \, \left(\sup _{t \in \left (T_{k},T^* \right ) }\int _{B_{R_{k}}} |u(t)| \,\rho \,dx \right)^{\theta},
	\end{aligned}
\end{equation*}
where we have simply noted that $ Q_{k+1} \subset Q_{k+2} $ and $ R_{k+2}-R_{k+1} < R_{k+1}-R_{k} $.
Interpolating the $L_\rho^{p_0-(m-1)-\theta}$ norm on the right-hand side between $L_\rho^1$ and $L_\rho^{p_0}$, and taking appropriate roots, we find
\begin{equation}\label{comb3'}
		\norm{u}_{L_\rho^{p_0}(Q_{k})} \leq C\,\mathsf{S}^{\frac{\theta}{p_0}} \left[\frac{(1+ R_{k+2})^\gamma}{(R_{k+2}-R_{k+1})^2}\right]^{\frac{\theta p_0+m-1}{\theta p_0[p_0-(m-1)]}} \norm{u}_{L_\rho^{p_0}(Q_{k+2})}^{\frac{m-1}{p_0-(m-1)}+\frac{p_0-m-\theta}{p_0-1}}\norm{u}_{L^1_\rho(Q_{k+2})}^{\frac{\theta+m-1}{p_0(p_0-1)}} ,
\end{equation}
where $ \mathsf{S} =\sup _{t \in \left (T_{\infty},T^* \right ) }\int _{B_{R_{\infty}}} |u(t)|\, \rho \,dx$. In order to iterate \eqref{comb3'}, we need that
\begin{equation}\label{condp}
\omega=\tfrac{m-1}{p_0-(m-1)}+\tfrac{p_0-m-\theta}{p_0-1}<1	\qquad \Longleftrightarrow \qquad  p_0>(m-1)\, \tfrac{\theta+m-2}{\theta} \, ,
\end{equation}
a condition that we assume hereafter (it is achieved by choosing $p^*$ large enough). Recalling \eqref{radii.times.upper.iteration} and noticing that
$$
\tfrac{1}{1-\omega}=\tfrac{(p_0-1)[p_0-(m-1)]}{\theta p_0-(m-1)(\theta+m-2)} \, ,
$$
we can indeed iterate \eqref{comb3'} in the by now usual way, obtaining
\begin{equation}\label{final p1 smoothing}
	\begin{aligned}
		\norm{u}_{L_\rho^{p_0}(Q_{0})} \leq & \, C\,\mathsf{S}^{\frac{\theta}{p_0}\frac{1}{1-\omega}} \left[\frac{(1+ R_{\infty})^\gamma}{(R_{\infty}-R_{0})^2}\right]^{\frac{\theta p_0+m-1}{\theta p_0[p_0-(m-1)]}\frac{1}{1-\omega}} \norm{u}_{L^1_\rho(Q_{\infty})}^{\frac{\theta + m-1}{p_0(p_0-1)}\frac{1}{1-\omega}}\\
		= & \, C\, \mathsf{S}^{\frac{\theta(p_0-1)[p_0-(m-1)]}{p_0[\theta p_0-(m-1)(\theta+m-2)]}} \left[\frac{(1+ R_{\infty})^\gamma}{(R_{\infty}-R_{0})^2}\right]^{\frac{(p_0-1)(\theta p_0+m-1)}{\theta p_0 [\theta p_0-(m-1)(\theta+m-2)]}}\\
		& \times\norm{u}_{L^1_\rho(Q_{\infty})}^{\frac{(\theta+m-1)[p_0-(m-1)]}{p_0[\theta p_0-(m-1)(\theta+m-2)]}}\\
		\leq & \, C\,\mathsf{S}^{\frac{(\theta p_0+m-1)[p_0-(m-1)]}{p_0[\theta p_0-(m-1)(\theta+m-2)]}} \left[\frac{(1+ R_{\infty})^\gamma}{(R_{\infty}-R_{0})^2}\right]^{\frac{(p_0-1)(\theta p_0+m-1)}{\theta p_0 [\theta p_0-(m-1)(\theta+m-2)]}} \\
		& \times(T^*-T_\infty)^{\frac{(\theta+m-1)[p_0-(m-1)]}{p_0[\theta p_0-(m-1)(\theta+m-2)]}} \, ,
	\end{aligned}
\end{equation}
where in the last inequality we have used $\int_a^b|f(t)| \, dt\leq(b-a)\sup_{t\in(a,b)}|f(t)|$.

Next we observe that, up to different multiplying constants, estimate \eqref{iteration6} also holds between $ Q_0 $ and $Q_1$ (\emph{i.e.}~with $ R_\infty \equiv R_1 $ and $ Q_\infty \equiv Q_1 $), whereas estimate \eqref{final p1 smoothing} is still true between $ Q_1 $ and $ Q_\infty $ (\emph{i.e.}~with $ R_0 \equiv R_1 $ and $ Q_0 \equiv Q_1 $), so combining them yields
\begin{equation}\label{sinfsmoothing}
	\begin{aligned}
		\norm{u}_{L^{\infty}(Q_{0})} \leq C\, \mathsf{S}^{\frac{\theta p_0+m-1}{\theta p_0-(m-1)(\theta+m-2)}} \left[\frac{(1+ R_{\infty})^\gamma}{(R_{\infty}-R_{0})^2}\right]^{\frac{p_0+\theta+m-1+\frac{m-1}{\theta}}{\theta p_0-(m-1)(\theta+m-2)}}(T^*-T_\infty)^{\frac{\theta+m-1}{\theta p_0-(m-1)(\theta+m-2)}} \, ,
	\end{aligned}
\end{equation}
where we have also exploited the fact that $ (R_1-R_0) $ and $ (R_\infty-R_1) $ are comparable to $ (R_\infty-R_0) $, up to numerical constants (still a consequence of \eqref{radii.times.upper.iteration}). Note that, under the essential condition \eqref{condp} on $p_0$, all of the exponents in \eqref{sinfsmoothing} are formally minimized by taking $p_0\to\infty$. However, this is not possible, because it can be shown, by keeping track of all of the dependencies on $p_0$, that the multiplying constant in \eqref{final p1 smoothing} actually blows up as $ p_0 \to \infty $. Therefore, in order to simplify the appearance of such exponents, we define an arbitrarily small number $\varepsilon > 0$ so that
\begin{equation*}\label{defeps}
	1+\varepsilon=\tfrac{\theta p_0+m-1}{\theta p_0-(m-1)(\theta+m-2)} \, .
\end{equation*}
Under this choice, estimate \eqref{sinfsmoothing} simplifies to
\begin{equation}\label{sinfsmoothing1}
	\begin{aligned}
		\norm{u}_{L^{\infty}(Q_{0})} \leq C \, \mathsf{S}^{1+\varepsilon}\left[ \frac{(1+ R_{\infty})^\gamma}{(R_{\infty}-R_{0})^2}\right ]^{\frac{\varepsilon}{m-1}+\frac{1+\varepsilon}{\theta}} (T^*-T_\infty)^{\frac{\varepsilon}{m-1}} \, ,
	\end{aligned}
\end{equation}
where $C$ now depends only on $N,m,\gamma,\underline{C},\overline{C}$ and on $\varepsilon$, being unstable as $\varepsilon\to0$. Recalling that $ p^\ast>1 $, such a constraint translates into $ \varepsilon<\varepsilon_0 $ for some constant $ \varepsilon_0>0 $ depending on $ m,\theta$, which can in fact be taken equal to $ +\infty $ if $m$ is sufficiently large.

In order to finish, we recall that \eqref{sinfsmoothing1} holds under \eqref{linfty contr mod 2}, but as in Step 3a the estimate
\begin{equation}\label{aaa-bis-gen}
    \norm{u}_{L^{\infty}(Q_{0})} \leq  C\,\mathsf{S}^{1+\varepsilon}\left[\frac{(1+ R_{\infty})^\gamma}{(R_{\infty}-R_{0})^2}\right]^{\frac{\varepsilon}{m-1}+\frac{1+\varepsilon}{\theta}} (T^*-T_\infty)^{\frac{\varepsilon}{m-1}} + \left[ \frac{(R_{\infty}-R_{0})^2}{(1+R_\infty)^\gamma\,(T_{0}-T_{\infty})} \right]^{\frac{1}{m-1}}
\end{equation}
also includes the case in which \eqref{linfty contr mod 2} fails, whence the thesis (\emph{i.e.}~the second claim in \eqref{l.infty.balls}) follows with the same choice of radii and times specified after \eqref{aaa-bis}.
\end{proof}

In order to complete the proof of Theorem \ref{local moser iter lem}, we perform a local approximation of very weak solutions by means of strong energy solutions, passing to the limit in the just proved estimate \eqref{l.infty.balls}. Before, we need some basic compactness estimates (which will also be useful in Section \ref{sec:unique}).

\begin{lem}[Local energy estimates II]\label{energy lem}
Let $ \Omega \subset \R^N $ be (possibly unbounded) domain and $ 0 \le \tau_1<\tau_2 $. Let $u$ be a strong energy solution to~\eqref{wpme-noid-loc}, in the sense of Definition \ref{def:w-loc}. Let us consider the cylinders $ Q_0,Q_1 $ as in \eqref{cyl-energy}, under the same inclusion assumptions. Then,
\begin{equation}\label{h1 energy}
\iint_{Q_1} \left|\nabla\!\left( u^m\right) \right|^2 dxdt \leq C \left( \mathsf{diff}_t^{-1} \vee \mathsf{diff}_r^{-2} \right) \left( 1 +  R_0^\gamma \left\|  u\right\|^{m-1}_{L^\infty(Q_0)} \right) \left\| u \right\|_{L^{m+1}_\rho(Q_0)}^{m+1}
\end{equation}
and
\begin{equation}\label{time energy}
\iint_{Q_1} \left| \partial_t(u^m) \right|^2 dxdt \leq C \left( \mathsf{diff}_t^{-1} \vee \mathsf{diff}_r^{-2} \right)^2 \left( 1 +  R_0^\gamma \left\|  u\right\|^{m-1}_{L^\infty(Q_0)} \right)^2 \, \left\|  u\right\|^{m-1}_{L^\infty(Q_0)} \left\| u \right\|_{L^{m+1}_\rho(Q_0)}^{m+1} ,
\end{equation}
where we set $ \mathsf{diff}_t = T_1-T_0 $ and $ \mathsf{diff}_r = R_0-R_1 $, and $C>0$ is a constant depending only on $ m,\gamma,\underline{C} $.
\end{lem}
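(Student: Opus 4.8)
The plan is to mimic the structure of the proof of Proposition \ref{estimates.balls}, but this time testing the weak formulation \eqref{w-cond-loc} against $\psi = (u^m)\,\varphi^2$ (rather than $u^{p-1}\varphi^2$), where $\varphi\in C^\infty_c(\Omega\times(\tau_1,\tau_2))$ is a space-time cutoff with $0\le\varphi\le1$, $\varphi\equiv1$ on $Q_1$, $\varphi\equiv0$ outside $Q_0$, and $|\nabla\varphi|\le C_\varphi/\mathsf{diff}_r$, $|\varphi_t|\le C_\varphi/\mathsf{diff}_t$. First I would establish \eqref{h1 energy}. Expanding $\langle\nabla(u^m),\nabla(u^m\varphi^2)\rangle = |\nabla(u^m)|^2\varphi^2 + 2\varphi\,u^m\langle\nabla(u^m),\nabla\varphi\rangle$ and using Young's inequality on the cross term to absorb half of $\int|\nabla(u^m)|^2\varphi^2$, one is left with
\begin{equation*}
\tfrac12\iint_{Q_0}|\nabla(u^m)|^2\varphi^2\,dxdt \le \tfrac1{m+1}\left|\iint_{Q_0}\partial_t\!\left(|u|^{m+1}\right)\varphi^2\rho\,dxdt\right| + C\iint_{Q_0}u^{2m}|\nabla\varphi|^2\,dxdt\,,
\end{equation*}
where the first term comes from $\int \langle \partial_t u, u^m\varphi^2\rho\rangle = \tfrac1{m+1}\int \partial_t(|u|^{m+1})\varphi^2\rho$ (recall the convention $u^m=|u|^{m-1}u$, so $u\cdot u^m=|u|^{m+1}$; the computation is justified by the regularity in Definition \ref{def:w-loc}, with the same non-singular approximation remark as in Proposition \ref{estimates.balls}). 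Integrating the time term by parts moves the derivative onto $\varphi^2$, producing $\iint |u|^{m+1}\varphi\varphi_t\rho$, bounded by $\mathsf{diff}_t^{-1}\|u\|_{L^{m+1}_\rho(Q_0)}^{m+1}$; the gradient term is bounded, using $u^{2m}=|u|^{m-1}|u|^{m+1}$ and the upper bound in \eqref{weight-cond} (to write $1\le C R_0^\gamma\rho$ on $B_{R_0}$, since $R_0\ge1$), by $\mathsf{diff}_r^{-2}R_0^\gamma\|u\|_{L^\infty(Q_0)}^{m-1}\|u\|_{L^{m+1}_\rho(Q_0)}^{m+1}$. Combining and using $\varphi\equiv1$ on $Q_1$ yields \eqref{h1 energy}, after collecting the two contributions into $\mathsf{diff}_t^{-1}\vee\mathsf{diff}_r^{-2}$ and $1+R_0^\gamma\|u\|_{L^\infty(Q_0)}^{m-1}$.

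For \eqref{time energy} I would test \eqref{w-cond-loc} instead with $\psi = \partial_t(u^m)\,\varphi^2$ — or, more safely, with $\partial_t$ of a mollification of $u^m$ times $\varphi^2$, to stay within the admissible test-function class of Definition \ref{def:w-loc}, then pass to the limit. The equation gives
\begin{equation*}
\iint_{Q_0}|\partial_t(u^m)|^2\varphi^2\,\tfrac{\rho}{?}\ \ \text{---}
\end{equation*}
more precisely, from $\rho u_t = \Delta(u^m)$ and $u_t = \tfrac1m |u|^{1-m}\partial_t(u^m)$ (so $\rho\, u_t\,\partial_t(u^m) = \tfrac1m |u|^{1-m}|\partial_t(u^m)|^2\rho \ge c\,\|u\|_{L^\infty(Q_0)}^{1-m}|\partial_t(u^m)|^2\rho$ using $\rho\ge \underline C(1+|x|)^{-\gamma}$ and the local bound on $u$), one obtains that $\|u\|_{L^\infty(Q_0)}^{1-m}\iint_{Q_1}|\partial_t(u^m)|^2\,dxdt$ is controlled, after the usual integration by parts in space, by $\iint_{Q_0}\langle\nabla(u^m),\nabla(\partial_t(u^m)\varphi^2)\rangle$; expanding and integrating by parts in time the term $\varphi^2\langle\nabla(u^m),\partial_t\nabla(u^m)\rangle = \tfrac12\varphi^2\partial_t|\nabla(u^m)|^2$ transfers the time derivative onto $\varphi^2$, leaving $\iint|\nabla(u^m)|^2\varphi\varphi_t$ plus a cross term $\iint \varphi\,\partial_t(u^m)\langle\nabla(u^m),\nabla\varphi\rangle$ which, by Young, is absorbed into the left-hand side at the cost of an extra $\iint|\nabla(u^m)|^2|\nabla\varphi|^2$. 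All the surviving terms are of the form (constant)$\cdot(\mathsf{diff}_t^{-1}\vee\mathsf{diff}_r^{-2})\iint_{Q_0'}|\nabla(u^m)|^2$ on a slightly larger intermediate cylinder $Q_0'$; applying \eqref{h1 energy} (on that intermediate cylinder, which is legitimate by the freedom in choosing radii/times) gives a further factor $(\mathsf{diff}_t^{-1}\vee\mathsf{diff}_r^{-2})(1+R_0^\gamma\|u\|_{L^\infty(Q_0)}^{m-1})\|u\|_{L^{m+1}_\rho(Q_0)}^{m+1}$. Multiplying through by $\|u\|_{L^\infty(Q_0)}^{m-1}$ to clear the $\|u\|^{1-m}$ prefactor on the left produces exactly the stated right-hand side of \eqref{time energy}, with $C$ depending only on $m,\gamma,\underline C$.

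The main obstacle is the rigorous justification of the computations for \eqref{time energy}: the test function $\partial_t(u^m)\varphi^2$ is not obviously admissible (it requires $\partial_t(u^m)\in L^2((\tau_1,\tau_2);H^1)$, which is not part of Definition \ref{def:w-loc}), so one must work with a Steklov average or time-mollification of $u^m$, verify that the cross terms and the $\partial_t|\nabla(u^m)|^2$ term pass to the limit, and handle the degeneracy of $|u|^{1-m}$ where $u$ vanishes — this last point is dealt with as in Proposition \ref{estimates.balls} by a non-singular approximation (replacing $|u|^{1-m}$ by $(|u|\vee\delta)^{1-m}$ and letting $\delta\to0$, using monotone/dominated convergence together with the already-established bound \eqref{h1 energy}). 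The two-step nesting of cylinders (first \eqref{h1 energy} on $Q_0'$, then \eqref{time energy} from $Q_0'$ to $Q_1$) is routine given the quantitative dependence on $\mathsf{diff}_t,\mathsf{diff}_r$, and I would simply absorb the intermediate geometric constants into $C$.
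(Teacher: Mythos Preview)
Your proposal is correct and follows essentially the same approach as the paper. For \eqref{h1 energy} the paper simply invokes Proposition~\ref{estimates.balls} with $p=m+1$, which is exactly your computation with the test function $u^m\varphi^2$; for \eqref{time energy} the paper tests (formally) with $\partial_t(u^m)\,\varphi^2$ as you do, but rewrites the left-hand side via the identity $|\partial_t(u^m)| = \tfrac{2m}{m+1}\,|u|^{(m-1)/2}\,|\partial_t(u^{(m+1)/2})|$, working with $\iint|\partial_t(u^{(m+1)/2})|^2\varphi^2\rho$ instead of your $\|u\|_{L^\infty}^{1-m}\iint|\partial_t(u^m)|^2$ --- these are algebraically equivalent and the intermediate-cylinder trick and Young-absorption steps are identical. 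For the rigorous justification the paper does not mollify the test function but instead appeals to approximation of $u$ by smooth solutions of a Cauchy--Dirichlet problem (using uniqueness of strong energy solutions to identify the limit), which serves the same purpose as your Steklov/$(|u|\vee\delta)$ scheme.
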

\begin{proof}
First of all, we notice that estimate \eqref{h1 energy} is a direct consequence of \eqref{wanted.inq} with $ p=m+1 $, recalling that we are assuming $ R_0>1 $. Hence, let us concentrate on \eqref{time energy}.

The computations that follow are only formal, since strong energy solutions do not have enough regularity to justify them rigorously; however, we will recall at the end of the proof the by-now standard methods that one can use to overcome these technical obstructions.

As a preliminary observation, we have
    \begin{equation}\label{prelim-tdir}
        \left|\partial_t(u^m)\right|= \left(\tfrac{2m}{m+1}\right) |u|^{\frac{m-1}{2}} \left|\partial_t\!\left(u^{\frac{m+1}{2}} \right)\right| ,
    \end{equation}
hence by the local boundedness assumption on $u$ it is enough to estimate the $L^2$ norm of the time derivative of $ u^{(m+1)/2} $. To this end, let $ \varphi $ be the same cutoff function as in \eqref{test.function.balls-prop}--\eqref{test.function.balls}, with $ Q_0 $ replaced by the ``intermediate'' cylinder $ \widehat{Q}_0 $ between $Q_1$ and $ Q_0 $, namely the one with radius and initial time equal to
$$
\widehat{R}_0 = \tfrac{R_0+R_1}{2} \qquad \text{and} \qquad \widehat{T}_0 = \tfrac{T_0+T_1}{2} \, ,
$$
respectively. Note that, with this choice,
\begin{equation}\label{diff-hat}
\widehat{R}_0-R_1 = R_0 - \widehat{R}_0 = \tfrac{R_0-R_1}{2} \qquad \text{and} \qquad T_1-\widehat{T}_0 = \widehat{T}_0 - T_0 =\tfrac{T_1-T_0}{2} \, .
\end{equation}
Testing \eqref{wpme-noid-loc} with $ \partial_t(u^m) \varphi^2 $, and integrating by parts on the right-hand side, yields
    \begin{equation}\label{tdir1}
        \begin{aligned}
            & \quad\ \iint_{\widehat{Q}_0}\left|\partial_t\!\left(u^{\frac{m+1}{2}}\right)\right|^2 \varphi^2 \,\rho \, dxdt \\
            & = c_m \, \iint_{\widehat{Q}_0} \Delta(u^m)\, \partial_t(u^{m}) \,\varphi^2 \, dxdt \\
            &=-c_m \, \iint_{\widehat{Q}_0} \left\langle \nabla (u^m) \, ,\nabla\!\left(\partial_t (u^m)\,\varphi^2\right)\right\rangle dx dt\\
            &=-\frac{c_m}{2} \, \underbrace{\iint_{\widehat{Q}_0} \partial_t\!\left(\left|\nabla (u^m)\right|^2\right)\varphi^2 \, dxdt}_{I}-c_m \, \underbrace{\iint_{\widehat{Q}_0}\left\langle \nabla (u^m) \, ,\nabla\!\left(\varphi^2\right) \right\rangle \partial_t (u^m) \, dxdt}_{II}\,,
        \end{aligned}
    \end{equation}
for a constant $c_m>0$ depending only on $m$. As concerns $I$, we have the estimate
    \begin{equation}\label{tdir2}
    \begin{aligned}
        I & = \int_{B_{\widehat{R}_0}} \left| \nabla(u^m)(T^*) \right|^2 \varphi^2(T^*) \, dx - \iint_{\widehat{Q}_0} \left|\nabla (u^m)\right|^2  \partial_t\!\left(\varphi^2\right) dxdt \\
        & \ge  - \iint_{\widehat{Q}_0} \left|\nabla (u^m)\right|^2  \left|\partial_t\!\left(\varphi^2\right)\right| dxdt \, .
        \end{aligned}
    \end{equation}
    As for $II$ on the other hand, by Young's inequality, \eqref{prelim-tdir}, and \eqref{weight-cond}, we derive
    \begin{equation}\label{tdir3}
        |II|\leq \mathsf{C} \, \varepsilon\,{\widehat{R}_0}^{\gamma}\norm{u}^{m-1}_{L^\infty \left(\widehat{Q}_0\right)} \iint_{\widehat{Q}_0} \left|\partial_t \!\left(u^{\frac{m+1}{2}}\right)\right|^2 \varphi^2 \, \rho \, dx dt+\frac{1}{\varepsilon}\!\iint_{\widehat{Q}_0} \left|\nabla(u^m)\right|^2 \left|\nabla\varphi\right|^2 dxdt \,,
    \end{equation}
 for all $\varepsilon>0$ and a constant $\mathsf{C}>0$ depending only on $m,\gamma,\underline{C}$. Combining \eqref{tdir1}, \eqref{tdir2}, and \eqref{tdir3} with the choice of $\varepsilon$ such that
    \begin{equation*}
c_m \, \mathsf{C} \, \varepsilon\,{\widehat{R}_0}^{\gamma}\norm{u}^{m-1}_{L^\infty \left(\widehat{Q}_0\right)} =\frac{1}{2}\,,
    \end{equation*}
    yields
    \begin{equation}\label{dt-last}
    \begin{aligned}
        & \iint_{\widehat{Q}_0}\left|\partial_t\!\left(u^{\frac{m+1}{2}}\right)\right|^2 \varphi^2 \,\rho \, dxdt \\
        \leq & \, C\left(1+{\widehat{R}_0}^{\gamma}\norm{u}^{m-1}_{L^\infty\left( \widehat{Q}_0 \right)} \right)\iint_{\widehat{Q}_0} \left|\nabla (u^m)\right|^2\left(\left|\nabla\varphi\right|^2 \vee \left|\partial_t\!\left(\varphi^2\right)\right|\right) dx dt \, ,
    \end{aligned}
    \end{equation}
    for a generic constant $C>0$ as in the statement. Hence, estimate \eqref{time energy} is a now a direct consequence of \eqref{dt-last}, the support properties of $ \varphi $, \eqref{h1 energy} applied with $ Q_1 \equiv \widehat{Q}_0 $, and \eqref{diff-hat}.


    Finally, it is a standard fact that strong energy solutions are uniquely determined by their (local) initial and boundary data, see \emph{e.g.}~\cite[Theorem 5.13]{Vazquez} (in the unweighted case -- also known as \emph{weak energy solutions} to Cauchy--Dirichlet problems like \eqref{cd problem}), hence one can always construct them as a limit of regular approximate solutions for which the above computations are justified, and estimates \eqref{h1 energy}--\eqref{time energy} are stable at the limit. Some more details about such a construction will be provided in the incoming end of proof of Theorem \ref{local moser iter lem}, where the much more delicate very weak solutions are dealt with.
\end{proof}

\begin{proof}[End of proof of Theorem \ref{local moser iter lem}]
We have just shown that the smoothing estimate \eqref{l.infty.balls} holds under the additional requirement that $u$ is a strong energy solution. On the other hand, by virtue of Lemma \ref{globloc} and Remark \ref{loc-vw-bdry},
we can assume without loss of generality that our local very weak solution $ u $ is in fact a (bounded) solution of the Cauchy--Dirichlet problem \eqref{cd problem}, in the sense of Definition \ref{vw cd def}, for some $ u_0 \in L^\infty(B_r) $ and $ g \in L^\infty_\sigma(\partial B_r \times (\tau_1,\tau_2)) $, for a.e.~$ r>0 $ such that $ B_{r} \Subset \Omega$.

Our goal is now to approximate $u$ with a sequence $ \{ u_n \} $ of \emph{regular} solutions of a slightly perturbed Cauchy--Dirichlet problem, to which the above proved estimate \eqref{l.infty.balls} is applicable, and then carefully pass to the limit as $n\to \infty$. In order to avoid an excess of technicalities, from now on we will assume in addition that $ u $ is \emph{non-negative}, providing at the end of the proof a brief explanation on how to treat sign-changing solutions as well. So, for each $ n \in \mathbb{N} \setminus\{0\} $, let $ u_n $ be the (non-negative) solution of
\begin{equation}\label{cd problem-bis}
	\begin{cases}
		\rho_n \, \partial_t  u_n = \Delta\!\left(u_n^m \right) & \text{in} \;\; B_{r} \times(\tau_1,\tau_2) \, , \\
		u^m_n = g_n & \text{on} \;\; \partial  B_{r} \times (\tau_1,\tau_2) \, , \\
		u_{n}  = u_{0,n} & \text{on} \;\; B_{r} \times \{ \tau_1 \} \, ,
	\end{cases}
\end{equation}
where $r$ is chosen as above with $ r>2R $. The approximate data $ u_{0,n} $, $ g_n $, and $ \rho_n $ are chosen so as to satisfy the following properties:
\begin{equation}\label{approx-data-loc}
\begin{gathered}
	u_{0,n}\in C^\infty\!\left(\overline{B}_{r}\right), \qquad u_{0,n} \overset{*}{\rightharpoonup} u_0 \quad \text{in} \;\; L^\infty(B_{r}) \, , \qquad \tfrac 1 n \leq u_{0,n}\leq M \,, \\
	g_n\in C^\infty(\partial B_{r}\times[\tau_1,\tau_2])\,, \qquad g_n \overset{*}{\rightharpoonup} g   \quad \text{in}\;\; L^\infty(\partial B_{r}\times(\tau_1,\tau_2))\,, \qquad \tfrac {1}{n^m} \leq g_n \leq M^m \,, \\
	\rho_n \in C^\infty\!\left(\overline{B}_{r}\right) , \quad \tfrac12\,\underline{C} \left( 1 + |x| \right)^{-\gamma}  \le \rho_n(x) \le 2 \, \overline{C} \, |x| ^{-\gamma} \;\;\;\; \forall x \in B_r \, , \quad \rho_n \to \rho \;\;\;\;  \text{a.e.~in} \;\; B_{r} \, ,
\end{gathered}
\end{equation}
the above limits being taken as $ n \to \infty $, and for a suitable $M>0$ independent of $n$. Also, we tacitly assume that the initial and boundary data smoothly match at the corner $\partial B_{r} \times \{ \tau_1 \}$, so that $ u_n $ is indeed smooth up to the parabolic boundary from classical results for quasilinear equations, see for example \cite[Chapter 7]{Lieberman}.

Hence, we have the right to apply \eqref{l.infty.balls} to $u_n$, which gives
		\begin{equation}\label{l.infty.balls-n}
			\begin{aligned}
				R^{-\frac{2-\gamma}{m-1}} \norm{u_n}_{L^\infty\left(\underline{Q}\right)} & \leq C_1 \left[  \left(R^{-(N-\gamma)-\frac{2-\gamma}{m-1}}\norm{u_n}_{L_\rho^{1}\left(\overline{Q}\right)}\right)^{\frac{1}{2-m}}+\left(\tfrac{1}{t_2-t_1}\right)^{\frac{1}{m-1}} \right] \quad \text{for}\;\; m\in(1,2) \, ,\\
				R^{-\frac{2-\gamma}{m-1}} \norm{u_n}_{L^\infty\left(\underline{Q}\right)} & \leq  C_2\left[\left(R^{-(N-\gamma)-\frac{2-\gamma}{m-1}}\,\mathsf{S}_n\right)^{1+\varepsilon}\left(T^*-t_1\right)^{\frac{\varepsilon}{m-1}}+\left(\tfrac{1}{t_2-t_1}\right)^{\frac{1}{m-1}}\right] \quad
				 \text{for}\;\; m>1 \, ,
			\end{aligned}
		\end{equation}
		with
        $$
        \mathsf{S}_n=\sup\limits_{t \in (t_1,T^*) } \int _{B_{2R}} \left|u_n(x,t)\right| \rho(x)\,dx \, ,
        $$
the constants $ C_1,C_2 $ being of the same type as in the statement (in particular independent of $n$). Next, we need to pass to the limit in \eqref{l.infty.balls-n} as $n \to \infty$. To this end, we first observe that by standard comparison principles we have
\begin{equation}\label{loc-unif-est}
	\tfrac{1}{n} \le  u_n \leq M \qquad \text{in}\;\; B_{r} \times (\tau_1,\tau_2) \, .
\end{equation}
In particular, Lemma \ref{energy lem} applied to $ u \equiv u_n $ ensures that
$$
\left\{ u_n^m \right\} \; \; \text{is bounded in} \; \; H^1 \!\left((\hat \tau_1,\hat\tau_2); L^2_\rho\!\left(B_{r_0}\right)\right) \cap L^2\!\left((\hat \tau_1,\hat \tau_2); H^1\!\left(B_{r_0}\right)\right)
$$
for all $ 0<r_0 < r $ and all $ \tau_1 < \hat{\tau}_1 < \hat{\tau}_2 < \tau_2 $. We are therefore in a position to invoke the Aubin--Lions compactness lemma, which yields the existence of an element $ u_\star $ such that
\begin{equation}\label{types of convergence smoothing}
	\begin{aligned}
		u_{n}^m \rightharpoonup u_\star^m \qquad & \text{in}\;\; L^2\!\left((\hat \tau_1,\hat \tau_2);H^1\!\left(B_{r_0}\right)\right) ,  \\
        \partial_t\!\left( u_{n}^m \right) \rightharpoonup \partial_t\!\left( u_\star^m \right) \qquad & \text{in}\;\; L^2\!\left((\hat \tau_1,\hat \tau_2);L^2_{\rho}\!\left( B_{r_0} \right)\right) ,
	\end{aligned}
\end{equation}
as $ n \to \infty $, along a subsequence that we do not relabel. It is plain that, thanks to \eqref{loc-unif-est} and a routine diagonal argument that implies selecting a further subsequence, we can also assume that $ u_\star \in L^\infty\!\left( B_{r} \times (\tau_1,\tau_2) \right) $ and that
\begin{equation}\label{types of convergence smoothing bis}
u_n \xrightharpoonup[n \to \infty]{\ast}  u_\star \qquad \text{in} \;\; L^\infty\!\left( B_{r} \times (\tau_1,\tau_2) \right) .
\end{equation}
The local very weak formulation satisfied by each $ u_n $ reads (for all test function $\psi$ as in Definition \ref{vw cd def})
		\begin{equation}\label{vw cd eq n version}
			\int_{\tau_1}^{\tau_2} \int_{B_r} \left(u_n \, \psi_t \, \rho_n + u_n^m \, \Delta\psi \right) dx dt +\int_\Omega u_{0,n}(x)\,\psi(x,\tau_1)\,\rho_n(x)\,dx= \int_{\tau_1}^{\tau_2} \int_{\partial B_r} g_n \, \partial_{\Vec{\mathsf n}}\psi \, d\sigma dt \, ,
		\end{equation}
and from \eqref{approx-data-loc}, \eqref{types of convergence smoothing}, \eqref{types of convergence smoothing bis}, it is safe to pass to the limit in \eqref{vw cd eq n version}, finding that $ u_\star $ is also a bounded very weak solution of the Cauchy--Dirichlet problem \eqref{cd problem-bis}. Hence, owing to Proposition \ref{local comparison lemma}, we necessarily have $ u=u_\star $. Finally, the validity of \eqref{l.infty.balls} is just a consequence of passing to the limit in \eqref{l.infty.balls-n} as $ n \to \infty $, the estimate being stable thanks to the convergence properties \eqref{types of convergence smoothing}--\eqref{types of convergence smoothing bis}. In particular, we observe that \eqref{types of convergence smoothing} ensures that $ \{ u_n \} $ converges to $ u_\star $ in $ C\!\left([t_1,T^*] ; L^1_\rho(B_{2R}) \right) $, so that $ \mathsf{S}_n \to \mathsf{S} $.

If $ u $ is a sign-changing solution, we cannot approximate it as in \eqref{cd problem-bis}--\eqref{approx-data-loc}, as lifting by $ 1/n $ the initial and boundary data is not enough to remove the degeneracy of the power $ u \mapsto u^m $ near $u=0$. In this case, it is more appropriate to approximate the degenerate power itself, that is, replace $ u^m_n $ with $ \Phi_n(u_n) $ in \eqref{cd problem-bis}, where $ \{ \Phi_n(u) \} $ is a suitable sequence of smooth and non-degenerate real functions converging to $u^m$; for the details of such a construction, we refer to \cite{GMPo} and the references therein. In particular, the energy estimates \eqref{h1 energy}--\eqref{time energy} can also be shown to hold for such approximate solutions (with suitable adaptations of the powers involved), by carrying out a ``localized'' version of \cite[Lemma 3.3]{GMPo}, with completely analogous computations. Having these modifications in mind, it is then not difficult to deduce, as in the last part of the proof, that $u$ is actually a strong energy solution in $ B_r \times (\tau_1,\tau_2) $, and therefore one can apply to it the firstly proved smoothing estimates \eqref{l.infty.balls} for this type of solutions.
\end{proof}

\begin{proof}[Proof of Corollary \ref{glob-unif-est}]
It is an immediate consequence of Theorems \ref{local moser iter lem} and \ref{apriori-bounded}.
\end{proof}

\begin{proof}[Proof of Corollary \ref{very weak strong energy}]
It readily follows from the end of proof of Theorem \ref{local moser iter lem}, where it is in particular shown that locally bounded very weak solutions can be approximated by strong energy solutions, and such approximations have a uniformly bounded local energy (see \eqref{loc-unif-est}--\eqref{types of convergence smoothing}). Rigorously, this is done in cylinders contained in $ \Omega \times (\tau_1,\tau_2) $ that are centered at the origin, however, also recalling Remark \ref{loc-vw-bdry}, there is no obstruction to carrying out the same construction in cylinders centered at an arbitrary $x_0 \in \Omega$ (energy estimates completely analogous to \eqref{h1 energy}--\eqref{time energy} hold).

If $u$ is in addition non-negative, then its \emph{a priori} local boundedness assumption can be dropped thanks to Theorem \ref{apriori-bounded}. Finally, property \eqref{Cont-Lp} follows directly from local boundedness in combination with \eqref{w-hyp-energ}.
\end{proof}

\section{Existence and uniqueness: proofs}\label{sec:unique}
The main goal of this section is to prove the well-posedness of problem \eqref{measure}, when $ \mu $ is a Radon measure with the growth rate identified in Theorem \ref{thm:weighted-AC}. We split it into three subsections: after a first one containing crucial preliminary results, in the second one we show existence (also for sign-changing measures), whereas in the third one we prove uniqueness for non-negative measures.

\subsection{Additional definitions and some preliminary results}\label{subsec:prelim-compact}
We start by introducing a suitable definition of \emph{weak} solution to \eqref{measure} that takes a non-negative \emph{finite} Radon measure as its initial datum.

\begin{den}[Weak energy solutions with finite measure data]\label{def3}
Let $\mu$ be a non-negative finite Radon measure. We say that a non-negative function $ u $ is a weak energy solution of problem \eqref{measure}, with $T=+\infty$, if
\begin{equation*}\label{energymeas}
	\begin{gathered}
		u \in  C\!\left((0,+\infty); L^1_\rho\!\left(\mathbb{R}^N\right) \right)\cap\, L^\infty\!\left( \mathbb{R}^N \times ( \tau , +\infty) \right) \qquad \forall\tau>0 \, , \\
		 u^m \in L^2_{\mathrm{loc}} \big((0,+\infty); \dot H^1\!\left(\mathbb{R}^N \right) \! \big) \, ,
	\end{gathered}
\end{equation*}
$\mu$ is the initial trace of $u$ in the sense of \eqref{measure-data}, and the weak formulation \eqref{energy-formulation} holds
for all $\psi\in C^\infty_c\!\left( \mathbb{R}^N\times (0, +\infty) \right)$.
\end{den}

The following result concerning the well-posedness of the problem \eqref{measure}, in the set of solutions provided by Definition \ref{def3}, is the local (pure Laplacian operator) counterpart of \cite[Theorems 3.2 and 3.4]{GMP} -- see the classical paper \cite{Pierre} in the case $\rho=1$.

\begin{thm}\label{weighted pierre thm}
Let $N\geq3$, $m>1$, and $\rho$ be a measurable function satisfying \eqref{weight-cond}. Let $\mu$ be a non-negative finite Radon measure. Then there exists a unique weak energy solution $u$ of problem \eqref{measure} (with $ T=+\infty $), in the sense of Definition \ref{def3}.
\end{thm}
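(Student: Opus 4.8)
The plan is to get uniqueness for free from the general uniqueness theorem already at our disposal, and to prove existence by the classical approximation scheme of \cite{Pierre} (for $\rho=1$) and \cite[Theorems 3.2 and 3.4]{GMP} (for the non-local case), the role of the explicit self-similar structure being played here by the quantitative bounds of Proposition \ref{pro1}.

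\textbf{Uniqueness.} Any weak energy solution $u$ of problem \eqref{measure} in the sense of Definition \ref{def3} is, in particular, a non-negative very weak solution of \eqref{wpme-noid} on every strip $\R^N\times(0,T')$, whose initial trace is $\mu$ in the $C_c$-sense \eqref{measure-data}; the time-continuity $u\in C((0,\infty);L^1_\rho(\R^N))$ built into Definition \ref{def3} shows that, given two such solutions, their difference satisfies the essential-limit condition \eqref{same data}. Hence Theorem \ref{uniq thm} applies directly and forces the two solutions to coincide.

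\textbf{Existence.} I would fix a sequence $\{u_{0,n}\}\subset C^\infty_c(\R^N)$ of non-negative data with $u_{0,n}\,\rho\overset{*}{\rightharpoonup}\mu$, $\int_{\R^N}u_{0,n}\,\rho\,dx\to\mu(\R^N)$, and uniform tightness of $\{u_{0,n}\,\rho\}$ (e.g.\ obtained by mollifying and truncating $\mu$); each $u_{0,n}$ lies in $L^1_\rho(\R^N)\cap L^\infty(\R^N)$, so Proposition \ref{pro1} yields the associated non-negative weak energy solution $u_n$. Mass conservation \eqref{mass cons} gives $\norm{u_n(t)}_{L^1_\rho(\R^N)}\le\mu(\R^N)$, and the smoothing estimate \eqref{basic smoothing} gives $\norm{u_n(t)}_{L^\infty(\R^N)}\le K\,t^{-\lambda}\,\mu(\R^N)^{\theta\lambda}$, both uniformly in $n$, so $\{u_n\}$ is uniformly bounded on $\R^N\times(\tau,\infty)$ for every $\tau>0$. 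The standard energy identity for the equation (rigorous thanks to \eqref{strong energy}) then produces a uniform bound for $u_n^m$ in $L^2((\tau,T);\dot H^1(\R^N))$, while the local energy estimates of Proposition \ref{estimates.balls} and Lemma \ref{energy lem} give uniform control of $\partial_t(u_n^m)$ in $L^2_{\mathrm{loc}}$; by the Aubin--Lions lemma and a diagonal argument one extracts a limit $u$ with $u_n\to u$ a.e.\ and in $L^1_{\rho,\mathrm{loc}}(\R^N\times(0,\infty))$, $u_n^m\rightharpoonup u^m$ in $L^2_{\mathrm{loc}}((0,\infty);\dot H^1(\R^N))$, $u$ inheriting the above $L^1_\rho$ and $L^\infty$ bounds. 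The cutoff computation $\frac{d}{dt}\int u_n\,\phi_R\,\rho\,dx=\int u_n^m\,\Delta\phi_R\,dx$ with $|\Delta\phi_R|\le C R^{-2}$ and $\gamma<2$ shows that the masses $\{u_n\,\rho\}$ stay uniformly tight for $t$ in compact subsets of $(0,\infty)$, upgrading the convergence to $u_n\to u$ in $C([\tau,T];L^1_\rho(\R^N))$, so that $u\in C((0,\infty);L^1_\rho(\R^N))$ and one may pass to the limit in \eqref{energy-formulation}. Finally, to identify the trace, I would use $\frac{d}{dt}\int u_n\,\varphi\,\rho\,dx=\int u_n^m\,\Delta\varphi\,dx$ for $\varphi\in C^\infty_c(\R^N)$, $\varphi\ge0$, together with the uniform smoothing bound and the relation $\lambda(m-1)+\theta\lambda=1$ of \eqref{spexprel}, which makes the singular exponent $\lambda(m-1)<1$ integrable near $t=0$, to get
\begin{equation*}
\left| \int_{\R^N} u_n(t)\,\varphi\,\rho\,dx - \int_{\R^N} u_{0,n}\,\varphi\,\rho\,dx \right| \le C_\varphi\,t^{\theta\lambda} \qquad \forall t>0 \, ,
\end{equation*}
with $C_\varphi$ independent of $n$ (here exploiting $u_n\in C([0,\infty);L^1_\rho(\R^N))$ to send the lower endpoint to $0$). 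Letting $n\to\infty$, then $t\to0$, and extending by density to all $\varphi\in C_c(\R^N)$ gives \eqref{measure-data}, so $u$ is the required weak energy solution.

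\textbf{Main obstacle.} The delicate points are entirely in the existence part and are of a global, spatial nature: securing uniform tightness of the approximate masses down to and at $t=0$ — precisely what would fail when $\gamma\ge2$ — and promoting local compactness to convergence in $C((0,\infty);L^1_\rho(\R^N))$ and in $L^2_{\mathrm{loc}}((0,\infty);\dot H^1(\R^N))$. The first is controlled by playing $|\Delta\phi_R|\le C R^{-2}$ against the weight growth $(1+|x|)^\gamma$ with $\gamma<2$; the second by the uniform energy bounds of Proposition \ref{estimates.balls} and Lemma \ref{energy lem}; and the fact that $\lambda(m-1)<1$ is exactly what lets the trace identification run all the way to $t=0$.
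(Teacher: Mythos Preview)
Your uniqueness argument is circular. Theorem \ref{uniq thm} is proved in Subsection \ref{uniqueness}, and Step 3 of that proof invokes Theorem \ref{weighted pierre thm} in an essential way: it is precisely the uniqueness statement of Theorem \ref{weighted pierre thm} that allows one to assert that the approximating solutions $w_k$ depend only on the truncated measure $\phi_{2^k}\mu$ and not on the particular solution $u$ used to build them. Without an independent proof of Theorem \ref{weighted pierre thm}, that step collapses and with it the whole argument of Theorem \ref{uniq thm}. So you cannot appeal to Theorem \ref{uniq thm} here.

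The paper takes a completely different route: it does not reprove existence and uniqueness from scratch but instead quotes \cite[Theorems 3.2 and 3.4]{GMP}, whose arguments carry over verbatim to the local (pure Laplacian) setting, \emph{except} at one point where \cite{GMP} needed the extra local two-sided power bound \eqref{loc-rho} on $\rho$. The entire content of the paper's proof is to remove that hypothesis: it shows that the operator $A[f]=-\rho^{-1}\Delta f$ with domain $D(A)=\{f\in L^2_\rho:\Delta f\in L^2_{\rho^{-1}}\}$ is densely defined, positive and self-adjoint on $L^2_\rho(\R^N)$ under \eqref{weight-cond} alone, by proving $D(A)\subset\dot H^1(\R^N)$ via elliptic regularity and a cutoff computation. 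Your existence sketch (approximate $\mu$ by smooth compactly supported data, invoke Proposition \ref{pro1}, pass to the limit via smoothing, mass conservation, energy estimates and tightness) is reasonable and close in spirit to Lemma \ref{alaa compact1}, but it does not address uniqueness independently --- and uniqueness for finite-measure data via potential (Pierre-type) methods is exactly what \cite{GMP} supplies and what the paper is leaning on.
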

\begin{proof}
As observed in the Introduction of \cite{GMP}, all of the results of that paper also work in the case of the pure Laplacian operator, which is the one we are interested in. However, it has to be noted that a local additional assumption was required on $ \rho $, specifically the existence of some $ \gamma_0 \in [0,\gamma] $ and constants $c_1,c_2>0$ such that
\begin{equation}\label{loc-rho}
c_1 \, |x|^{-\gamma_0} \le \rho(x) \le c_2 \, |x|^{-\gamma_0} \qquad \text{for a.e.}\;\; x \in \R^N \, .
\end{equation}
Nevertheless, the only point where \eqref{loc-rho} was crucially used is \cite[Theorem 3.7]{GMP} (a complete proof can be found in \cite{M}), which, within the present framework, amounts to the fact that the operator
$$
A[f] = - \rho^{-1} \, \Delta f \qquad \forall f \in D(A) \, ,
$$
with domain
$$
D(A) = \left\{ f \in L^2_\rho\!\left(\R^N\right) : \ \Delta f  \in L^2_{\rho^{-1}}\!\left(\R^N\right) \right\} ,
$$
densely defined, positive, and self-adjoint in $  L^2_\rho\!\left(\R^N\right) $, with quadratic form
$$
Q(f,f) = \int_{\R^N} \left| \nabla f \right|^2 dx \, .
$$
Our goal is to show that such properties still hold under \eqref{weight-cond}, regardless of \eqref{loc-rho}. Since clearly $ C^\infty_c\!\left(\R^N\right) \subset D(A) $, the claim follows if we can prove that $ D(A) \subset \dot{H}^1\!\left( \R^N \right) $ and that for all $ f,g \in D(A) $ it holds
\begin{equation}\label{self-adj-prop}
 \int_{\R^N} f \, A[g] \, \rho \, dx \,   = \int_{\R^N} \left\langle \nabla f  , \nabla g \right\rangle dx  = \int_{\R^N} A[f] \, g \, \rho \, dx\, .
\end{equation}
First of all, let us check that if $ f \in D(A) $ then $ f \in H^1_{\mathrm{loc}}\!\left( \R^N \right) $. Thanks to the upper bound on $ \rho $ in \eqref{weight-cond} and H\"older's inequality, it is readily seen that $ \Delta f \in L^p_{\mathrm{loc}}\!\left( \R^N \right) $ for all $ p $ such that
\begin{equation*}\label{ell-reg}
1 \le p <  \tfrac{2N}{N+\gamma} \, .
\end{equation*}
From classical elliptic regularity, see \emph{e.g.}~\cite[Theorem 9.11]{GT}, we have that $ f \in W^{2,p}_{\mathrm{loc}}\!\left( \R^N\right) $ for all such $p$, which entails $ f \in W^{1,q}_{\mathrm{loc}}\!\left( \R^N\right)  $ for all $ 1 \le q < 2N/(N-2+\gamma) $, whence $ f \in H^1_{\mathrm{loc}}\!\left( \R^N \right) $ owing to $\gamma<2$. Now, let $ \{\phi_R\}_{R \ge 1} $ be a family of cutoff functions as in Definition \ref{den-cutoff}; if we could prove that
\begin{equation}\label{hdot-1}
    \limsup_{R \to +\infty} \int_{\R^N} \left| \nabla\!\left( f \phi_R \right) \right|^2 dx < +\infty \, ,
\end{equation}
then it would follow immediately that $ f \in \dot{H}^1\!\left( \R^N \right) $. We have:
$$
\begin{aligned}
\int_{\R^N} \left| \nabla\!\left( f \phi_R \right) \right|^2 dx = & - \int_{\R^N} f \,\phi_R \left[  f \, \Delta \phi_R + 2 \left\langle \nabla f , \nabla \phi_R \right\rangle + \phi_R \, \Delta f  \right] dx \\
= & - \int_{\R^N} f^2 \,\phi_R \, \Delta \phi_R \, dx + \frac 1 2 \int_{\R^N} f^2 \, \Delta \!\left(\phi_R^2 \right) dx - \int_{\R^N} f \,\phi_R^2 \, \Delta f \, dx \\
\le & \, C \left[ \frac{1}{R^2} \int_{B_{2R}\setminus B_R} f^2 \, dx +  \left\| f \right\|_{L^2_\rho\left( \R^N \right)} \left\| \Delta f \right\|_{L^2_{\rho^{-1}}\left( \R^N \right)} \right] \\
\le & \, C \left[\left\| f \right\|_{L^2_\rho\left( \R^N \right)}^2 +  \left\| f \right\|_{L^2_\rho\left( \R^N \right)} \left\| \Delta f \right\|_{L^2_{\rho^{-1}}\left( \R^N \right)} \right] ,
\end{aligned}
$$
where $C>0$ is a constant that may change from line to line and depends only on $ N,\gamma, \underline{C} $, whence \eqref{hdot-1}. Finally, as a consequence of a routine integration by parts, for all $ g \in D(A) $ and all $ \hat{f} \in \dot{H}^1\!\left( \R^N \right) $ with compact support it  holds
$$
 \int_{\R^N} \hat{f} \, A[g] \, \rho \, dx \, = - \int_{\R^N} \hat{f} \, \Delta g \, dx   = \int_{\R^N} \left\langle \nabla \hat{f} \, , \nabla g \right\rangle dx \, .
$$
On the other hand, by means of a density and truncation argument in $ \dot{H}^1\!\left( \R^N \right) \cap L^2_\rho\!\left( \R^N \right) $, it is easily seen that the same identity holds for all $ f \in D[A] $, thus \eqref{self-adj-prop} follows.
\end{proof}
Following \cite[Section 2]{MP}, we now introduce the following uniqueness space $Y$ for unsigned solutions to \eqref{wpme-funid}:
\begin{equation}\label{def-space-Y}
Y(I)=\left\{f\in L^\infty_{\mathrm{loc}}\left(\R^N\times I\right)\colon \ \sup_{R\geq r}\frac{\norm{f}_{L^\infty(B_R\times I)}}{R^{\frac{2-\gamma}{m-1}}}<+\infty \right\} ,
\end{equation}
where $I\subset\R$ is any time interval and $r\ge 1$ is a fixed parameter (it is plain that the definition of $Y$ does not depend on $r$). In this case, the $_{\mathrm{loc}}$ subscript will refer to $I$ only, that is, we write $f \in Y_{\mathrm{loc}}(I)$ if and only if $f \in Y(J)$ for all subinterval $J\Subset I$.

Next, we state two existence and uniqueness results from \cite{MP}, which will be crucially employed in the proofs of this section. In order to lighten the notation, and in agreement with Definition \ref{def-Morrey}, since the initial data considered therein are always of the form $ \mu \equiv u_0 \, \rho \in X $ for $ u_0 \in L^1_{\rho,\mathrm{loc}}\!\left( \R^N\right) $, we write $ u_0 \in X $ (and similar properties) to actually mean $ \mu \in X $.

\begin{pro}[Theorem 2.2 in \cite{MP}]\label{mp-exist}
Let  $N\geq3$, $m>1$, and $\rho$ be a measurable function satisfying \eqref{weight-cond}. Let $ u_0  \in X $. Then there exists a solution $ u $ of problem \eqref{wpme-funid} with $ T \equiv T(u_0) $, in the sense that it complies with Definition \ref{def:vw-gl},	$u\in C\!\left([0,T(u_0));L^1(\Phi_\alpha)\right) $, and $ u(0) = u_0 $, where
$$
T(u_0) = \frac{C_1}{[\ell(u_0)]^{m-1}} \quad \text{if} \;\; u_0 \in X \setminus X_0 \, , \qquad T(u_0) = +\infty \quad \text{if} \; \; u_0 \in X \setminus X_0 \, ,
$$
for a suitable constant $C_1>0$ depending only on $ N,m,\gamma,\underline{C},\overline{C} $.
Furthermore, the following additional properties are satisfied. Setting (let $ r \ge 1 $)
\begin{equation}\label{def-Tr}
T_r(u_0) = \frac{C_1}{\| u_0 \|_{1,r}^{m-1}} \, ,
\end{equation}
the estimates
\begin{equation}\label{1-r-a-p}
\| u(t) \|_{1,r} \le C_2 \, \| u_0 \|_{1,r} \qquad \forall t \in (0,T_r(u_0))
\end{equation}
and
\begin{equation}\label{smooth-a-p}
\| u(t) \|_{\infty,r} \le C_3 \, t^{-\lambda} \, \| u_0 \|_{1,r}^{\theta \lambda}   \qquad \forall t \in (0,T_r(u_0))
\end{equation}
hold for some constants $ C_2,C_3>0 $ depending only on $ N,m,\gamma,\underline{C},\overline{C} $, and if $v$ is the constructed solution associated with another initial datum $v_0\in X$, then the stability estimates
\begin{equation}\label{l1-weight-contr}
	\norm{u(t)-v(t)}_{L^1(\Phi_\alpha)}\leq \exp\!\left(C_4 \, t^{\theta\lambda}\right) \norm{u_0-v_0}_{L^1(\Phi_\alpha)} \qquad \forall t\in(0,T_r(u_0) \wedge T_r(v_0))
\end{equation}
and
\begin{equation}\label{cutoff-weight-cont}
	\left|u(t)-v(t)\right|_{1,r}\leq \exp\!\left(C_5\, t^{\theta\lambda}\right) \left|u_0-v_0\right|_{1,r} \qquad \forall t\in(0,T_r(u_0) \wedge T_r(v_0))
\end{equation}
hold for a constant $C_4>0$ depending only on $N,m,\gamma,\underline{C},\overline{C},\alpha,r,\norm{u_0}_{1,r} , \norm{v_0}_{1,r} $ and a constant $C_5>0$ depending only on $N,m,\gamma,\underline{C},\overline{C},\norm{u_0}_{1,r},\norm{v_0}_{1,r}$, respectively. Also, $ u_0 \le v_0 $ implies $ u \le v $ in $ \R^N \times (0,T(v_0)) $.

Finally, if in addition $ u_0 \in X_0 $, then $ u \in C\!\left([0,+\infty) ; X \right) $ and
\begin{equation}\label{ess-lim-subc}
\underset{|x| \to +\infty}{ \esslim } \, |x|^{-\frac{2-\gamma}{m-1}} \, u(x,t) = 0 \qquad \forall t >0 \, .
\end{equation}
 \end{pro}

\begin{pro}[Theorem 2.3 in \cite{MP}]\label{mp-uniq}
	Let  $N\geq3$, $m>1$, $T \in (0,+\infty]$, and $\rho$ be a measurable function satisfying \eqref{weight-cond}. Let $u , v \in C\big( [0,T) ; L^1_{\rho,\mathrm{loc}}\!\left( \R^N \right) \! \big) $ be any two solutions of problem \eqref{wpme-funid} with the same initial datum $ u_0 \in X $, in the sense that they comply with Definition \ref{def:vw-gl} and $ u(0)=v(0)=u_0  $. If
	\begin{equation*}\label{suf-cond-pm-uniq}
		u,v\in Y_{\mathrm{loc}}\!\left((0,T)\right)\cap L^\infty_{\mathrm{loc}}\!\left([0,T);X\right) ,
	\end{equation*}
	then $u=v$.
\end{pro}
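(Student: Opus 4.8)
This is \cite[Theorem 2.3]{MP}, so in the present paper the statement is merely cited; were one to reprove it, the plan would be a B\'enilan--Crandall--Pierre-type duality argument adapted to the Morrey scale, running parallel --- now in the whole space and with growing data --- to the local duality scheme of Proposition \ref{local comparison lemma}. First I would set $ w=u-v $ and
\[
a(x,t) = \begin{cases} \dfrac{u^m(x,t)-v^m(x,t)}{u(x,t)-v(x,t)} & \text{if } u(x,t)\neq v(x,t) \, , \\[2mm] 0 & \text{if } u(x,t)=v(x,t) \, , \end{cases}
\]
so that the very weak formulations of $ u $ and $ v $ give $ \rho\,w_t=\Delta(a\,w) $ in $ \mathcal{D}'\!\left(\R^N\times(0,T)\right) $, with $ a\ge0 $ locally bounded and, thanks to $ u,v\in Y_{\mathrm{loc}}\!\left((0,T)\right) $, subject to the controlled growth $ a(x,t)\le C(J)\left(1+|x|\right)^{2-\gamma} $ on $ \R^N\times J $ for every $ J\Subset(0,T) $.

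The core step would be to test the equation for $ w $ against solutions $ \psi $ of the backward dual problem
\[
-\rho\,\psi_t = a\,\Delta\psi \quad \text{in } \R^N\times(s,\tau) \, , \qquad \psi(\tau)=\omega \, ,
\]
for $ 0<s<\tau<T $ and $ \omega\in C_c^\infty\!\left(\R^N\right) $, $ \omega\ge0 $, which formally yields $ \tfrac{d}{dt}\int_{\R^N} w\,\psi\,\rho\,dx=0 $, hence $ \int_{\R^N} w(\tau)\,\omega\,\rho\,dx=\int_{\R^N} w(s)\,\psi(s)\,\rho\,dx $. To make this rigorous I would, exactly as in the proof of Proposition \ref{local comparison lemma}, approximate $ \rho $ by smooth positive $ \rho_n $ and $ a $ by smooth functions $ a_n $ lifted away from zero (keeping $ a_n/\rho_n $ comparable to $ \left(1+|x|\right)^{2-\gamma} $), and solve the approximate problems on an exhausting family of balls $ B_{R_n} $ with homogeneous Dirichlet data. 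The decisive ingredient is a weighted energy estimate for $ \psi_n $: testing against $ \Delta\psi_n $ times a weight built out of $ \Phi_\alpha $, with $ \alpha $ as in \eqref{alpha-cond}, and absorbing the commutator terms generated by $ \nabla\Phi_\alpha $ and $ \Delta\Phi_\alpha $ (where $ \gamma<2 $ and $ N\ge3 $ enter), one would get bounds of the type $ \sup_t\int_{\R^N}|\nabla\psi_n|^2\,\Phi_\alpha\,dx+\iint\tfrac{a_n}{\rho_n}\,|\Delta\psi_n|^2\,\Phi_\alpha\,dx\,dt\le C $, uniform in $ n $. Feeding these into the error terms produced by $ \rho-\rho_n $, $ a-a_n $ and the $ B_{R_n} $-truncation, and exploiting the time-uniform bounds $ \sup_{[0,T_1]}\|u\|_{1,r},\ \sup_{[0,T_1]}\|v\|_{1,r}<+\infty $ --- granted by $ u,v\in L^\infty_{\mathrm{loc}}\!\left([0,T);X\right) $ for any $ T_1\Subset[0,T) $ --- to control the spatial tails (recall $ X\hookrightarrow L^1(\Phi_\alpha) $), one would reach a Gronwall inequality
\[
\left\|w(t)\right\|_{L^1(\Phi_\alpha)} \le \exp\!\left(C\,t^{\theta\lambda}\right)\left\|w(s)\right\|_{L^1(\Phi_\alpha)} \qquad \text{for } 0<s<t<T_\star \, ,
\]
on an interval $ (0,T_\star) $ whose length is bounded below in terms of $ \sup_{[0,T_1]}\|u\|_{1,r} $ and $ \sup_{[0,T_1]}\|v\|_{1,r} $ (note $ \theta\lambda<1 $ by \eqref{spexprel}). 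Since $ w\in C\!\left([0,T);L^1_{\rho,\mathrm{loc}}\!\left(\R^N\right)\right) $ with $ w(0)=0 $ and $ \{w(s)\}_{s\in(0,T_1)} $ is bounded in $ X $ --- hence has uniformly small $ L^1(\Phi_\alpha) $-tails --- letting $ s\to0^+ $ forces $ w\equiv0 $ on $ [0,T_\star) $; restarting from $ t=T_\star $ with $ w(T_\star)=0 $ and iterating finitely many times covers $ [0,T_1) $, and the arbitrariness of $ T_1 $ gives $ u=v $.

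The hard part will be the construction and \emph{uniform} control of the dual profiles $ \psi_n $ in the whole space, because the coefficient $ a $ both degenerates on the contact set $ \{u=v\} $ and grows at the borderline rate $ \left(1+|x|\right)^{2-\gamma} $, so no purely local argument suffices; the cure is to identify the correct weighted energy framework --- which is exactly what dictates the threshold on $ \alpha $ in \eqref{alpha-cond} and uses $ \gamma<2 $ and $ N\ge3 $ --- and to check that its constant grows only like the sublinear power $ t^{\theta\lambda} $, which is what makes both the Gronwall step and the time-iteration close. The formal integrations by parts would all be legitimized by the double-approximation plus ball-exhaustion device already employed in Proposition \ref{local comparison lemma}, the estimates being stable under $ n\to\infty $ and $ R_n\to+\infty $. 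A more hands-on alternative would bypass the dual problem: test the equation for $ w $ directly with high powers of the cutoffs $ \phi_R $ against $ u^m-v^m $, integrate, and use the $ Y_{\mathrm{loc}} $- and $ X $-growth bounds to close the same Gronwall inequality, in the spirit of \cite{DK}; the duality route seems cleaner given the machinery already in place.
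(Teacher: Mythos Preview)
You are correct that the paper does not prove this statement: Proposition~\ref{mp-uniq} is simply quoted as \cite[Theorem~2.3]{MP} and used as a black box (in the proofs of Lemma~\ref{alaa compact2}, Theorem~\ref{mBCP}, and Theorem~\ref{uniq thm}), so there is no ``paper's own proof'' to compare against. Your sketch of a duality argument in the spirit of Proposition~\ref{local comparison lemma}, upgraded to the whole space with $\Phi_\alpha$-weighted energy control on the dual profiles, is indeed the line taken in \cite{MP}; in particular, the resulting stability inequality in $L^1(\Phi_\alpha)$ is precisely \eqref{l1-weight-contr}, which the present paper quotes rather than re-derives.
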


Now, let us present a simple generalization of \cite[Lemma 8.5]{Vazquez} concerning equicontinuity of solutions in suitable normed spaces.
\begin{lem}\label{time scaling trick}
Let $ \mathsf{Z} $ be a suitable subset of solutions to \eqref{wpme-funid} (containing the null solution), which are defined in a common time interval $  (0,S)$, $S \in (0,+\infty]$, such that the stability estimate
\begin{equation}\label{gen contraction}
	\norm{u(t)-v(t)}\leq F(t)\norm{u_0-v_0} \qquad \forall t \in(0,S)
\end{equation}
is valid for a suitable function $F\in C(\R^+)$, a norm $\norm{\cdot}$, and all $u, v\in \mathsf{Z}$. Let $ u \in \mathsf{Z} $ be such that the time-scaled solution $ \lambda \, u(\lambda^{m-1}\,t)$ still belongs to $\mathsf{Z}$ for all $ \lambda \in \left(1,\overline{\lambda}\right) $, for some $ \overline{\lambda} \in (1,2) $. Then, for all $t_0 \in (0,S)$, it holds
\begin{equation}\label{time scaling trick eq}
	\frac 1 h \norm{u(t_0+h)-u(t_0)}\leq \tfrac{2}{(m-1)} \cdot \tfrac{F(t_0)}{t_0} \, \overline{\lambda}^{(2-m)_+} \norm{u_0} \qquad \text{for all} \; \;  0 < h < \left(\overline{\lambda}^{m-1}-1\right)t_0 \, .
\end{equation}
\end{lem}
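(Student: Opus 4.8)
\textbf{Proof strategy for Lemma \ref{time scaling trick}.} The plan is to exploit the time-scaling structure of the equation together with the stability estimate \eqref{gen contraction}, mimicking the classical argument in \cite[Lemma 8.5]{Vazquez} but keeping track of the extra factor $\overline{\lambda}^{(2-m)_+}$ that arises from comparing the time-scaled initial datum with the original one. First I would fix $t_0 \in (0,S)$ and, for a parameter $\lambda \in \left(1,\overline{\lambda}\right)$ to be chosen, introduce the rescaled solution
\[
u_\lambda(x,t) = \lambda \, u\!\left(x,\lambda^{m-1}\,t\right) ,
\]
which by hypothesis still belongs to $\mathsf{Z}$ and is a solution to \eqref{wpme-funid} (scale invariance of the equation) with initial datum $u_{0,\lambda} = \lambda \, u_0$. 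The key identity is that $u_\lambda(t_0) = \lambda \, u\!\left(\lambda^{m-1}\,t_0\right)$, so choosing $\lambda$ so that $\lambda^{m-1}\,t_0 = t_0 + h$, i.e. $\lambda = \left(1+\tfrac{h}{t_0}\right)^{1/(m-1)}$, turns the difference quotient of $u$ into something controlled by $u_\lambda(t_0) - u(t_0)$. The admissibility constraint $\lambda < \overline{\lambda}$ translates precisely into $h < \left(\overline{\lambda}^{m-1}-1\right)t_0$, as in the statement.

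Next I would write
\[
u(t_0+h) - u(t_0) = \left[ u_\lambda(t_0) - u(t_0) \right] + \left[ u(t_0+h) - u_\lambda(t_0) \right] = \left[ u_\lambda(t_0) - u(t_0) \right] + (1-\lambda)\, u(\lambda^{m-1}\,t_0) ,
\]
and estimate each bracket separately. For the first bracket I apply \eqref{gen contraction} with $v = u_\lambda$ (legitimate since both are in $\mathsf{Z}$), obtaining $\norm{u_\lambda(t_0) - u(t_0)} \le F(t_0)\,\norm{u_{0,\lambda} - u_0} = F(t_0)\,(\lambda-1)\norm{u_0}$. For the second bracket, $\norm{(1-\lambda)\, u(\lambda^{m-1}\,t_0)} = (\lambda-1)\,\norm{u(\lambda^{m-1}\,t_0)}$; I then compare $u(\lambda^{m-1}\,t_0)$ to the null solution via \eqref{gen contraction} applied with $v \equiv 0$ and at the time $\lambda^{m-1}\,t_0 \in (0,S)$, giving $\norm{u(\lambda^{m-1}\,t_0)} \le F(\lambda^{m-1}\,t_0)\,\norm{u_0}$. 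Here a small care is needed: $F$ need not be monotone, but one can bound $F(\lambda^{m-1}\,t_0)$ on the compact interval $\lambda^{m-1}\,t_0 \in \left[t_0, \overline{\lambda}^{m-1}\,t_0\right]$; in the spirit of the target estimate the clean route is instead to observe that applying \eqref{gen contraction} at time $t_0$ to the \emph{rescaled} solution directly yields $\norm{u(\lambda^{m-1}\,t_0)} = \tfrac1\lambda \norm{u_\lambda(t_0)} \le \tfrac1\lambda\left[ \norm{u_\lambda(t_0)-u(t_0)} + \norm{u(t_0)} \right] \le \tfrac1\lambda F(t_0)\left[(\lambda-1)+1\right]\norm{u_0} = F(t_0)\,\norm{u_0}$, using again $\norm{u(t_0)}\le F(t_0)\norm{u_0}$. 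Combining, $\norm{u(t_0+h)-u(t_0)} \le (\lambda-1)\left[F(t_0) + F(t_0)\right]\norm{u_0} = 2(\lambda-1)\,F(t_0)\,\norm{u_0}$.

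Finally I would divide by $h$ and bound $(\lambda-1)/h$. Since $\lambda = \left(1+\tfrac{h}{t_0}\right)^{1/(m-1)}$, elementary convexity/concavity of $s \mapsto s^{1/(m-1)}$ on $[1,\overline{\lambda}^{m-1}]$ gives $\lambda - 1 \le \tfrac{1}{m-1}\,\overline{\lambda}^{(2-m)_+}\,\tfrac{h}{t_0}$: indeed $\tfrac{d}{ds}s^{1/(m-1)} = \tfrac{1}{m-1}s^{(2-m)/(m-1)}$, which is bounded on $[1,\overline{\lambda}^{m-1}]$ by $\tfrac{1}{m-1}$ when $m \ge 2$ and by $\tfrac{1}{m-1}\overline{\lambda}^{2-m}$ when $1<m<2$, i.e. in both cases by $\tfrac{1}{m-1}\overline{\lambda}^{(2-m)_+}$, so by the mean value theorem $\lambda - 1 \le \tfrac{1}{m-1}\overline{\lambda}^{(2-m)_+}\tfrac{h}{t_0}$. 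Plugging this in yields exactly \eqref{time scaling trick eq}. The only genuinely delicate point is handling the lack of monotonicity of $F$ and the admissibility of all the comparisons in $\mathsf{Z}$; the computation itself is routine once the scaling substitution $\lambda^{m-1}t_0 = t_0+h$ is in place.
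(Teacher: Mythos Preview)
Your proof is correct and follows essentially the same route as the paper: the identical scaling substitution $\lambda=(1+h/t_0)^{1/(m-1)}$, the same decomposition (the paper writes it as $(\lambda^{-1}-1)\tilde u(t_0)+[\tilde u(t_0)-u(t_0)]$, which is algebraically your two brackets), the same use of \eqref{gen contraction} for both pieces, and the same mean-value bound on $(\lambda-1)/h$. The one cosmetic difference is that for the bound $\|u(\lambda^{m-1}t_0)\|\le F(t_0)\|u_0\|$ you pass through a triangle inequality, whereas the paper applies \eqref{gen contraction} directly at time $t_0$ to the pair $(\tilde u,0)$, obtaining $\|\tilde u(t_0)\|\le F(t_0)\|\tilde u_0\|=\lambda F(t_0)\|u_0\|$ and then dividing by $\lambda$; your route works but that one-liner is cleaner.
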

\begin{proof}
For any $ \lambda \in \left(1,\overline{\lambda}\right) $, let us consider the time-scaled solution $\tilde{u}(x,t)=\lambda \, u(x,\lambda^{m-1}\,t)$, which, by assumption, belongs to $ \mathsf{Z} $ and takes the initial datum $\tilde{u}_0(x)=\lambda \, u_0(x)$. Given any $ t_0 \in (0,S) $ and an arbitrary increment $ h $ as in the statement, we notice that the choice
\begin{equation}\label{choice-lambda}
\lambda = \left( 1 + \tfrac{h}{t_0} \right)^{\frac{1}{m-1}} < \overline{\lambda}
\end{equation}
is feasible and yields $ \lambda^{m-1}\,t_0=t_0+h $, so that by simple algebraic manipulations we obtain
$$
\begin{aligned}
	u(x,t_0+h)-u(x,t_0)&=\lambda^{-1}\,\tilde{u}(x,t_0)-u(x,t_0)\\
	&=\left(\lambda^{-1}-1 \right)\tilde{u}(x,t_0)+\left[\tilde{u}(x,t_0)-u(x,t_0) \right] .
\end{aligned}
$$
Taking the norm $\norm{\cdot}$ on both sides and applying \eqref{gen contraction} at $ t=t_0 $, first with $ u=\tilde{u} $ and $ v=0 $, then with $u \equiv u$ and $ v=\tilde{u} $, yields
\begin{equation*}
		\norm{u(t_0+h)-u(t_0)} \leq F(t_0)\left(\left|\lambda^{-1}-1\right|\norm{\tilde{u}_0}+\norm{\tilde{u}_0-u_0}\right) \le 2 F(t_0) \,(\lambda-1)\norm{u_0}  .
        \end{equation*}
By \eqref{choice-lambda} and the Lagrange theorem, it is plain that
$$
\lambda-1 = \tfrac{1}{(m-1)t_0} \left( 1 + \tfrac{\xi}{t_0} \right)^{\frac{2-m}{m-1}} h
$$
for some $ \xi \in (0,h) $, whence the thesis follows.
\end{proof}

We use the preceding propositions and lemma to obtain a compactness result in parabolic spaces that will be employed in the proofs of both existence and uniqueness. Also, for the sake of clarity, we recall from Definition \ref{boch} that when we write compactness or convergence in spaces of continuous functions defined in \emph{open} time intervals, we implicitly mean locally uniformly in compact subintervals.

\begin{lem}\label{alaa compact2}
Let $\mathfrak{I}$ be a possibly uncountable set and let $\left\{u_\beta\right\}_{\beta\in \mathfrak{I}}$ be a family of strong energy solutions to \eqref{wpme-noid-loc}, in the sense of Definition \ref{def:w-loc},  with $ \Omega \times (\tau_1,\tau_2) = \R^N \times (0,T) $. Also, suppose that $\left\{u_\beta\right\}$  is bounded in $Y_{\mathrm{loc}}((0,T))$. Then
	\begin{equation}\label{loc-comp-lemma}
    \begin{gathered}
	    \left\{u_\beta\right\}\;\text{is precompact in}\;\;C\!\left((0,T);L^1(\Phi_\alpha)\right) \; \text{and bounded in} \\
        \mathrm{Lip}_{\mathrm{loc}}\!\left( \left(0,T \right) ; X \right) \cap W^{1,\infty}_{\mathrm{loc}} \!\left( (0,T) ; L^1(\Phi_\alpha) \right) ,  \\
        \left\{u_\beta^m\right\}\;\text{is weakly precompact in}\;\; L^2_{\mathrm{loc}}\!\left((0,T);H^1_{\mathrm{loc}}\!\left(\R^N\right)\right) \cap H^{1}_{\mathrm{loc}}\!\left((0,T);L^2_{\rho,\mathrm{loc}}\!\left(\R^N\right)\right) .
        \end{gathered}
    \end{equation}
\end{lem}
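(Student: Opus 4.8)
\textbf{Proof proposal for Lemma \ref{alaa compact2}.}

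The plan is to combine the \emph{a priori} estimates already available for strong energy solutions with the time-equicontinuity trick of Lemma \ref{time scaling trick}, and then invoke a standard Aubin--Lions-type compactness argument. First I would observe that the assumption that $\{u_\beta\}$ is bounded in $Y_{\mathrm{loc}}((0,T))$ means precisely that for every $J\Subset(0,T)$ there is a constant $M_J$ with $\norm{u_\beta(t)}_{L^\infty(B_R\times J)}\le M_J\,R^{(2-\gamma)/(m-1)}$ for all $R\ge1$, uniformly in $\beta$. In particular $\norm{u_\beta(t)}_{\infty,r}\le M_J$ for $t\in J$, and since the $\infty$-norm controls the $1$-norm (see Definition \ref{def-Morrey}), one gets a uniform bound $\sup_{t\in J}\norm{u_\beta(t)}_{1,r}\le C\,M_J$; combined with the continuous embedding $X\hookrightarrow L^1(\Phi_\alpha)$ recorded after \eqref{alpha-cond}, this already gives uniform boundedness of $\{u_\beta\}$ in $L^\infty(J;L^1(\Phi_\alpha))$ and in $L^\infty(J;X)$.

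Next I would upgrade these to the Lipschitz/$W^{1,\infty}$-in-time statements. Fix $J=[t_1,t_2]\Subset(0,T)$ and let $u_\beta$ be one of our solutions; restricting attention to a slightly larger interval, the time-scaled function $\lambda\,u_\beta(\lambda^{m-1}t)$ is again a strong energy solution with the same type of $Y$-bound for $\lambda$ close to $1$, so it lies in a common class $\mathsf{Z}$ for which the $L^1(\Phi_\alpha)$-stability estimate \eqref{l1-weight-contr} (respectively the $|\cdot|_{1,r}$-stability estimate \eqref{cutoff-weight-cont}) of Proposition \ref{mp-exist} holds with a constant controlled by the uniform bound on $\norm{u_\beta}_{1,r}$. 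Applying Lemma \ref{time scaling trick} with the norm $\norm{\cdot}=\norm{\cdot}_{L^1(\Phi_\alpha)}$ and with $\norm{\cdot}=|\cdot|_{1,r}$ respectively, and using that $\norm{u_\beta(t_0)}$ in either norm is controlled by the above \emph{a priori} bounds, yields a difference quotient bound
$$
\tfrac1h\,\norm{u_\beta(t_0+h)-u_\beta(t_0)}\le C_J
$$
for all $t_0\in J$ and all small $h>0$, with $C_J$ independent of $\beta$. This gives the desired uniform boundedness of $\{u_\beta\}$ in $\mathrm{Lip}_{\mathrm{loc}}((0,T);X)\cap W^{1,\infty}_{\mathrm{loc}}((0,T);L^1(\Phi_\alpha))$ (the $W^{1,\infty}$ statement because $L^1(\Phi_\alpha)$ is an $L^1$-type space, but here it is actually obtained directly from the difference-quotient bound, not merely from $\mathrm{Lip}\subset W^{1,\infty}$, which would fail in a non-reflexive space).

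For the precompactness in $C((0,T);L^1(\Phi_\alpha))$ I would argue as follows. The uniform Lipschitz-in-time bound in $L^1(\Phi_\alpha)$ gives equicontinuity of $t\mapsto u_\beta(t)$ with values in $L^1(\Phi_\alpha)$, uniformly on compact subintervals. For the pointwise-in-time precompactness in $L^1(\Phi_\alpha)$, I would use the energy estimates: since each $u_\beta$ is a strong energy solution which is moreover locally bounded (by the $Y$-bound) on $B_{2R}\times J$, Lemma \ref{energy lem} applied on nested cylinders gives that $\{u_\beta^m\}$ is bounded in $L^2(J';H^1(B_R))\cap H^1(J';L^2_{\rho}(B_R))$ for all $B_R\Subset\R^N$ and $J'\Subset(0,T)$; by Aubin--Lions this yields strong precompactness of $\{u_\beta^m\}$, hence of $\{u_\beta\}$ (using local boundedness and the nondegeneracy of $v\mapsto v^m$ away from, and Hölder-continuity near, zero), in $L^2(J';L^2_{\rho,\mathrm{loc}}(\R^N))$ and thus in $L^1_{\mathrm{loc}}$. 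Combining local-in-space strong convergence with the uniform decay $\int_{B_R^c}|u_\beta|\,\Phi_\alpha\,\rho\to0$ as $R\to\infty$ (which follows from the uniform $X$-bound together with the integrability condition \eqref{alpha-cond} on $\alpha$, exactly as in the proof of the embedding $X\hookrightarrow L^1(\Phi_\alpha)$) upgrades this to precompactness in $L^1(\Phi_\alpha)$ for each fixed $t$; together with the equicontinuity in $t$, the Arzelà--Ascoli theorem in $C(J;L^1(\Phi_\alpha))$ finishes the first assertion. Finally, the weak precompactness of $\{u_\beta^m\}$ in $L^2_{\mathrm{loc}}((0,T);H^1_{\mathrm{loc}}(\R^N))\cap H^1_{\mathrm{loc}}((0,T);L^2_{\rho,\mathrm{loc}}(\R^N))$ is exactly the energy bound from Lemma \ref{energy lem} together with weak compactness of bounded sets in Hilbert spaces. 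The main obstacle I anticipate is the careful bookkeeping in the time-equicontinuity step: one must verify that the time-scaled solutions stay in a class on which \eqref{l1-weight-contr}--\eqref{cutoff-weight-cont} apply with $\beta$-independent constants, which requires that the maximal existence times $T_r$ stay bounded away from the compact subinterval $J$, using the uniform $Y_{\mathrm{loc}}$-bound to control $\ell$ and $\norm{\cdot}_{1,r}$.
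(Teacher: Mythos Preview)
Your outline follows essentially the same strategy as the paper's proof, but two steps that you flag as ``bookkeeping'' actually require specific ingredients that you have not identified.

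First, the stability estimates \eqref{l1-weight-contr}--\eqref{cutoff-weight-cont} are proved in Proposition \ref{mp-exist} only for the \emph{constructed} solutions produced there, not for arbitrary strong energy solutions in $Y_{\mathrm{loc}}$. To make your class $\mathsf{Z}$ legitimate, the paper invokes the uniqueness result Proposition \ref{mp-uniq}: for each $t_0$ in the compact subinterval, $u_\beta(\,\cdot\,+t_0-\hat\tau)$ is shown to coincide (on a short time interval) with the constructed solution taking initial datum $u_\beta(t_0-\hat\tau)$, and it is to this constructed representative that Lemma \ref{time scaling trick} is applied. Without this identification there is no reason the $L^1(\Phi_\alpha)$- or $|\cdot|_{1,r}$-contraction should hold for the given $u_\beta$. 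Your closing paragraph anticipates difficulty here but misdiagnoses it as a matter of controlling $T_r$; the real issue is that you must pass through uniqueness.

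Second, your parenthetical remark about $W^{1,\infty}_{\mathrm{loc}}((0,T);L^1(\Phi_\alpha))$ is not quite right: a uniform difference-quotient bound in a non-reflexive space such as $L^1(\Phi_\alpha)$ does \emph{not} by itself imply that the weak derivative exists, so it does not yield membership in $W^{1,\infty}$. The paper closes this gap by appealing to \cite[Theorem 1.1]{BG}, which uses the $H^1_{\mathrm{loc}}$-in-time regularity of $u_\beta^m$ together with the Lipschitz bound to deduce that $u_\beta\in W^{1,1}_{\mathrm{loc}}((0,T);L^1_{\rho,\mathrm{loc}})$; only then can one let $h\to0$ in \eqref{time scaling trick eq} and obtain an $L^\infty$ bound on $\partial_t u_\beta$ in $L^1(\Phi_\alpha)$.

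Apart from these two points, your use of Lemma \ref{energy lem} for the energy bounds and weak precompactness, the Aubin--Lions step for local strong convergence, the tail estimate via the series in $\Phi_\alpha$, and the Arzel\`a--Ascoli conclusion all match the paper's argument.
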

\begin{proof}
The second statement in \eqref{loc-comp-lemma} is shown with a completely analogous argument to the one leading to \eqref{types of convergence smoothing}, since boundedness in $ Y_{\mathrm{loc}}((0,T)) $ implies boundedness in $ L^\infty_{\mathrm{loc}}\!\left( \R^N \times (0,T)  \right) $. This ensures the existence of an element $ u_\star $, and a sequence in $ \{ u_\beta \} $ that we simply denote by $ \{ u_k \} $, such that
\begin{equation}\label{types of convergence2}
	\begin{aligned}
		u_{k}^m\rightharpoonup u_\star^m\qquad & \text{in}\;\; L^2_{\mathrm{loc}}\!\left((0,T);H^1_{\mathrm{loc}}\!\left(\R^N\right)\right) ,  \\
        \partial_t\!\left( u_{k}^m \right) \rightharpoonup \partial_t\!\left( u_\star^m \right) \qquad & \text{in}\;\; L^2_{\mathrm{loc}}\!\left((0,T);L^2_{\rho,\mathrm{loc}}\!\left( \R^N \right)\right) ,  \\
		u_{k}\to u_\star \qquad & \text{in}\;\; L^2_{\mathrm{loc}}\!\left((0,T);L^2_{\rho,\mathrm{loc}}\!\left( \R^N \right)\right),
	\end{aligned}
\end{equation}
as $k\to \infty$, the last convergence following from the numerical inequality
\begin{equation*}\label{numer-eq-asy}
	|A-B|\leq m\left||A|^{m-1}A-|B|^{m-1}B\right|^{\frac{1}{m}}\qquad\forall A,B\in\R\,.
\end{equation*}

Next, we intend to improve the convergence of $ \{ u_k \} $ to $ u_\star $ in $C\!\left((0,T);L^1(\Phi_\alpha)\right)$ by employing the Ascoli--Arzel\`a theorem. To this end, let us fix an arbitrary $ \tau \in (0,T/4) $ and set
$$
M_\tau = \sup_{ \substack{ s \in [\tau,T - \tau]  \\ \beta \in \mathfrak{I}} }  2^{\frac{1}{m-1}} \left\| u_\beta(s) \right\|_{1,r} ,
$$
which is a finite quantity thanks to the $ Y_{\mathrm{loc}}((0,T)) $ boundedness assumption and the strong-energy property. Recalling the notation from Proposition \ref{mp-exist}, let us also set
$$
T_{r,\tau} = T_r(\cdot,M_\tau) \, , \qquad C_{4,\tau} = C_4(\cdot,M_\tau) \, , \qquad C_{5,\tau} = C_5(\cdot,M_\tau) \, .
$$
In fact, it is plain that $ T_r(u_0) $ depends on the usual parameters and on $ u_0 $ through $ \| u_0 \|_{1,r} $ only, in a monotone-decreasing way; also, a closer inspection to the proof of \cite[Theorem 2.2]{MP} reveals that both $ C_4 $ and $C_5$ depend in a monotone-increasing way on $ \| u_0 \|_{1,r} \vee \| v_0 \|_{1,r} $, so the above definitions are sensible.

As a result, upon choosing $ S = T_{r,\tau} $ and $ \mathsf{Z} $ as the set of solutions to \eqref{wpme-funid} provided by Proposition \ref{mp-exist} taking any initial datum $ u_0 \in X $ such that $ \| u_0 \|_{1,r} \le M_\tau $, thanks to \eqref{l1-weight-contr} we have that
\begin{equation*}\label{gen contraction applied}
	\norm{u(t)-v(t)}_{L^1(\Phi_\alpha)} \leq \exp\!\left(C_{4,\tau} \, t^{\theta\lambda}\right) \norm{u_0-v_0}_{L^1(\Phi_\alpha)} \qquad \forall t \in(0,T_{r,\tau}) \, ,
\end{equation*}
for all $ u,v \in \mathsf{Z} $. Let $ \hat{\tau} = \tau \wedge (T_{r,\tau}/2) $. In view of Proposition \ref{mp-uniq}, for all $ t_0 \in (2\tau,T-2\tau) $ we can assert that $ u_\beta(t+t_0-\hat{\tau}) $ coincides (at least up to $ (T-t_0+\hat{\tau}) \wedge T_{r,\tau} $) with the solution constructed in Proposition \ref{mp-exist} taking the initial datum $ u_\beta(t_0-\hat{\tau}) $, which belongs to $ \mathsf{Z} $ and we call $w(t)$. Moreover, if we take $ \overline{\lambda}=2^{{1}/{(m-1)}} $, from the definition of $ M_\tau $ we have that also $  \lambda \, w(\lambda^{m-1} \, t) $ belongs to $ \mathsf{Z} $ for all $ \lambda \in \left(1,\overline{\lambda}\right) $. We are thus in position to apply \eqref{time scaling trick eq} to $ u = w$ at $ t_0 = \hat{\tau} $ (this is feasible since $\hat{\tau}<T_{r,\tau}$), which in terms of $ u_\beta $ reads
\begin{equation}\label{time scaling trick applied}
\begin{gathered}
	\frac 1 h \norm{u_\beta(t_0+h)-u_\beta(t_0)}_{L^1(\Phi_\alpha)} \leq \tfrac{2}{(m-1)} \cdot \tfrac{\exp\left(C_{4,\tau} \, {\hat \tau}^{\theta\lambda}\right)}{\hat{\tau}} \, 2^{\frac{(2-m)_+}{m-1}} \norm{u_\beta(t_0-\hat{\tau})}_{L^1(\Phi_\alpha)} \\ \forall  h \in ( 0,\hat{\tau}) \, .
    \end{gathered}
\end{equation}
Thanks to the arbitrariness of $ \tau , t_0 $ and again boundedness in $ Y_{\mathrm{loc}}((0,T)) $, estimate \eqref{time scaling trick applied} ensures that $ \{ u_\beta \} $ is locally uniformly equicontinuous in $ C\!\left((0,T); L^1(\Phi_\alpha) \right) $.

To complete the Ascoli--Arzel\`a argument, we need to exhibit the pointwise relative compactness of the family. That is, we will show that
\begin{equation}\label{pw-rel-comp}
    u_k(t) \underset{k \to \infty}{\longrightarrow} u_\star(t) \qquad \text{in}\;\;L^1(\Phi_\alpha) \;\; \text{for all}\;\;t\in(0,T)\,.
\end{equation}
To this end, as local uniform boundedness plus $ L^2_{\mathrm{loc}}\!\left(\mathbb{R}^N\right) $ convergence implies $ L^1_{\rho,\mathrm{loc}}\!\left(\mathbb{R}^N \right) $ convergence, it is only a question of extending the latter to the aforementioned global space. In fact, it is enough to prove the following smallness claim: given any $ t \in (0,T) $, for all $\varepsilon>0$ there exist $R_\varepsilon \ge 1$ and $k_\varepsilon\in\mathbb{N}$ such that
\begin{equation}\label{small-claim-1}
    \int_{B_{R_\varepsilon}^c}|u_k(t)|\,\Phi_\alpha\,\rho\,dx\leq \varepsilon\qquad\text{for all}\;\;k \ge k_\varepsilon\,.
\end{equation}
Indeed, let us see that \eqref{small-claim-1} implies \eqref{pw-rel-comp}. We have:
\begin{equation*}
\begin{aligned}
    \int_{\R^N}\left|u_k(t)-u_\star(t)\right| \Phi_\alpha \,\rho\,dx&=\int_{B_{R_\varepsilon}} \left|u_k(t)-u_\star(t)\right| \Phi_\alpha\,\rho\,dx+\int_{B_{R_\varepsilon}^c}\left|u_k(t)-u_\star(t)\right|\Phi_\alpha\,\rho\,dx\\
    &\leq \int_{B_{R_\varepsilon}}\left|u_k(t)-u_\star(t)\right| \rho\,dx+2\varepsilon\,,
\end{aligned}
\end{equation*}
for all $k\geq k_\varepsilon$ (clearly \eqref{small-claim-1} also holds at the limit). Taking $ k \to\infty$, using \eqref{types of convergence2}, and then letting $\varepsilon \to 0$ yields the claim. Hence, let us prove \eqref{small-claim-1}: for all $ \ell \in \mathbb{N} $ it holds
\begin{equation*}
\begin{aligned}
    \int_{B_{2^\ell}^c}|u_k(t)|\,\Phi_\alpha\,\rho\,dx&=\sum_{i=\ell}^\infty\int_{B_{2^{i+1}}\setminus B_{2^i}}|u_k(t)|\,\Phi_\alpha\,\rho\,dx\\
    &\leq C\,\sum_{i=\ell}^\infty \left(1+{2^{2i}}\right)^{-\alpha} \, 2^{(i+1)(N-\gamma)} \norm{u_k(t)}_{L^\infty\left(B_{2^{i+1}}\right)}\\
    &\leq C \norm{u_k(t)}_{\infty,r} \, \sum_{i=\ell}^\infty\left(1+{2^{2i}}\right)^{-\alpha}\,2^{(i+1)\left(N-\gamma+\frac{2-\gamma}{m-1}\right)}\,,
\end{aligned}
\end{equation*}
for some generic constant $C>0$ depending only on $N,\gamma, \overline C$, and where in the second inequality we used the definition of the Morrey-type norms \eqref{def-norm-inf-r}. By the condition \eqref{alpha-cond} on $\alpha$, the series converges so, using the fact that $ \left\{ \|u_k(t)\|_{r,\infty} \right\}_k$ is bounded, it is enough to take $ \ell $ sufficiently large depending on $\varepsilon$ to conclude the proof of \eqref{small-claim-1}. Therefore \eqref{pw-rel-comp} holds, hence by the Ascoli--Arzel\`a theorem, we have convergence in $C\!\left((0,T);L^1(\Phi_\alpha)\right)$.

 We now exploit \cite[Theorem 1.1]{BG}, which is applicable in view of \eqref{time scaling trick applied} and by the fact that $u_\beta^m\in H^{1}_{\mathrm{loc}}\big((0,T);L^2_{\rho,\mathrm{loc}}\!\left(\mathbb R^N \right)\!\big)$, to deduce that $u_\beta\in W^{1,1}_{\mathrm{loc}}\big((0,T); L^1_{\rho,\mathrm{loc}}\!\left(\mathbb R^N\right)\!\big)$. We can then pass to the limit as $h\to0 $ in \eqref{time scaling trick applied}, obtaining
\begin{equation}\label{W11}
\left\| \partial_t u_\beta(t_0) \right\|_{L^1(\Phi_\alpha)} \le C_\tau \qquad \text{for a.e.} \;\; t_0 \in (2\tau,T-2\tau) \, ,
\end{equation}
where $ C_\tau>0 $ is a suitable constant that depends on $\tau$ (as well as on all of the usual parameters) via the quantity $ \hat{\tau} $ and the constant $ C_{4,\tau}$ defined above, and in addition on
$$
\sup_{ \substack{ t \in (\tau,T - \tau)  \\ \beta \in \mathfrak{I}} }  \norm{u_\beta(t)}_{\infty,r}  .
$$
As a consequence of \eqref{W11} and the arbitrariness of $ \tau $, we can therefore assert that $ \{ u_\beta \}$ is bounded in $ W^{1,\infty}_{\mathrm{loc}} \!\left( (0,T) ; L^1(\Phi_\alpha) \right) $.

Finally, by repeating the same argument that led to \eqref{time scaling trick applied}, taking advantage of \eqref{cutoff-weight-cont} in the place of \eqref{l1-weight-contr}, we end up with the estimate
\begin{equation*}\label{time scaling trick applied X}
	\frac 1 h \left| u_\beta(t_0+h)-u_\beta(t_0) \right|_{1,r} \leq \tfrac{2}{(m-1)} \cdot\tfrac{\exp\left(C_{5,\tau} \, {\hat \tau}^{\theta\lambda}\right)}{\hat{\tau}} \, 2^{\frac{(2-m)_+}{m-1}} \left| u_\beta(t_0-\hat{\tau}) \right|_{1,r}  \qquad \forall  h \in ( 0,\hat{\tau}) \, ,
\end{equation*}
so that boundedness in $ \mathrm{Lip}_{\mathrm{loc}}\!\left( \left(0,T \right) ; X \right) $ follows straightforwardly.
\end{proof}

\begin{rem}[Constructed solutions from \cite{MP} are strong energy]\label{ws}\rm
If one recalls the method employed to construct solutions in \cite[Theorem 2.2]{MP} (which is rewritten here as Proposition \ref{mp-exist}), it follows easily from Lemma \ref{energy lem} that those solutions are in fact strong energy, in the sense of Definition \ref{def:w-loc}. Indeed, let us consider the sequence $\left\{u_n\right\}$, where each $u_n$ is the weak energy the solution of problem \eqref{wpme-funid}, guaranteed by Proposition \ref{pro1}, which takes the truncated initial datum
\begin{equation*}
    u_{0n}=\left[(u_0\land n)\lor (-n)\right] \chi_{B_n}\,.
\end{equation*}
Owing to \eqref{strong energy}, such $u_n$ are strong energy solutions, and by the smoothing estimate of type \eqref{smooth-a-p} (or more precisely \cite[Proposition 3.4]{MP} -- which holds \emph{a priori} at the level of $u_n$) it follows that
\begin{equation*}
    \left\{u_n\right\} \;\; \text{is bounded in}\;\;Y_{\mathrm{loc}}\!\left((0,T)\right) ,
\end{equation*}
for some $T>0$ that can be sharply estimated according to the mentioned proposition. Hence, the energy estimates of Lemma \ref{energy lem} apply to $u_n$ and are stable with respect to $n$, so that by letting $n \to \infty$ we find that also the the corresponding limit $ u $ (that has already been identified in \cite{MP} as the solution provided by Proposition \ref{mp-exist}) is strong energy.
\end{rem}

\subsection{Existence of solutions taking unsigned measures}\label{existence}

\begin{proof}[Proof of Theorem \ref{mBCP}]
Let $ \{\eta_\varepsilon \}_{\varepsilon>0}$ be a standard sequence of mollifiers, and let us consider the following approximate initial datum obtained by convolution:
\begin{equation*}
    \mu_{\varepsilon} = \rho^{-1} \,  (\eta_\varepsilon\ast\mu)\,.
\end{equation*}
By the definition of Euclidean convolution, it can be easily checked that 
 there exists $\varepsilon_0$ such that, for all $\varepsilon<\varepsilon_0$ one has:
\begin{equation}\label{norm-est-mu}
    \norm{\mu_\varepsilon}_{1,r} \leq 2\norm{\mu}_{1,r} .
\end{equation}
Then, from the initial datum $\mu_\varepsilon$, let us introduce the constructed solution
\begin{equation*}
    u_{\varepsilon}\in C\!\left([0,T(\mu_\varepsilon));L^1(\Phi_\alpha)\right)
\end{equation*}
of \eqref{wpme-funid}, provided by Proposition \ref{mp-exist}.

In preparation for using Lemma \ref{alaa compact2}, let us apply the estimates \eqref{1-r-a-p}--\eqref{smooth-a-p} to $ u_\varepsilon $:
\begin{equation*}\label{appl smoothing estimate}
	\left\| u_{\varepsilon}(t) \right\|_{1,r} \leq C_2 \left\| \mu_{\varepsilon} \right\|_{1,r}  \qquad \forall t \in \left( 0 , T_r(\mu_{\varepsilon}) \right) ,
\end{equation*}
\begin{equation*}\label{appl key estimate}
	\left\| u_{\varepsilon}(t) \right\|_{\infty,r} \leq C_3 \, t^{-\lambda} \left\| \mu_{\varepsilon} \right\|_{1,r}^{ \theta \lambda} \qquad \forall t \in \left( 0 , T_r(\mu_{\varepsilon}) \right) .
\end{equation*}
Combining the latter with \eqref{def-Tr} and \eqref{norm-est-mu}, we deduce
\begin{equation}\label{appl smoothing estimate bis}
	\left\| u_{\varepsilon}(t) \right\|_{1,r} \leq 2\,C_2 \left\| \mu \right\|_{1,r} \qquad \forall t \in \left( 0 , \mathsf{T}_r(\mu) \right)
\end{equation}
and
\begin{equation}\label{appl key estimate bis}
	\left\| u_{\varepsilon}(t) \right\|_{\infty,r} \leq 2^{\theta\lambda} \, C_3 \, t^{-\lambda} \left\| \mu \right\|_{1,r}^{ \theta \lambda} \qquad \forall t \in \left( 0 , \mathsf{T}_r(\mu) \right) ,
\end{equation}
for all $\varepsilon\in(0,\varepsilon_0)$, where we set
$$
\mathsf{T}_r(\mu)= \frac{C_1}{2^{m-1} \|\mu\|_{1,r}^{m-1}} \, .
$$
From \eqref{appl smoothing estimate bis}--\eqref{appl key estimate bis} and Remark \ref{ws}, it is plain that $\left\{u_\varepsilon\right\}_{\varepsilon \in (0 , \varepsilon_0)}$ satisfies all of the hypotheses of Lemma \ref{alaa compact2}. We thus obtain a subsequence of $\varepsilon \to 0$ (that for simplicity we do not relabel) and a limit $u$ such that
\begin{equation}\label{appl conv 2}
	\begin{aligned}
		u_{\varepsilon}^m\rightharpoonup u^m \! & \qquad  \text{in}\;\; L^2_{\mathrm{loc}}\!\left((0, \mathsf{T}_r(\mu));H^1_{\mathrm{loc}}\!\left(\R^N\right)\right) \cap H^{1}_{\mathrm{loc}}\!\left((0,\mathsf{T}_r(\mu) );L^2_{\rho,\mathrm{loc}}\!\left(\R^N\right)\right),\\
		u_{\varepsilon}\to u & \qquad  \text{in}\;\; C\!\left(\left(0,\mathsf{T}_r(\mu)\right);L^1(\Phi_\alpha)\right).
	\end{aligned}
\end{equation}
This convergence is clearly enough to see that $u$ is a very weak solution to \eqref{wpme-noid} (at least up to $ T=\mathsf{T}_r(\mu) $ -- see below), in the sense of Definition \ref{def:vw-gl}, and it satisfies the statements \eqref{def-t-u0m} (upon letting $r \to +\infty$), \eqref{smoothing estimatem}, and \eqref{key estimatem}. However, we still must confirm that $\mu$ is the initial datum of $u$ in the sense of \eqref{measure-data}.

To prove that the initial trace is in fact $\mu$, let us return to the approximations $u_\varepsilon$ for all $\varepsilon \in (0,\varepsilon_0)$.
First of all, we claim that for all $\varphi \in C_c^\infty\!\left(\mathbb R^N\right)$ it holds
\begin{equation}\label{eps-claim}
    \int_{\R^N}u_\varepsilon(s)\,{\varphi}\,\rho\,dx=\int_{\R^N}\left(\eta_\varepsilon\ast\mu\right) {\varphi}\,dx+\int_0^s\int_{\R^N}u_\varepsilon^m\,\Delta {\varphi}\,dxdt\,,
\end{equation}
for all $s\in[0,\mathsf{T}_r(\mu)/2]$. Indeed, such a claim can easily be verified by using the extra regularity property of $u_\varepsilon$ ensured by Proposition \ref{mp-exist} and Remark \ref{ws}. Now, let us estimate quantitatively the second term on the right-hand side of \eqref{eps-claim},
assuming without loss of generality that $ \operatorname{supp} \varphi \subset B_R$ for some $R>0$. We have:
\begin{equation*}\label{nlest}
\begin{aligned}
    \int_0^s\int_{\R^N}\left|u_\varepsilon\right|^m \left|\Delta {\varphi}\right| dxdt & \leq C\norm{\Delta {\varphi}}_{L^\infty(B_R)}\int_0^s\norm{u_\varepsilon(t)}_{L^\infty(B_R)}^{m-1}\left(\int_{B_R}|u_\varepsilon(t)|\,\rho \, dx \right) dt \\
    & \leq C \norm{\Delta{\varphi}}_{L^\infty(B_R)} \norm{\mu}_{1,r}^{\theta\lambda(m-1)+1}\int_0^s t^{-\lambda(m-1)}\,dt\\
    & = C \norm{\Delta {\varphi}}_{L^\infty(B_R)} \norm{\mu}_{1,r}^{\theta\lambda(m-1)+1} s^{\theta\lambda} \, ,
\end{aligned}
\end{equation*}
for a constant $C>0$ depending only on $N , m ,\gamma ,\underline{C} , \overline{C}$, and $R$. Note that in the first inequality we have used \eqref{weight-cond} and in the second step we have crucially exploited \eqref{appl smoothing estimate bis}--\eqref{appl key estimate bis}, whereas the time integration readily follows from \eqref{spexprel}. This estimate allows us to pass to the limit in \eqref{eps-claim}, first in $\varepsilon\to0$ and then in $s\to0$, to conclude that
\begin{equation}\label{in-dat}
    \lim_{s\to0}\int_{\R^N} u(s)\,{\varphi}\,\rho\,dx=\int_{\R^N}{\varphi}\,d\mu\,,
\end{equation}
for all ${\varphi}\in C^\infty_c\!\left(\R^N\right)$. Due to the density of $ C^\infty_c\!\left(\R^N\right) $ in $ C_c\!\left(\R^N\right) $ with respect to the strong $ L^\infty $ topology, it is straightforward to check that \eqref{in-dat} also holds for arbitrary $ \varphi \in C_c\!\left(\R^N\right) $, whence \eqref{measure-data}.

Next, let us prove \eqref{Lip-X}. Still by Lemma \ref{alaa compact2}, we have
\begin{equation}\label{unif}
\|u_\varepsilon(t+h)-u_\varepsilon(t)\|_{L^1(\Phi_\alpha)}
\le C_\tau \, h \qquad \forall t \in \left(2\tau, \mathsf{T}_r(\mu)-2\tau\right )  , \ \forall h\in(0,\tau) \, ,
\end{equation}
and
\begin{equation}\label{unif-2}
\left| u_\varepsilon(t+h)-u_\varepsilon(t) \right|_{1,r}
\le C_\tau \, h \qquad \forall t\in\left(2\tau, \mathsf{T}_r(\mu)-2\tau\right) , \ \forall h\in(0,\tau) \, ,
\end{equation}
for any $\tau\in \left(0, \mathsf{T}_r(\mu)/4 \right)$, where $ C_\tau>0 $ is a general constant independent of $ \varepsilon $. The fact that $ u \in \mathrm{Lip}_{\mathrm{loc}}\!\left( \left( 0 , \mathsf{T}_r(\mu) \right) ; X \right) $ is a direct consequence of passing to the limit in \eqref{unif-2} as $ \varepsilon \to 0 $, recalling that the $ | \cdot |_{1,r} $ norm is lower semicontinuous with respect to convergence in $ L^1_{\rho,\mathrm{loc}}\!\left( \R^N \right) $ (see \emph{e.g.}~\cite[Proposition A.1]{MP}). Similarly,  we can obtain the analogue of \eqref{unif} with $u$ replacing $u_\varepsilon$. Moreover, in this case we can also make sure that $ \partial_t u $ exists (and is bounded) in $ L^1(\Phi_\alpha) $. Indeed, using the fact that $\partial_t(u^m)\in L^2_{\mathrm{loc}}\big(\big(0,\mathsf{T}_r(\mu)\big);L^2_{\rho,\mathrm{loc}}\big(\mathbb{R}^N\big)\big)$, as follows from the first part of \eqref{appl conv 2}, we can apply again \cite[Theorem 1.1]{BG} exactly as in the proof of Lemma \ref{alaa compact2}, dividing by $h$ and letting $ h \to 0 $ in \eqref{unif} to find that $u \in W^{1,\infty}_{\mathrm{loc}}\!\left(\left(0,\mathsf{T}_r(\mu)\right);L^1(\Phi_\alpha) \right)$. We conclude by noting that $r$ is arbitrary and using a diagonal argument, to deduce that $u$ is in fact is well defined, and the corresponding regularity properties hold, up to time $T=\mathsf{T}(\mu)$.

The comparison inequality $ u \le v $ when $ \mu \le \nu $ is an immediate consequence of the same inequality in Proposition \ref{mp-exist} and the above approximation method.

Finally, let us briefly deal with the case $ \mu \in X_0 $. From \eqref{smoothing estimatem}, we infer that if $ \mu \in X_0 $ then also $ u(t) \in X_0 $ for all $ t \in \left( 0  , \mathsf{T}(\mu) \right) $, so \eqref{subcritical-growth} is a consequence of the same property \eqref{ess-lim-subc} in Proposition \ref{mp-exist} (via the uniqueness Proposition \ref{mp-uniq}). On the other hand, since $ L^1_{\rho}\!\left( \R^N \right) $ functions are dense in $ X_0 $, see \cite[Proposition A.2]{MP}, the fact that $ \partial_t u $ exists at least in $ L^1_{\rho,\mathrm{loc}}\!\left( \R^N \right) $ (consequence of the above argument) allows us to divide by $ h $ and let $ h \to 0 $ in \eqref{unif-2}, to find that $ \partial_t u $ also exists as a weak derivative in $ X_0 $ and is bounded with values in such a space, ensuring that $ W^{1,\infty}_{\mathrm{loc}}\!\left(\left(0,\mathsf{T}_r(\mu)\right); X_0 \right)  $.
\end{proof}

\subsection{Uniqueness without global assumptions}\label{uniqueness}
Before proving Theorem \ref{uniq thm}, we need an additional compactness result, which is a sort of global version of Lemma \ref{alaa compact2}.

\begin{lem}\label{alaa compact1}
Let $\mathfrak{I}$ be a possibly uncountable set and let $\left\{u_\beta\right\}_{\beta\in \mathfrak{I}}$ be a family of non-negative weak energy solutions to {\eqref{wpme-funid}}, in the sense of Definition \ref{def2}. Suppose that
\begin{equation}\label{L1-bound}
 \sup_{\beta \in \mathfrak{I}} \left\| u_\beta(0) \right\|_{L^1_\rho(\mathbb{R}^N)} < +\infty   \qquad \text{and} \qquad \exists R_0 \ge 1 : \ \operatorname{supp} u_\beta(0) \subset B_{R_0} \quad \forall \beta \in \mathfrak{I} \, .
\end{equation}
	Then
	\begin{equation}\label{AA-1}
    \begin{gathered}
	    \left\{u_\beta\right\}\;\text{is precompact in}\;\;C\!\left((0,+\infty);L^1_\rho\!\left(\mathbb{R}^N\right)\right),\\
        \left\{u_\beta^m\right\}\;\text{is weakly precompact in}\;\; L^2_{\mathrm{loc}}\big((0,+\infty);\dot{H}^1\!\left(\R^N\right)\!\big) \, .
        \end{gathered}
    \end{equation}
    \end{lem}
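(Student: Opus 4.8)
The plan is to run, in the global-in-space setting, the same scheme as in the proof of Lemma~\ref{alaa compact2}, the only genuinely new ingredient being a \emph{uniform mass-retention estimate} that controls the $L^1_\rho$ tails of the family at spatial infinity, replacing the role played there by the summable weight $\Phi_\alpha$. Throughout, set $M=\sup_{\beta\in\mathfrak{I}}\|u_\beta(0)\|_{L^1_\rho(\R^N)}$, which is finite by \eqref{L1-bound}. First I would collect uniform bounds: by mass conservation \eqref{mass cons}, $\|u_\beta(t)\|_{L^1_\rho(\R^N)}=\|u_\beta(0)\|_{L^1_\rho(\R^N)}\le M$ for all $t\ge0$, and by the smoothing estimate \eqref{basic smoothing}, $\|u_\beta(t)\|_{L^\infty(\R^N)}\le K\,t^{-\lambda}M^{\theta\lambda}$ for all $t>0$, both uniformly in $\beta$. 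Testing the equation with $u_\beta^m$ — legitimate since weak energy solutions are strong, cf.\ \eqref{strong energy} — gives the energy identity $\tfrac1{m+1}\tfrac{d}{dt}\int_{\R^N}u_\beta^{m+1}\rho\,dx+\int_{\R^N}\left|\nabla\!\left(u_\beta^m\right)\right|^2dx=0$, whence
\[
\int_\tau^T\!\!\int_{\R^N}\left|\nabla\!\left(u_\beta^m\right)\right|^2 dx\,dt\le\tfrac1{m+1}\,\|u_\beta(\tau)\|_{L^\infty(\R^N)}^m\,\|u_\beta(\tau)\|_{L^1_\rho(\R^N)}\le C_\tau
\]
uniformly in $\beta$, for every $0<\tau<T<\infty$; this already yields the weak precompactness of $\{u_\beta^m\}$ in $L^2_{\mathrm{loc}}\big((0,+\infty);\dot H^1(\R^N)\big)$, i.e.\ the second line of \eqref{AA-1}.

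Next I would establish equicontinuity in time via Lemma~\ref{time scaling trick}, applied with $\|\cdot\|$ the $L^1_\rho(\R^N)$ norm, $F\equiv1$ (the contraction estimate \eqref{m37}), a suitable $\overline{\lambda}\in(1,2)$, and $\mathsf{Z}$ the set of weak energy solutions whose initial datum is supported in $B_{R_0}$ and has $L^1_\rho(\R^N)$ norm at most $2M$. The rescaled solution $\lambda\,u_\beta(\lambda^{m-1}t)$ indeed belongs to $\mathsf{Z}$ for $\lambda\in(1,\overline{\lambda})$, since time-scaling leaves the spatial support unchanged and only multiplies the mass by $\lambda$; moreover $\lambda\,u(\lambda^{m-1}t)$ solves \eqref{wpme-noid} because that scaling does not touch $x$. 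Lemma~\ref{time scaling trick} then produces a bound of the form $h^{-1}\|u_\beta(t_0+h)-u_\beta(t_0)\|_{L^1_\rho(\R^N)}\le C\,t_0^{-1}M$, uniformly in $\beta$, for small $h$, so $\{u_\beta\}$ is locally uniformly equicontinuous as a family of curves valued in $L^1_\rho(\R^N)$.

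The crux is the tail estimate. Fix $t>0$, let $R\ge R_0$, and use the cutoff $\phi_R$ of Definition~\ref{den-cutoff}. The computation carried out in the proof of Proposition~\ref{weighted AC lemma} — combining \eqref{mass cons}, the lower bound in \eqref{weight-cond}, and \eqref{basic smoothing} — gives $\bigl|\tfrac{d}{ds}\int_{\R^N}u_\beta(s)\,\phi_R\,\rho\,dx\bigr|\le C\,s^{-\lambda(m-1)}M^{1+\theta\lambda(m-1)}R^{\gamma-2}$ for a.e.\ $s>0$. Since $\lambda(m-1)<1$ by \eqref{spexprel} and $\operatorname{supp}u_\beta(0)\subset B_{R_0}\subset B_R$, so that $\int_{\R^N}\phi_R\,u_\beta(0)\,\rho\,dx=\int_{\R^N}u_\beta(0)\,\rho\,dx$, integrating on $(0,t)$ and subtracting from the conserved mass yields
\[
\int_{B_{2R}^c}u_\beta(t)\,\rho\,dx\le\int_{\R^N}(1-\phi_R)\,u_\beta(t)\,\rho\,dx\le C\,t^{\theta\lambda}M^{1+\theta\lambda(m-1)}R^{\gamma-2},
\]
which tends to $0$ as $R\to\infty$ (here $\gamma<2$ is essential), uniformly in $\beta$. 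Together with the $L^\infty$ bound of the first step, this gives uniform integrability with respect to $\rho\,dx$ at spatial infinity.

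To conclude, on each cylinder $B_R\times(\tau,T)$ the uniform $L^\infty$ bound and the local energy estimates of Lemma~\ref{energy lem} make $\{u_\beta^m\}$ bounded in $H^1\!\left((\tau,T);L^2_\rho(B_R)\right)\cap L^2\!\left((\tau,T);H^1(B_R)\right)$; the Aubin--Lions lemma then yields, after a diagonal extraction over exhausting balls and time intervals and exactly as in the argument leading to \eqref{types of convergence2}, a subsequence $\{u_k\}$ and a limit $u_\star$ with $u_k\to u_\star$ in $L^2_{\mathrm{loc}}\big((0,+\infty);L^2_{\rho,\mathrm{loc}}(\R^N)\big)$, $u_k^m\rightharpoonup u_\star^m$ in $L^2_{\mathrm{loc}}\big((0,+\infty);H^1_{\mathrm{loc}}(\R^N)\big)$, and $\partial_t(u_k^m)\rightharpoonup\partial_t(u_\star^m)$. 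The tail estimate upgrades the local $L^1_\rho$ convergence to $u_k(t)\to u_\star(t)$ in $L^1_\rho(\R^N)$ for every fixed $t>0$, and this pointwise precompactness, combined with the equicontinuity of the previous step and the Ascoli--Arzel\`a theorem, gives $u_k\to u_\star$ in $C\big((0,+\infty);L^1_\rho(\R^N)\big)$, proving the first line of \eqref{AA-1}. The main obstacle is precisely the tail estimate: in contrast with Lemma~\ref{alaa compact2}, no integrable weight is available to damp the tails, so one must exploit finite-mass retention through the $\phi_R$ test-function argument, which is where the hypothesis $\gamma<2$ enters decisively.
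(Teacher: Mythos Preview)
Your proof is correct and follows essentially the same route as the paper: global energy identity for the $\dot H^1$ precompactness, the cutoff/mass-retention argument for the spatial tail, Aubin--Lions for local strong convergence, and Ascoli--Arzel\`a to conclude. The one organizational difference is that the paper obtains local-in-space precompactness in $C\big((0,\infty);L^1_{\rho,\mathrm{loc}}\big)$ by simply invoking Lemma~\ref{alaa compact2} as a black box (via the $L^1(\Phi_\alpha)$ framework established there), whereas you re-derive equicontinuity directly in $L^1_\rho(\R^N)$ by applying Lemma~\ref{time scaling trick} with the plain $L^1_\rho$-contraction \eqref{m37} and $F\equiv1$. Your variant is slightly more self-contained for this specific setting---it avoids routing through the $L^1(\Phi_\alpha)$ machinery and the constructed-solution identification used inside Lemma~\ref{alaa compact2}---while the paper's version is shorter on the page because the work has already been done. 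Both reach the same tail estimate by the same computation (the paper tests with $1-\phi_R$, you test with $\phi_R$ and subtract from the conserved mass; these are equivalent).
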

\begin{proof}
We begin by noticing that each $ u_\beta $ is also a strong energy solution, thanks to \eqref{strong energy}. Therefore, we may apply the global (in space) version of Proposition \ref{estimates.balls} obtaining, in particular,
\begin{equation}\label{h1 energy - lemma}
\int_{\R^N} \left|u_\beta(t_2)\right|^{m+1}  \rho \, dx + \int_{t_1}^{t_2} \int_{\R^N} \left|\nabla\!\left( u^m_\beta\right) \right|^2 dxdt =  \int_{\R^N} \left|u_\beta(t_1)\right|^{m+1} \rho \, dx
\end{equation}
for all $ t_2>t_1>0 $, which simply follows by \eqref{eq1-bdd} with (formally) $\varphi = \chi_{[t_1,t_2]} $. Discarding the first term on the left-hand side of \eqref{h1 energy - lemma}, and estimating the right-hand side by
$$
\int_{\R^N} \left|u_\beta(t_1)\right|^{m+1} \rho \, dx \le K^m\,t_1^{-\lambda m}\norm{u_\beta(0) }_{L^1_\rho(\mathbb{R}^N)}^{\theta\lambda m + 1} ,
$$
where we used \eqref{m37a}--\eqref{basic smoothing}, we easily obtain the second claim in \eqref{AA-1} by virtue of \eqref{L1-bound}.

As for the first claim, we notice that we are in a special case of Lemma \ref{alaa compact2} (owing to \eqref{basic smoothing}), so we already have that
\begin{equation}\label{loc-strong-comp}
   \left\{u_\beta\right\} \; \text{is precompact in}\;\;C\!\left((0,+\infty);L^1_{\rho,\mathrm{loc}}\!\left(\R^N\right)\right).
\end{equation}
Now it is just a question of improving \eqref{loc-strong-comp} to be global in space. To this end, let us study a convergent sequence
\begin{equation}\label{loc-strong-conv}
    u_{k} \underset{k \to \infty}{\longrightarrow} u_\star \qquad \text{in}\;\; C\!\left((0,+\infty);L^1_{\rho,\mathrm{loc}}\!\left(\R^N\right)\right) ;
\end{equation}
we claim that for all $\varepsilon,S>0$ there exists  $R_{\varepsilon}\ge 1$ (also depending on $S$) such that
\begin{equation}\label{small-space-claim}
    \sup_{t \in(0,S)}\norm{u_k(t)}_{L^1_\rho \left(B_{R_{\varepsilon}}^c\right)}\leq \varepsilon \qquad \forall k \in \mathbb{N} \,.
\end{equation}
Before proving the claim, let us quickly observe that it is sufficient in order to pass to global convergence. Indeed, for all $t\in(0,S)$ and all $k>k_{\varepsilon}$, it holds
\begin{equation*}
\begin{aligned}
    \norm{u_k(t)-u_\star(t)}_{L^1_\rho\left(\mathbb{R}^N\right)} & = \norm{u_k(t)-u_\star(t)}_{L^1_\rho\left(B_{R_\varepsilon}\right)} +\norm{u_k(t)-u_\star(t)}_{L^1_\rho\left(B_{R_\varepsilon}^c\right)}\\
    &\leq \norm{u_k(t)-u_\star(t)}_{L^1_\rho\left(B_{R_\varepsilon}\right)}+2\varepsilon\,.
\end{aligned}
\end{equation*}
We may now take the supremum over $t\in(t_1,t_2)$, for arbitrary $ 0<t_1<t_2<S $, and pass to the limit first as $k\to\infty$, using \eqref{loc-strong-conv}, and then as $\varepsilon\to0$.

Hence, let us prove \eqref{small-space-claim}. We start from the identity
$$
\int_{\R^N} \left(1-\phi_{R}\right)  u_k(t) \, \rho \, dx = \int_{\R^N} \left(1-\phi_{R}\right) u_k(0) \, \rho \, dx - \int_0^t \int_{\R^N} u_k^m \, \Delta \phi_{R} \, dx ds \, ,
$$
where $\phi_{R}$ is the cutoff function defined in Definition \ref{den-cutoff}.
By the support assumption in \eqref{L1-bound}, the first term on the right-hand side above vanishes provided $ R \ge R_0 $. Using again \eqref{m37a}--\eqref{basic smoothing}, along with \eqref{cutoff-est}, we have
$$
\begin{aligned}
\int_{B_{2R}} u_k(t) \, \rho \, dx \le \int_{\R^N} \left(1-\phi_{R}\right)  u_k(t) \, \rho \, dx  \le & \, \frac{C}{R^2} \int_0^t \int_{B_{2R} \setminus B_R} u_k^m \, dx ds \\
\le & \, \frac{3^\gamma C}{\underline{C} \, R^{2-\gamma}} \int_0^t \int_{B_{2R} \setminus B_R} u_k^m \, \rho \, dx ds \\
\le & \, \frac{3^\gamma C \, K^{m-1}}{\underline{C} \, R^{2-\gamma}} \left\| u_k(0) \right\|_{L^1_\rho\left(\R^N\right)}^{\lambda \theta(m-1) + 1} \int_0^t s^{- \lambda (m-1)} \, ds \\
\le & \,  \frac{3^\gamma C \, K^{m-1}}{\lambda \theta \,  \underline{C} \, R^{2-\gamma}} \, \sup_{\beta \in \mathfrak{I}}\left\| u_\beta(0) \right\|_{L^1_\rho\left(\R^N\right)}^{\lambda \theta(m-1) + 1} S^{\lambda \theta} \, ,
\end{aligned}
$$
whence the claim readily follows by choosing $R$ large enough.
\end{proof}

\begin{proof}[Proof of Theorem \ref{uniq thm}]\, \\[0.1cm]
\noindent{\textbf{Preliminary claim:}} Let $u$ be any non-negative solution to \eqref{wpme-noid}, in the sense of Definition \ref{def:vw-gl}. Then
\begin{equation}\label{claim-U}
u\in Y_{\mathrm{loc}}{((0,T))}\cap C\!\left((0,T);L^1\!\left(\Phi_\alpha\right)\right).
\end{equation}
This claim states that non-negative solutions are \emph{almost} in the uniqueness class of Proposition \ref{mp-uniq}, the only obstruction being the lack of regularity at $t=0$.
Indeed, as a consequence, we find that such solutions are uniquely determined by their value at \emph{all previous positive times $t>0$}, which is of course in general not automatic.

To prove \eqref{claim-U}, let us fix an arbitrary $R \ge 1$, arbitrary times $0<t_1<t_2<T-\delta$  (with $\delta>0$ sufficiently small), and consider the cylinders
$$
  \underline{Q}=B_{R}\times(t_2,T-\delta) \, , \qquad \overline{Q}=B_{2R}\times(t_1,T-\delta) \, .
$$
By Corollary \ref{glob-unif-est} we know that \eqref{l.infty.balls} holds, that is (focusing on the dependence on $R$ only),
\begin{equation*}
	R^{-\frac{2-\gamma}{m-1}} \norm{u}_{L^\infty\left(\underline{Q}\right)} \leq
		c_2\left[\left(R^{-(N-\gamma)-\frac{2-\gamma}{m-1}} \, \sup\limits_{t \in (t_1,T-\delta) }\int _{B_{2R}}u(t)\,\rho\,dx\right)^{1+\varepsilon}+1\right] ,
\end{equation*}
where $c_2>0$ is a constant depending only on $N,m,\gamma,\underline{C},\overline{C},\varepsilon, \delta, t_1,t_2, T$. Upon applying \eqref{weighted-AC-inq} with the appropriate choice of times (and time shifting) we get
\begin{equation*}\label{weighted-AC-inq-bis}
			\sup\limits_{t \in (t_1,T-\delta) }\int _{B_{2R}}u(t)\,\rho\,dx \leq C \left[\delta^{-\frac{1}{m-1}}R^{N-\gamma+\frac{2-\gamma}{m-1}}+\delta^{\frac{N-\gamma}{2-\gamma}} \norm{u}_{B_1\times \left(t_1+\frac{\delta}{
            4},T-\frac{\delta}{2}\right)}^{1+\frac{N-\gamma}{2-\gamma}(m-1)}\right] ,
            		\end{equation*}
where $C>0$ is a constant as in \eqref{weighted-AC-inq}, and on the right-hand side we simply estimated mean values with the $ L^\infty $ norm. Using also the local boundedness ensured by Theorem \ref{apriori-bounded}, and combining the previous two inequalities, it follows
\begin{equation}\label{norma X}
	R^{-\frac{2-\gamma}{m-1}} \norm{u}_{L^\infty\left(\underline{Q}\right)} \leq c_2 \, ,
\end{equation}
for a (possibly different) $c_2>0$ having all of the above dependencies. From \eqref{norma X} and the arbitrariness of $ R,t_1,t_2,\delta $, it is apparent that $u\in Y_\mathrm{loc}\left((0,T)\right)$. In particular, thanks to \cite[Proposition 4.1 and Remark 4.2]{MP}, we can assert that for almost every $\tau\in(0,T)$ the solution $u |_{t>\tau} $ coincides with the one constructed in Proposition \ref{mp-exist} taking the initial datum $u(\tau)$, at least up to time $\mathsf{T}=\left[ \tau+T(u(\tau)) \right] \land T $. Hence, it follows that
\begin{equation}\label{mathsfT}
    u\in C\!\left([\tau,\mathsf{T});L^1\!\left(\Phi_\alpha\right)\right) ,
\end{equation}
 In the case where $T > \tau+T(u(\tau)) $, it is sufficient to iterate the above procedure recalling \eqref{norma X}, which ensures that $ t \mapsto \ell(u(t)) $ is locally bounded in $(0,T)$. In this way, we can obtain \eqref{mathsfT} with $T$ replacing $ \mathsf{T} $, so the claim is proved owing to the arbitrariness of $\tau$.
\smallskip

\noindent{\textbf{Step 1: Set-up.}}
In view of the Preliminary claim, in particular, it holds that $u(\tau)\in X \cap L^\infty_{\mathrm{loc}}\!\left( \R^N \right) $ for all $\tau\in(0,T)$. Now, given a cutoff function $\phi_{2^{k}}$ as in Definition \ref{den-cutoff} for $k\in\mathbb{N}$, we consider the initial datum
$$
\phi_{2^{k}}\,u(\tau)\in L^1_\rho\!\left(\mathbb{R}^N\right) \cap L^\infty\!\left( \R^N \right) .
$$
We then apply Proposition \ref{pro1} to construct a unique non-negative, weak energy solution $w_{\tau,k}\in C\!\left([\tau,+\infty);L^1_\rho\!\left(\mathbb{R}^N\right)\right) \cap\, L^\infty\!\left(\R^N\times(\tau,+\infty)\right)$ to \eqref{wpme-funid} in the sense of Definition \ref{def2} (with time origin shifted to $\tau$), whose initial datum is precisely $\phi_{2^{k}}\,u(\tau)$. By local comparison, \emph{i.e.}~Corollary \ref{global comparison lemma}, and the stated regularity of $w_{\tau,k}$ and $u$, we have
\begin{equation}\label{tauk comparison}
	w_{\tau,k} \leq u \qquad \text{in}\;\; \R^N \times (\tau,T) \, .
\end{equation}
Next, we observe that  both $w_{\tau,k}$ and $u\vert_{t\in(\tau, T)}$ (thanks to the Preliminary Claim) belong to the uniqueness class of Proposition \ref{mp-uniq}, so they coincide with the constructed solutions, in the sense of Proposition \ref{mp-exist}, which take the same initial data. As a result, we may apply the $L^1(\Phi_\alpha)$ stability estimate \eqref{l1-weight-contr} to obtain
\begin{equation*}
	\norm{u(t)-w_{\tau,k}(t)}_{L^1(\Phi_\alpha)}\leq \exp\left(C_4\,(t-\tau)^{\theta\lambda}\right) \left\| u(\tau)-\phi_{2^{k}}\,u(\tau)\right\|_{L^1(\Phi_\alpha)} \qquad \forall t\in(\tau,\mathsf{T}_r) \, ,
\end{equation*}
where $C_4$ depends only on $N,m,\gamma,\underline{C},\overline{C},\alpha,r,\norm{u(\tau)}_{1,r}$, and we let $ \mathsf{T}_r $ denote the analogue of $ \mathsf{T} $ with $ T(u(\tau)) $ replaced by $ T_r(u(\tau)) $. Recalling the support properties of $ \phi_{2^k} $ and that by \eqref{tauk comparison} these solutions are ordered, the previous inequality can be written as follows:
\begin{equation}\label{tauk contraction}
	\int_{\R^N}\left[u(t)-w_{\tau,k}(t)\right]\Phi_\alpha \, \rho\,dx\leq \exp\left(C_4\,(t-\tau)^{\theta\lambda}\right)\int_{B^c_{2^{k}}}u(\tau)\,\Phi_\alpha \, \rho\,dx \qquad \forall t\in(\tau,\mathsf{T}_r) \, .
\end{equation}

\smallskip
\noindent{\textbf{Step 2: Compactness.}}
First we notice that $u$ has, by Theorem \ref{thm:weighted-AC}, a unique non-negative Radon measure $\mu$ as its initial trace, in the sense of \eqref{measure-data}.
In the following two steps, we will pass to the limit (along some sequence) as $\tau\to0$ in \eqref{tauk comparison}--\eqref{tauk contraction}, and find a solution $w_k$ of problem \eqref{measure}, in the sense of Definition \ref{def3}, which takes as its initial datum $\phi_{2^k}\,\mu$.
At this stage, the compactness tool that provides existence is Lemma \ref{alaa compact1}. To this end, we must consider the family (indexed by $\tau$) defined by the time-shifted solutions
\begin{equation*}
    \mathtt{w}_{\tau,k}(t)=w_{\tau,k}(\tau+t)\,,
\end{equation*}
which are defined for all $t\geq0$. Let us check that the leftmost hypothesis in \eqref{L1-bound} is satisfied by the family $\left\{\mathtt{w}_{\tau,k}\right\}_{\tau\in(0,T/8)}$, the rightmost one being true by construction. Indeed, we have
\begin{equation}\label{EBB}
    \sup_{\tau\in(0,T/8)}\norm{\mathtt{w}_{\tau,k}(0)}_{L^1_\rho\left(\mathbb{R}^N\right)}=\sup_{\tau\in(0,T/8)}\int_{B_{2^k}}u(\tau)\,\rho\,dx<+\infty\,,
\end{equation}
the boundedness being due to the Aronson--Caffarelli-type estimate \eqref{AC unif bound-2} of Corollary \ref{small time AC lem}.
Therefore, we are in a position to apply Lemma \ref{alaa compact1} to find a sequence $\tau_j\to0$ and a corresponding limit $w_k$ such that
\begin{equation}\label{appl conv}
	\begin{aligned}
    \mathtt{w}_{\tau_j,k} \to w_k \qquad & \text{in}\;\; C\!\left((0,+\infty);L^1_\rho\!\left(\mathbb{R}^N\right)\right) , \\
		\mathtt{w}_{\tau_j,k}^m \rightharpoonup  {w}_{k}^m \qquad & \text{in} \;\; L^2_{\mathrm{loc}}\big((0,+\infty);\dot H^1\!\left(\R^N\right)\!\big) \, ,
	\end{aligned}
\end{equation}
as $j \to \infty$. The above convergence is nearly enough to conclude that $w_k$ is solution to \eqref{measure} in the sense of Definition \ref{def3}. The only hypothesis we still lack is that $w_k\in L^\infty\!\left(\R^N\times(s,+\infty)\right)$ for all $s>0$ (apart from the initial trace -- we will address it in the next step). On the other hand, using the smoothing estimate $\eqref{basic smoothing}$ at the level of $\mathtt{w}_{\tau,k}$ yields
\begin{equation*}
    \sup_{t\in(s,+\infty)}\norm{\mathtt{w}_{\tau,k}(t)}_{L^\infty\left(\R^N\right)}\leq K \,s^{-\lambda}\norm{\mathtt{w}_{\tau,k}(0)}^{\theta\lambda}_{L^1_\rho\left(\mathbb{R}^N\right)} =K\,s^{-\lambda} \left(\int_{B_{2^k}}u(\tau)\,\rho\,dx\right)^{\theta\lambda} ,
\end{equation*}
where the right-hand side is finally bounded in $\tau$ owing to \eqref{EBB}. Hence, passing to the limit as $\tau \to 0$ guarantees the missing hypothesis.

Also, we observe that the stated convergence \eqref{appl conv} allows us to pass to the limit in \eqref{tauk comparison} to obtain
\begin{equation}\label{k comparison}
	w_{k}(t) \leq u(t) \qquad \text{a.e.\ in}\;\;\R^N\,, \ \text{for all}\;\;t\in(0,T)\,.
\end{equation}
In fact, by the previous construction and the ordering statement of Proposition \ref{pro1}, we even have
\begin{equation}\label{mono-comp}
	w_k(t)\leq w_{k+1}(t)  \qquad \text{a.e.\ in}\;\;\R^N\,, \ \text{for all}\;\;t\in(0,+\infty)\,.
    \end{equation}

\smallskip
\noindent{\textbf{Step 3: Approximate initial trace and uniqueness.}}
Now we must show that the initial trace of $w_k$ is $\phi_{2^{k}}\,\mu$, in the sense of \eqref{measure-data}. To this end, it is enough to proceed exactly as in the proof of Theorem \ref{mBCP}, see in particular the computations from \eqref{eps-claim} onward. The only differences are first in the fact that the convolution in \eqref{eps-claim} is replaced by  $\phi_{2^k} \, u(\tau) \, \rho$, which clearly converges to $\phi_{2^k} \, \mu$ as $\tau\to0$ in the sense of measures. Besides, the role of the smoothing estimate \eqref{appl key estimate bis} is taken here by \eqref{basic smoothing}, upon noticing that the time dependence of the two formulas is identical. In particular, one easily proves that
\begin{equation}\label{time continuity}
\left\vert\int_{\mathbb R^N}w_k(t)\,\varphi\,\rho\,dx-\int_{\mathbb R^N} \varphi \,\phi_{2^k}\,d\mu\right|\le C \, S_T^{\theta\lambda(m-1)+1} \, t^{\theta\lambda} \qquad \forall t\in \left( 0 , T_r\!\left(\cdot,S_T\right) \right) ,
\end{equation}
where
\begin{equation}\label{def-S-T}
S_T = \sup_{\tau \in (0,T/8)}\norm{u(\tau)}_{1,r}
\end{equation}
and $C>0$ is a constant that depends only on $ N,m,\gamma,\underline{C}, \overline{C}, R, \| \Delta \varphi \|_{L^\infty(B_R)}$. Note that $ S_T $ is finite still by virtue of Corollary \ref{small time AC lem}, so $ T_r\!\left(\cdot,S_T\right) $ is also well defined according to the same notation used in the proof of Lemma \ref{alaa compact2}. Therefore, letting $ t \to 0 $ in \eqref{time continuity} we infer that $ \phi_{2^k} \, \mu$ is indeed the initial trace of $w_k$.
Hence, by the stated uniqueness Theorem \ref{weighted pierre thm}, we find that $w_k$ \emph{depends only on the initial measure $\phi_{2^k}\,\mu$} and not on its specific method of construction via $u$.

\smallskip
\noindent{\textbf{Step 4: Compactness II and identification of the initial trace.}}
We shall now take $ k \to \infty$ on the sequence $\{ w_k \}$. First of all, we note that
\begin{equation*}
    \left\{w_k\right\} \; \; \text{is bounded in}\;\; Y_{\mathrm{loc}}\!\left((0,T)\right) ,
\end{equation*}
which in fact easily follows from \eqref{k comparison} and the Preliminary claim. Next, we intend to apply the compactness Lemma \ref{alaa compact2} to the family $\left\{w_k\right\}_{k\in\mathbb{N}}$. To this end, we observe that such solutions are strong energy, as follows by noting that this holds for the approximating solutions $\mathtt{w}_{\tau_j,k}$, and that the energy estimates of Lemma \ref{energy lem} with $ u \equiv \mathtt{w}_{\tau_j,k} $ are stable as $j\to\infty$.
Therefore, Lemma \ref{alaa compact2} is indeed applicable, ensuring the existence of a subsequence $k_j\to \infty$ and a limit
\begin{equation}\label{claim-w}
    w\in C\!\left((0,T);L^1(\Phi_\alpha)\right)\cap L^2_{\mathrm{loc}}\!\left((0,T);H^1_{\mathrm{loc}}\!\left(\R^N\right)\right)
\end{equation}
such that
\begin{equation}\label{strong conv}
\begin{aligned}
	w_{k_j}\to w \qquad & \text{in}\;\;C\!\left((0,T);L^1(\Phi_\alpha)\right) ,  \\
    w_{k_j}^m \rightharpoonup w^m \qquad & \text{in}\;\; L^2_{\mathrm{loc}}\!\left((0,T);H^1_{\mathrm{loc}}\!\left(\R^N\right)\right),\\
    \end{aligned}
\end{equation}
as $j\to\infty$. This is enough to determine that $w$ is a solution to \eqref{wpme-noid} in the sense of Definition \ref{def:vw-gl}. As by construction $w\geq0$, we know from Theorem \ref{thm:weighted-AC} that $w$ has an initial trace $\mu_w$. Furthermore, by the previous two steps, we have that $w$ \emph{depends only on the initial measure $\mu$}, also recalling the monotonicity property \eqref{mono-comp}, which makes sure that $ w $ is actually independent of the specific subsequence $ \{ w_{k_j} \} $.

Finally, we can let $k \to \infty$ in \eqref{k comparison} to obtain
\begin{equation}\label{comparisonvw}
w(t) \leq u(t) \qquad \text{a.e.\ in}\;\;\R^N\,, \ \text{for all}\;\;t\in(0,T)\,.
\end{equation}
Passing to the limit in \eqref{time continuity}, first as $k \to \infty$ then as $ t \to 0 $, which is justified by the above steps, we obtain
\[
\lim_{t\to0}\int_{\R^N}w(t)\,\varphi\,\rho\,dx=\int_{\R^N}\varphi\,d\mu
\]
for all $\varphi\in C_c^\infty\!\left(\R^N\right)$, so that $\mu_w=\mu$.

\smallskip
\noindent{\textbf{Step 5: Conclusion.}}
So far we have constructed a non-negative solution $w$ to \eqref{measure}, in the sense of Definition \ref{def:vw-gl-m}, which takes the same initial datum as $u$ (and $v$ thanks to \eqref{same data}). Moreover, as noted above, $w$ only depends on the initial datum $\mu$.
Therefore, if we can show that $u=w$, it would follow just in the same way that $v=w$, which would conclude the proof.

Hence, let us prove that $u=w$. To this end, we will pass to suitable limits in \eqref{tauk contraction}. Indeed, letting first $\tau\to0$ and then $k\to\infty$ (ignoring the subindices), we have
\begin{equation}\label{contraction}
\begin{gathered}
	\int_{\R^N}\left[u(t)-w(t)\right]\Phi_\alpha \, \rho\,dx\leq \limsup_{k \to \infty} \, \limsup_{\tau \to 0} \, \exp\left(C_4 \, T^{\theta\lambda}\right)\int_{B^c_{2^{k}}}u(\tau)\,\Phi_\alpha \, \rho\,dx \\ \forall t\in\left(0, T_r(\cdot,S_T) \wedge T \right) ,
    \end{gathered}
\end{equation}
where $S_T$ is defined in \eqref{def-S-T} and we are tacitly using the constant $C_4$ as in \eqref{tauk contraction} with $ \| u(\tau) \|_{1,r} $ replaced by $S_T$, so as to remove dependence on $\tau$. Note that the passages to the limit on the left-hand side of \eqref{contraction} are justified thanks to \eqref{appl conv} and \eqref{strong conv}. Now we need to show that the integral on the right-hand side of \eqref{contraction} converges to $0$ as $k\to\infty$, uniformly in $\tau$. On the other hand, this is a simple consequence of the following estimate:
\begin{equation}\label{est-series}
	\begin{aligned}
		\int_{B^c_{2^{k}}} u(\tau) \, \Phi_\alpha\,\rho\,dx&=\sum_{i=k}^{\infty}\int_{B_{2^{i+1}}\setminus B_{2^{i}}} u(\tau) \, \Phi_\alpha\,\rho\,dx\\
		&\leq S_T \, \sum_{i=k}^{\infty}\left(1+2^{2i}\right)^{-\alpha}\,2^{(i+1)\frac{(N-\gamma)(m-1)+2-\gamma}{m-1}} \, ,
	\end{aligned}
\end{equation}
where we are implicitly assuming $ \tau \in (0,T/8) $ and that the $ \| \cdot \|_{1,r} $ norm is taken for some fixed $ r < 2^k $. Due to the lower bound on $\alpha$ in \eqref{alpha-cond} and Corollary \ref{small time AC lem}, we can deduce that the left-hand side of \eqref{est-series} vanishes as $ k \to \infty $, uniformly in $ \tau \in (0,T/8) $. In conclusion, we have established that the right-hand side of \eqref{contraction} is indeed zero, which in combination with \eqref{comparisonvw} entails
\begin{equation*}
w(t) = u(t) \qquad \text{a.e.\ in}\;\;\R^N\,, \ \text{for all}\;\; t\in\left(0, T_r(\cdot,S_T) \wedge T \right) .
\end{equation*}
If $ T_r(\cdot,S_T) \ge T $ the proof is complete; otherwise, it is enough to observe that due to \eqref{claim-U}, \eqref{claim-w}, and \eqref{comparisonvw} both $ u\vert_{t \in (s,T)} $ and $ w\vert_{t \in (s,T)} $ fall in the same uniqueness class of Proposition \ref{mp-uniq} for any $ s \in \left(0,T_r(\cdot,S_T)\right) $.
\end{proof}

\section{\emph{A priori} local boundedness: proofs}\label{sec:loc-bdd}
In this section we prove Theorem \ref{apriori-bounded}, namely that non-negative solutions to \eqref{wpme-noid-loc} are locally bounded. In order to simplify the discourse, throughout we will only consider \emph{global solutions} to \eqref{wpme-noid}, but by minor modifications the same result can be obtained for local solutions to \eqref{wpme-noid-loc} (see Remark \ref{LS} below). Specifically, we will carefully modify the strategy of \cite{DK2} to handle the weight $\rho$; the spatial inhomogeneity of the equation calls for a more general approach that, in the final arguments of our proof, involves regularization in \emph{time} only.

Let us begin by observing that very weak solutions to \eqref{wpme-noid} are also solutions of local problems in $B_R \times(0,T)$, at least for a.e.\ $R>0$.
That is, we have
\begin{equation}\label{loc-prob}
\rho \, u_t = \Delta\!\left(u^m \right)\qquad \text{in}\; \; B_R\times(0,T) \, ,
\end{equation}
in the sense that
\begin{equation}\label{vw-loc}
\int_{0}^{T} \int_{B_R} \left(u \, \psi_t \, \rho + u^m \, \Delta\psi \right) dx dt = \int_{0}^{T} \int_{\partial B_R} u^m \, \partial_{\Vec{\mathsf n}}\psi \, d\sigma dt
\end{equation}
for all $\psi\in C^\infty_c\!\left(\overline{B}_R\times(0,T)\right)$ that vanish on $\partial B_R\times(0,T)$. In fact, this is a direct consequence of Lemma \ref{globloc}.

Following the overall strategy of \cite{DK2}, but with significant technical obstructions coming from the presence of the weight, we will prove Theorem \ref{apriori-bounded} by studying \emph{local potentials} of $u$; more precisely, for all $x\in B_R$ and a.e.\ $t\in(0,T)$ we define
\begin{equation}\label{pot-u}
w(x,t)=\int_{B_R} G_R(x,y)\,u(y,t)\,\rho(y)\,dy \, ,
\end{equation}
where $G_R $ is the Green's function of $-\Delta$ in $B_R$ with homogeneous Dirichlet boundary conditions. One can easily see, for instance estimating $G_R$ by the global Green's function of $-\Delta$ and using Young's convolution inequality, that
\begin{equation*}\label{q-w}
w(t)\in L^q(B_R)  \qquad\text{for all}\;\; 1 \le q<\tfrac{N}{N-2} \, .
\end{equation*}
We refer \emph{e.g.}~to \cite[Th\'eor\`eme 9.1]{Stamp} for a detailed proof. By integrating \eqref{pot-u} in time and using the same argument, together with the space-time integrability of $u$, we also get
\begin{equation}\label{q-w-2}
\int_0^tw(s)\,ds\in L^q(B_R) \qquad\text{for all}\;\; 1 \le q<\tfrac{N}{N-2} \, .
\end{equation}
Next, we define the boundary correction function
\begin{equation*}\label{def-h}
h(x,t)=-\int_0^t\int_{\partial B_R}\partial_\varrho G_R(x,y)\, u^m(y,s)\,d\sigma(y)\,ds \qquad \text{in}\;\; B_R \times (0,T) \, ,
\end{equation*}
where $\partial_\varrho \equiv \partial_{\Vec{\mathsf n}} $ is the outer normal (radial) derivative on $ \partial B_R $. Here and in the sequel, without loss of generality, we are assuming that $u\in L^1_{\rho,\mathrm{loc}}\!\left(\mathbb R^N\times [0,T)\right)$ and $u^m\in L^1_{\mathrm{loc}}\!\left(\mathbb R^N\times [0,T)\right)$; this is achieved by a routine time shift, and it is not restrictive since we aim at proving \emph{local} boundedness.
Note that, by construction, $h(t)$ is non-negative, harmonic, and integrable in $B_R$ for all $t\in(0,T)$. More precisely, there exists a constant $C_{N,R}>0$ such that
\[
\int_{B_R}h(x,t)\,dx \le C_{N,R} \, \int_{\partial B_R}\int_0^t u^m\,d\sigma ds \, ,
\]
which, in combination with the mean-value principle for harmonic functions, entails that $ h \in L^\infty_{\mathrm{loc}}\!\left( B_R \times [0,T) \right) $ (see also \cite[Lemma 4.5]{DK2}). As a consequence, we have
$$
W=w-h\in L^1\!\left( B_R \times (0,T-\tau) \right) \qquad \text{for all}\; \; \tau \in (0,T) \, .
$$
Furthermore, the function $W$ satisfies the following dual version of the weighted porous medium equation:
\begin{equation}\label{dual-eq}
\begin{cases}
	-\Delta W =  u \, \rho &\text{in} \;\; B_R\times(0,T)\, ,\\
	-W_t =  u^m  & \text{in} \;\; B_R\times(0,T)\,,
\end{cases}
\end{equation}
in the sense that
\begin{equation}\label{dual-eq-ell-vws}
\int_{0}^{T}\int_{B_R}\left(W\,\Delta \psi+u\,\psi\,\rho\right) dxdt = 0
\end{equation}
and
\begin{equation}\label{dual-eq-par-vws}
\int_{0}^{T}\int_{B_R}\left(W\,\psi_t-u^m\,\psi\right) dx dt  =0 \, ,
\end{equation}
for all $\psi\in C^\infty_c\!\left(B_R\times(0,T)\right)$. Identity \eqref{dual-eq-ell-vws} is a direct consequence of the definition of $w$ and the harmonicity of $h$, whereas \eqref{dual-eq-par-vws} can be rigorously proved by replacing in \eqref{vw-loc} a test function $\psi$ with its Dirichlet potential $ x \mapsto \int_{B_R} G_R(x,y) \, \psi(y,t) \, dy $.

\begin{lem}\label{pot-bounded}
Let $u$ be a non-negative local solution to \eqref{loc-prob}, in the sense of \eqref{vw-loc}, and let $W$ be the corresponding solution to \eqref{dual-eq} constructed above, in the sense of \eqref{dual-eq-ell-vws}--\eqref{dual-eq-par-vws}. Then $W\in L^\infty_{\mathrm{loc}}\!\left(B_R\times(0,T)\right)$.
\end{lem}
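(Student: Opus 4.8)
\textbf{Proof plan for Lemma \ref{pot-bounded}.}

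The plan is to exploit the dual system \eqref{dual-eq} in the spirit of the local boundedness argument of \cite{DK2}, but replacing the scaling/Aleksandrov-type tools used there with direct potential estimates that do not see the roughness of $\rho$. The key structural observation is that, by \eqref{dual-eq}, the function $W$ is simultaneously (i) \emph{superharmonic} in $B_R$ for a.e.\ fixed $t$, since $-\Delta W = u\,\rho \ge 0$, and (ii) \emph{non-increasing in time}, since $-W_t = u^m \ge 0$; moreover $W \ge 0$ because both $w$ and $h$ arise from Green's potentials of non-negative data (recall $w \ge 0$ by non-negativity of $u$, and $W = w - h$ with $h$ the harmonic extension subtracting the boundary layer — one checks $W \ge 0$ from the maximum principle applied to $-\Delta W = u\rho \ge 0$ with $W = 0$ on $\partial B_R$). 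So $W$ is a non-negative, time-monotone family of superharmonic functions on $B_R$, each in $L^1(B_R)$ locally uniformly in $t$ away from $t=T$.

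First I would reduce to an a priori $L^1$ bound: fix $\tau \in (0,T)$ and a subball $B_{R'} \Subset B_R$; I want to bound $\|W(t)\|_{L^\infty(B_{R'})}$ uniformly for $t$ in compact subsets of $(0,T-\tau)$. By time-monotonicity, $W(t_2) \le W(t_1) \le W(t_0)$ whenever $t_0 \le t_1 \le t_2$, so it suffices to control $W$ from above near any fixed early time and from below near $T-\tau$; the lower bound is trivial ($W \ge 0$), so the whole game is an \emph{upper} bound on $W(t_0)$ for small $t_0>0$. Here is where the potential structure pays off: for a superharmonic function $v \ge 0$ on $B_R$, the sub-mean-value inequality gives a pointwise bound of $v(x)$ on $B_{R'}$ by a constant (depending on $R,R'$) times $\|v\|_{L^1(B_R)}$ — this is the standard Harnack-type estimate for non-negative superharmonic functions (e.g.\ via the Poisson–Jensen representation $v = $ Green potential of $-\Delta v$ plus a non-negative harmonic part, and using that the Green function and the harmonic measure are bounded on $B_{R'}\times \partial B_R$ and that the Green kernel $G_R(x,y)$ is, for $x \in B_{R'}$, a bounded-plus-locally-integrable kernel in $y$). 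Thus the claim follows once I know $\sup_{t \in (\tau_1,\tau_2)} \|W(t)\|_{L^1(B_R)} < +\infty$ for every $0 < \tau_1 < \tau_2 < T$. But this was \emph{already established} in the construction preceding the lemma: $w(t) \in L^q(B_R)$ for $q < N/(N-2)$ with the bound coming from $\|u(t)\rho\|_{L^1}$ — wait, more carefully, $w(t)$ is controlled by the $L^1_\rho(B_R)$ norm of $u(t)$, which is only known for a.e.\ $t$; and $h(t) \in L^\infty_{\mathrm{loc}}$ with the explicit bound displayed above. So the genuine point is to upgrade the \emph{a.e.-in-$t$} $L^1$ control of $w$ to a \emph{locally uniform-in-$t$} control of $W$, and this is exactly what monotonicity in time buys: $W(t) \le W(s)$ for $s < t$, $s$ in the full-measure set where $w(s) \in L^1(B_R)$; pick one such $s = s_0$ below $\tau_1$, and then $\sup_{t \ge \tau_1} \|W(t)\|_{L^1(B_R)} \le \|W(s_0)\|_{L^1(B_R)} < +\infty$ by positivity of $W$. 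Combining with the superharmonic sub-mean-value estimate on $B_{R'}$ gives $W \in L^\infty(B_{R'} \times (\tau_1, \tau_2))$, and since $R', \tau_1, \tau_2$ are arbitrary, $W \in L^\infty_{\mathrm{loc}}(B_R \times (0,T))$.

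The main obstacle I anticipate is making the two monotonicity/regularity statements \emph{rigorous from the very weak formulations} \eqref{dual-eq-ell-vws}--\eqref{dual-eq-par-vws} rather than from a classical PDE, because $W$ is only an $L^1$ function a priori: one must justify that $t \mapsto W(x,t)$ has an a.e.-monotone (hence everywhere-defined, up to the choice of a good representative) version, and that "$-\Delta W(t) = u(t)\rho \ge 0$ distributionally" legitimately implies the sub-mean-value inequality for a.e.\ $t$ with a constant \emph{uniform in $t$}. I would handle the time-monotonicity by testing \eqref{dual-eq-par-vws} with $\psi(x,t) = \eta(x)\,\zeta(t)$ for non-negative $\eta \in C_c^\infty(B_R)$ and $\zeta$ approximating $\chi_{(t_1,t_2)}$, getting $\int_{B_R} W(t_2)\eta\,dx - \int_{B_R} W(t_1)\eta\,dx = -\int_{t_1}^{t_2}\int_{B_R} u^m \eta \le 0$ for a.e.\ $t_1 < t_2$, which gives monotonicity of $t \mapsto \int W(t)\eta$ and, by density over $\eta$, pointwise a.e.-monotonicity along a good representative; this also shows $t\mapsto W(t)$ is (locally) of bounded variation with values in $L^1_{\mathrm{loc}}$, so a genuine representative exists. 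For the spatial regularity I would invoke \eqref{q-w-2} together with the explicit Green-potential representation $W(t) = \int_{B_R}G_R(\cdot,y)u(y,t)\rho(y)\,dy - h(t)$ — which is precisely how $W$ was built — and note that for $x$ in the fixed compact $\overline{B_{R'}}$ the kernel $G_R(x,\cdot)$ is bounded uniformly, so $\|W(t)\|_{L^\infty(B_{R'})} \le C_{R,R'}\big(\|u(t)\rho\|_{L^1(B_R)} + \|h(t)\|_{L^\infty(B_{R'})}\big)$ for a.e.\ $t$, and then monotonicity propagates the bound from a good time $s_0$ to all later times. One has to be slightly careful that the bound on $h$ is genuinely locally-in-time (away from $t=T$), which it is from the displayed inequality and $u^m \in L^1_{\mathrm{loc}}$. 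Assembling these pieces yields the lemma.
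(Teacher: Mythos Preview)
Your argument has a genuine gap at its central step. You claim that a non-negative superharmonic function $v$ on $B_R$ with $v \in L^1(B_R)$ satisfies a pointwise upper bound $v(x) \le C_{R,R'} \|v\|_{L^1(B_R)}$ for $x \in B_{R'}$, and later that ``for $x$ in the fixed compact $\overline{B_{R'}}$ the kernel $G_R(x,\cdot)$ is bounded uniformly''. Both assertions are false. Superharmonicity gives $v(x) \ge \fint_{B_r(x)} v$, an inequality in the \emph{wrong direction} for an upper bound; and $G_R(x,y) \sim c_N\,|x-y|^{2-N}$ as $y \to x$, so $G_R(x,\cdot)$ is never bounded for any $x \in B_R$. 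A concrete counterexample: $v = G_R(\cdot,0)$ is non-negative, superharmonic, in $L^1(B_R)$, yet unbounded near the origin. Hence the estimate $\|W(t)\|_{L^\infty(B_{R'})} \le C\big(\|u(t)\rho\|_{L^1(B_R)} + \|h(t)\|_{L^\infty(B_{R'})}\big)$ simply does not follow; the Green potential of an $L^1$ density is only in $L^q$ for $q < \tfrac{N}{N-2}$, which is exactly the starting point \eqref{q-w-2} and nothing more.

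This is precisely why the paper's proof cannot be short-circuited. The actual argument runs an elliptic bootstrap: from $W(t_1) \in L^{q}$ one gets $\int_{t_1}^{t_2} u^m \, dt \in L^q$ via the time equation and monotonicity (your observations about monotonicity and non-negativity, suitably repaired, are used here), then $\int_{t_1}^{t_2} u \, dt \in L^{mq}$ by H\"older, then $\big(\int u \, dt\big)\rho \in L^{p}$ for some $p>1$ (here the condition $\gamma<2$ enters), and finally Calder\'on--Zygmund plus Sobolev embeddings applied to $-\Delta\big(\int W \, dt\big) = \big(\int u \, dt\big)\rho$ improve the exponent to $p_{k+1}$. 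One shows $p_k$ grows geometrically until it crosses $N/2$, at which point $\int W \, dt$ becomes H\"older continuous and monotonicity in $t$ transfers this to an $L^\infty$ bound on $W$ itself. Your time-monotonicity reduction and the handling of $h$ are correct and are indeed used, but the missing ingredient is this iterative gain of integrability; there is no one-step route from $L^1$ of the source to $L^\infty$ of the potential.
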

\begin{proof} \, \\[0.1cm]
\noindent{\textbf{Preparation:}} As a first step, in \eqref{dual-eq-par-vws}, let us take $\psi(x,t) = \varphi_n(x)\,\eta_n(t)$, where $ \{ \varphi_n \}$ is a smooth approximation of $\delta_{y}$ for some $y\in B_R$ and $\{\eta_n\}$ suitably converges to $\chi_{(t_1,t_2)}$ as $n \to \infty$, for some $0<t_1<t_2<T$. Taking the limit, it follows that
\begin{equation}\label{ae W}
	W(y,t_2)-W(y,t_1)=-\int_{t_1}^{t_2}u^m(y,t)\,dt \le 0 \, ,
\end{equation}
for a.e.\ $t_1,t_2\in(0,T)$, with $t_1<t_2$, and for a.e.\ $y\in B_R$. Note that \eqref{ae W} actually holds for \emph{all} Lebesgue points of $W\in L^1_{\mathrm{loc}}\!\left((0,T);L^1(B_R)\right)$. Let us then denote by $\mathcal{T}\subset(0,T)$ the set of such times.

Because $u\geq0$, it follows that $t \mapsto W(t)$ is essentially non-increasing. However, for our purposes, it will be convenient for $W(t)$ to be defined and continuous with values in $L^1(B_R)$ and non-increasing \emph{for all times} $t\in(0,T)$. To this end, it is enough to define
\begin{equation}\label{def-completion}
	\overline{W}(t)=
	W(t_1)-\int_{t_1}^{t}u^m(s) \, ds \, ,
\end{equation}
where $t_1$ is any fixed element of $\mathcal{T}$. From \eqref{ae W} we get that such a function is defined for all $t\in(0,T)$, coincides with $W$ a.e.\ in $ B_R\times(0,T)$, and the mapping $t\mapsto \overline{W}(t)$ is continuous in $L^1(B_R)$ and essentially non-increasing, in the sense that $ \overline{W}(x,t_2) \le \overline{W}(x,t_1) $ for a.e.\ $ x \in B_R $ (possibly depending on $ t_1,t_2 $) and for all $ 0<t_1<t_2<T $. Clearly, from \eqref{def-completion} and the integrability of $ u^m $, we have $\overline{W}\in C\!\left((0,T);L^1(B_R)\right)$, and $\overline{W}$ continues to satisfy \eqref{dual-eq-ell-vws}--\eqref{dual-eq-par-vws}. For the rest of the proof, we will implicitly work with $\overline{W}$ in the place of $W$ (but without changing notations).

Let us fix any $ 0 < t_\infty<S<T$ and $0<R_\infty<R$. Then we take any monotone sequences $\{t_0,t_1,\cdots\}\subset(0,T)$ with $t_k\nearrow t_\infty$ and $\{R_1,R_2,\cdots\}\subset(R_\infty,R)$ with $R_k\searrow R_\infty$. We aim at proving that $ W \in L^\infty\!\left(B_{R_\infty} \times (t_\infty,S) \right) $, which implies the thesis due to the arbitrariness of $R_\infty,t_\infty,S$. From here on, in addition to the continuity of $W$ in $ L^1(B_R) $, we will also tacitly assume that $ W \ge 0 $. In general, this is not guaranteed, but it can be achieved by adding to $W$ any constant $c$ such that $ c \ge h $ in $ B_{R_1} \times (t_0,S) $, which is possible thanks to the local boundedness of $h$.
\smallskip
\\\noindent{\textbf{Base case:}}
We begin our considerations by noticing that
\begin{equation}\label{s0-int}
	\int_{t_1}^{t_2}u^m(t)\,dt=W(t_1)-W(t_2)\leq_{(a)} W(t_1)\leq_{(b)} (t_1-t_0)^{-1}\int_{t_0}^{t_1}W(t)\,dt \in L^{q}\!\left(B_{R_1}\right) ,
\end{equation}
for the same exponents $q$ as in \eqref{q-w-2} (recalling that $ h $ is locally bounded), where in $(a)$ and $(b)$ we have used the above observed non-negativity and monotonicity of $W$, respectively. Hence,
\begin{equation}\label{s1-int}
	\int_{t_1}^{t_2}u(t)\,dt \leq  \left(\int_{t_1}^{t_2}u^m(t)\,dt\right)^{1/m}(t_2-t_1)^{\frac{m-1}{m}}\in L^{mq}\!\left(B_{R_1}\right) .
\end{equation}
In order to perform a bootstrap argument via elliptic regularity, we now look for a $p>1$ such that
\begin{equation}\label{bootstrap}
	\left( \int_{t_1}^{t_2}u(t)\,dt \right) \rho \in L^p\!\left(B_{R_1}\right) .
\end{equation}
To this end, upon applying H\"{o}lder's inequality we obtain
\begin{align*}
	\int_{B_{R_1}}\left(\int_{t_1}^{t_2} u(x,t)\,dt\right)^p [\rho(x)]^p \, dx \leq & \left[\int_{B_{R_1}}\left(\int_{t_1}^{t_2} u(x,t)\,dt\right)^{mq}dx\right]^{\frac{p}{mq}}\\ &\times\left[\int_{B_{R_1}}[\rho(x)]^{\frac{pmq}{mq-p}}\,dx\right]^{\frac{mq-p}{mq}} .
\end{align*}
As a consequence of \eqref{weight-cond} and \eqref{s1-int}, the right-hand side of the above formula is finite provided
\begin{equation}\label{eq-gammapmq}
	\frac{\gamma pmq}{mq-p}<N \, ;
\end{equation}
since $q<N/(N-2)$, this is always achievable if
\begin{equation}\label{p0cond}
	p<\frac{Nm}{N-2+\gamma m} \, .
\end{equation}
Crucially, since $ m>1$ and $\gamma\in(0,2)$, we may indeed take $p>1$ in \eqref{p0cond}.
In conclusion, we have \eqref{bootstrap} for all $p>1$ complying with \eqref{p0cond}. In particular, exploiting again the time monotonicity of $W$, we can assert that there exists a $p_1>1$ such that
\begin{equation*}
	\int_{t_1}^{t_2}W(t)\,dt \in L^{p_1}\!\left(B_{R_1}\right) \qquad\text{and}\qquad \left(\int_{t_1}^{t_2}u(t)\,dt\right) \rho\in L^{p_1}\!\left(B_{R_1}\right) .
\end{equation*}
\smallskip
\noindent{\textbf{Induction step:}} Let us now iterate the above arguments at a general step $k\in\mathbb{N} \setminus \{ 0 \} $. Our induction hypothesis is that
\begin{equation}\label{ind-hypo}
	\int_{t_k}^{t_{k+1}}W(t)\,dt \in L^{p_k}\!\left(B_{R_k}\right) \qquad\text{and}\qquad \left( \int_{t_k}^{t_{k+1}} u(t)\,dt \right) \rho \in L^{p_k}\!\left(B_{R_k}\right) ,
\end{equation}
for some $p_k>1$. By the base case, we know that such an assertion is true for $k=1$. Now we aim to quantitatively improve the above integrability by resorting to elliptic regularity, noticing that
\begin{equation*}\label{int-eq}
	-\Delta \left( \int_{t_k}^{t_{k+1}}W(t)\,dt \right)=\left(\int_{t_k}^{t_{k+1}}u(t)\,dt \right) \rho \, ,
\end{equation*}
at least in the sense of distributions (an immediate consequence of \eqref{dual-eq-ell-vws}).

Now, without loss of generality, we may assume that $p_k<{N}/{2}$. If, on the other hand, $p_k>{N}/{2}$, then an application of the Calder\'{o}n--Zygmund elliptic estimates in conjunction with a second-order Sobolev embedding (see \cite[Theorem 9.11]{GT} and \cite[Theorem 6 in Section 5.6]{Evans}, respectively) immediately gives that
\begin{equation*}\label{hold-est}
	\int_{t_k}^{t_{k+1}}W(t)\,dt \in C^{\alpha}\!\left(B_{R_{k+1}}\right),
\end{equation*}
for some $\alpha\in(0,1)$. By the monotonicity of $W$, we can therefore infer that
$$
W\in L^\infty\!\left(B_{R_{k+1}}\times(t_{k+1},S)\right) \subset L^\infty\!\left(B_{R_{\infty}}\times(t_{\infty},S)\right) ,
$$
whence the thesis follows. In the borderline case $p_k={N}/{2}$, by the critical Sobolev embedding we have
\begin{equation*}\label{crit-sob}
	\int_{t_k}^{t_{k+1}}W(t)\,dt \in L^q\!\left(B_{R_{k+1}}\right) \qquad \forall q\in(1,\infty) \, .
\end{equation*}
As before, using \eqref{dual-eq-par-vws} along with the monotonicity of $W$ gives
\begin{align*}
	\int_{t_{k+1}}^{t_{k+2}}u^m(t)\,dt&=W(t_{k+1})-W(t_{k+2})\leq W(t_{k+1})\\&\leq(t_{k+1}-t_{k})^{-1}\int_{t_k}^{t_{k+1}}W(t)\,dt
	\in L^q\!\left(B_{R_{k+1}}\right) \qquad\forall q\in(1,\infty) \, .
\end{align*}
Proceeding as in the base case, we conclude that
$$
\left( \int_{t_{k+1}}^{t_{k+2}} u(t)\,dt \right) \rho \in L^p\!\left(B_{R_{k+1}} \right)
$$
provided \eqref{eq-gammapmq} holds,
which, recalling that here $q>1$ can be taken arbitrarily large, is equivalent to
\begin{equation*}
	p<\frac{N}{\gamma} \, .
\end{equation*}
Since $\gamma<2<N$, we may take $p_{k+1}=p>{N}/{2}$, so again applying the monotonicity of $W$, after one additional step we end up with \eqref{ind-hypo} for $k \equiv k+1$ and a supercritical exponent $p_{k+1}>{N}/{2}$.

Let us now return to the subcritical case $ p_k < N/2 $. In this range, Calder\'{o}n--Zygmund estimates, second-order Sobolev embeddings, and the monotonicity of $W$ yield
\begin{equation}\label{is w-int}
	W(t_{k+1})\leq(t_{k+1}-t_{k})^{-1}\int_{t_{k}}^{t_{k+1}} W(t)\,dt\in L^{q_{k+1}}\!\left(B_{R_{k+1}}\right),
\end{equation}
with
\begin{equation}\label{pk-cond}
	\frac{1}{q_{k+1}}=\frac{1}{p_k}-\frac{2}{N} \, .
\end{equation}
Similarly to \eqref{s0-int}--\eqref{s1-int}, from \eqref{is w-int} we get
\begin{equation*}
	\int_{t_{k+1}}^{t_{k+2}}u(t)\,dt \in L^{mq_{k+1}}\!\left(B_{R_{k+1}}\right) \subset L^{\hat{m}q_{k+1}}\!\left(B_{R_{k+1}}\right) \qquad \forall \hat{m} \in (1,m] \, .
\end{equation*}
For the following arguments, it will be convenient to take
$$
\hat{m}=\min\!\left\{\tfrac{m+1}{2} , \, \tfrac{N}{N-\gamma} , \, \tfrac{2}{\gamma}\right\} .
$$
As in the base case, we need to find a suitable $p_{k+1}>1$ such that
\begin{equation}\label{is u-int}
	\left( \int_{t_{k+1}}^{t_{k+2}} u(t) \, dt \right) \rho \in L^{p_{k+1}}\!\left(B_{R_{k+1}}\right)
\end{equation}
and $p_{k+1}\leq q_{k+1}$, the latter condition being necessary to bootstrap the argument involving elliptic regularity. Again, by H\"{o}lder's inequality, we have
\begin{align*}
	\int_{B_{R_{k+1}}}\left(\int_{t_{k+1}}^{t_{k+2}} u(x,t)\,dt\right)^{p_{k+1}}[\rho(x)]^{p_{k+1}}\,dx \leq & \left[\int_{B_{R_{k+1}}}\left(\int_{t_{k+1}}^{t_{k+2}} u(x,t)\,dt\right)^{mq_{k+1}}dx\right]^{\frac{p_{k+1}}{mq_{k+1}}}\\
&\times\left(\int_{B_{R_{k+1}}}[\rho(x)]^{\frac{p_{k+1}mq_{k+1}}{mq_{k+1}-p_{k+1}}}\,dx\right)^{\frac{mq_{k+1}-p_{k+1}}{mq_{k+1}}},
\end{align*}
so we must require
\begin{equation*}
	\frac{\gamma\,p_{k+1}\,m\,q_{k+1}}{m\,q_{k+1}-p_{k+1}}<N \, ,
\end{equation*}
or equivalently, using \eqref{pk-cond},
\begin{equation*}
	\frac{1}{p_{k+1}}>\frac{1}{m\,q_{k+1}}+\frac{\gamma}{N}=\frac{1}{m\,p_k}-\frac{2}{Nm}+\frac{\gamma}{N}=\frac{1}{m}\left(\frac{1}{p_k}-\frac{2}{N}\right)+\frac{\gamma}{N} \, .
\end{equation*}
In particular, we can choose $p_{k+1}$ satisfying
\begin{equation}\label{p-step cond}
	\frac{1}{p_{k+1}}=\frac{1}{\hat{m}\,q_{k+1}}+\frac{\gamma}{N}=\frac{1}{\hat{m}\,p_k}-\frac{2}{N\hat{m}}+\frac{\gamma}{N}=\frac{1}{\hat{m}}\left(\frac{1}{p_k}-\frac{2}{N}\right)+\frac{\gamma}{N} \, .
\end{equation}
We next need to check that $p_{k+1}\leq q_{k+1}$, which is implied by
\begin{equation*}
	\hat{m}\leq\frac{N}{N-\gamma} \, .
\end{equation*}
Therefore, with the above choices, we have \eqref{is u-int} for $p_{k+1}$ satisfying \eqref{p-step cond} and $p_{k+1}\leq q_{k+1}$. This completes the induction step; indeed, applying \eqref{is w-int} and the monotonicity of $W$ as before, we obtain exactly \eqref{ind-hypo} for $k\equiv k+1$.
\smallskip
\\ \noindent{\textbf{Conclusion:}}
It is now left to study the behavior of $\{p_k\}$ as $k\to\infty$. Rewriting \eqref{p-step cond}, we have
\begin{equation*}
	\frac{1}{p_{k+1}}=\frac{1}{\hat{m}\,p_k}+\frac{\gamma}{N}-\frac{2}{N\hat{m}} \, .
\end{equation*}
As by definition $\hat{m}\leq {2}/{\gamma}$, it follows that
\begin{equation*}
	p_{k+1}\geq\hat{m}\,p_{k} \, ,
\end{equation*}
so $\{ p_k \}$ has at least exponential growth as $k\to\infty$. Therefore, in a finite number $K$ of iterations, we necessarily have $p_{K}\geq{N}/{2}$, and the thesis follows according to the argument above involving a supercritical or critical Sobolev embedding in one or two additional steps, respectively.
\end{proof}

\begin{cor}\label{pot-bounded-2}
Let $u$ be a non-negative solution to \eqref{loc-prob}, in the sense of \eqref{vw-loc}, and let $w$ be defined by \eqref{pot-u}. Then $w\in L^\infty_{\mathrm{loc}}\!\left(B_R\times(0,T)\right)$.
\end{cor}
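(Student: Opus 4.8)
The plan is to read off the conclusion directly from the decomposition $w=W+h$ that was set up in the construction preceding Lemma \ref{pot-bounded}, where $W$ is the solution of the dual system \eqref{dual-eq} (in the sense of \eqref{dual-eq-ell-vws}--\eqref{dual-eq-par-vws}) and $h$ is the boundary correction function. First I would invoke Lemma \ref{pot-bounded} to get $W\in L^\infty_{\mathrm{loc}}\!\left(B_R\times(0,T)\right)$; here one should only note that the additive constant possibly used in that proof to enforce $W\ge0$ is irrelevant for the boundedness statement, so no loss is incurred. Then it remains to recall that $h\in L^\infty_{\mathrm{loc}}\!\left(B_R\times[0,T)\right)$, which was already observed right before Lemma \ref{pot-bounded}. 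Summing the two contributions gives $w=W+h\in L^\infty_{\mathrm{loc}}\!\left(B_R\times(0,T)\right)$, which is the thesis.

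For the local boundedness of $h$, the argument I would spell out (or simply cite, since it is already in the text) is: for each fixed $t\in(0,T)$ the function $h(\cdot,t)$ is non-negative and harmonic in $B_R$, and it satisfies the $L^1$ bound $\int_{B_R}h(x,t)\,dx\le C_{N,R}\int_{\partial B_R}\int_0^t u^m\,d\sigma ds$. Since by Lemma \ref{globloc} the boundary trace $u^m|_{\partial B_R}$ is a well-defined element of $L^1_\sigma\!\left(\partial B_R\times(0,\tau)\right)$ for a.e.\ $R$ and all $\tau<T$, the right-hand side is finite and locally uniform in $t$ away from $t=T$; the mean-value property for non-negative harmonic functions then upgrades this to a local $L^\infty$ bound on $h$ (this is the analogue of \cite[Lemma 4.5]{DK2}).

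I do not expect any real obstacle here: the corollary is essentially a bookkeeping statement, and all the substantive work has been done in Lemma \ref{pot-bounded} and in the preliminary construction of $W$ and $h$. The only point deserving a line of care is the passage from the $L^1$-in-space estimate for $h$ to a pointwise bound, which is routine once one recalls that harmonicity plus an $L^1$ control on balls compactly contained in $B_R$ forces local boundedness, uniformly on compact time subintervals of $[0,T)$.
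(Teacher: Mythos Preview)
Your proof is correct and follows exactly the same approach as the paper: the paper's own proof simply recalls that $h\in L^\infty_{\mathrm{loc}}\!\left(B_R\times[0,T)\right)$ was already observed, and then writes $w=W+h$ with $W$ locally bounded by Lemma~\ref{pot-bounded}. Your additional remarks on the additive constant and on the mean-value argument for $h$ are accurate but not needed beyond what the paper already records in the preamble to Lemma~\ref{pot-bounded}.
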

\begin{proof} We have already noticed that $ h \in L^\infty_{\mathrm{loc}}\!\left( B_R \times [0,T) \right) $, so the statement follows by the very definition of $w$ and the local boundedness of $W$ proved in Lemma \ref{pot-bounded}.
\end{proof}

\begin{cor}\label{coro-conv}
Let $u$ be a non-negative solution to \eqref{loc-prob}, in the sense of \eqref{vw-loc}. Then, for all $ \tau \in (0,T/2) $, we have
\begin{equation}\label{bdd-conv-1}
(x,t) \mapsto \int_{t-\frac \tau 2}^{t+\frac \tau 2} u^{m}(x,s) \, ds  \in L^\infty_{\mathrm{loc}}\!\left(B_R \times (\tau,T-\tau)\right)
\end{equation}
and
\begin{equation}\label{bdd-conv-2}
 (x,t) \mapsto \int_{t-\frac \tau 2 }^{t+\frac \tau 2} u(x,s) \, ds  \in L^\infty_{\mathrm{loc}}\!\left(B_R \times (\tau,T-\tau)\right)  .
\end{equation}
\end{cor}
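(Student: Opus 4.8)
The plan is to obtain both \eqref{bdd-conv-1} and \eqref{bdd-conv-2} directly from the machinery built for Lemma \ref{pot-bounded}, so that essentially no new work is needed. Fix $\tau\in(0,T/2)$, let $W$ be the solution of the dual system \eqref{dual-eq} associated with $u$ (via Corollary \ref{pot-bounded-2}), and let $\overline{W}$ be its continuous (in $L^1(B_R)$) and essentially non-increasing representative constructed in \eqref{def-completion}; recall from Lemma \ref{pot-bounded} that $\overline{W}\in L^\infty_{\mathrm{loc}}\!\left(B_R\times(0,T)\right)$. The second line of \eqref{dual-eq}, in the integrated form \eqref{ae W}--\eqref{def-completion}, yields the pointwise identity
\[
\int_{t_1}^{t_2}u^m(x,s)\,ds=\overline{W}(x,t_1)-\overline{W}(x,t_2)\qquad\text{for a.e.}\;\;x\in B_R\;\;\text{and all}\;\;0<t_1<t_2<T\,,
\]
in which, notice, the sign of $\overline{W}$ plays no role since we only use differences (so the normalization $W\ge0$ performed in the proof of Lemma \ref{pot-bounded} is irrelevant here).

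First I would prove \eqref{bdd-conv-1}. Given any cylinder $K\times(\tau+\delta,T-\tau-\delta)\Subset B_R\times(\tau,T-\tau)$ with $K\Subset B_R$ and $\delta\in(0,T/2-\tau)$, for $t$ in this range the two times $t\mp\tfrac{\tau}{2}$ stay in the compact interval $[\tfrac{\tau}{2}+\delta,\,T-\tfrac{\tau}{2}-\delta]\Subset(0,T)$, so applying the identity above with $t_1=t-\tfrac{\tau}{2}$ and $t_2=t+\tfrac{\tau}{2}$ gives
\[
\left|\int_{t-\frac\tau2}^{t+\frac\tau2}u^m(x,s)\,ds\right|\le 2\left\|\overline{W}\right\|_{L^\infty\left(K\times[\frac\tau2+\delta,\,T-\frac\tau2-\delta]\right)}<+\infty\,.
\]
The one technical point is to upgrade the $L^\infty_{\mathrm{loc}}$ bound on $\overline{W}$, which a priori controls $\overline{W}$ only for a.e.\ $(x,s)$, to a bound on the values $\overline{W}(x,t\mp\tfrac{\tau}{2})$ valid for a.e.\ $x$ and \emph{every} admissible $t$: this is a routine Fubini argument, using that for a.e.\ fixed $x$ the map $s\mapsto\overline{W}(x,s)$ is (absolutely) continuous, with derivative $-u^m(x,\cdot)$, by \eqref{def-completion}. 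This establishes \eqref{bdd-conv-1}.

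Then \eqref{bdd-conv-2} is immediate: by Jensen's inequality, exactly as in the estimate \eqref{s1-int} of the proof of Lemma \ref{pot-bounded},
\[
\int_{t-\frac\tau2}^{t+\frac\tau2}u(x,s)\,ds\le\tau^{\frac{m-1}{m}}\left(\int_{t-\frac\tau2}^{t+\frac\tau2}u^m(x,s)\,ds\right)^{\frac1m}\qquad\text{for a.e.}\;\;x\,,
\]
so \eqref{bdd-conv-2} follows at once from \eqref{bdd-conv-1}. There is no genuine obstacle beyond Lemma \ref{pot-bounded} itself: the corollary is little more than a restatement of the monotonicity of $\overline{W}$ together with its local boundedness, the only care required being the Fubini/continuity step mentioned above, which legitimizes the pointwise-in-time evaluations of $\overline{W}$.
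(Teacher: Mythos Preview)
Your proof is correct and follows essentially the same route as the paper's: both use the identity $\int_{t_1}^{t_2}u^m=\overline{W}(t_1)-\overline{W}(t_2)$ together with the local boundedness of $\overline{W}$ from Lemma~\ref{pot-bounded} to obtain \eqref{bdd-conv-1}, and then Jensen's inequality (exactly \eqref{s1-int}) for \eqref{bdd-conv-2}. The only cosmetic difference is that the paper bounds $\int u^m\le W(t-\tfrac\tau2)$ after normalizing $W\ge0$, whereas you bound the difference by $2\|\overline{W}\|_{L^\infty}$ without that normalization; your extra Fubini/continuity remark is a harmless refinement.
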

\begin{proof}
 Let us take any $ R' \in (0,R)$. Following the notations of Lemma \ref{pot-bounded}, from \eqref{s0-int} (with suitable choices of $ t_1,t_2 $) we infer
$$
\begin{gathered}
 \int_{t-\frac \tau 2}^{t+\frac \tau 2} u^{m}(x,s) \, ds \le W\!\left(x,t-\tfrac \tau 2\right) \le \left\| W \right\|_{L^\infty \left( B_{R'} \times \left(\frac{\tau}{2},T-\frac{3}{2} \tau\right) \right)} \\
 \text{for a.e.} \; \; (x,t) \in B_{R'} \times (\tau,T-\tau) \, ,
 \end{gathered}
$$
which settles \eqref{bdd-conv-1}. On the other hand, \eqref{s1-int} reads
$$
	 \int_{t-\frac \tau 2}^{t+\frac \tau 2} u(x,s) \, ds \leq  \left(\int_{t-\frac \tau 2}^{t+\frac \tau 2} u^m(x,s)\,ds \right)^{1/m}\tau^{\frac{m-1}{m}} \, ,
$$
therefore \eqref{bdd-conv-2} is a direct consequence of the first estimate.
\end{proof}

The following technical result, which we state here without proof, was established in \cite{DK2}.

\begin{lem}[Lemma 4.7 in \cite{DK2}]\label{tough-lem}
Let $w$ be a smooth function in a neighborhood of $Q=\Omega\times(t_1,t_2)$, where $\Omega\subset \mathbb R^N$ is a bounded smooth domain, which satisfies
\begin{equation*}\label{conds-on-w}
	w_t \leq 0 \quad \text{and} \quad \Delta w\leq0 \qquad \text{in} \;\; Q \, .
\end{equation*}
If $\int_\Omega |\Delta w(x,t)|\,dx\leq M$ and $|w(x,t)|\leq M$ for all $ (x,t) \in Q $ for some constant $M>0$, then for every compact set $ K  \Subset Q $ there exists a constant $C>0$, depending on $K,Q$, such that
\begin{equation}\label{tough-estimate}
	\iint_{K} w_t \, \Delta w \, dxdt \leq C M^2 \, .
\end{equation}
\end{lem}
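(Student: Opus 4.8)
The plan is to reduce the statement to a Caccioppoli-type estimate on space-time cylinders and then close it by a self-improving absorption argument. First I would fix a smooth cutoff $\varphi_0$ with $0\le\varphi_0\le1$, $\varphi_0\equiv1$ on $K$ and $\operatorname{supp}\varphi_0\Subset Q$; since $w_t\le0$ and $\Delta w\le0$ we have $w_t\Delta w\ge0$ pointwise, so it suffices to bound $\iint_Q\varphi_0^2\,w_t\Delta w\,dxdt$. Integrating by parts in space (no boundary terms, by the support of $\varphi_0$), then using $\nabla w_t\cdot\nabla w=\tfrac12\partial_t|\nabla w|^2$ together with an integration by parts in time, one gets
\begin{equation*}
\iint_Q\varphi_0^2\,w_t\Delta w\,dxdt=\tfrac12\iint_Q\partial_t(\varphi_0^2)\,|\nabla w|^2\,dxdt-\iint_Q w_t\,\nabla(\varphi_0^2)\cdot\nabla w\,dxdt=:A_0+B_0 .
\end{equation*}

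Two auxiliary facts feed the estimate. On the one hand, the elliptic energy identity $\iint\varphi^2|\nabla w|^2=-\iint\varphi^2 w\,\Delta w+\tfrac12\iint w^2\,\Delta(\varphi^2)$, valid for any cutoff $\varphi$, combined with $|w|\le M$ and $\int_\Omega|\Delta w(\cdot,t)|\,dx\le M$, yields the local energy bound $\iint\varphi^2|\nabla w|^2\,dxdt\le CM^2$; this immediately gives $|A_0|\le CM^2$. On the other hand, the monotonicity $w_t\le0$ together with $|w|\le M$ gives the crucial pointwise-in-$x$ bound $\int(-w_t(x,t))\,dt\le w(x,t_1)-w(x,t_2)\le2M$, hence $\iint_{\operatorname{supp}\varphi}(-w_t)\,|\nabla\varphi|^2\,dxdt\le CM$ for any cutoff $\varphi$ (here the sign of $w_t$ is used to monotonically extend the time integral). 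Applying the Cauchy--Schwarz inequality to $B_0=2\iint\varphi_0(-w_t)\,\nabla\varphi_0\cdot\nabla w$ with respect to the nonnegative density $(-w_t)\,dxdt$ then gives $|B_0|\le2\sqrt{CM}\,\sqrt{D}$, where $D:=\iint\varphi_0^2\,(-w_t)\,|\nabla w|^2\,dxdt$.

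The heart of the matter, and where I expect the main difficulty, is to show $D\le CM^3+CM\cdot\bigl(\iint\varphi_0^2\,w_t\Delta w\,dxdt\bigr)$: there is no $L^\infty$ bound on either $\nabla w$ or $w_t$, so $D$ cannot be estimated directly. The idea is to expand $D=-\iint\varphi_0^2 w_t|\nabla w|^2$ by integration by parts, writing $w_t|\nabla w|^2=\partial_t\bigl(w|\nabla w|^2\bigr)-2w\,\nabla w\cdot\nabla w_t$ and then $2w\,\nabla w=\nabla(w^2)$, and integrating by parts once more in $x$ and once in $t$. Every resulting term either (i) is controlled by $CM^3$, using $|w|\le M$, the energy bound above, and $\int_\Omega|\Delta w|\le M$; or (ii) reproduces, up to a factor $M$, either $\iint\varphi_0^2 w_t\Delta w$ (through $|w|\le M$ and $w_t\Delta w\ge0$) or $D$ itself (through the same Cauchy--Schwarz used for $B_0$). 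The term of type (ii) proportional to $D$ is absorbed by Young's inequality, leaving exactly $D\le CM^3+CM\cdot\iint\varphi_0^2 w_t\Delta w$.

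Substituting this back into $|B_0|\le2\sqrt{CM}\,\sqrt{D}$, using $\sqrt{a+b}\le\sqrt a+\sqrt b$, and then absorbing the resulting term $CM\,\bigl(\iint\varphi_0^2 w_t\Delta w\bigr)^{1/2}$ by Young's inequality, finally yields $\iint_Q\varphi_0^2\,w_t\Delta w\,dxdt\le CM^2$, and hence $\iint_K w_t\Delta w\,dxdt\le CM^2$. The one bookkeeping subtlety is that each integration by parts forces a slight enlargement of the cutoff; since $K\Subset Q$ there is room for the finitely many enlargements required, the corresponding integrals of the smooth function $w$ are all finite (so the absorptions are legitimate), and the constant $C$ acquires its dependence on $K$ and $Q$ precisely through the gradients of these cutoffs, which scale like $1/\operatorname{dist}(K,\partial Q)$, and through $|Q|$.
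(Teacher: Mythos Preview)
The paper does not prove this lemma: it is stated without proof and attributed to \cite[Lemma~4.7]{DK2}. There is therefore nothing in the present paper to compare your argument against.

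That said, your approach is sound. After the first integration by parts one has $\iint\varphi_0^2 w_t\Delta w=A_0+B_0$ with $A_0$ controlled by the elliptic energy identity and $B_0$ by $C\sqrt{M}\sqrt{D}$. For $D=\iint\varphi_0^2(-w_t)|\nabla w|^2$, writing $w_t|\nabla w|^2=\partial_t(w|\nabla w|^2)-\nabla(w^2)\cdot\nabla w_t$ and integrating by parts again produces (via $\Delta(w^2)=2w\Delta w+2|\nabla w|^2$) the identity
\[
D=-I_1-I_2-I_3,\qquad I_3=-2\iint\varphi_0^2\,w\,w_t\Delta w,
\]
where $|I_1|\le CM^3$, $|I_2|\le CM^{3/2}\sqrt{D}\le\tfrac12 D+CM^3$, and $-I_3\le 2M\iint\varphi_0^2 w_t\Delta w$ (using $w_t\Delta w\ge0$ and $|w|\le M$). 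This gives exactly $D\le CM^3+CM\iint\varphi_0^2 w_t\Delta w$, and the final absorption closes as you indicate. One small point your write-up glosses over: the $2|\nabla w|^2$ part of $\Delta(w^2)$ makes the term $2D$ reappear on the right, so one really gets $D=-(I_1+I_2+I_3)$ rather than ``$D\le(\cdots)+\varepsilon D$'' directly; the conclusion is the same, but the algebra is worth spelling out. The finitely many cutoff enlargements and the a priori finiteness of all integrals (smooth $w$ on a compact neighborhood of $K$) make the absorptions legitimate.
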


\begin{lem}\label{imp-int}
Let $u$ be a non-negative solution to \eqref{loc-prob}, in the sense of \eqref{vw-loc}. Then $u\in L^{m+1}_{\rho,\mathrm{loc}}\!\left( B_R\times (0,T)\right)$.
\end{lem}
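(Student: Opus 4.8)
The plan is to exploit the dual formulation \eqref{dual-eq} together with the regularity already established for the potential $W$ (Lemma~\ref{pot-bounded}) and the local convolution estimates of Corollary~\ref{coro-conv}, and to integrate the identity $-W_t = u^m$ against $u\,\rho = -\Delta W$ in a controlled way. Formally, on a cylinder $Q = B_{R'}\times(t_1,t_2)$ with $B_{R'}\Subset B_R$ and $0<t_1<t_2<T$, multiplying $-W_t=u^m$ by $u\,\rho = -\Delta W$ and integrating gives
\begin{equation*}
  \iint_Q u^{m+1}\,\rho\,dx\,dt = \iint_Q (-W_t)(-\Delta W)\,dx\,dt = \iint_Q W_t\,\Delta W\,dx\,dt \, ,
\end{equation*}
so the lemma would follow if the right-hand side were finite. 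This is precisely the quantity controlled by Lemma~\ref{tough-lem}: on $\overline{W}$ (the monotone, $L^1(B_R)$-continuous representative constructed in the proof of Lemma~\ref{pot-bounded}, shifted by a large harmonic constant so that $\overline{W}\ge 0$), one has $\overline W_t = -u^m \le 0$ and $\Delta \overline W = -u\,\rho \le 0$ in the very weak sense, and $\overline W$ is locally bounded by Lemma~\ref{pot-bounded}, while $\int_{B_{R'}}|\Delta \overline W(t)|\,dx = \int_{B_{R'}} u(t)\,\rho\,dx$ is locally bounded in time — this last point follows from \eqref{bdd-conv-2} after averaging, or directly from the local integrability of $u\rho$ together with the monotonicity of $W$. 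Thus Lemma~\ref{tough-lem} yields $\iint_K \overline W_t\,\Delta \overline W\,dx\,dt \le C M^2$ on compacta $K\Subset Q$, which is exactly the desired bound.

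The key steps, in order, are: (i) fix a cylinder $Q=B_{R'}\times(t_1,t_2)\Subset B_R\times(0,T)$ and recall from Lemma~\ref{pot-bounded} and its proof that $\overline W$ is defined for all $t$, continuous into $L^1(B_R)$, essentially non-increasing, and bounded on a slightly larger cylinder, after adding a harmonic constant dominating $h$; (ii) record that $\overline W$ satisfies $-\Delta \overline W = u\rho$ and $-\overline W_t = u^m$ in the sense of \eqref{dual-eq-ell-vws}--\eqref{dual-eq-par-vws}, and that $\int_{B_{R'}}(u(t)+u^m(t))\,dx$ is locally bounded in time (using Corollary~\ref{coro-conv}, or the bound \eqref{s0-int} for $u^m$ and Hölder \eqref{s1-int} for $u$, which also shows $u\rho\in L^1_{\mathrm{loc}}$); (iii) regularize $\overline W$ in time only — say by a mollification $\overline W_\varepsilon = \overline W * \eta_\varepsilon$ in $t$ — so that the sign conditions $(\overline W_\varepsilon)_t\le 0$ and $\Delta \overline W_\varepsilon \le 0$ are preserved, the mass and sup bounds pass to $\overline W_\varepsilon$ uniformly in $\varepsilon$ on interior cylinders, and then further approximate in space by harmonic-type smoothing or simply invoke the hypotheses of Lemma~\ref{tough-lem} on the mollified object; (iv) apply Lemma~\ref{tough-lem} to obtain $\iint_K (\overline W_\varepsilon)_t\,\Delta \overline W_\varepsilon\,dx\,dt \le CM^2$ uniformly, and identify the limit: $(\overline W_\varepsilon)_t \to -u^m$ and $\Delta \overline W_\varepsilon \to -u\rho$ in appropriate local topologies, so by lower semicontinuity (the integrand being non-negative, $= u_\varepsilon^m u_\varepsilon \rho$-like and convergent a.e.\ along a subsequence) $\iint_K u^{m+1}\,\rho\,dx\,dt \le CM^2$; (v) conclude by the arbitrariness of $K$ and $Q$.

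The main obstacle I expect is step~(iii)--(iv): justifying the passage through Lemma~\ref{tough-lem}, which is stated for \emph{smooth} $w$, while our $\overline W$ is only $L^\infty_{\mathrm{loc}}$ with $\Delta \overline W$ a locally finite measure density $u\rho$ and $\overline W_t = -u^m \in L^1_{\mathrm{loc}}$. Time-only mollification is natural and keeps both sign conditions, but one must check that the product $(\overline W_\varepsilon)_t\,\Delta\overline W_\varepsilon$ converges to $u^m\,u\,\rho$ in a way compatible with the one-sided bound — the cleanest route is to note that $\overline W_\varepsilon$ is still spatially superharmonic with $L^1$-bounded Laplacian density and $L^\infty$-bounded, mollify additionally in space by a subharmonic-preserving kernel to land genuinely in the hypothesis class of Lemma~\ref{tough-lem}, apply it, and then use Fatou on the non-negative integrands after extracting an a.e.\ convergent subsequence of the (now smooth) solutions $u_\varepsilon$ to the approximate dual system. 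Handling the weight $\rho$ here is routine since it only enters multiplicatively and is locally bounded above and below away from the origin; near the origin one uses the local integrability already guaranteed by \eqref{weight-cond} with $\gamma<2$. Once the finiteness of $\iint_K u^{m+1}\rho$ is in hand on every interior cylinder, the conclusion $u\in L^{m+1}_{\rho,\mathrm{loc}}(B_R\times(0,T))$ is immediate.
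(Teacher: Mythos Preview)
Your proposal is correct and follows essentially the same route as the paper: reduce to the estimate $\iint_K W_t\,\Delta W\,dxdt \le CM^2$ via Lemma~\ref{tough-lem}, regularize so that the smoothness hypothesis is met, and pass to the limit by Fatou. The only cosmetic difference is that the paper mollifies in \emph{space and time simultaneously} (applying a standard $(N{+}1)$-dimensional mollifier $T_\varepsilon$ to the function $v(x,t)=\int_{B_R}G_R(x,y)\,u(y,\tau_1)\,\rho(y)\,dy-\int_{\tau_1}^t u^m(x,s)\,ds$, which differs from your $\overline W$ by a fixed-in-time harmonic function), thereby landing directly in the smooth class required by Lemma~\ref{tough-lem}, whereas you mollify in time first and then patch in space; the paper also makes the uniform-in-time bound on $\int_{B_{R'}}u(t)\,\rho\,dx$ explicit via a cutoff test-function argument (the line \eqref{cutoff-L1}), which is slightly more direct than your appeal to Corollary~\ref{coro-conv}.
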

\begin{proof} We follow a similar procedure to the proof of \cite[Theorem 4.6]{DK2}, providing some detail for the reader's convenience. First of all, for a locally integrable function $f$ in $ \R^{N+1} $ and every $ \varepsilon>0 $, we set
\begin{equation*}\label{def-moll}
	T_\varepsilon [f](x,t)=\iint_{\R^{N+1}} f\!\left(x-\xi,t-s\right) \eta_\varepsilon(\xi,s)\,d\xi ds \, ,
\end{equation*}
where $ \left\{ \eta_\varepsilon \right\} \subset C^\infty_c\!\left(\R^{N+1}\right)$ is a standard family of mollifiers. Let $Q$ be as in Lemma \ref{tough-lem}, with $ \Omega=B_{R'} $ for an arbitrary $ R' \in (0,R) $ and arbitrary $0<t_1<t_2<T$. Let $0<\tau_1<\tau_2$ be such that $[t_1,t_2]\subset (\tau_1, \tau_2)$, with $\tau_1 $ a Lebesgue time for $ u $ in $ L^1_{\rho,\mathrm{loc}}\!\left(\R^N\right) $, and let
$$
v(x,t)=\int_{B_{{R}}} G_{{R}}(x,y)\, u (y,\tau_1)\, \rho(y)\,dy -
	\int_{\tau_1}^{t} u^m(x,s)\,ds \qquad \text{in} \;\; B_{{R}} \times(\tau_1,\tau_2) \, .
$$
It is readily seen that
$$
v_t = -u^m \qquad \text{and} \qquad \Delta v = - u \, \rho \, ,
$$
at least in the $ L^1(B_R \times (\tau_1,\tau_2)) $ sense. Therefore, it is plain that for all small enough $\varepsilon>0$ the function $ v_\varepsilon = T_\varepsilon[v] $ (with $v$ set to zero outside $ B_R \times (\tau_1,\tau_2) $) is a smooth in a neighborhood of $ B_{R'} \times (t_1,t_2) $ and satisfies
\begin{equation*}\label{eqs-for-v-moll}
	\partial_t v_\varepsilon = - T_\varepsilon[u^m] \leq 0 \quad \text{and} \quad \Delta v_\varepsilon  = - T_\varepsilon [u \, \rho] \leq 0 \qquad \text{in} \;\; B_{R'} \times (t_1,t_2) \, .
\end{equation*}
Also, it is readily seen from \eqref{vw-loc}, by testing with suitable cutoff functions, that
\begin{equation}\label{cutoff-L1}
\int_{B_{R'}} u(t) \, \rho \, dx \le \int_{B_{R}} u(\tau_1) \, \rho \, dx + C_{R,R'} \int_{\tau_1}^{\tau_2} \int_{B_R} u^m \, dxds  \qquad \text{for a.e.} \; \; t \in (\tau_1,\tau_2) \, ,
\end{equation}
where $ C_{R,R'} >0$ is a constant depending on $R,R'$.

Now we may apply Lemma \ref{tough-lem} to $w=v_\varepsilon $, upon noticing that the right-hand side of \eqref{tough-estimate} is stable as $ \varepsilon \to 0 $ in view of Corollaries \ref{pot-bounded-2}--\ref{coro-conv} and \eqref{cutoff-L1}. The thesis then follows by Fatou's lemma and the arbitrariness of $ R',t_1,t_2 $.
\end{proof}

We are now ready to prove Theorem \ref{apriori-bounded}, at least, in the case of global non-negative solutions.

\begin{proof}[Proof of Theorem \ref{apriori-bounded} (for global solutions)]
We split the argument into two steps: first, the existence of a locally bounded solution $\overline{u}$ that satisfies the same (localized) initial and boundary conditions as the given solution $u$, and second, the identification of these two solutions. To this end, we will work in a cylinder $B_r \times (\tau_1,\tau_2) $, for arbitrary (up to an a.e.\ constraint) {$r>1$} and $0<\tau_1<\tau_2<T$. We will show that $u$ is locally bounded in $B_r \times(\tau_1,\tau_2)$, whence the statement follows owing to the arbitrariness of $r,\tau_1,\tau_2$.

\smallskip
\noindent{\textbf{Step 1: Existence.}}
Similarly to what was done in the Proof of Lemma \ref{imp-int}, for a locally integrable function $f$ in $ \mathbb{R}^{N+1} $ and each $\varepsilon>0$ we set
\begin{equation*}\label{def-time-moll}
	S_\varepsilon [f] (x,t)=\int_{\R} f(x,t-s)\,\eta_\varepsilon(s)\, ds \, ,
\end{equation*}
where $ \left\{ \eta_\varepsilon \right\} \subset C^\infty_c(\R) $ is a smooth and non-negative approximation of the Dirac delta in $\mathbb{R}$, specifically, a family of mollifiers such that each $\eta_\varepsilon$ is supported in $ (-\varepsilon/2,\varepsilon/2) $. Let us apply such a time-regularizing operator to $ u $ and $ u^m $ (extended to zero outside $(0,T)$); from the definition of $\eta_\varepsilon$, it is plain that
$$
S_\varepsilon[u](x,t) \le \| \eta_\varepsilon \|_\infty \int_{t-\frac \varepsilon 2}^{t+\frac{\varepsilon}{2}} u(x,s) \, ds \qquad \text{and} \qquad S_\varepsilon[u^m](x,t) \le  \| \eta_\varepsilon \|_\infty \int_{t-\frac \varepsilon 2}^{t+\frac{\varepsilon}{2}} u^m(x,s) \, ds \, ,
$$
hence by Corollary \ref{coro-conv} (with any $R>r$) we immediately infer that
\begin{equation}\label{bdd-eps-loc}
S_\varepsilon[u] ,  S_\varepsilon[u^m]  \in L^\infty( B_r \times (\varepsilon_0 , T-\varepsilon_0) ) \, ,
\end{equation}
where from here on we assume $ \varepsilon < \varepsilon_0 $ for some fixed $ \varepsilon_0 \in (0,T/2) $.

We now consider the constructed solution $u_\varepsilon$ of problem \eqref{cd problem} in $B_r\times(\tau_1,\tau_2)$, in the sense of Definition \ref{vw cd def}, with initial datum $ S_\varepsilon [u](\tau_1)$ and boundary datum $ S_\varepsilon [u^m]|_{\partial B_r } = S_\varepsilon \!\left[u^m|_{\partial B_r }\right]$, where $\tau_1$ and $ r $ are almost arbitrarily chosen so that the corresponding traces of $u$ on $ \mathbb{R}^ N\times \{ \tau_1 \} $ and $ u^m $ on $ \partial B_r \times (0,T) $, respectively, are well defined, in agreement with Lemma \ref{globloc}. Also, we additionally require $ \varepsilon_0 $ to be so small that $ \varepsilon_0 < \tau_1 < \tau_2 < T-\varepsilon_0 $. Note that the existence and boundedness of such a solution is standard and can be shown exactly as in the end of proof of Theorem \ref{local moser iter lem} (all of the initial and boundary data are bounded thanks to \eqref{bdd-eps-loc}).

Next, we aim at showing that the family $ \{ u_\varepsilon \} $ is uniformly (with respect to $ \varepsilon $) locally bounded in $ B_r \times (\tau_1,\tau_2) $. By construction, each $u_\varepsilon$ is a strong energy solution in the sense of Definition \ref{def:w-loc}, so that the computations that follow are justified. First of all, we need to establish uniform boundedness in $ L^1_{\rho,\mathrm{loc}}( B_r \times (\tau_1,\tau_2) ) $; to this end, we reason as in \cite[Proposition 2.1]{DK2}. Let us plug into the very weak formulation satisfied by $ u_\varepsilon $ the purely spatial test function $ \eta$, chosen as  the (positive) solution of the elliptic equation $-\Delta \eta = 1$ in $B_r$, with homogeneous Dirichlet boundary conditions (that is the Dirichlet potential of $ 1 $ in $B_r$). Up to a usual time approximation of $ \chi_{[\tau_1,t]} $, where $t \in (\tau_1,\tau_2)$ is arbitrary, we obtain the identity
\begin{equation}\label{boundedness}
\int_{B_r} u_\varepsilon(t)\, \eta\, \rho \, dx+\int_{\tau_1}^{t}\int_{B_r}u_\varepsilon^m\,dxds=
 \int_{B_r} S_\varepsilon[u](\tau_1) \, \eta \, \rho \, dx + \int_{\tau_1}^{t}\int_{\partial B_r} S_\varepsilon\!\left[u^m|_{\partial B_r }\right] \partial_{\Vec{\mathsf{n}}} \eta \,  d\sigma ds \, .
\end{equation}
Since $ \tau_1 $ is a Lebesgue time for $ t \mapsto u(t) $ as a curve with values in $ L^1_{\rho}(B_r) $ and $ u^m|_{\partial B_r} \in L_\sigma^1(\partial B_r \times (\tau_1,\tau_2) ) $, it is plain that the right-hand side of \eqref{boundedness} is convergent as $\varepsilon\to 0$, whence
\begin{equation}\label{boundedness 2}
\begin{gathered}
\{ u_\varepsilon \} \; \; \text{is bounded in} \; \;  L^\infty\!\left((\tau_1,\tau_2); L^1_{\rho,\mathrm{loc}}(B_r)\right)  , \\
\left\{ u^m_\varepsilon \right\} \; \text{is bounded in} \; \;  L^1(B_r\times(\tau_1,\tau_2)) \, .
\end{gathered}
\end{equation}
Before passing to the limit as $ \varepsilon \to 0 $, let us comment that each  $u_\varepsilon $ is continuous in $ [\tau_1,\tau_2) $ with values in $L^p_\rho(B_r)$ for all $ p\in[1,\infty)$, a fact that will be used in Step 2. Indeed, we already have continuity in $ (\tau_1,\tau_2)$, as a consequence of the fact that $u_\varepsilon$ is a (bounded) strong energy solution. In order to prove continuity down to $t=\tau_1$, we first note that $u_\varepsilon(t)\rightharpoonup u_\varepsilon(\tau_1)$ in $L^p_\rho(B_r)$ as $\tau\to\tau_1$, for all $ p\in[1,\infty)$, which can be easily seen from the definition of very weak solution. On the other hand, for all non-negative $\phi\in C_c^\infty(B_r)$ it holds (formally using $ u^{p-1} \, \phi \,  \chi_{[\tau_1,t]} $ as a test function)
\begin{equation}\label{uppertau}
\int_{B_r}u_\varepsilon^p(t)\,\phi\,\rho\,dx\le \int_{B_r}u_\varepsilon^p(\tau_1)\,\phi\,\rho\,dx+C_\phi \, (t-\tau_1)   \, ,
\end{equation}
for all $t\in(\tau_1,\tau_2)$ and a suitable constant $C_\phi>0$ that depends on $\phi$ and the $L^\infty(B_r\times(\tau_1,\tau_2))$ norm of $u_\varepsilon$. From \eqref{uppertau}, the just mentioned weak convergence in $L^p_{\rho}(B_r)$ as $t\to\tau_1$, and the boundedness of $ u_\varepsilon  $, it is easy to deduce that also strong convergence in $L^p_\rho(B_r)$ holds.

Let us go back to the convergence of $ \{ u_\varepsilon \} $ for $ \varepsilon \to 0 $. Thanks to \eqref{boundedness 2} and the $ L^\infty $ interior estimates of Theorem \ref{local moser iter lem} (also recall Remark \ref{GB}), we can infer that
\begin{equation}\label{boundedness 3}
\{ u_\varepsilon \} \; \; \text{is bounded in} \; \;  L^\infty_{\mathrm{loc}}(B_r\times(\tau_1,\tau_2)) \, .
\end{equation}
As already observed, the solutions $ \{ u_\varepsilon \} $ are strong energy and thus they satisfy the energy estimates of Lemma \ref{energy lem}. Hence, due to \eqref{boundedness 3}, we can appeal to the local version of the Aubin--Lions compactness lemma as in the proof of Lemma \ref{alaa compact2}, deducing in particular that there exists a non-negative function $ \overline{u} \in L^\infty_{\mathrm{loc}}(B_r\times(\tau_1,\tau_2)) $ such that (up to a subsequence that we do not relabel)
\begin{equation}\label{ptwse-eps}
 u_\varepsilon \underset{\varepsilon \to 0}{\longrightarrow} \overline{u}  \qquad \text{a.e.\ in} \; \;  B_r\times(\tau_1,\tau_2) \, .
 \end{equation}
Although we will not directly exploit this fact, we notice that $ \overline{u} $ turns out to be a very weak solution to \eqref{cd problem} (with $ \Omega = B_r $) whose initial and boundary data coincide with those of $u$.

\smallskip
\noindent{\textbf{Step 2: Identification.}}
As in the beginning of the current section, let $G_r$ denote the Dirichlet Green's function of $ -\Delta $ in $B_r$. For all $\varepsilon\in(0,\varepsilon_0)$, we set
$$
\begin{gathered}
	w_\varepsilon(x,t)=\int_{B_r} G_r(x,y)\, S_\varepsilon[u](y,t)\,\rho(y)\,dy \, ,\\
	\overline{w}_\varepsilon(x,t)=\int_{B_r}G_r(x,y)\,u_\varepsilon(y,t)\,\rho(y)\,dy \, ,
\end{gathered}
$$
along with difference potential $E_\varepsilon=\overline{w}_\varepsilon-w_\varepsilon$. From its definition, it is plain that for every $ t \in (\varepsilon_0,T-\varepsilon_0) $ the function $ E_\varepsilon(t) $ satisfies
\begin{equation}\label{E-eps-1}
\begin{cases}
   -\Delta E_\varepsilon(t) = \left( u_\varepsilon(t) - S_\varepsilon[u](t)\right) \rho & \text{in} \; \; B_r \, , \\
   E_\varepsilon(t) = 0 & \text{on} \; \; \partial B_r \, ,
\end{cases}
\end{equation}
and thanks to elliptic regularity up to the boundary (see \emph{e.g.}\ \cite[Lemma 9.17]{GT}) and \eqref{bdd-eps-loc}, we have that $ E_\varepsilon(t) \in W^{2,p}(B_r) \cap W^{1,q}_0(B_r) $ for all $ p \in \left[1,N/\gamma\right) $ and all $ q \in \left[1, N/(\gamma-1)_+ \right) \supset [0,2] $. Since the operator $S_\varepsilon$ commutes with both $ \rho \,\partial_t $ and $\Delta$, we observe that $ S_\varepsilon[u] $ satisfies the differential equation $ \rho \, \partial_t S_\varepsilon[u] = \Delta S_\varepsilon[u^m] $, and $  S_\varepsilon[u^m] $ has the same trace as $ u_\varepsilon^m $ on $ \partial B_r \times (\tau_1,\tau_2) $; hence, it is readily seen that
\begin{equation*}\label{E-eps-2}
   \partial_t E_\varepsilon =  S_\varepsilon[u^m] - u_\varepsilon^m \qquad \text{in} \; \; B_r \times (\tau_1,\tau_2) \, .
\end{equation*}
In particular, we deduce that $ E_\varepsilon \in C^{1}( [\tau_1,\tau_2); L^p(B_r))  $ for all $ p \in [1,\infty) $. Moreover, from \eqref{E-eps-1} and again \eqref{bdd-eps-loc}, for all $t \in (\tau_1,\tau_2)$ and all $ |h|>0 $ sufficiently small we have:
$$
\begin{aligned}
& \int_{B_r} \left| \nabla E_\varepsilon(t+h) \right|^2 dx -  \int_{B_r} \left| \nabla E_\varepsilon(t) \right|^2 dx  \\
= &  -\int_{B_r} E_\varepsilon(t+h) \, \Delta E_\varepsilon(t+h) \, dx  +  \int_{B_r}E_\varepsilon(t) \, \Delta E_\varepsilon(t) \, dx \\
= & \int_{B_r} \left[ E_\varepsilon(t) - E_\varepsilon(t+h) \right] \Delta \!\left[E_\varepsilon(t) + E_\varepsilon(t+h) \right] dx \\
= & \int_{B_r} \left[ E_\varepsilon(t) - E_\varepsilon(t+h) \right] \left( S_\varepsilon[u](t) + S_\varepsilon[u](t+h) - u_\varepsilon(t) - u_\varepsilon(t+h) \right) \rho  \, dx \, .
\end{aligned}
$$
Since both $ S_\varepsilon[u] $ and $ u_\varepsilon $ belong to $ C^0\!\left([\tau_1,\tau_2);L^p_\rho(B_r)\right) $ for all $ p \in [1,\infty) $, we can safely divide by $ h $ and let $ h \to 0 $ in the above identities, which yields
$$
t \mapsto \int_{B_r} \left| \nabla E_\varepsilon(t) \right|^2 dx \in C^1([\tau_1,\tau_2))
$$
with
\begin{equation}\label{E-eps-diff}
\frac{d}{dt} \int_{B_r} \left| \nabla E_\varepsilon(t) \right|^2 dx = 2 \int_{B_r} \left(u_\varepsilon^m(t) - S_\varepsilon[u^m](t) \right)\left( S_\varepsilon[u](t) - u_\varepsilon(t) \right) \rho \, dx \, .
\end{equation}
Integrating \eqref{E-eps-diff} from $\tau_1$ to  any $\tau_1<\tau<\tau_2$ and exploiting the fact that $ \overline{w}_\varepsilon $ and $w_\varepsilon$ coincide on $ B_r \times \{ \tau_1 \} $, we find
$$
0 \le \int_{B_r} \left| \nabla E_\varepsilon(\tau) \right|^2 dx = 2 \int_{\tau_1}^{\tau} \int_{B_r} \left(u_\varepsilon^m - S_\varepsilon[u^m] \right)\left( S_\varepsilon[u] - u_\varepsilon \right) \rho \, dx dt \, ,
$$
which entails
\begin{equation}\label{crucial-ineq}
\begin{aligned}
 & \int_{\tau_1}^{\tau_2} \int_{B_r} S_\varepsilon[u^m] \, S_\varepsilon[u] \, \rho \, dx dt + \int_{\tau_1}^{\tau_2} \int_{B_r} u_\varepsilon^{m+1} \, \rho \, dx dt  \\
 \le & \int_{\tau_1}^{\tau_2} \int_{B_r} u_\varepsilon^m \, S_\varepsilon[u] \, \rho \, dx dt + \int_{\tau_1}^{\tau_2} \int_{B_r} S_\varepsilon[u^m] \, u_\varepsilon \, \rho \, dx dt \, .
 \end{aligned}
\end{equation}
In particular, H\"{o}lder's inequality yields
\begin{equation}\label{crucial-ineq-bis}
\begin{aligned}
 \int_{\tau_1}^{\tau_2} \int_{B_r} u_\varepsilon^{m+1} \, \rho \, dx dt
\le & \left( \int_{\tau_1}^{\tau_2} \int_{B_r} u_\varepsilon^{m+1} \, \rho \, dx dt \right)^{\frac{m}{m+1}} \left(  \int_{\tau_1}^{\tau_2} \int_{B_r} S_\varepsilon[u]^{m+1} \, \rho \, dx dt\right)^{\frac{1}{m+1}} \\
& + \left( \int_{\tau_1}^{\tau_2} \int_{B_r} S_\varepsilon[u^m]^{\frac{m+1}{m}} \, \rho \, dx dt \right)^{\frac{m}{m+1}} \left( \int_{\tau_1}^{\tau_2} \int_{B_r} u_\varepsilon^{m+1} \, \rho \, dx dt \right)^{\frac{1}{m+1}} .
\end{aligned}
\end{equation}
By virtue of Lemma \ref{imp-int} and standard properties of one-variable convolutions, we have
\begin{equation}\label{crucial-ineq-ter}
S_\varepsilon[u] \underset{\varepsilon \to 0}{\longrightarrow} u \quad \text{in} \;\; L^{m+1}_\rho(B_r \times(\tau_1,\tau_2)) \qquad \text{and} \qquad S_\varepsilon[u^m] \underset{\varepsilon \to 0}{\longrightarrow} u^m \quad \text{in} \;\; L^{\frac{m+1}{m}}_\rho(B_r \times(\tau_1,\tau_2)) \, ;
\end{equation}
we can therefore deduce from \eqref{crucial-ineq-bis} that $ \{ u_\varepsilon \} $ is bounded in $ L^{m+1}_\rho(B_r \times(\tau_1,\tau_2)) $, which, in combination with \eqref{ptwse-eps}, entails
\begin{equation}\label{crucial-ineq-quater}
u_\varepsilon \xrightharpoonup[\varepsilon \to 0]{} \overline{u} \quad \text{in} \;\; L^{m+1}_\rho(B_r \times(\tau_1,\tau_2)) \qquad \text{and} \qquad u_\varepsilon^m \xrightharpoonup[\varepsilon \to 0]{} \overline{u}^m \quad \text{in} \;\; L^{\frac{m+1}{m}}_\rho(B_r \times(\tau_1,\tau_2)) \, .
\end{equation}
Thanks to \eqref{crucial-ineq-ter}--\eqref{crucial-ineq-quater}, we can then pass to the limit safely in \eqref{crucial-ineq} as $ \varepsilon \to 0 $ to obtain
$$
\int_{\tau_1}^{\tau_2} \int_{B_r} \left( u^{m+1} + \overline{u}^{m+1} \right) \rho \, dx dt \le \int_{\tau_1}^{\tau_2} \int_{B_r} \left( \overline{u}^m \, u + u^m \, \overline{u} \right) \rho \, dx dt \, ,
$$
that is
$$
\int_{\tau_1}^{\tau_2} \int_{B_r} \left( u^{m} - \overline{u}^{m} \right)\left( u - \overline{u} \right) \rho \, dx dt \le 0 \, ,
$$
whence $ u=\overline{u} $ in $ B_r \times (\tau_1,\tau_2) $ and the thesis follows.
\end{proof}


        \begin{rem}[On \emph{local} non-negative solutions]\label{LS}\rm
    There is no significant obstruction to extending the above arguments to local non-negative very weak solutions, in the sense of Definition \ref{vw loc def}. First of all, by a standard covering argument, it is enough to show that, given any $ B_R(x_0) \Subset \Omega $, the solution $u$ is locally bounded in $ B_R(x_0) \times (\tau_1,\tau_2) $. Also, it is not restrictive to be able to do that under the additional constraint $R \le r_0$, for some fixed $r_0>0$. Then, one can  reason as follows. If $ \Omega $ contains the origin $ x_0 = 0 $, one can choose any $R_*>0$ such that $ B_{R_*} \Subset \Omega  $ and prove that $u$ is locally bounded in $ B_{R_*} \times  (\tau_1,\tau_2) $ exactly as above, having in mind Remark \ref{GB}. On the other hand, in any ball $ B_{R}(x_0) \Subset \Omega $ such that $ |x_0| \ge R_* $ and $ |R| \le R_*/2 $, it is plain that $\rho$ is bounded above and below by positive constants, therefore the arguments of this section, and in particular the local estimates of Theorem \ref{local moser iter lem}, apply with $\gamma=0$ (see again Remarks \ref{GB} and \ref{loc-vw-bdry}). Note that all of these types of balls are enough to cover $\Omega$. Finally, if $ \Omega $ does not contain the origin, one can simply ignore $R_*$.
\end{rem}

\subsection*{Acknowledgments} G.G., M.M., and T.P.\ are members of the Gruppo Nazionale per l'Ana\-lisi Mate\-matica, la Probabilit\`a e le loro Applicazioni (GNAMPA, Italy) of the Istituto Nazionale di Alta Matematica (INdAM, Italy). Their research was in part carried out within the project ``Partial differential equations and related geometric-functional inequalities'', ref. 20229M52AS and the project ``Geometric-Analytic Methods for PDEs and Applications (GAMPA)'', ref. 2022SLTHCE -- funded by European Union -- Next Generation EU within the PRIN 2022 program (D.D. 104 -- 02/02/2022 Ministero dell’Universit\`a e della Ricerca). N.S.\ has been supported by the project CONVIVIALITY ANR-23-CE40-0003 of the French National Research Agency.

\end{document}